\documentclass{amsart}%

\usepackage{amsfonts}
\usepackage{amsmath}
\usepackage{amsthm,amstext,amsfonts,bm,amssymb,amsopn}
\usepackage{amssymb}
\usepackage{xfrac}
\usepackage{graphicx}%
\usepackage{color}%
\usepackage{refcount}
\allowdisplaybreaks
\setcounter{MaxMatrixCols}{30}
\usepackage{xypic}
\usepackage{tikz-cd}
\usepackage{environ,enumitem}
\usepackage[margin=.5cm]{caption}

\setenumerate{label={\normalfont\arabic*)}}

\makeatletter

\usepackage{tikz}

% code by Werner:
% http://tex.stackexchange.com/a/152489/13304
\makeatletter
\newcommand{\xleftrightarrow}[2][]{\ext@arrow 3359\leftrightarrowfill@{#1}{#2}}
\makeatother

\newcommand{\xdasharrow}[2][->]{
% correct vertical setting by egreg:
% http://tex.stackexchange.com/a/59660/13304
\tikz[baseline=-\the\dimexpr\fontdimen22\textfont2\relax]{
\node[anchor=south,font=\scriptsize, inner ysep=1.5pt,outer xsep=5pt](x){#2};
\draw[shorten <=3.4pt,shorten >=3.4pt,dashed,#1](x.south west)--(x.south east);
}
}

\newcommand{\dashmap}{\xdasharrow{\ \ \ \ }{}}\providecommand{\U}[1]{\protect\rule{.1in}{.1in}}
\providecommand{\comment}[1]{}
%EndMSIPreambleData

\makeatletter
\newtheorem*{rep@theorem}{\rep@title}
\newcommand{\newreptheorem}[2]{%
\newenvironment{rep#1}[1]{%
 \def\rep@title{#2 \ref{##1}}%
 \begin{rep@theorem}}%
 {\end{rep@theorem}}}
\makeatother

\newtheorem*{namedtheorem}{\theoremname}
\newcommand{\theoremname}{testing}

\newtheorem{theorem}{Theorem}[section]
\newtheorem*{fact*}{Fact}

\newtheorem{claim}[theorem]{Claim}

\newtheorem{question}[theorem]{Question}
\newtheorem{corollary}[theorem]{Corollary}

\newtheorem{lemma}[theorem]{Lemma}

\newtheorem{proposition}[theorem]{Proposition}
\newtheorem{remark}[theorem]{Remark}
\newtheorem{fact}[theorem]{Fact}

\theoremstyle{definition}
\newtheorem{example}[theorem]{Example}

\newtheorem{definition}[theorem]{Definition}

\newreptheorem{theorem}{Theorem}
\newreptheorem{lemma}{Lemma}
\newreptheorem{proposition}{Proposition}

\NewEnviron{thmquote}{\vspace{1ex}\par
\refstepcounter{equation}%
\hfill\parbox{\dimexpr \textwidth-2cm}%
{\centering\small\textit{\BODY}}%
\hfill\llap{(\thequote)}\vspace{1ex}\par}

\newcommand{\integral}{\int}
\newcommand{\isometries}{\mathrm{Isom}}
\newcommand{\volume}{\mathrm{vol}}
\newcommand{\converges}{\to}
\newcommand{\CF}{\mathcal F}
\newcommand{\BR}{\mathbb R}
\newcommand{\BN}{\mathbb N}\newcommand{\CP}{\mathcal P}
\newcommand{\BH}{\mathbb H}

\newcommand{\BC}{\mathbb C}

\newcommand{\PSL}{\mathrm {PSL}}

\newcommand{\inj}{\mathrm{inj}}

\newcommand{\actson}{\circlearrowleft}
\newcommand{\coset}{{\mathrm{Cos}_G}}

\newcommand{\subgroup}{{\mathrm{Sub}_G}}

\title{Unimodular measures on the space of all Riemannian manifolds}
\author{Mikl\'os Ab\'ert and Ian Biringer}

\address {Mikl\'os Ab\'ert \\ Renyi Institute of Mathematics \\
13-15 Realtanoda utca, 
1053 Budapest, Hungary\\}
\email{abert.miklos@renyi.mta.hu}

\address {Ian Biringer \\ Boston College \\
Mathematics Department \\
 140 Commonwealth Ave.
Chestnut Hill, MA  02467-3806\\}
\email{ianbiringer@gmail.com}

\begin{document}

\maketitle

\begin{abstract}
 We study unimodular measures on the space $\mathcal M^d$ of all pointed Riemannian $d$-manifolds. Examples can be constructed from finite volume manifolds, from  measured foliations with Riemannian leaves, and from invariant random subgroups of Lie groups.   Unimodularity  is preserved under weak*  limits, and under certain geometric constraints (e.g.\ bounded geometry) unimodular measures can be used to compactify sets of finite volume manifolds.   One can then understand  the geometry of manifolds $M$ with large, finite volume by  passing to unimodular limits.

We develop a structure theory for  unimodular measures on $\mathcal M^d$, characterizing them via invariance under a certain geodesic flow, and showing that they correspond to transverse measures on a foliated `desingularization' of $\mathcal M^d$. We also give a geometric proof of a compactness theorem for unimodular measures on the space of pointed manifolds  with pinched negative curvature, and characterize unimodular measures supported on  hyperbolic $3$-manifolds with finitely generated fundamental group.

\end{abstract}

\section{Introduction}
\label{intro}
The focus  of this paper is on the class of `unimodular' measures on the space
\[
\mathcal M^d = \big\{\text {pointed Riemannian } d\text {-manifolds } (M,p)\big\} / \text{pointed isometry},
\]  Throughout the paper, \emph {all Riemannian manifolds we  consider are connected and complete.}  We consider $\mathcal M^d$  with the \emph{smooth topology}, where two pointed manifolds $(M,p)$ and $(N,q)$ are  smoothly close if there are  compact subsets of $M$ and $N$  containing large radius neighborhoods of the base points that are  diffeomorphic via a map that is $C^\infty$-close to an isometry, see \S \ref{smoothsec}.

Let $\mathcal M_2 ^ d$ be the space of isometry classes of doubly pointed Riemannian $d $-manifolds $(M, p, q) $, considered in the appropriate smooth topology, see \S \ref{urms}.
\begin {definition}\label {unimodular}
A $\sigma$-finite Borel measure $\mu$ on $\mathcal M ^ d  $ is \emph {unimodular} if and only if for every nonnegative Borel function $f : \mathcal M_2^ d \longrightarrow \BR$ we have:
\begin{equation}
\int_{(M,p) \in \mathcal M^ d} \int_{q \in M} f(M,p,q) \, dq \, d\mu = \int_{(M,p) \in \mathcal M^ d} \int_{q \in M} f(M,q,p) \, dq \, d\mu. 
\label{MTP}
\end{equation}
\end {definition}
Here, \eqref{MTP} is usually called the \emph{mass transport principle} or MTP.  One sometimes considers $f $ to be a `paying function', where $f(M,p,q)$ is the amount that the point $(M,p)$ pays to $(M,q)$, and the equation says that the expected income of a $\mu$-random element $(M,p) \in \mathcal M ^ d $  is the same as the expected amount paid.   Note that two sides of the mass transport principle  can be considered as  integrals $\int f \, d\mu_l$ and $\int f \, d\mu_r$ for two appropriate `left' and `right' Borel measures $ \mu_l,\mu_r$ on $\mathcal M^d_2$, so $\mu$  is unimodular if and only if $ \mu_l=\mu_r$. See the beginning of \S \ref{urms}.

\begin {example}[Finite volume manifolds] Suppose $M$ is a finite volume\footnote{If $M$  has infinite volume, the push forward measure $\mu_M$ still satisfies the mass transport principle, but may not be $\sigma$-finite.  For instance, if $X$ has transitive isometry group then the map  $X \longrightarrow\mathcal M^d$  is a constant map, and $\mu_X$ is only $\sigma$-finite  if $X$ has finite volume.  On the other hand, if the isometry group of $X $ is trivial,  $\mu_X$  will always be $\sigma$-finite.}  Riemannian $d$-manifold, and let $\mu_M$ be the measure on $\mathcal M^d$ obtained by pushing forward the  Riemannian measure $ \volume_M$ under  the map \begin {equation} M \longrightarrow \mathcal M^d, \ \ \ p \longmapsto (M,p).\label {map}
\end {equation} Then both sides of the mass transport principle are equal to the integral of $f(M,p,q)$ over $(p,q)\in M \times M$,  equipped with the product Riemannian measure, so the measure $\mu_M $ is unimodular.  

More generally,  one can construct a  finite unimodular measure on $\mathcal M^d$  from any Riemannian $M$ that regularly covers a finite volume manifold.  The point is that because of the symmetry  given by the action of the deck group, the image of $M $ in $\mathcal M^d$ actually looks like the finite volume quotient.  See  Example \ref{regcover}.\label {finvolex}
\end {example}

\begin {example}[Measured foliations] \label {foliationex} Let $X $ be a \emph{foliated space}, a  separable metrizable space $X$ that is a union of `leaves' that fit together locally as the horizontal factors in a product $\BR^d \times Z$ for some transversal space $Z$.   Suppose $X$ is \emph{Riemannian}, i.e.\ the leaves all have Riemannian metrics, and these metrics vary smoothly in the transverse direction. (See \S \ref{foliatedsec} for details.)   There is then a Borel\footnote{This is Proposition \ref{leafBorel}, where in fact we show that the leaf map is upper semi-continuous in a certain sense, extending a result of Lessa \cite{Lessabrownian}.}  \emph{leaf map}  $X \longrightarrow\mathcal M^d, \ x \longmapsto (L_x,x),$ where $L_x$  is the leaf through $x$.  

 Suppose that $\mu$ is a finite \emph{completely invariant}  measure on $X $, that is, a measure obtained by integrating the Riemannian measures on the leaves of $X $ against some invariant transverse measure, see \cite{Candelfoliations2}.   The push forward of $\mu$ under the leaf map  is a unimodular measure on $\mathcal M ^ d $, see Theorem \ref {desingularizing}.
\end {example}

\begin {example}[Many transitive manifolds] Let $X $ be a Riemannian manifold with transitive isometry group, and  note that any base point for $X$ gives the same element $X \in M ^ d $.  In Proposition \ref{terminologyprop}, we show that \emph{an atomic measure on $X \in \mathcal M ^ d $ is unimodular if and only if $\mathrm{Isom}(X)$ is a unimodular Lie group. }
Examples where $\mathrm{Isom}(X)$  is unimodular include nonpositively curved symmetric spaces, e.g.\ $\BR^d, \BH^d, SL_n\BR / SO(n)$, and  compact transitive manifolds like $\mathbb S^d$.
An example where $\mathrm{Isom}(X)$  is non-unimodular is the $3$-dimensional Lie group $\mathrm{Sol}(p,q)$, where $p\neq q$, equipped with any left invariant metric\footnote{In \cite[Lemma 2.6]{Brofferiobrownian}, it is shown that the isometry group is a finite extension of $\mathrm{Sol}(p,q)$, which is not unimodular when $p\neq q$.}.  

Proposition \ref{terminologyprop} is  one reason why these measures are called unimodular,  although the mass transport principle also has a formal similarity to  unimodularity  of topological groups, being an equality of two `left' and `right'  measures.\label {transitiveex}
\end {example}

%We will define unimodularity below, but since the definition can be unintuitive at first, we first  indicate where unimodular measures  come from and why they are useful. The impatient  reader can look ahead to \S \ref{uniintro} if necessary.

\subsection{Motivation} \emph{Though this paper first appeared online in 2016, we have rewritten the section in 2020 to indicate how this paper fits into the field currently. As such, many of the papers referenced below appeared after this one.}

\smallskip

 There are two main reasons to study unimodular measures.  First,  the space $\mathcal M^d$  provides a convenient universal setting in which to view finite volume manifolds, measured foliations, and infinite volume manifolds that have a sufficient amount of symmetry (e.g.\  the transitive examples above).  More importantly, though, interpreting all these examples as measures on a single universal space allows one to define a notion of \emph{convergence} from one to another through weak* convergence of the associated measures on $\mathcal M^d$. Here, recall that $\mu_i\to\mu$ in the \emph{weak* topology} (or for some authors, the \emph{weak topology}) if $\int f \, d\mu_i \to \int f \, d\mu$ for every bounded, continuous function $f:\mathcal M^d \longrightarrow\BR$. 

 An important special  case of weak*  convergence to keep in mind is when  the measures $\mu_i=\mu_{M_i}/\volume (M_i)$,  for some sequence of finite volume manifolds $(M_i)$,  as in Example \ref {finvolex}. If $\mu_i\to\mu$ in the weak* topology, we say that the sequence $(M_i)$ \emph {Benjamini--Schramm (BS) converges} to $\mu$, see e.g.\ \cite{abert2012growth}. This is in honor of an analogous notion introduced by those two authors in graph theory \cite{Benjaminirecurrence}. See also \S \ref{history} below for more of this history. In some sense, the limit measure $\mu$ encodes,  for large $i$,  what the geometry of $M_i$  looks like  near randomly chosen base points,  up to small  metric distortion. As an example,  consider Figure~\ref{rattlefig}.  The transition between the spheres and the neck is lost  in the weak* limit, since  the probability that a randomly chosen base point will  lie near  there is negligible,  and the  small metric distortion allows $\BR^2$  to approximate the large radius spheres.

\begin {figure}[t]
\centering
\includegraphics {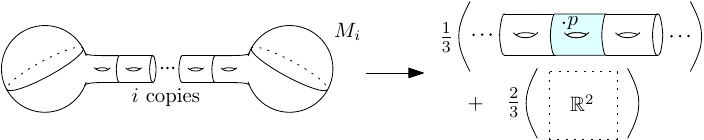} 
\caption {Create surfaces $M_i$  by gluing $i$  copies of some unit volume surface $T$ end-to-end  via a fixed gluing map, capping off with two  round spheres, each with volume $i$, and smoothing the result.  Here, $\mu_{M_i}/\volume (M_i)$ weak*  converges to  a convex combination of an atomic measure on  the single point  $\BR^2$ in $\mathcal M^d$,  and a probability measure constructed by gluing bi-infinitely many copies of $T$  together  and choosing a random base point from, say, the center copy.}
\label{rattlefig}
 \end {figure}

As mentioned in \S \ref{urms}, unimodularity is preserved under weak* limits, so in particular, all BS-limits of finite volume manifolds are unimodular\footnote{The converse is open, and is essentially equivalent to the analogous question for \emph{unimodular random graphs}, which generalizes the open question of whether \emph{all groups are sofic}, see \cite{Aldousprocesses}.}. 
So, if one can develop a robust theory of unimodular measures, one can then try to use this theory to analyze  sequences of finite volume manifolds via their weak* limits, as above. 

As an example, let $X$ be a irreducible symmetric space of noncompact type. An \emph{$X$-manifold} is a quotient $M=\Gamma\backslash X$, where $\Gamma$ acts freely, properly discontinuously and isometrically. In recent joint work with Bergeron and Gelander \cite{abert2018convergence}, using the framework developed in this paper we showed:

\begin{theorem}[Abert, Bergeron, Biringer, Gelander \cite{abert2018convergence}]\label{abbg}
	Suppose that $X$ is not a  metric scale of $\BH^3$. If $(M_i)$ is any BS-convergent sequence of finite volume $X$-manifolds, then for all $k$ the sequence $b_k(M_i)/vol(M_i)$ converges.
\end{theorem}

Essentially, this means that  in the context above, a given limit measure $\mu$ has some sort of `Betti number' that is the limit of the volume normalized Betti numbers of any approximating sequence of $M_i$'s. It would be interesting to develop an intrinsic definition of such a Betti number for an arbitrary unimodular measure $\mu$ on $\mathcal M^d$. Relatedly, an analogous definition has been recently given by Michael Schr{\"o}dl for \emph{unimodular random rooted simplicial complexes}, see \cite{schrodl2018ell}. However, the manifold case presents additional difficulties.

One situation in which there is an existing definition, though, is when the unimodular measure $\mu$ is just an atomic measure $\mu_X$ on the single point $X \in \mathcal M^d$. In this case, the appropriate invariants are the \emph{$L^2$-Betti numbers} $\beta^{(2)}_k(X)$, see e.g.\ \cite{abert2012growth} for definitions. In our 2012 paper with Abert et al \cite{abert2012growth}, we all had previously shown that if $rank_\BR(X)\geq 2$, then \emph{any} sequence of distinct finite volume $X$-manifolds BS-converges to $\mu_X$. Combining this with Theorem \ref{abbg} above gives:

\begin{theorem}[Corollary 1.4 of \cite{abert2018convergence}]
	 Suppose that $rank_\BR(X)\geq 2$ and $(M_i)$  is any sequence of distinct finite volume $X$-manifolds. Then for all $k\in \BN$, we have
$$b_k(M_n) / vol(M_n) \to \beta_k^{(2)}(X).$$
\end{theorem}

This extends earlier work with Nikolov-Raimbault-Samet in \cite{abert2012growth}, and is a uniform version of L\"uck's approximation theorem \cite{Luckapproximating} that applies to all quotients of a fixed symmetric space, not just covers of a single quotient. We stress that it is necessary in \cite{abert2018convergence} to work geometrically, and so most of that paper is written using the framework of convergence of measures on $\mathcal M^d$, as developed in this paper.

\smallskip

\smallskip

Subsequent work of Ab\'ert--Bergeron--Masson \cite{abert2018eigenfunctions} exploits the language of Benjamini-Schramm convergence of manifolds introduced in this paper to analyze eigenfunctions of the Laplacian for compact Riemannian manifolds. The asymptotic behavior of eigenfunctions has been studied extensively in the literature. There are two major directions of interest: one can study the eigenvalue aspect, where one has a fixed manifold and the energy of the eigenfunction tends to infinity, and the level aspect, where one looks at a covering tower of manifolds (usually coming from a subgroup chain for an arithmetic lattice) and the energy converges to a fixed value. It was understood in the community that these aspects are related and most theorems on one side tend to find their counterparts on the other side. Our language of Benjamini-Schramm convergence now unifies the level and eigenvalue aspects. Indeed, for a covering tower, the limit of eigenfunctions will be an invariant random eigenwave on the limiting space (usually the symmetric space of the corresponding Lie group). For a fixed manifold, rescaling the Riemannian metric with the energy will produce a sequence of manifolds and a fixed eigenvalue, hence the limiting eigenwave will live on the standard Euclidean space. The language of Benjamini-Schramm convergence, in particular, allows one to give the first mathematically precise formulation for the famous Berry conjecture in physics, and connects the conjecture to Quantum Unique Ergodicity. See \cite{abert2018eigenfunctions} for details.

\subsection{History and related papers}
\label{history}
Much of  our work is inspired by a recent program in graph theory, in particular work of Aldous-Lyons \cite{Aldousprocesses} and Benjamini-Schramm \cite{Benjaminirecurrence}.   For instance, the term `unimodularity' was  previously used in \cite{Aldousprocesses}, for measures $\mu$ on the space
\[
\mathcal G = \big\{\text {rooted, connected,  locally finite graphs } (G,p)\big\} / \text{automorphism}
\] 
 such that  for any Borel  function $f$ on the space of doubly rooted graphs,
\begin {equation}\label {graphMTP}
\int_{(G,p) \in \mathcal G} \sum_{q \in V(G)} f(G,p,q) \, dq \, d\mu = \int_{(G,p) \in \mathcal G} \sum_{q \in V(G)} f(G,q,p) \, dq \, d\mu. 
\end {equation}
 In fact, this  version of the mass transport principle appeared  even earlier in \cite{Benjaminirecurrence},  generalizing a concept important in percolation theory \cite{Benjaminigroup,Haggstrominfinite}.

 As in  Example \ref {finvolex}, every finite graph $G$ gives a unimodular measure  $\mu_G$ on $\mathcal G$, by  pushing forward the  counting measure on its vertices under the map
$$G \longrightarrow \mathcal G ,\ \ p \longmapsto (G,p).$$
Similarly, a transitive graph gives a  unimodular measure on $\mathcal G$ if and only if its automorphism group is unimodular, see \cite {Benjaminigroup} and \cite[Section 8.2]{Lyonsprobability}. One  can study unimodular measures on  $\mathcal G $ that are  weak* limits of the $\mu_G$, c.f.\cite{Abertbenjamini,Benjaminirecurrence,Namazidistributional}, and extending results  known for finite graphs to  arbitrary unimodular measures on $\mathcal G$  has recently become a small industry, see e.g.\ \cite{Aldousprocesses,Benjaminigroup,Benjaminiunimodular,Haggstrominfinite}.

\vspace{2mm}

 Ideas similar to ours have  also  appeared previously in the continuous setting,  even apart from ABBGNRS \cite{abert2012growth}.  Most directly, Bowen \cite{Bowencheeger}  used unimodular measures on the space of pointed metric measure spaces  to bound the Cheeger constants of hyperbolic $4$-manifolds with free fundamental group.  In his thesis, Lessa \cite{Lessabrownian}, see also \cite{Lessareeb,Lessabrownianpaper},  studied measures on $\mathcal M^d$  that are {stationary} under Brownian motion, of which unimodular measures are examples\footnote {On foliated spaces, Lessa's `stationary measures' correspond to harmonic measures,  while our unimodular measures  correspond to  completely invariant measures. See \S \ref{foliatedsec}, \cite{Candelfoliations2} and \cite{Lessabrownian}. Also, see Benjamini--Curien \cite{Benjaminiergodic}  for  a corresponding theory of stationary random graphs.},  and  a few of the technical  parts of this paper are  similar to  parts of his. Namazi-Panka-Souto~\cite{Namazidistributional},  analyzed weak*-limits of the measures $\mu_{M_i}/\volume(M_i)$ for  sequences $(M_i)$ of manifolds   that are all quasi-conformal deformations of a fixed closed manifold and that all have bounded geometry.  Also, Vadim Kaimanovich  has for some time  promoted measured foliations  from a viewpoint similar to ours, and we refer the reader to his papers \cite{Kaimanovichinvariance,Kaimanovichbrownian,Kaimanovichconformal}  for culture.

\label {motivation}

\subsection{Statements of results}
\label{uniintro}
\vspace{2mm}

Most of the paper concerns the structure theory of unimodular measures.   The case where $\mu$  is a unimodular  probability measure is of particular interest: a $\mu$-random  element of $\mathcal M^d$  is  then called a \emph{unimodular random manifold} (URM). In this section, we will start by  explaining  the close relationship  between unimodular  measures and completely invariant measures on foliated spaces, as mentioned in Example \ref {foliationex}. Then, we will outline the dictionary between \emph {invariant random subgroups} and unimodular random locally symmetric spaces.  As an interesting  trip to the zoo, a characterization of unimodular random hyperbolic $2$ and $3$-manifolds with finitely generated fundamental group is given.  We then  discuss  conditions under which  sets of  unimodular  measures on $\mathcal M^d$ are weak* compact, and finish with a discussion of the  rather long appendix, where it is shown that $\mathcal M^d$  and various related spaces have reasonable topology. 

\subsubsection{Unimodularity and foliated spaces}

 As mentioned above, a separable, metrizable space $X$ is a \emph{foliated space} if it is a union of leaves that fit together locally as the horizontal factors in a product $\BR^d \times Z$ for some transversal space $Z$. We say $X$ is \emph{Riemannian} if the leaves all have Riemannian metrics, and if these metrics vary smoothly in the transverse direction. See \S \ref{foliatedsec} for details.

On such an $X$, let $L_p$   be the leaf through $p$. A $\sigma $-finite Borel measure $\mu$ on $X$ is \emph{unimodular} if for every nonnegative Borel $f : X \times X \longrightarrow \BR$ we have:
\begin{equation}
\int_{p\in X} \int_{q \in L_p} f(p,q) \, d\volume_{L_p} \, d\mu = \int_{p\in X} \int_{q \in L_p} f(q,p) \, d\volume_{L_p} \, d\mu. \label{folmtp}
\end{equation}
 Also, a measure $\mu$ on $X$ is called \emph {completely invariant} if it is obtained by integrating the Riemannian measures on the leaves of $X$  against some invariant transverse measure,  see \S \ref{foliatedsec} and \cite{Candelfoliations2}.  We then have:

\begin{theorem}\label{foliationsintro}
Suppose that $X $ is a Riemannian foliated space and $\mu $ is a $\sigma$-finite Borel measure on $X$.  Then the following are equivalent:
\begin {enumerate}
\item $\mu$ is completely invariant,
\item $\mu $ is unimodular,
\item $\mu $ lifts uniformly to a measure $\tilde \mu$ on the leaf-wise unit tangent bundle $T^1X$ that is invariant under leaf-wise geodesic flow.
\end {enumerate}
\end{theorem}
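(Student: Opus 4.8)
The plan is to reduce all three conditions to complete invariance, by proving $(1)\Rightarrow(2)\Rightarrow(1)$ together with $(1)\Leftrightarrow(3)$. Complete invariance is the most usable of the three locally: over a flow box $U\cong\BR^d\times Z$ one has $\mu|_U=\volume_{\mathrm{leaf}}\otimes\bar\mu$ for a Borel measure $\bar\mu$ on $Z$, and the locally defined measures $\bar\mu$ are compatible under the holonomy pseudogroup. Throughout I fix a countable cover of $X$ by flow boxes with a subordinate partition of unity $\{\phi_i\}$, and I freely disintegrate $\sigma$-finite Borel measures over the local transversals.

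For $(1)\Rightarrow(2)$: writing $f=\sum_{i,j}\phi_i(p)\phi_j(q)f(p,q)$, it suffices to verify \eqref{folmtp} for an $f$ whose two arguments are supported in flow boxes $U_i$ and $U_j$. Disintegrating $\mu$ over $Z_i$, the relevant piece of the left side of \eqref{folmtp} becomes an integral over $Z_i$ of an ordinary double integral over a plaque $P\subset U_i$ and the (at most countably many) plaques of $U_j$ lying on the same leaf as $P$; holonomy-invariance of the transverse measure is exactly what makes the weight attached to $P$ agree with the weight attached to each partner plaque in $U_j$, after which $\int\!\int f(p,q)=\int\!\int f(q,p)$ over $\BR^d\times\BR^d$ is Tonelli. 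For $(2)\Rightarrow(1)$: first show $\mu$ is absolutely continuous along leaves, i.e.\ $\mu(A)=0$ whenever $\volume_{L_p}(A\cap L_p)=0$ for $\mu$-a.e.\ $p$, by applying the MTP to $f(p,q)=\mathbf 1_A(p)\,\phi_p(q)$ for a Borel family of probability densities $\phi_p$ on $L_p$, so that the left side is $\mu(A)$ and the right side vanishes. Consequently, in a flow box $U\cong\BR^d\times Z$ we may write $\mu|_U=\int_Z(\rho_z\volume_{\mathrm{plaque}})\,d\bar\mu(z)$, and feeding $f(p,q)=\psi(p)\chi(q)$ with $\psi,\chi$ supported in $U$ into the MTP and comparing forces $\int\psi\,\rho_z=c(z)\int\psi$ for all $\psi$, hence $\rho_z$ is a.e.\ constant on each plaque; absorbing $c(z)$ into $\bar\mu$ gives $\mu|_U=\volume_{\mathrm{leaf}}\otimes\bar\mu$, and essential uniqueness of the disintegration forces the transverse measures of overlapping flow boxes to be related by holonomy.

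For $(1)\Leftrightarrow(3)$: if $\mu=\volume_L\otimes\nu$ locally, its uniform lift $\tilde\mu$ (the measure restricting to the uniform probability on each fiber and projecting to $\mu$) is locally $(\volume_L\otimes\omega_{S^{d-1}})\otimes\nu$, i.e.\ the leafwise Liouville measure integrated against $\nu$; since leafwise geodesic flow preserves each leaf, preserves Liouville measure on it, and acts trivially on the transversal, it preserves $\tilde\mu$, giving $(3)$. Conversely, given a uniform $g_t$-invariant lift $\tilde\mu$, disintegrate it over the transversal of a small flow box; for sufficiently short times leafwise geodesics stay inside a single plaque, so the pieces $\tilde\mu_z$ are invariant under short-time geodesic flow. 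By the uniform-fibers hypothesis each $\tilde\mu_z$ has a density depending only on the base point, and $g_t$-invariance together with invariance of Liouville measure forces that density to be geodesic-flow invariant; a function on a unit tangent bundle that is both spherically symmetric and geodesic-flow invariant is constant on the (connected) plaque. Projecting back to $X$ gives $\mu|_U=\volume_{\mathrm{leaf}}\otimes\bar\mu$, and holonomy-invariance again follows from uniqueness of the disintegration, so $\mu$ is completely invariant.

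The main obstacle in every implication is the interplay with holonomy: in $(1)\Rightarrow(2)$ one must assemble the global Fubini identity from the local pieces using invariance of $\nu$, and in the converse directions one must check that the transverse measures extracted flow-box by flow-box are genuinely holonomy-invariant. Both rest on the essential uniqueness of disintegrations over the local transversals, made delicate by the fact that a single leaf can meet a flow box in countably many plaques — and, in $(3)\Rightarrow(1)$, by the need to restrict to a small flow box and short times so that leafwise geodesics cannot exit and re-enter a plaque.
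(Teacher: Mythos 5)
Your strategy differs from the paper's in a structural way that is worth noting.  The paper closes a single cycle $1)\Rightarrow 2)\Rightarrow 3)\Rightarrow 1)$; in particular it never proves $2)\Rightarrow 1)$ directly, instead lifting unimodularity to $T^1X$ and then invoking a rigidity lemma (Lemma \ref{flowinvariant}) for geodesic-flow-invariant measures.  You instead prove $1)\Leftrightarrow 2)$ without ever mentioning the geodesic flow, and then establish $1)\Leftrightarrow 3)$ separately.  Your $1)\Rightarrow 3)$ is also more direct than the paper's $2)\Rightarrow 3)$, which proceeds by showing that the lift $\tilde\mu$ is itself unimodular and then playing the measures $\tilde\mu_l,\tilde\mu_r$ against the flow-invariance of Liouville measure.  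Both routes are legitimate; the paper's $2)\Rightarrow 3)$ is the more striking step (unimodularity is shown to be stable under the lift to the unit tangent bundle), while your direct $2)\Rightarrow 1)$ is cleaner conceptually and would actually be a nice addition, since it keeps the equivalence of unimodularity and complete invariance entirely within the base.

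Two points where the sketch is too quick and needs repair before it can be called a proof.

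First, in $2)\Rightarrow 1)$ you feed $f(p,q)=\psi(p)\chi(q)$ with $\psi,\chi$ supported in a single flow box $U$ into the MTP and claim the output is $\int\psi\,\rho_z = c(z)\int\psi$.  But the inner integral $\int_{q\in L_p}\chi(q)\,d\volume_{L_p}$ runs over the \emph{whole leaf}, not just the plaque through $p$; a single leaf can meet $U$ in countably many plaques, and each contributes.  So what the MTP gives is a relation involving the sum of $\int_{\text{plaque}}\chi$ over all plaques of the leaf, not a plaque-by-plaque identity, and the stated conclusion does not follow as written.  You acknowledge the multi-plaque difficulty at the end, but you do not resolve it; the standard fix is to insert a localizing factor, say $f(p,q)=\psi(p)\chi(q)\,h\!\left(d_{L_p}(p,q)\right)$ with $h$ a bump function supported near $0$ and $U$ of small leafwise diameter, so that only the plaque containing $p$ contributes.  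A similar remark applies to your absolute-continuity step: the existence of a jointly Borel family of leafwise probability densities $\phi_p$ is true but needs a sentence of construction, e.g.\ normalized bump functions built from the flow-box charts and a partition of unity.

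Second, in $3)\Rightarrow 1)$ the key assertion is that a spherically symmetric, geodesic-flow-invariant density on $T^1$ of a plaque is constant.  As stated this needs the plaque to be geodesically convex inside its leaf: without that, two points of the plaque need not be joined by a geodesic staying in the plaque, and the short-time flow-invariance that survives disintegration does not directly propagate.  Shrinking plaques (Whitehead: small balls are geodesically convex) handles this, but you should say so.  This is essentially the content of the paper's Lemma \ref{flowinvariant}, which first establishes absolute continuity of the disintegrated measure via short-time flows and a change of variables, and only then compares Radon--Nikodym derivatives along geodesics; your sketch compresses both steps into a single sentence and thereby hides where the convexity is used.

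With the localization device in $2)\Rightarrow 1)$ and an explicit convexity hypothesis on the plaques in $3)\Rightarrow 1)$, the proposal becomes a correct and in places more economical proof than the one in the paper.
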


To understand the `uniform lift' in 3), take  a measure $\mu$ on $X$, and integrate $\mu$ against the (round) Riemannian measures on all leaf-wise tangent spheres $T^1_p L$  to get a measure $\tilde \mu$ on  $T^1X$.  A version of this condition will reappear below as an alternative characterization  of unimodularity  for measures on $\mathcal M ^d.$ Theorem \ref {foliationsintro} is proved in \S \ref{foliatedsec},  where it is restated as Theorem~\ref {foliations}. Two additional characterizations of unimodularity  are included in the new statement,  one of which parallels a well-known result in graph theory.

\vspace{2mm}

 In some sense, the space $\mathcal M^d$ is  itself almost foliated, where the `leaves' are the subsets obtained by fixing a manifold $M $ and varying the basepoint $p\in M $.   One would like to  say that unimodular measures are  just completely invariant measures, with respect to this foliation. However, due to the  equivalence relation defining $\mathcal M^d$, these `leaves' are actually of the form $M / \mathrm{Isom}(M)$,  so the foliation is highly singular,  and complete invariance does not make sense.

 However, there is a way to make this point of view precise.  Recall that if $X$  is any Riemannian foliated space, its \emph{leaf map}  takes $x\in X$ to  the pointed Riemannian manifold $(L_x,x)$, where $L_x$  is the leaf through $x$.  We then have:

\begin {theorem}[Desingularizing $\mathcal M^d$]\label {desingularizing}
If $\mu$ is a completely invariant probability measure on a Riemannian foliated space $X$, then $\mu$ pushes forward  under the leaf map to a unimodular probability measure $\bar \mu$ on $\mathcal M ^ d $. 

Conversely, there is a Polish Riemannian foliated space $\mathcal P ^ d$  such that any $\sigma$-finite unimodular measure on $\mathcal M ^ d $ is the push forward  under the leaf map of some completely invariant measure on $\mathcal P ^ d $.    Moreover, for any fixed  manifold $M $, the preimage of $\{(M,p) \ | \ p \in M\} \subset \mathcal M^d$  under the leaf map is a union of leaves of $\mathcal P^d$, each of which is isometric to $M $.
\end {theorem}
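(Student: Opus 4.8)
The plan is to prove the two halves separately. For the forward direction, let $\mu$ be a completely invariant probability measure on a Riemannian foliated space $X$, write $\pi\colon X\to\mathcal M^d$, $x\mapsto(L_x,x)$ for the leaf map, and set $\bar\mu=\pi_*\mu$. By Theorem~\ref{foliationsintro}, $\mu$ satisfies the foliated mass transport principle~\eqref{folmtp}. Given a nonnegative Borel $f$ on $\mathcal M^d_2$, put $\hat f(x,q)=f(L_x,x,q)$ for $q\in L_x$ and $\hat f(x,q)=0$ otherwise; this is Borel on $X\times X$ once one knows the ``doubly pointed leaf map'' $(x,q)\mapsto(L_x,x,q)$ is Borel on the Borel set $\{(x,q):q\in L_x\}$, a routine extension of Proposition~\ref{leafBorel}. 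Applying the change of variables formula for $\pi_*$ to the Borel function $(M,p)\mapsto\int_{q\in M}f(M,p,q)\,dq$, and using that $L_x$ is the underlying manifold of $\pi(x)$ with basepoint $x$, gives
\[
\int_{\mathcal M^d}\int_{q\in M}f(M,p,q)\,dq\,d\bar\mu=\int_X\int_{q\in L_x}\hat f(x,q)\,d\volume_{L_x}\,d\mu,
\]
and the analogous identity with the arguments of $f$ swapped has right-hand side $\int_X\int_{q\in L_x}\hat f(q,x)\,d\volume_{L_x}\,d\mu$. These coincide by~\eqref{folmtp}, so $\bar\mu$ is unimodular, and it is a probability measure since $\pi_*$ preserves total mass.

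For the converse I would build $\mathcal P^d$ out of symmetry‑broken pointed manifolds. Let $\mathcal P^d$ be the space of isometry classes of triples $(M,p,\omega)$, where $(M,p)\in\mathcal M^d$ and $\omega$ is a \emph{rigid} marked point configuration on $M$ --- concretely, a locally finite subset of $M\times[0,1]$ whose projection to $M$ is injective and which is preserved by no nontrivial isometry of $M$. Rigidity makes $q\mapsto(M,q,\omega)$ injective for fixed $(M,\omega)$, so the subsets obtained by fixing $(M,\omega)$ and varying the basepoint are copies of $M$; these are the leaves of $\mathcal P^d$, with $\omega$ serving as a transverse coordinate. That $\mathcal P^d$, suitably topologized, is a Polish Riemannian foliated space, that the forgetful map $(M,p,\omega)\mapsto(M,p)$ is --- under the identification of leaves with manifolds --- its leaf map, and that the preimage of $\{(M,p):p\in M\}$ is the union of the (mutually $\cong M$) leaves $\{(M,q,\omega):q\in M\}$ over rigid $\omega$, are all established by the same techniques the appendix uses for $\mathcal M^d$ itself, and I would cite those results rather than redo them.

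It remains to lift measures. Given a $\sigma$-finite unimodular $\mu$ on $\mathcal M^d$, let $\nu$ be the law of $(M,p,\omega)$ where $(M,p)$ is $\mu$-distributed and $\omega$ is, conditionally on $(M,p)$, a marked Poisson point process of unit intensity on $M\times[0,1]$. A Poisson process is not canonical on $M$, but its law is isometry invariant and (by standard measurable selection in the Polish framework of the appendix) depends measurably on $(M,p)$; hence $\nu$ is a well-defined $\sigma$-finite Borel measure whose forgetful pushforward is $\mu$, and for any nonnegative Borel $g$ on $\mathcal P^d\times\mathcal P^d$ the function $h(M,p,q):=\mathbb E_\omega[\,g((M,p,\omega),(M,q,\omega))\,]$ is a well-defined nonnegative Borel function on $\mathcal M^d_2$. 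Since a marked Poisson configuration is almost surely rigid, $\nu$ is concentrated on the locus where leaves are genuine copies of $M$, so by Theorem~\ref{foliationsintro} it suffices to check the foliated MTP~\eqref{folmtp} for $\nu$. Unwinding definitions (and Tonelli, as $g\ge0$), the left side of~\eqref{folmtp} for $\nu$ equals $\int_{\mathcal M^d}\int_{q\in M}h(M,p,q)\,dq\,d\mu$ and the right side equals $\int_{\mathcal M^d}\int_{q\in M}h(M,q,p)\,dq\,d\mu$, and these agree exactly because $\mu$ is unimodular, applied to the paying function $h$. I expect the main obstacle to be the bookkeeping for this Poisson lift --- making rigorous that ``sample a Poisson process on $M$'' descends to the isometry class Borel‑measurably, so that $h$ genuinely lives on $\mathcal M^d_2$ --- together with the appendix‑level verification that $\mathcal P^d$ is a Polish Riemannian foliated space with the stated leaf structure; once those are in place, unimodularity of $\mu$ enters only through the final one‑line application of~\eqref{MTP}.
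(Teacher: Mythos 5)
Your forward direction matches the paper's proof: push the foliated MTP through the doubly-pointed leaf map and appeal to Borel regularity of that map. No issues there.

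Your converse direction shares the paper's architecture --- build a foliated space $\mathcal P^d$ of symmetry-broken triples, lift $\mu$ via Poisson processes, verify complete invariance of the lift by reducing the foliated MTP back to the MTP for $\mu$ --- but your symmetry-breaking device fails at exactly the point where the paper's is subtle. You place a marked Poisson process on $M\times[0,1]$ and assert it is ``almost surely rigid.'' The paper instead places a Poisson process on the orthonormal frame bundle $\CF M$ and proves (Lemma~\ref{free}) that for $d\geq 2$, almost every \emph{nonempty} configuration $D\subset\CF M$ has trivial $\isometries(M)$-stabilizer. The crucial difference is that $\isometries(M)$ acts \emph{freely} on $\CF M$, so even a singleton $\{e\}$ is rigid; whereas the action of $\isometries(M)$ on $M\times[0,1]$ by $\psi\times\mathrm{id}$ is not free --- the stabilizer of $(x,t)$ is $\mathrm{Stab}_{\isometries(M)}(x)$, which is nontrivial whenever $M$ has continuous symmetry (e.g.\ $O(2)$ when $M=S^2$). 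Since the Poisson configuration on $M\times[0,1]$ is a singleton (or empty) with positive probability whenever $\volume(M)<\infty$, your rigidity claim is false, so the $\nu$ you define is not a measure on $\mathcal P^d$ and its forgetful pushforward is not $\mu$. The fix --- restrict to the rigid locus and renormalize by its (positive, manifold-dependent) $\rho$-probability, as the paper does with the clean constant $1-e^{-\volume\CF M}$ --- still requires proving that configurations in $M\times[0,1]$ are rigid with positive probability, which needs a manifold-dependent number of points in general position and is fiddlier than the frame-bundle argument. You should also state the hypothesis $d\geq 2$ (the construction genuinely fails for $M\cong S^1$), and be aware that the Polish Riemannian foliated structure on $\mathcal P^d$, which you defer, is in fact the bulk of the paper's proof (Theorem~\ref{pd}, \S\ref{proofpdsec}).
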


 This theorem  indicates an advantage our continuous framework has over graph theory: although  the mass transport principle \eqref{graphMTP}  does indicate a compatibility between unimodular measures on $\mathcal G$ and the counting measures  on the vertex sets of fixed graphs $G$,  there is no precise statement saying  that  unimodular measures are  made by  locally integrating up these counting measures  in analogy with  Theorem \ref{desingularizing}.  In some sense, the problem is that graphs do not have enough local structure for this perspective to translate.

 \'Alvarez L\'opez and Barral Lij\'o~\cite{Alvarezbounded} independently prove a desingularization theorem similar to Theorem~\ref{desingularizing},  which they  use to  show that any manifold with bounded geometry can be realized   isometrically as a leaf in a compact Riemannian foliated space.  As their goals are topological, rather than measure theoretic, their foliated space is not set up so that one can lift   measures on $\mathcal M^d$ to the foliated space using Poisson processes, though, a property that is crucial in our applications.

The idea behind the construction of $\mathcal P^d$ is simple. Since the problem  is that Riemannian manifolds may have  nontrivial isometries, we set $\mathcal P^d$ to  be the set of  isometry classes of triples $(M,p,D)$ where $p\in M$  is a base point and $D\subset M$ is a closed subset {such  that there is no isometry $f : M \longrightarrow M$ with $f(D)=D$}.  The leaves are obtained by fixing $M$ and $D$ and varying $p$, and the leaf map is just  the projection $(M,p,D) \mapsto (M,p)$. However, it takes some work to see that these leaves  fit together locally into a product structure $\BR^d \times Z$: a  brief sketch of this argument is given in the beginning of \S \ref{proofpdsec}. Assuming this, though, measures  on $\mathcal M^d$  induce measures on $\mathcal P^d$ after  integrating against a Poisson  process on  each fiber  of the leaf map. See \S \ref{pdsec}  for details.

Completely invariant measures on foliated spaces have been well studied, e.g.\ \cite {Candelharmonic,Feresleafwise,Garnettstatistical,Garnettfoliations,Ghystopologie}. So for instance,  one can now take a sequence of finite volume manifolds $(M_i)$, pass to the associated unimodular probability measures $\mu_{M_i}/\volume(M_i)$, extract a weak* limit measure $\mu$, study this $\mu$ using tools from foliations,  and deduce results about the manifolds $M_i$.

 For those working in foliations, the  mass transport principle \eqref{MTP} may seem  less interesting now that we know  unimodularity can also be characterized in terms of complete invariance.   However, we would like to stress that often, the MTP  is the more convenient definition to use.  We illustrate this in Theorem~\ref{hypcompact}, where we use the MTP  to give a proof of weak*  compactness of  the set of unimodular  measures supported on manifolds with pinched negative curvature.  For another example, Biringer-Raimbault \cite {Biringertopology} have studied the space of ends of a unimodular random manifold, showing for example that it has either $0,1$ or $2$ elements, or is a Cantor set.  This parallels a result of Ghys \cite {Ghystopologie} on the topology of generic leaves of a measured foliation.  Neither of these  results quite implies the other, although Ghys's result is  really more general,  as it  applies to harmonic measures, and not just completely invariant ones.  However, the MTP  encapsulates a recurrence that makes the proof in \cite {Biringertopology} extremely short.

 One other reason  to prefer unimodularity in our setting is that to talk about complete invariance, one must leave $\mathcal M^d$, passing to an associated foliated space using the desingularization theorem.  On the other hand,  the geodesic flow invariance  of Theorem~\ref {foliationsintro} can be  phrased (more or less) directly within $\mathcal M^d$.

\begin {theorem}\label {gflow}
Suppose that $\mu $ is a Borel measure on $\mathcal M^d$.  Then $\mu $ is unimodular if and only if its uniform lift $\tilde \mu$ on $T^1 \mathcal M^d$ is  geodesic flow invariant.
\end {theorem}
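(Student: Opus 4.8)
The plan is to reduce the geodesic-flow-invariance of the uniform lift $\tilde\mu$ on $T^1\mathcal M^d$ to the mass transport principle \eqref{MTP} by choosing test functions on $\mathcal M^d_2$ adapted to geodesics. The key observation is that an element of $T^1\mathcal M^d$ can be encoded, up to a set of measure zero, by a germ of geodesic: given $(M,p)$ and a unit vector $v\in T^1_pM$, let $q=q_t=\exp_p(tv)$ be the point reached at time $t$. For small $t$ (smaller than the injectivity radius), the unit vector $v$ is recovered from the pair $(M,p,q_t)$ as $t^{-1}\exp_p^{-1}(q_t)$, together with the distance $d(p,q_t)=t$. So there is a partially-defined, measure-class-preserving correspondence between $T^1\mathcal M^d$ (with the uniform lift $\tilde\mu$) and a neighborhood of the diagonal in $\mathcal M^d_2$ (with the `left' measure $\mu_l$ from the discussion after Definition \ref{unimodular}): in polar coordinates around $p$, the Riemannian volume $dq$ on $M$ is $J(t,v)\,dt\,d(\mathrm{round})$ where $J$ is the volume density of the exponential map, and $J(t,v)\to 1$ as $t\to 0$. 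This is exactly the infinitesimal content of the phrase ``integrate $\mu$ against the round measures on the tangent spheres'' used to define the uniform lift.

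The steps I would carry out: (1) Make precise the smooth structure on $T^1\mathcal M^d$ and the definition of the uniform lift $\tilde\mu$, and verify that geodesic flow $g_s$ is well-defined and smooth on it — this is where one must be careful that basepoints move but the manifold does not, so the flow really is the ``leafwise'' geodesic flow in the sense of Theorem \ref{foliationsintro}. (2) For a test function $\phi$ on $T^1\mathcal M^d$ and a small time $s$, express $\int \phi\circ g_s\,d\tilde\mu - \int\phi\,d\tilde\mu$ as an integral over $\mathcal M^d$ of a leafwise integral over $M$, using polar coordinates centered at the basepoint; the Jacobian factors $J(t,v)$ appear, but in the limit $s\to 0$, or after integrating the flow-invariance statement in $s$, they are controlled. (3) Convert the resulting identity into the form \eqref{MTP}: the natural test function is $f(M,p,q)=\phi(M,p,\,\widehat{\exp_p^{-1}(q)})\cdot\psi(d(p,q))$ for a bump function $\psi$ supported near $0$, where $\widehat{(\cdot)}$ denotes normalization to a unit vector; the symmetrized integrand $f(M,q,p)$ then corresponds to flowing by $-d(p,q)$, and unimodularity of $\mu$ becomes precisely a time-reversal-plus-integration statement equivalent to $(g_s)_*\tilde\mu=\tilde\mu$ for all $s$. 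Conversely, starting from flow-invariance, one runs the same computation backwards: given an arbitrary nonnegative Borel $f$ on $\mathcal M^d_2$, one approximates it by functions supported near the diagonal (reducing to the near-diagonal case is legitimate because, by connectedness and completeness, the MTP for all $f$ follows from the MTP for $f$ supported on $\{d(p,q)<\epsilon\}$ — one can chain together short hops, or invoke that both $\mu_l$ and $\mu_r$ are determined by their restrictions near the diagonal together with the flow), rewrites the integral in polar coordinates, and reads off \eqref{MTP} from $(g_s)_*\tilde\mu = \tilde\mu$.

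The main obstacle I expect is the reduction ``MTP for $f$ supported near the diagonal $\Rightarrow$ MTP for all $f$,'' i.e.\ going from the infinitesimal (geodesic-flow) statement back to the global mass transport principle. The geodesic flow only sees pairs $(p,q)$ with $q$ on a geodesic from $p$, which is \emph{every} pair by completeness, but the flow at time $s$ only directly relates $(M,p,v)$ to points at distance exactly $|s|$ — so to handle a test function supported on pairs at distance $R$ one must either flow for time $R$ (fine, but then the Jacobian $J(R,v)$ is genuinely present and one needs that $\tilde\mu$ is $g_s$-invariant for \emph{all} $s$, not just infinitesimally, which is what we are assuming) and deal with the fact that $\exp_p$ is not injective past the injectivity radius, so the ``unit vector pointing from $p$ to $q$'' is not well-defined. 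The clean fix is to \emph{not} try to recover $v$ from $(p,q)$ globally: instead, work on $\mathcal M^d_2$ augmented by a direction, or equivalently note that the correct object mediating between the two sides is the space of pairs-plus-geodesic-segment, on which both the flow description and the transport description are manifestly the same measure; the injectivity-radius issue then only affects the explicit change of variables, not the statement. I would structure the proof so that this augmented space does the bookkeeping, citing the smooth-topology setup of the appendix and Theorem \ref{foliationsintro} (applied to the foliated space $\mathcal P^d$ of Theorem \ref{desingularizing}) to transfer the equivalence $(2)\Leftrightarrow(3)$ from the foliated setting, thereby avoiding re-proving the Jacobian estimates from scratch.
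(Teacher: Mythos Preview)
Your proposal eventually lands on the paper's argument, but only in its last sentence. The paper's proof is entirely the ``clean fix'' you describe at the end: lift $\mu$ to a measure $\hat\mu$ on the foliated space $\mathcal P^d$ via Theorem~\ref{desingularizing}, apply the foliated equivalence $(2)\Leftrightarrow(3)$ of Theorem~\ref{foliations} to $\hat\mu$, and push forward along the leafwise derivative $D\phi:T^1\mathcal P^d\to T^1\mathcal M^d$, which is geodesic-flow equivariant. The polar-coordinate analysis, Jacobian bookkeeping, and near-diagonal reduction that occupy most of your proposal are never invoked.

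Two remarks. First, the paper's written proof only makes one direction explicit (unimodular $\Rightarrow$ flow-invariant); the converse runs through the same machinery, using that the Poisson-process lift $\mu\mapsto\hat\mu$ of \eqref{muint} is defined for \emph{any} $\mu$, that $\hat\mu_l,\hat\mu_r$ are fiberwise Poisson integrals of $\mu_l,\mu_r$ (so $\hat\mu_l=\hat\mu_r\Leftrightarrow\mu_l=\mu_r$), and that the uniform lift commutes with the leaf map so that flow-invariance of $\tilde\mu$ transfers to flow-invariance of $\widetilde{\hat\mu}$. Second, your worry about the step ``MTP near the diagonal $\Rightarrow$ MTP globally'' is well placed: that reduction is not obvious and would itself require something like flow-invariance to propagate outward, so the direct route as sketched risks circularity. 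The desingularization sidesteps this entirely, since on a genuine foliated space the hard direction $(3)\Rightarrow(1)$ of Theorem~\ref{foliations} is proved via Lemma~\ref{flowinvariant} and a local disintegration argument, not via any chaining of short hops.
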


See \S \ref{pdsec}  for the proof. Here, $T^1 \mathcal M^d$ is the space of   isometry classes of rooted unit tangent bundles $(T^1M,p,v)$, where $v\in T_p^1 M$. Each fiber $T^ 1_p M$  of  $$T^1\mathcal M^d \longrightarrow \mathcal M^d, \ \ (M,p,v) \longmapsto (M,p)$$ comes with a  natural Riemannian metric  induced by the inner product on $T_p M$, and we write $\omega_{M,p}$ for the associated Riemannian measure on $T ^ 1_p M $.  Then $\tilde \mu$  is the measure on $T ^ 1\mathcal M^d$ defined by the equation $d\tilde\mu =\omega_{M, p} \, d\mu.$  The geodesic flows on  individual $T^1M $ combine to give a  continuous flow $$g_t : T^1\mathcal M^d \longrightarrow T^1\mathcal M^d$$  and this is the  geodesic flow referenced in the statement of the theorem.

This theorem is an analogue of a result in graph theory. Let $\mathcal G_d \subset \mathcal G$ be the space of isometry classes of pointed $d$-regular graphs. The associated space $\mathcal E_d$ of $d$-regular graphs with a distinguished oriented edge projects onto $\mathcal G_d $,  where the map replaces a distinguished edge with its original vertex.   For each $(G,v) \in \mathcal G_d$, the  uniform probability measure on the set of $d$  edges originating at $v$   quotients to a probability measure on the fiber over $(G,v)$ in $\mathcal E_d$.   Integrating these  fiber measures against $\mu$  gives  a measure $\tilde\mu $ on $\mathcal E $, and Aldous-Lyons  \cite {Aldousprocesses} proved that $\mu $ is unimodular if and only if $\tilde\mu $ is invariant under the map $\mathcal E \to \mathcal E $ that switches the orientation of the distinguished edge.

\subsubsection{Unimodular  random manifolds  and IRSs}

 We now focus on unimodular probability measures $\mu$ on  $\mathcal M^d$,  in which case a $\mu$-random  element of $\mathcal M^d$  is called a \emph{unimodular random manifold} (URM).  There is a close relationship between URMs and \emph {invariant random subgroups} (IRSs),  which  have been studied in \cite {Biringerunimodularity,bowen2012invariant,bowen2012invariantb,Eisenmanngeneric,
Gelanderinvariant,hartman2012furstenberg,hartman2013stabilizer,Thomasinvariant}. 

Let $G $ be a  locally compact, second countable group, and let $\subgroup$  be the space of closed subgroups of $G$,  endowed with its \emph {Chabauty topology}, see \ref{Chabautysection}. 

\begin {definition}  An \emph {invariant random subgroup} (IRS) of $G$ is a random element of a Borel  probability measure  $\mu$ on $\subgroup $ that is invariant under the conjugation action of $G $ on $\subgroup$.\end {definition}

 When $G $ is  finitely generated, say by  a symmetric set $S$, there is a dictionary between IRSs of $G$  and \emph {unimodular random $S$-labeled graphs,} or URSGs,  which we will briefly explain. An \emph {$S$-labeled graph} is a countable directed graph with edges labeled by elements of $S$, such that the edges coming out from any given vertex $v$ have labels in 1-1 correspondence with elements of $S$, and the same is true for the labels of edges coming into $v$. Every  subgroup $H<G$  determines an $S$-labeled \emph {Schreier graph} $\mathrm{Sch}_S(H\backslash G)$, whose vertices are right cosets $Hg$ and where each $s \in S$ contributes  a labeled edge from every coset $Hg$ to $Hgs$.  Note that $\mathrm{Sch}_S(H\backslash G)$ comes with a natural base point, the  identity coset $H $.

In a variation of the discussion in \S \ref {motivation}, let $\mathcal  G_S $ be the space of isomorphism classes of rooted  $S $-labeled graphs. A URSG is a random element of $\mathcal  G_S $  with respect to a probability measure  that satisfies the appropriate $S$-labeled analogue of the mass transport principle \eqref{graphMTP}. A random  subgroup $H < G$  determines a random rooted  Schreier graph, and conjugation invariance of the distribution of $H$ is equivalent  to unimodularity of $\mathrm{Sch}_S(H\backslash G)$. So, IRSs of $G$  exactly correspond to URSGs. See \cite{abert2012kesten} and \cite[\S 4]{Biringerunimodularity}  for details.

 In the continuous setting, IRSs were first  studied in ABBGNRS \cite{abert2012growth}.   In analogy with the above, when a group $G $ acts on $X$  by isometries, there should be a dictionary between IRSs of $G$ and certain unimodular random $X$-manifolds. Here, an \emph {$X$-manifold} is just a quotient $\Gamma \backslash X$, where $\Gamma$  acts freely and properly discontinuously  on $X $ by isometries.   Two parts of this dictionary are discussed in \S \ref {unimodularIRS},  in the cases where $G $ acts transitively, or discretely  on $X $.   The following is a particularly nice case of our  analysis  of IRSs of  transitive $G$.
  
 \begin {proposition}[URMs vs IRSs]\label {urmirsintro}
   Suppose $X $ is a simply connected Riemannian manifold     whose isometry group  is unimodular and acts  transitively.	Then there is a weak*-homeomorphism between  the spaces of  distributions of discrete, torsion free IRSs of $\mathrm{Isom}(X)$  and of unimodular random $X$-manifolds.
 	\end {proposition}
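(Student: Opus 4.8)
The plan is to fix a basepoint $o\in X$, put $G=\mathrm{Isom}(X)$ and $K=\mathrm{Stab}_G(o)$ --- compact because $G$ acts properly --- and identify $X=G/K$; unimodularity of $G$ will be used crucially below. Given a discrete, torsion free IRS $\mu$ of $G$, I first observe that every such $\Gamma$ acts on $X$ freely and properly discontinuously: properness because $\Gamma$ is discrete in $G$ and $K$ is compact, and freeness because a nontrivial element of $\Gamma$ fixing a point of $X$ would be conjugate into the compact group $K$, forcing the cyclic subgroup it generates to be finite and contradicting torsion freeness. Hence $M_\Gamma:=\Gamma\backslash X$ is an $X$-manifold, and I define
\[
\Phi\colon\bigl\{\Gamma\in\subgroup:\ \Gamma\text{ discrete, torsion free}\bigr\}\longrightarrow\mathcal M^d,\qquad\Phi(\Gamma)=\bigl(M_\Gamma,\,\Gamma\!\cdot\!o\bigr),
\]
a Borel map (using the explicit metric on $\mathcal M^d$ and continuity of $\Gamma\mapsto M_\Gamma$ out of the Chabauty topology). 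The claim is that $\Phi_*$ and a suitable inverse are mutually inverse weak$^*$-continuous bijections onto the space of unimodular random $X$-manifolds.

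The crux is that $\Phi_*\mu$ is unimodular, and this is precisely where unimodularity of $G$ is essential: for a non-unimodular $\mathrm{Isom}(X)$, such as that of $\mathrm{Sol}(p,q)$ with $p\neq q$, already the pushforward of the trivial IRS $\delta_{\{e\}}$ --- namely $\delta_X$ --- fails to be unimodular by Proposition~\ref{terminologyprop}. I plan to deduce it from Theorem~\ref{gflow}. The uniform lift $\widetilde{\Phi_*\mu}$ on $T^1\mathcal M^d$ is the law of $(T^1M_\Gamma,\Gamma o,v)$ with $\Gamma$ drawn from $\mu$ and $v$ uniform on $T^1_{\Gamma o}M_\Gamma\cong T^1_oX$. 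Flowing for time $t$ moves the footpoint to $\gamma_v(t)$; by transitivity one can push it back to $o$ by some $h\in G$ with $h\!\cdot\!o=\gamma_v(t)$, which conjugates the deck group $\Gamma$ to $h^{-1}\Gamma h$. Since $\mu$ is conjugation invariant and $h$ is a function of $v$ alone, the subgroup coordinate $h^{-1}\Gamma h$ is again $\mu$-distributed and independent of the data coming from $T^1_oX$; so geodesic-flow invariance of $\widetilde{\Phi_*\mu}$ reduces to the same statement for $\delta_X$, i.e.\ to unimodularity of $\delta_X$ --- which holds precisely because $G$ is unimodular (Proposition~\ref{terminologyprop}). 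One could instead verify \eqref{MTP} directly, lifting the integral over $M_\Gamma$ to a fundamental domain in $G$ for left translation by $\Gamma$ and using bi-invariance of Haar together with conjugation invariance of $\mu$, but that route is more computational, and I expect this unimodularity step to be the main obstacle, at least to set up carefully.

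For injectivity of $\mu\mapsto\Phi_*\mu$, a computation using that $X$ is simply connected --- so that any isometry $(M_{\Gamma_1},\Gamma_1o)\to(M_{\Gamma_2},\Gamma_2o)$ lifts to some $h\in G$ with $h\Gamma_1h^{-1}=\Gamma_2$ and $ho\in\Gamma_2o$, the latter letting $h$ be taken in $K$ --- shows $\Phi(\Gamma_1)=\Phi(\Gamma_2)$ iff $\Gamma_2=k\Gamma_1k^{-1}$ for some $k\in K$. Thus $\Phi$ descends to an injective Borel map on $\subgroup/K$, and a $G$-conjugation invariant $\mu$, being $K$-conjugation invariant, is recovered from its image there (disintegrate over $\subgroup\to\subgroup/K$; $K$-invariance forces the conditional measures on the $K$-orbits to be the Haar pushforwards), hence from $\Phi_*\mu$. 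To build the inverse, take a unimodular random $X$-manifold $\nu$. For $\nu$-a.e.\ $(M,p)$, simple connectedness identifies $\widetilde M$ with $X$ and realizes $\pi_1(M)$ as a discrete, torsion free subgroup of $G$, well defined up to conjugacy once one fixes an isometry $\widetilde M\to X$ carrying a lift of $p$ to $o$; the set of such isometries is a $K$-torsor. Forming the corresponding $K$-bundle over the support of $\nu$, lifting $\nu$ by fiberwise Haar measure on $K$, and pushing forward by ``transport the deck group to $\subgroup$'' yields a measure $\mu_\nu$, manifestly $K$-conjugation invariant. Full conjugation invariance is exactly where the mass transport principle for $\nu$ is used: it encodes the statistical invisibility of changing the basepoint of $(M,p)$, and since $K$ together with elements of $G$ realizing geodesic displacements of $o$ generate $G$, one reduces to geodesic-flow invariance of the uniform lift $\tilde\nu$ from Theorem~\ref{gflow}. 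One then checks $\Phi_*\mu_\nu=\nu$, which together with the injectivity above makes the two constructions mutually inverse.

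Finally, $\Phi$ is continuous for the metric on $\mathcal M^d$, and $\nu\mapsto\mu_\nu$ is assembled from continuous operations --- passing to universal covers, reading off deck groups, and averaging over the compact $K$ --- so both induced maps on spaces of probability measures are weak$^*$-continuous, and being mutually inverse they give the claimed homeomorphism. Beyond the unimodularity equivalence, which is the conceptual heart of the argument, the part I expect to be most delicate is the measurable bookkeeping: a Borel choice of fundamental domains and of the identifications $\widetilde M\to X$, the Borel structure on the $K$-bundle, and the uses of Lusin--Souslin, all of which should go through via standard selection theorems and the Polishness results of the appendix.
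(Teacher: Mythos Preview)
Your route differs genuinely from the paper's. The paper works directly with mass-transport principles: using the Biringer--Tamuz characterization of IRSs (Theorem~\ref{unimodularIRS}), it shows by explicit calculation---via Claim~\ref{measureequality}, which matches the natural fiber measures on $\coset$ with those on $\mathcal M^d_2$---that the IRS-MTP for $\mu$ is equivalent to the URM-MTP for $\Phi_*\mu$. You instead pass through the geodesic-flow characterization (Theorem~\ref{gflow}). Your forward direction is correct and clean: conditioning on $v$, conjugation invariance makes $h\Gamma h^{-1}$ again $\mu$-distributed and independent of $v$, so flow invariance of $\widetilde{\Phi_*\mu}$ reduces to that of $\widetilde{\delta_X}$, i.e.\ to unimodularity of $G$. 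Note, though, that this leans on the harder direction of Theorem~\ref{gflow}, which in the paper rests on the desingularization machinery; the ``more computational'' alternative you set aside is exactly the paper's argument and is in fact the lighter of the two.

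The inverse direction has a gap. You claim that since $K$ and ``geodesic displacements of $o$'' generate $G$, conjugation invariance of $\mu_\nu$ reduces to geodesic-flow invariance of $\tilde\nu$. But lifting the flow to $T^1_oX\times\subgroup$ as $(v,\Gamma)\mapsto(Dh_{v,t}(g_tv),\,h_{v,t}\Gamma h_{v,t}^{-1})$, flow invariance only says that the \emph{average} over $v$ (against the round measure on $T^1_oX$) of the conjugated laws $(h_{v,t}(\cdot)h_{v,t}^{-1})_*\mu_\nu$ reproduces $\mu_\nu$; it does not give invariance under each individual $h_{v,t}$, and ``generation of $G$'' does not bridge that---a measure can be stationary under a generating distribution without being invariant under each element of its support. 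This can plausibly be repaired (choose the section $v\mapsto h_{v,t}$ so that $v\mapsto Dh_{v,t}(g_tv)$ is a measurable bijection of $T^1_oX$, then disintegrate over the first coordinate to extract pointwise invariance for a.e.\ $v$, and finally use that the invariance subgroup is closed and contains $K$), but that is a nontrivial argument your sketch does not indicate. The paper sidesteps the issue entirely by using the full MTP for $\nu$, not merely its geodesic-flow consequence, to verify the IRS-MTP of Theorem~\ref{unimodularIRS} directly. The injectivity, construction of the inverse via $K$-averaging, and weak$^*$-continuity portions of your plan match the paper's.
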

 
So, $X $ could be a non-positively curved symmetric space, for instance $\BR ^d,$ $\BH ^d$ or $SL_n\BR/SO(n)$.  Note that when we say an IRS or URM has a property like `torsion free' or `$X$', this  property is to be  assumed to be satisfied almost always.   For instance,  the above says that there is a homeomorphism between the space of  conjugation invariant probability measures $ \mu$ on $\subgroup $ such that $\mu$-a.e.\ $H\in \subgroup $ is discrete and torsion free,  and the space of unimodular  probability measures $\mu$ on $\mathcal M^d$  such that $\mu$-almost every $(M,p)$ is an $X$-manifold.  
  
\subsubsection{Compactness theorems}

 To understand sequences of  finite volume manifolds, then, one  would naturally like  to understand conditions under which  sets of unimodular probability measures are compact, so that a unimodular weak* limit  of the measures $\mu_{M_i}/\volume (M_i)$ can be extracted after passing to a subsequence.

 By work of Cheeger and Gromov, c.f. \cite{Gromovmetric} and \cite[Chapter 10]{Petersenriemannian}, the  subset of $\mathcal M^d$  consisting of pointed manifolds $(M,p)$ with bounded geometry  is compact. Here, \emph {bounded geometry}  means that the  sectional curvatures of $M$, and all of their derivatives, are uniformly bounded above and below, and  the injectivity radius at  the base point $p$  is bounded away from zero. See \S \ref {hypcompactsec}.  
By the Riesz representation theorem and Alaoglu's theorem,  this implies that  the set of unimodular probability measures  supported on manifolds with bounded geometry is weak* compact, since unimodularity  is a weak*  closed condition.

Both the curvature bounds and the bound on injectivity radius are necessary for compactness of pointed manifolds.  However, we show that  in the presence of pinched negative curvature, an  injectivity radius bound is unnecessary for compactness once we pass to measures:

\begin {theorem}\label{hypcompact}
 The set of all  unimodular probability measures on $\mathcal M^d$  that are  concentrated on pointed manifolds with pinched negative curvature and uniform upper and lower bounds on all derivatives of curvature is weak* compact. 
\end {theorem}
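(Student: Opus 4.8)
The plan is to deduce the theorem from two ingredients: the Cheeger--Gromov compactness theorem mentioned above, and a uniform bound on the measure of the thin part coming from the mass transport principle. Fix once and for all the pinching constants $-b^{2}\le K\le -a^{2}<0$ and the curvature-derivative bounds $|\nabla^{k}R|\le C_{k}$ that define the class $\mathcal U_{<0}$ of measures under consideration. Since $\mathcal M^{d}$ is Polish, the space of Borel probability measures on it is Polish in the weak* topology, so by Prokhorov's theorem it suffices to prove that $\mathcal U_{<0}$ is \emph{tight}. Granting that, any weak* limit $\mu_\infty$ of a sequence in $\mathcal U_{<0}$ is automatically a probability measure; it is unimodular (unimodularity being a weak*-closed condition, as noted above); and it is concentrated on the set of $(M,p)$ with $-b^2\le K\le -a^2$ and $|\nabla^kR|\le C_k$ for all $k$, which is a closed subset of $\mathcal M^{d}$ (curvature and all its derivatives depend continuously on $(M,p)$ in the smooth topology), so $\mu_\infty$ of it is $\ge 1$ by the portmanteau theorem. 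Hence a tight $\mathcal U_{<0}$ is weak* compact.

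For tightness, recall that by Cheeger--Gromov, for each $\epsilon>0$ the set $K_{\epsilon}\subset\mathcal M^{d}$ of pointed $(M,p)$ with $-b^{2}\le K\le -a^{2}$, $|\nabla^{k}R|\le C_{k}$ for all $k$, and $\inj_{M}(p)\ge\epsilon$ is compact; and $K_\epsilon$ is precisely the intersection of the pinched/bounded-derivative locus with $\{\inj\ge\epsilon\}$. Since every $\mu\in\mathcal U_{<0}$ gives full mass to that locus, we have $\mu(\mathcal M^d\setminus K_\epsilon)\le \mu(\{\inj<\epsilon\})$, so tightness follows once we show
\[
\sup_{\mu\in\mathcal U_{<0}}\ \mu\big(\{(M,p)\in\mathcal M^{d}:\inj_{M}(p)<\epsilon\}\big)\ \longrightarrow\ 0\qquad(\epsilon\to 0).
\]

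To prove this uniform estimate we apply the MTP \eqref{MTP}. Let $\epsilon_{0}=\epsilon_{0}(d,a,b)>0$ be a Margulis constant, so that for $\epsilon<\epsilon_{0}$ every component of the $\epsilon$-thin part $M_{<\epsilon}$ of a manifold in our class is either a tube (a tubular neighborhood of a short closed geodesic) or a cusp (a quotient of a horoball); assume $\epsilon<\epsilon_{0}/2$. For $(M,p,q)\in\mathcal M^{d}_{2}$ with $\inj_{M}(p)<\epsilon$, let $C(p)$ be the component of $M_{<\epsilon_{0}}$ through $p$ and put $S(p)=\{x\in C(p):\tfrac{\epsilon_{0}}{2}\le\inj_{M}(x)<\epsilon_{0}\}$, which is a nonempty open set of finite positive volume since $C(p)$ is a nontrivial tube or cusp; set $f(M,p,q)=\mathbf{1}[q\in S(p)]/\volume(S(p))$, and $f(M,p,q)=0$ when $\inj_{M}(p)\ge\epsilon$. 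This is a nonnegative Borel function on $\mathcal M^{d}_{2}$ with $\int_{q\in M}f(M,p,q)\,dq=\mathbf{1}[\inj_{M}(p)<\epsilon]$, so the left side of \eqref{MTP} equals $\mu(\{\inj<\epsilon\})$. On the right side, since $x\in S(p)$ exactly when $p\in C(x)$ and $\tfrac{\epsilon_0}{2}\le\inj_M(x)<\epsilon_0$, the mass received by a point $x$ is
\[
\int_{p\in M}f(M,x,p)\,dp=\mathbf{1}\big[\tfrac{\epsilon_{0}}{2}\le\inj_{M}(x)<\epsilon_{0}\big]\cdot\frac{\volume\big(\{p\in C(x):\inj_{M}(p)<\epsilon\}\big)}{\volume\big(\{p\in C(x):\tfrac{\epsilon_{0}}{2}\le\inj_{M}(p)<\epsilon_{0}\}\big)}\ \le\ g(\epsilon),
\]
where $g(\epsilon)$ denotes the supremum, over all tubes and cusps $C$ in all manifolds with $-b^{2}\le K\le -a^{2}$, of the ratio $\volume(\{\inj<\epsilon\}\cap C)/\volume(\{\tfrac{\epsilon_{0}}{2}\le\inj<\epsilon_{0}\}\cap C)$. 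Equating the two sides of \eqref{MTP} gives, for every $\mu\in\mathcal U_{<0}$,
\[
\mu\big(\{\inj<\epsilon\}\big)\ \le\ g(\epsilon)\cdot\mu\big(\{\tfrac{\epsilon_{0}}{2}\le\inj<\epsilon_{0}\}\big)\ \le\ g(\epsilon),
\]
so the theorem will be complete once $g(\epsilon)\to 0$ as $\epsilon\to 0$.

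This last point is the crux, and I expect it to be the main obstacle: it is a comparison-geometry statement, local to a single tube or cusp, that must be made uniform over the entire pinched family (note the curvature-derivative bounds play no role here—they enter only afterward, through Cheeger--Gromov). Inside such a component the injectivity radius is comparable to a monotone radial parameter—the distance to the core geodesic for a tube, a Busemann parameter for a cusp—and by the Rauch comparison theorem the volume of the very thin region $\{\inj<\epsilon\}$ is smaller than that of the fixed reference shell $\{\tfrac{\epsilon_{0}}{2}\le\inj<\epsilon_{0}\}$ in the same component by a factor that is $\asymp(\epsilon/\epsilon_{0})^{d-1}$ in constant curvature and bounded by $(\epsilon/\epsilon_{0})^{c}$ for some $c=c(d,a,b)>0$ under the pinching; carrying this out carefully, in particular handling long tubes and cusps with large cross-section where the geometric quantities blow up but cancel in the ratio, is where the work lies. (Alternatively, one can transport each thin point onto the shell $\{\epsilon\le\inj<r\epsilon\}$ of its own component, obtain $\mu(\{\inj<\epsilon\})\le\lambda\,\mu(\{\epsilon\le\inj<r\epsilon\})$ for fixed $r=r(d,a,b)>1$ and $\lambda=\lambda(d,a,b)<1$, and telescope from $\epsilon_{0}$, which even yields exponential decay of $\mu(\{\inj<\epsilon\})$ in $\log(1/\epsilon)$.) Once this uniform comparison is established, the theorem follows by the soft assembly above.
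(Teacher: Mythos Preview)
Your overall strategy---reduce to tightness via Prokhorov, then use the mass transport principle to show $\mu(\{\inj<\epsilon\})\to 0$ uniformly---is exactly the paper's approach (see Proposition~\ref{thickbase} and \S\ref{proofpropsec}).  The difference lies in the choice of transport function, and yours has a genuine gap.

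You assert that the reference shell $S(p)=\{x\in C(p):\tfrac{\epsilon_0}{2}\le\inj_M(x)<\epsilon_0\}$ has finite positive volume ``since $C(p)$ is a nontrivial tube or cusp.''  This is false for cusp neighborhoods whose parabolic subgroup has rank $<d-1$.  For instance, take $M=\BH^3/\langle z\mapsto z+1\rangle$ (a rank-one cusp in dimension three): in upper half-space coordinates the quotient is $S^1\times\BR\times(0,\infty)$ with volume form $dx_1\,dx_2\,dy/y^3$, injectivity radius $\asymp 1/(2y)$, and any shell $\{a<y<b\}$ has infinite volume because of the unbounded $x_2$-direction.  Such manifolds are complete with pinched negative curvature and can lie in the support of measures in your class $\mathcal U_{<0}$.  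On these components your function $f$ is identically zero (since $1/\volume(S(p))=0$), so the left side of the MTP is \emph{not} $\mu(\{\inj<\epsilon\})$ but only the part coming from thin components with finite-volume shells; the estimate then says nothing about the rest.  Your alternative telescoping suggestion has the same defect, and your ratio $g(\epsilon)$ is not even well-defined on such components.

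The paper circumvents this by making the transport \emph{local}: instead of spreading the mass of a thin point $p$ uniformly over the whole shell of its component, one constructs an explicit piecewise-smooth homeomorphism $g_M:M_{<\epsilon}^\circ\to M_{<\epsilon_0}$ along the orthogonal ``geodesic leaves'' of the tube or cusp, landing in a unit neighborhood of $\partial M_{<\epsilon_0}$, and proves via Rauch/Warner comparison that $|\det dg_M|\ge D(\epsilon)\to\infty$ (Claims~\ref{disthin} and~\ref{voldistortion}).  The MTP function is then $F(M,p,q)=|\det dg_M(p)|\cdot\mathbf 1[d(g_M(p),q)\le 1]$, and the change-of-variables formula bounds the income at any $q$ by $\volume(B_M(q,1))\le V(1)$, a local quantity that stays finite regardless of whether the component has infinite volume.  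Your comparison-geometry intuition is the right one, but it must be implemented pointwise along these leaves rather than as a global volume ratio.
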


See \S \ref {hypcompactsec}  for a more precise  statement.  The condition on the derivatives of curvature is only necessary because we consider $\mathcal M^d$  with the smooth topology; a topology of weaker regularity would require weaker assumptions.   Essentially, the reason the injectivity radius assumption is not necessary is because in pinched negative curvature,  the  $\epsilon $-thin part of a manifold takes up at most some uniform proportion $C(\epsilon) $ of the total volume, where $C\to 0$  as $\epsilon\to 0$.  In fact, Theorem~\ref{hypcompact}   boils down to a  precise version of this  kind of statement, see the proof of Proposition~\ref{thickbase},  that still applies  to manifolds with infinite volume.

 We explain in \S \ref{hypcompactsec}  that there is no analogue of  Theorem \ref {hypcompact} in nonpositive curvature, but  using work from ABBGNRS \cite{abert2012growth}, one can show that  we still have a weak*  compactness theorem for locally symmetric spaces:

\begin{theorem}\label {Xcompact} Let $X$ be a  symmetric space of nonpositive curvature with no Euclidean factors,  and let $\mathcal M^X \subset \mathcal M^d$ be the subset of pointed $X$-manifolds. Then the space of unimodular probability measures  on $\mathcal M^X$ is weak*-compact. 
\end{theorem}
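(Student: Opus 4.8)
Write $\mathcal U^X$ for the set of unimodular probability measures on $\mathcal M^X$. The plan is to show that $\mathcal U^X$ is weak* closed in the space $\mathcal P(\mathcal M^d)$ of all Borel probability measures on $\mathcal M^d$, that it is \emph{tight}, and then conclude by Prokhorov's theorem. The closedness is soft. Unimodularity is a weak* closed condition, as noted in the discussion preceding Theorem~\ref{hypcompact}. And $\mathcal M^X$ is a closed subset of $\mathcal M^d$: if $(M_i,p_i)\to(M,p)$ in the smooth topology with each $M_i$ locally isometric to $X$, then the curvature tensor of $M$ and its covariant derivative at each point are $C^\infty$ limits of those of the $M_i$, so the covariant derivative of the curvature of $M$ vanishes and its curvature tensor agrees pointwise, up to a linear isometry of tangent spaces, with that of the symmetric space $X$; hence $M$ is locally symmetric and locally isometric to $X$, and since every manifold in $\mathcal M^d$ is complete, $M$ has universal cover isometric to $X$ and is therefore a quotient $\Gamma\backslash X$, i.e.\ $M\in\mathcal M^X$. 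Since $\{\mu : \mu(F)=1\}$ is weak* closed for any closed $F\subseteq\mathcal M^d$, a weak* limit of measures in $\mathcal U^X$ is again unimodular and concentrated on $\mathcal M^X$, hence lies in $\mathcal U^X$; it thus remains only to produce such a limit, i.e.\ to prove tightness.

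For tightness, fix $\epsilon>0$ and let $\mathcal M^X_{\geq\epsilon}\subset\mathcal M^X$ be the set of pointed $X$-manifolds with injectivity radius at least $\epsilon$ at the base point. Every $X$-manifold is locally isometric to the homogeneous space $X$ and so has uniformly bounded geometry --- all derivatives of curvature bounded by constants depending only on $X$ --- so by the Cheeger--Gromov compactness theorem (see \cite[Chapter~10]{Petersenriemannian}) the closure of $\mathcal M^X_{\geq\epsilon}$ is a compact subset of $\mathcal M^d$, and by the closedness of $\mathcal M^X$ it is a compact subset of $\mathcal M^X$. Since each $\mu\in\mathcal U^X$ is concentrated on $\mathcal M^X$, tightness of $\mathcal U^X$ is therefore equivalent to the uniform thin-part estimate
\[
C(\epsilon):=\sup_{\mu\in\mathcal U^X}\ \mu\big(\{(M,p)\in\mathcal M^X : \inj_M(p)<\epsilon\}\big)\ \longrightarrow\ 0\quad\text{as }\epsilon\to 0 .
\]

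This estimate is the crux of the theorem, and it is exactly where the hypothesis that $X$ has no Euclidean factor enters: for $X=\BR^d$ it fails, since the flat tori $\BR^d/(t\BZ)^d$ define atomic measures in $\mathcal U^{\BR^d}$ whose $\epsilon$-thin part has full measure once $t<2\epsilon$ (consistently, this family has no weak* limit, as the tori collapse and escape to infinity in $\mathcal M^d$). For a single $\mu\in\mathcal U^X$ the estimate is trivial by continuity of measure, since $\bigcap_\epsilon\{\inj<\epsilon\}=\{\inj=0\}$ is $\mu$-null; the content is the uniformity over all of $\mathcal U^X$. To get it I would pass, via the desingularization theorem and the dictionary of \S\ref{unimodularIRS} (Proposition~\ref{urmirsintro} and its relatives), from $\mathcal U^X$ to the space of discrete, torsion-free invariant random subgroups of $G=\isometries(X)$, a semisimple Lie group without compact factors, under which the thin-part estimate becomes a uniform bound on the measure of the $\epsilon$-thin part of such an IRS; a bound of precisely this kind is established by ABBGNRS \cite{abert2012growth} in their study of invariant random subgroups of semisimple Lie groups, and I would quote it here.

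I expect this last step to be the main obstacle: the topological inputs are routine, but --- in contrast with the pinched-negative-curvature setting of Theorem~\ref{hypcompact}, where the thin-part estimate (Proposition~\ref{thickbase}) follows from curvature bounds together with a mass-transport recurrence alone --- here one genuinely needs the algebraic structure of $G$, e.g.\ the Kazhdan--Margulis theorem, to rule out an invariant random family of manifolds that is predominantly thin. Granting the estimate, the sets $K_n=\overline{\mathcal M^X_{\geq 1/n}}$ form a tightening sequence of compacta for $\mathcal U^X$, so by Prokhorov's theorem every sequence in $\mathcal U^X$ has a weak* convergent subsequence, whose limit lies in $\mathcal U^X$ by the first paragraph; hence $\mathcal U^X$ is weak* compact.
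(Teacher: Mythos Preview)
Your overall strategy is correct and matches the paper's at the conceptual level: both reduce to invariant random subgroups of the semisimple group $G=\isometries(X)$ and then invoke the algebraic structure of $G$. But the paper organizes the argument more efficiently, and your proof has a soft spot at exactly the point you flag as ``the main obstacle.''

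The paper does \emph{not} prove the uniform thin-part estimate $C(\epsilon)\to 0$ as an intermediate step. Instead, it uses the homeomorphism of Proposition~\ref{urmirsprop} to identify $\mathcal U^X$ with the space of discrete, torsion-free IRSs of $G$, and then argues directly on the IRS side: the Chabauty space $\subgroup$ is compact, so the space of \emph{all} IRSs is weak*-compact, and it remains only to show that ``discrete'' and ``torsion-free'' are closed conditions among IRSs (Proposition~\ref{discreteclosed}). The key input is the short Lemma~\ref{connectedIRS}: any \emph{connected} IRS of a semisimple $G$ is normal in $G^\circ$, proved by a Furstenberg-type argument on the adjoint action on the Grassmannian of $\mathfrak g$. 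Since a Chabauty limit of discrete subgroups has nilpotent identity component, and a semisimple group has no nontrivial nilpotent normal subgroups, a limit of discrete IRSs is discrete; torsion-free is then a closed condition among discrete subgroups by an elementary argument. The thin-part estimate (Proposition~\ref{thickbaseX}) is explicitly noted to be \emph{equivalent} to the compactness theorem, and so is obtained as a corollary rather than proved en route.

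Your route via Prokhorov and tightness is logically equivalent, but your handling of the crucial step is a genuine gap: you say you would ``quote'' a uniform thin-part bound for IRSs from \cite{abert2012growth}, but you do not identify a specific result, and in fact the relevant content of \cite{abert2012growth} is essentially the Chabauty-compactness argument the paper reproduces here. Invoking Kazhdan--Margulis alone does not give the estimate either: it bounds the covolume of lattices from below, but does not by itself prevent a sequence of IRSs from being supported mostly on very thin manifolds. If you want to make your outline into a proof, the cleanest fix is precisely the paper's: show that discrete torsion-free IRSs form a closed (hence compact) set, and deduce tightness from that rather than the other way around. Your first-paragraph argument that $\mathcal M^X$ is closed in $\mathcal M^d$ is correct but becomes unnecessary once you pass through Proposition~\ref{urmirsprop}.
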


 The proof of  Theorem \ref {Xcompact} is algebraic: it uses the dictionary between unimodular measures and IRSs  discussed in the previous section, and  arguments related to Borel's density theorem, c.f.\ \cite{Furstenbergnote}.  We give this proof in \S \ref{localsymcomp},  and also briefly discuss the  question of whether there is a universal theorem that generalizes both  Theorems \ref{hypcompact} and \ref{Xcompact}.

\subsubsection{Hyperbolic $3$-manifolds  with finitely generated fundamental group, and The No-Core Principle}
\label {fingenintro}

 Finite volume hyperbolic manifolds have finitely generated $\pi_1$,  \cite{Benedettilectures}.   While the converse is not true in general,  the question is  at least interesting for $d$-manifolds $M$ with enough symmetry:  is it true that when $M \neq \BH^d$ regularly covers a  finite volume $d$-manifold and $\pi_1 M$  is finitely generated, then $M$ has finite volume? 

When $d=2$,  the answer is yes. Any surface $S$ with finitely generated  fundamental group is {geometrically finite}, see \cite[Theorem 4.6.1]{Katoklectures}. If $S\neq \BH^2$  regularly covers a finite volume surface, its  limit set is the entire circle $\partial_\infty \BH^2$,  say by \cite[Theorem 12.2.14]{Ratcliffefoundations}, and  then geometric finiteness  implies that $S$  has finite volume, see \cite[Theorem 4.5.1]{Katoklectures}. The question is open for $d\geq 4$.

When $d=3$, Thurston's fibered hyperbolization theorem \cite{Thurstonhyperbolic1} states that the
mapping torus $M $ of a pseudo-Anosov homeomorphism of a surface $S$ admits a hyperbolic metric. The fundamental group of $M $ splits as the semidirect product
\begin{equation}
\label{semidirect}1 \longrightarrow\pi_{1} S\longrightarrow\pi_{1} M
\longrightarrow\mathbb{Z }\longrightarrow1,
\end{equation}
and the regular cover $\hat M$  corresponding to $\pi_1 S$   is a hyperbolic $3$-manifold  with finitely generated fundamental group.  However, it  is a well-known consequence of the Tameness Theorem of Agol~\cite {Agoltameness} and Calegari-Gabai \cite {Calegarishrinkwrapping} and Canary's covering theorem~\cite{Canarycovering} that  these $\hat M$ are the only examples when $d=3$.

\vspace{2mm}

A \emph {unimodular  random hyperbolic manifold} (URHM) is, as should be expected,  a random element of $\mathcal M^d$  with respect to a  unimodular  probability measure concentrated on  pointed hyperbolic $d$-manifolds.   Simple examples include a finite volume hyperbolic manifold  with a randomly chosen base point, and  the hyperbolic space $\mathbb H^d$. (See Examples \ref{finvolex} and \ref{transitiveex}.)

Any regular cover of a hyperbolic manifold can be considered as a URHM, via Example \ref{regcover}. It turns out that URHMs have enough symmetry that the rigidity results for regular covers discussed above have analogues for URHMs  with finitely generated fundamental group.    For instance,  it follows from \cite[Proposition 11.3]{abert2012growth} that the limit set of a URHM  $M$, with $M \neq \BH^d$,  is always the entire boundary sphere $\partial_\infty \BH^d$. When $d=2$, this means that any URHM with finitely generated $\pi_1$  has finite volume,  via the  same argument as above.

When $d=3$, we constructed examples in \cite[\S 12.5]{abert2012growth} of IRSs (hence URHMs, by  Proposition \ref{urmirsintro}) with  finitely generated $\pi_1$  that are not  regular covers of  finite volume manifolds.  However, these examples  all have the same coarse geometric structure as the $\hat M$  examples above: they are all \emph {doubly degenerate hyperbolic $3$-manifolds homeomorphic to $S \times \BR$},  for some  finite type surface $S$. See \S \ref{fingensec}  for definitions. Here, we show that these are the only examples:

\begin {theorem}\label {fingen3dim}
 Every 	unimodular  random hyperbolic $3$-manifold with finitely generated fundamental group  either is isometric to  $\BH^3$,  has finite volume,  or is a doubly degenerate hyperbolic structure on $S\times \BR $ for some finite type $S$.
\end {theorem}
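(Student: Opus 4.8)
The plan is to reduce Theorem~\ref{fingen3dim} to the Tameness Theorem plus Canary's covering theorem, mimicking the structure of the known rigidity statement for regular covers but replacing ``regular cover of a finite volume manifold'' with ``support of a unimodular measure.'' Let $\mu$ be a unimodular probability measure on $\mathcal M^3$ concentrated on pointed hyperbolic $3$-manifolds $(M,p)$ with $\pi_1 M$ finitely generated, and assume $\mu$-a.e.\ $M$ has infinite volume and is not $\BH^3$; we want to conclude that $\mu$-a.e.\ $M$ is doubly degenerate and homeomorphic to $S \times \BR$ for some finite type surface $S$. First I would record the structural facts available to a generic $M$: by the Tameness Theorem (Agol, Calegari--Gabai) $M$ is topologically tame, so $M \cong \mathrm{int}(\bar M)$ for a compact $3$-manifold with boundary $\bar M$, and each end of $M$ is either geometrically finite or simply degenerate. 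Since $M$ has infinite volume, it has at least one infinite-volume end. The cited consequence of \cite[Proposition 11.3]{abert2012growth} already tells us that the limit set of a URHM that is $\mu$-a.e.\ not $\BH^3$ is all of $\partial_\infty \BH^3$; I would use this to rule out geometrically finite ends of infinite volume, since a geometrically finite end would produce a domain of discontinuity and hence a proper limit set — the same dichotomy used in the $d=2$ case. Thus $\mu$-a.e.\ $M$ has all its infinite-volume ends simply degenerate.

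The heart of the argument is a ``no-core'' statement: a generic $M$ cannot have a compact core $C \subset M$ with $M \setminus C$ consisting of cusps and geometrically finite pieces only, i.e., $M$ has no geometrically finite end, and moreover the degenerate ends cannot be ``partially'' compactly exhausted in a way that leaves a proper compact piece carrying the topology. The mechanism is a mass-transport / recurrence argument: if a positive-measure set of $(M,p)$ had a canonically-defined compact core region of bounded volume (bounded because $\pi_1 M$ is finitely generated, using tameness to get a bound on the complexity of $\bar M$ on pieces of $\mathcal M^3$ of positive measure — here one would stratify by the homeomorphism type of $\bar M$, which is a Borel partition of $\mathcal M^3$), then one could define a paying function $f$ sending each point to a distinguished point of that core, producing infinite income at the core points and finite payment, contradicting \eqref{MTP}. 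This is the continuous analogue of the standard fact that a unimodular random graph with a uniformly locally finite distinguished finite ``center'' set of positive density is impossible. Combined with the previous paragraph, this forces every end of a generic $M$ to be degenerate, and then tameness plus Canary's covering theorem (applied via the desingularization $\mathcal P^3$ of Theorem~\ref{desingularizing}, or directly to the leafwise structure) forces $\bar M$ to be an $I$-bundle over a surface, i.e.\ $M \cong S \times \BR$ with both ends degenerate — a doubly degenerate hyperbolic structure on $S \times \BR$.

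More concretely, the steps in order: (1) stratify $\mathcal M^3$ by topological type of the tame compactification $\bar M$ and reduce to a single stratum of positive $\mu$-measure, noting finite generation bounds the stratum's complexity; (2) use \cite[Prop.\ 11.3]{abert2012growth} to conclude the limit set is the full sphere $\mu$-a.e., ruling out geometrically finite infinite-volume ends and noncompactness coming from a domain of discontinuity; (3) run the mass-transport ``no-core'' argument: the only $\pi_1$-carrying compact region, if canonical and of bounded volume on the stratum, yields a violation of the MTP, so no such region exists — equivalently $M$ is exhausted by its degenerate ends and $\pi_1 M$ is carried entirely ``out to infinity'' symmetrically; (4) invoke tameness + Canary's covering theorem to identify $\bar M$ as a twisted or trivial $I$-bundle and conclude $M \cong S\times \BR$ is doubly degenerate; handle the remaining cases (finite volume, or $M = \BH^3$) as the excluded alternatives. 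I expect step (3) — making the ``no-core'' mass-transport argument rigorous — to be the main obstacle: one must pin down a Borel, isometry-invariant choice of the relevant compact region (or of a finite distinguished point-set in it) and a genuine volume bound valid $\mu$-a.e.\ on the stratum, and argue that the resulting $f$ is Borel on $\mathcal M_2^3$; the subtlety is that a simply degenerate end has no canonical ``end of the core,'' so one likely works instead with the geometrically finite ends (which do have canonical boundary pieces via their boundary-incompressible surfaces or their thick parts) and shows their absence, then separately rules out closed-manifold-like behavior, rather than trying to locate a core directly.
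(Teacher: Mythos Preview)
Your big-picture strategy is right: the paper also reduces everything to the No-Core Principle (Theorem~\ref{nocore}), and the hard part is exactly what you flag in your last paragraph---making a Borel, isometry-invariant choice of ``core'' whose volume is finite and nonzero precisely when $M$ is neither $\BH^3$, finite volume, nor doubly degenerate $S\times\BR$. But steps (3)--(4) as written do not close, for two reasons.

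First, step (4) is a misapplication. Canary's covering theorem requires an actual Riemannian covering map $N\to M$ with $M$ geometrically controlled; in the URM setting there is no such map, and the desingularization $\mathcal P^3$ does not supply one (its leaves are isometric copies of $M$, not covers of one another). So once you know all ends are degenerate, nothing you have invoked forces $\bar M$ to be an $I$-bundle. Concretely, a hyperbolic structure on the interior of a genus-$g$ handlebody with its single end degenerate has full limit set and no geometrically finite ends, yet is not $S\times\BR$; your steps (2) and (4) cannot exclude it.

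Second, and relatedly, your step (3) is exactly where all the content lives, and your fallback (``work with the geometrically finite ends'') is not enough: after you rule those out via the limit set, you still have to exclude the handlebody/compression-body possibilities above, and for those any na\"ive compact core can be slid arbitrarily far into the degenerate end, so the union of ``all cores'' is unbounded and the MTP gives nothing. The paper's resolution is a specific, rather delicate notion of \emph{$R$-core}: a compact submanifold $N\subset M$ of diameter $<R$, lying near the convex core, whose complementary components are topological products, with two extra clauses---(4) components facing convex-cocompact ends lie outside the convex core, and (5) components facing degenerate ends admit, through every point at $\epsilon$-electric distance $>R$ from $N$, an embedded level surface of $\epsilon$-electric diameter $<R$. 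Condition (5), which comes from Canary's \emph{Filling Theorem} (not the covering theorem) plus Freedman--Hass--Scott, is the missing ingredient: it is exactly what prevents $R$-cores from escaping to infinity in a degenerate end unless that end is matched by another degenerate end on the other side of the level surfaces (via Waldhausen's cobordism theorem), forcing $M\cong S\times\BR$ doubly degenerate. The paper then shows the union $C_R(M)$ of all $R$-cores has finite nonzero volume in every remaining case, and---this is the other half of the work you anticipated---that $\{(M,p):p\in C_R(M)\}$ is Borel, which requires a separate argument using continuity of the convex core on injectivity-radius-bounded strata. Your outline does not contain an analogue of condition (5), and without it the No-Core Principle cannot separate $S\times\BR$ from the other all-ends-degenerate possibilities.
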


\begin{figure*}
\centering
\includegraphics{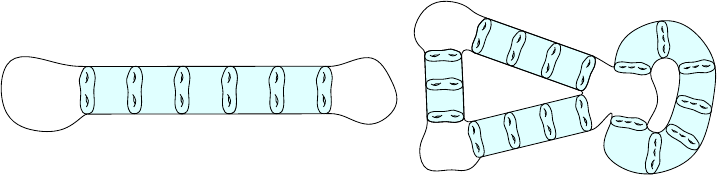}
\caption{  A schematic picturing a number of \emph {long product regions} in hyperbolic $3$-manifolds. Each is homeomorphic to  a product $S \times [0,1]$,  for some finite type surface $S$,  and has bounded area level surfaces.  If the  product regions lengthen while  the complexity of the underlying graph stays bounded, the  associated probability measures $\mu_{M_i}/\volume(M_i)$  on $\mathcal M^d$ weak* converge to  a  URHM  that is a doubly degenerate  hyperbolic structure on $S\times\BR $.}
\label {lpr}\end{figure*}

Here is another  informal way to  motivate Theorem \ref{fingen3dim}.  Suppose that $M $ is a hyperbolic $3$-manifold with finite, but large, volume.   Randomly choose a point $p\in M$ and consider a neighborhood $U \ni p$  with some  fixed radius $R$,  which is large, but say not as large as $\volume(M)$.  What can $U $  look like geometrically?  On the one hand, it could be a large embedded ball from $\BH^3$, while at the other extreme, it could have  very complicated topology,  requiring many elements to generate $\pi_1 U$.  If $\pi_1 U$  can be generated by few elements, though,   the geometry  of $U $ is more limited:  essentially, it will look like a  large piece of an  infinite volume hyperbolic $3$-manifold $N$ with some small number of ends.  

Now, in any sufficiently large piece an  infinite volume $N $, the ends of $N$  take up  a much larger proportion of volume than the `core' of $N$ does. So,  the probability that our base point $p$ was randomly chosen to lie inside the core is negligible.  In other words,  most choices of $p$  that end up in a neighborhood $U$  with not so complicated topology will look like they are  stuck deep inside an end of an infinite volume hyperbolic $3$-manifold. By  the geometric classification of ends  of hyperbolic $3$-manifolds, c.f.\ \cite{Canaryends} and also \cite[The Filling Theorem]{Canarycovering},  this means that the point $p$ will  either be contained in a large embedded  ball from $\BH^3$ (when the end is geometrically finite) or a \emph {long product region} (when the end is degenerate). See  Figure \ref{lpr}.

 Informally, this discussion means that near a randomly chosen point  in a  hyperbolic $3$-manifold $M$ with large finite volume, $M$  will  look like  either
\begin {enumerate}
\item a  large embedded ball from $\BH^3$,
\item a region  for which the minimal number of $\pi_1$-generators is `very large',
\item  a  {long product region}.
\end {enumerate}
 To relate this back to Theorem \ref{fingen3dim},  note that large  hyperbolic balls and  long product regions are  exactly what one obtains by taking large neighborhoods within $\BH^3$ and  doubly degenerate  hyperbolic structures on $ S \times \BR$, which are the only infinite volume manifolds allowed in the theorem. And if a  sequence $(M_i)$ of finite volume manifolds with $\volume(M_i)\to \infty$  gives a sequence of  probability measures $\mu_{M_i}/\volume (M_i)$  that weak*  converges to a unimodular $\mu$  on $\mathcal M^d$,  the    informal local analysis of the geometry of $M_i$  described above translates exactly into Theorem \ref{fingen3dim}.  In fact,  in light of the weak* compactness of the set of unimodular  probability measures  supported on hyperbolic manifolds,  which  follows from Theorem \ref{hypcompact} or  Theorem \ref{Xcompact},  one can view Theorem \ref{fingen3dim} as a precise  version of the  informal statement that  for \emph{any}  large-volume $M$,  the local geometry near a randomly chosen base point is as described above.

\vspace{2mm}

The key idea in  the proof of Theorem \ref{fingen3dim} is  the following:

\begin{theorem}[The No-Core Principle]\label {nocore}	 Suppose that $\mu$ is a unimodular probability measure on $\mathcal  M ^ d$ and that $f:\mathcal M^d\longrightarrow \{0,1\}$ is a Borel function. Then for $\mu$-almost every element  $(M,p)\in \mathcal  M ^ d$, we have 
	 $$0<\volume_M \{q\in M \ | \ f(M,q)=1\}<\infty \implies \volume(M)<\infty.$$ 
	\end{theorem}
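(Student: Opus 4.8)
The plan is to deduce $\mu(E)=0$ for the event
\[
E \ = \ \bigl\{(M,p)\in\mathcal M^d \ : \ 0 < \volume_M(C_M) < \infty \ \text{ and } \ \volume(M)=\infty \bigr\}, \qquad C_M := \{q\in M : f(M,q)=1\},
\]
by feeding the mass transport principle \eqref{MTP} a single paying function: on manifolds lying in $E$, every point pays, uniformly and with total payment one, into the finite-volume core $C_M$. Since $\volume(M)=\infty$, each point of the core then collects infinitely much, which is incompatible with unimodularity. Note that the theorem is exactly the assertion that $E$ is $\mu$-null.

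I would first record the routine measurability facts. Writing $v(M):=\volume_M(C_M)\in[0,\infty]$, this is a Borel function of $(M,p)$ that depends only on the isometry class of $M$ and not on $p$: indeed $(M,p,q)\mapsto \mathbf 1[f(M,q)=1]$ is Borel on $\mathcal M_2^d$, being the Borel function $f$ precomposed with the continuous forgetful map $(M,p,q)\mapsto (M,q)$, and $v(M)=\int_{q\in M}\mathbf 1[f(M,q)=1]\,dq$, whose measurability in $(M,p)$ is part of the setup making the left side of \eqref{MTP} meaningful, see \S\ref{urms}. Consequently $E$ is a Borel subset of $\mathcal M^d$ which is a union of ``leaves'' $\{(M,p):p\in M\}$, and $\mathbf 1_E$ and $v$ may be regarded as Borel functions of the manifold alone.

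Next I would apply \eqref{MTP} to
\[
F(M,p,q) \ = \ \mathbf 1_E(M)\cdot\frac{\mathbf 1[f(M,q)=1]}{v(M)},
\]
with the convention that $F=0$ when $\mathbf 1_E(M)=0$; this is nonnegative and Borel on $\mathcal M_2^d$ because $v(M)\in(0,\infty)$ on $E$. Integrating $F(M,p,q)$ over $q\in M$ yields $\mathbf 1_E(M)\cdot v(M)/v(M)=\mathbf 1_E(M)$, so the left side of \eqref{MTP} equals $\mu(E)\le 1$. For the right side, basepoint-independence gives $F(M,q,p)=\mathbf 1_E(M)\,\mathbf 1[f(M,p)=1]/v(M)$, and integrating over $q\in M$ multiplies by $\volume(M)$; since on $E$ we have $\volume(M)=\infty$ and $v(M)<\infty$, the right side of \eqref{MTP} equals $\infty\cdot\mu\bigl(\{(M,p)\in E : f(M,p)=1\}\bigr)$ (with $\infty\cdot 0=0$). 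Equating the two sides and using that the left side is finite forces $\mu(\{(M,p)\in E : f(M,p)=1\})=0$, whence the right side is $0$, and therefore $\mu(E)=0$, as desired.

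The argument is short and I do not anticipate a genuine obstacle; the only points requiring care are the measurability and basepoint-independence bookkeeping of the second paragraph and the handling of the $\infty\cdot 0$ versus $\infty\cdot(\text{positive})$ dichotomy in the last step, rather than an illegitimate cancellation of infinities. The principle being exploited is simply that a unimodular measure cannot let a set of finite volume absorb all the mass of a set of infinite volume.
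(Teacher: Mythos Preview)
Your proof is correct and follows essentially the same route as the paper: restrict attention to the saturated bad set $E$ and apply the mass transport principle to a paying function built from $f$, so that one side sees the finite core volume while the other sees the infinite ambient volume. The only difference is cosmetic---your normalization by $v(M)$ makes the left side of \eqref{MTP} equal to $\mu(E)$ on the nose, whereas the paper uses the unnormalized function $(M,p,q)\mapsto f(M,p)$ after restricting $\mu$ to $E$ and argues via a short back-and-forth between the two sides---but the underlying idea is identical.
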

	 Geometrically,  one should imagine that $f(M,p)=1$ when the base point $p$ lies in a `core' of $M$.  The theorem then says that when $(M,p)$ lies in the support of a unimodular probability measure, one can only Borel-select a core with finite, nonzero volume for $M$ when $M$ has finite volume.  While the statement above is very useful---it also is used in Biringer-Raimbault \cite{Biringertopology}---it is basically an immediate consequence of the mass transport principle, see \S \ref{nocoresec}.

 Essentially, the  proof of Theorem \ref{fingen3dim}  is that any hyperbolic $3$-manifold with finitely generated fundamental group has a  finite volume core,  obtained by chopping off  neighborhoods of its infinite volume ends.  However, it requires some work to choose the core  in a canonical enough way so that the function $f$ in the No-Core Principle  is Borel.  See \S \ref{fingensec}  for details.

\subsubsection{Appendix: the topology of  $\mathcal M^d$  and the Chabauty topology}
  
 The paper ends with a lengthy appendix. We give in \S \ref{smoothsec} and \ref{stabilitysec} slight extensions of  existing compactness  and stability results  concerning smooth  convergence, but most of the appendix is spent showing $\mathcal M^d$ and various related spaces are Polish. This is necessary to justify the use of measure theoretic tools like Rohlin's disintegration theorem, or Varadarajan's  compact model theorem (see  the proof of Proposition \ref {ergodicdecomposition}).

Candel, \'Alvarez L\'opez and Barral Lij\'o \cite{Alvarezuniversal} have independently and concurrently studied the space $\mathcal M^d$, proving that it is Polish and establishing a number of  interesting topological properties that are very related to this paper, e.g.\ to Proposition~\ref{leafBorel}. Their proof was made publicly available before ours, so the result is really theirs.  The two approaches are quite similar, but our proof produces an explicit metric that we use elsewhere in the paper, and is simpler in some ways, so we still present it here.

We find the proofs that these spaces are Polish quite interesting. For instance, recall that two points $(M,p)$ and $(N,q)$ in $\mathcal M^d$  are smoothly close if  there is a diffeomorphism $f$ from a large neighborhood $B \ni p$  to a large neighborhood of $q$, such that the metric $\langle , \rangle_N$ on $N $ pulls back to a metric on  $B$ that is $C^ \infty$ close to $\langle , \rangle_M$, see \S \ref{smoothsec}.  To metrize this definition directly, one would have to  metrize the $C ^\infty$ topology on  the appropriate space of tensors  on $M$ separately for each $(M,p) \in \mathcal M^d$, and then hope that the choice is  canonical enough that the triangle inequality holds for  the induced metric on $ \mathcal M^d$.    This is hard to do, so instead we  define the distance between $(M,p)$ and $(N,q)$ by  measuring the  bilipschitz distortion of the `iterated total derivatives' $$D^k f: T^kM \longrightarrow T ^k N$$
 on the  $k$-fold iterated tangent bundles  $T^kM = T(T(\cdots T(M))\cdots ),$  which we consider with the associated `iterated Sasaki metrics'. See \S \ref{secmetrizability}.

\subsection{Plan of the paper}

 Section \ref{urms}  introduces unimodular measures in detail,  and discusses the No-Core  Principle  and the dictionary between URMS and IRSs.  In Section \ref{foliatedsec},  we review completely invariant measures on foliated spaces, and   prove Theorem \ref{foliations},  which  shows that complete invariance is equivalent to unimodularity, among other things. Section \ref{foliatedstructuresec}  discusses the foliated structure of $\mathcal M ^ d$,  the desingularization theorem,  and the ergodic decomposition of  unimodular measures.  The weak*  compactness theorems  mentioned in the introduction are proved in  Section \ref{hypcompactsec},  while the characterization of unimodular random hyperbolic $3$-manifolds  with finitely generated $\pi_1$  is the main focus of Section \ref{fingensec}.  The paper ends with an  appendix  concerning the topology of $\mathcal M^d$  and related spaces.

\subsection{Acknowledgments}

We would like to thank Nir Avni, Igor Belegradek, Lewis Bowen, Renato Feres, Etienne Ghys, J.\ A.\ \'Alvarez L\'opez, Juan Souto, Ralf Spatzier and Shmuel Weinberger for a number of useful discussions. We also thank a referee for very helpful comments, and for finding a significant error in the leadup to Theorem~\ref{manifoldmetrizable}. The second author was partially supported by NSF grant DMS-1308678.

\section{Unimodular measures on $ \mathcal M^d$}
\label{urms}

A \emph{rooted Riemannian $d$-manifold} is a pair $(M,p)$ where $M$ is a
Riemannian $d$-manifold and $p\in M$ is a basepoint. We assume that all Riemannian manifolds in this paper are complete and connected.
A \emph{doubly
rooted manifold} is a triple $(M,p,q)$ where $M$ is a Riemannian $d$-manifold
and $p,q\in M$.

\begin{definition}
Let $\mathcal{M}^{d}$ and $\mathcal{M}%
_{2}^{d}$ be the spaces of isometry classes of rooted and doubly rooted Riemannian
$d$-manifolds, endowed with the  smooth topology.
\end{definition}

Recall that a sequence of rooted Riemannian manifolds $(M_{i},p_{i})$ \emph {smoothly converges} to $(M,p)$
if for every $R>0$, there is a $C^{\infty
}$-embedding $$f_{i}:B_M(p,R)\longrightarrow M_{i}$$ such that $f_i(p)=p_i$ and $f_{i}^{\ast}(g_{M_{i}})\to g_{M}$ in the $C^{\infty}$
topology, where $g_{M_{i}}$ and $g_M$ are the  associated Riemannian metrics.  The convergence $(M_i, p_i, q_i) \rightarrow (M,p,q)$ of a sequence of doubly rooted manifolds is the same, except that we require that $f_i (q) = q_i$ when defined. In \S \ref{secmetrizability}, we show that smooth convergence comes from a Polish topology on $\mathcal M^d$. An analogous statement holds for $\mathcal{M}%
_{2}^{d}$.

\begin {example} \label {onecase}Setting $d=1$, $\mathcal M ^1$ is homeomorphic to $(0,\infty]$, since there is a unique rooted $1$-manifold of diameter $x$ for each $0<x\leq \infty$. The space $\mathcal{M}%
_{2}^{1}$ is then naturally homeomorphic to the set $$S=\big\{(x,y) \ \big|\  0 < x < \infty, \ 0 \leq y \leq x \ \  \text{or} \ \ x=\infty, \, 0\leq y < \infty \big\},$$
where $x$ is the diameter of $M$ and $y$ is the distance between the base points $p,q \in M$. Both the left and right projections of $\mathcal M ^1_2$ onto $\mathcal M ^1$ are then the first coordinate projection $S\longrightarrow (0,\infty]$.
\end {example}

Let $\mu$ be a $\sigma$-finite Borel measure on $\mathcal{M}^{d}$.  From $\mu$,  we define two associated Borel measures $\mu_{l}$ and $\mu_{r}$  on $\mathcal{M}_{2}^{d}$,  by setting
\begin {align*}
	\mu_{l}(S) &=\int_{(M,p)\in\mathcal{M}^{d}}
\volume_{M} \left\{  q\in M\mid (M,p,q)\in S\right\} 
d\mu \\
\mu_{r}(S) &=\int_{(M,q)\in\mathcal{M}^{d}} \volume_{M} \left\{  p\in M\mid (M,p,q)\in S\right\}  
d\mu,
\end {align*}
 whenever $S$ is a Borel subset of $\mathcal{M}_{2}^{d}$.  Sometimes, we abbreviate the  above as
$$d\mu_l(M,p,q) = \volume_{M}(q) \, d\mu(M,p), \ \ d\mu_r(M,p,q) = \volume_M(p) \, d\mu(M,q).$$

\begin{definition}\label {unimodularity}
 We say $\mu$  is \emph{unimodular} if $\mu_{l}=\mu_{r}$. When $\mu$  is a probability measure, a $\mu$-random element of $\mathcal M^d$  is a \emph{unimodular random manifold} (URM).
\end{definition}

Unimodular measures were first studied in the context of rooted graphs rather than rooted Riemannian manifolds (see \cite {Benjaminigroup}, \cite {Haggstrominfinite}, \cite {Aldousprocesses}).  In these works, the equality of $\mu_l$ and $\mu_r $ is phrased via the \emph{mass transport principle}, which is the definition we gave in the introduction (Definition \ref{10}). Lewis Bowen \cite{Bowencheeger}  has previously considered unimodularity in the general context of metric-measure spaces; however here we restrict ourselves to the Riemannian setting.

When $d=1$, any Borel probability measure on $\mathcal M ^ d \cong (0,\infty]$ is unimodular, as the measures $\mu_l$ and $\mu_r$ are obtained by integrating $\mu$ against twice the Lebesgue measure on the fiber $[0,x] \subset S$ over $x\in (0,\infty]$.

 For an alternative definition, note that there is an involution \begin {equation}i : \mathcal M_2^d\longrightarrow\mathcal M_2^d, \ \ i(M,p,q) = (M,q,p)\label {involution}\end {equation}
and that $i_*(\mu_l)=\mu_r$. It follows that \emph {a measure $\mu$ on $\mathcal M^d$ is unimodular if and only if either/both of $\mu_l$ and $\mu_r$ are $i$-invariant.}

As mentioned in the introduction, any finite volume Riemannian $d $-manifold $M
$ determines a unimodular measure $\mu$, obtained by pushing forward the
 Riemannian volume $\volume_M$ under the map
\[
M \longrightarrow\mathcal{M }^{d}, \ \ p \mapsto(M, p).
\]
In this case, the measures $\mu_l$ and $\mu_r$ are  both obtained by pushing forward the product measure $\volume_M \times \volume_M$ on $M \times M$ to $\mathcal M ^ 2_d,$ so $\mu $ is unimodular.  
Also, $\mu \mapsto (\mu_l,\mu_r)$ is weak* continuous, so the space of unimodular measures on $\mathcal {M} ^ d $ is closed.  So, more unimodular measures can be constructed as weak* limits.

Here are some other  constructions of unimodular  measures on $\mathcal {M} ^ d $.

\begin {example}[Regular covers]  Suppose that $\pi: N \longrightarrow M$ is a regular Riemannian covering map and $M$  has finite volume.  Then there is a map
$$i_\pi : M \longrightarrow\mathcal M ^ d, \ \ i_\pi(p) = (N,q), \text { where } \pi(q)=p.$$ 
Here, the point is that the isometry class of $(N,q)$ depends only on the projection $p=\pi(q)$, since any two $q$ with the same projection differ by a deck transformation. The push forward of $\volume_M$ under $i_\pi$ is a probability measure $\mu$ on $\mathcal M ^ d $ that is supported on manifolds isometric to $N$. 
For an alternative construction of $\mu$, choose a fundamental domain $F$ for the projection $\pi: N \longrightarrow M$ and push forward the measure $\volume_N / \volume_N(F)$ via the map $q\in F \longmapsto (N,q).$

 To see that $\mu  $ is unimodular, let $\mathcal N_2 = N \times N / \Gamma$, where $\Gamma$ is the group of deck transformations of $\pi$, which acts diagonally on $N\times N$.  We can identify $\mathcal N_2$ with $F \times N$, and give it the measure $\volume_N / \volume_N(F) \times \volume_N$.  Then the map
$$\mathcal N_2 = N \times N / \Gamma\longrightarrow N \times N / \mathrm{Isom}(N) \subset \mathcal M^d_2$$
 is measure preserving, where we consider $\mathcal M^d_2$ with $\mu_l$. On $\mathcal N_2$, the involution $(p,q)\mapsto (q,p)$ is measure preserving, since  for each $\gamma \in \Gamma$ the composition
$$F \times \gamma F \subset F \times N \cong \mathcal N_2  \overset  {(p,q)\mapsto(q,p)}{ \xrightarrow{\hspace*{1.8cm}}} \mathcal N_2  \cong F\times N $$
 is just given by $(x, y)\mapsto (\gamma^{-1}(y),\gamma^{-1}(x))$. So, as this involution on  $\mathcal N_2 $   descends to \eqref{involution} on $\mathcal M^d_2$, the measure $\mu_l$ is invariant under \eqref{involution}, so $\mu$  is unimodular.\label {regcover}
\end {example}

\begin {example}[Restriction to saturated subsets]\label {restriction}
A subset $B\subset\mathcal M^d$ is \emph{saturated} if whenever $(M,p)\in B$ and $q\in M$, then $(M,q)\in B$ as well. Note that saturated Borel subsets of $\mathcal M^d$  form a  $\sigma$-algebra, $\mathcal S$.

If $\mu$ is unimodular and $B$ is a saturated Borel subset of $\mathcal M^d$, then $\mu |_B$  is unimodular as well, since $(\mu_B)_l$ and $(\mu_B)_r$ are just the restrictions of $\mu_l=\mu_r$ to the   set of all $(M,p,q)\in \mathcal M^d_2$ with $(M,p)\in B$.
\end {example}

 Finally, let $X $ be a complete Riemannian manifold with a transitive isometry group $\mathrm{Isom}(X)$.  Up to rooted isometry, the choice of root in $X$ is irrelevant, so we will denote the corresponding point in $\mathcal M ^ d $ by $X $ as well.

\begin {proposition}\label{terminologyprop}
If a Riemannian manifold $X $ has transitive isometry group, the atomic probability measure $\delta_X$ supported on $X \in \mathcal M ^ d $ is unimodular if and only if $\mathrm{Isom}(X)$ is a unimodular Lie group.
\end {proposition}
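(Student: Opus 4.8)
The plan is to reduce the mass transport principle for the atomic measure $\mu=\delta_X$ (unimodularity is insensitive to scaling $\mu$, so this is the only case) to the comparison of the left and right Haar measures on $G=\mathrm{Isom}(X)$. Fix a basepoint $o\in X$ and let $K=\mathrm{Stab}_G(o)$, which is a compact subgroup, so that $X\cong G/K$ as $G$-spaces and $\volume_X$ is, up to a positive scalar, the $G$-invariant measure on $G/K$ --- which exists precisely because $\Delta_G|_K\equiv\Delta_K\equiv 1$, $K$ being compact. First I would unwind the two measures $\mu_l,\mu_r$ on $\mathcal M^d_2$: taking $(X,o)$ as a representative of $X\in\mathcal M^d$, and checking via $G$-invariance of $\volume_X$ and isometry-invariance of $f$ that the resulting integrals are independent of the representative, one gets
\[
\int f\,d\mu_l=\int_X f(X,o,q)\,d\volume_X(q),\qquad \int f\,d\mu_r=\int_X f(X,p,o)\,d\volume_X(p)
\]
for every nonnegative Borel $f:\mathcal M^d_2\to\BR$.

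Next I would push both integrals onto $G$ using the quotient integration (Weil) formula $\int_{G/K}\phi\,d\volume_X=\int_G\phi\circ\pi\,dg$, valid for right-$K$-invariant integrands, where $dg$ is a left Haar measure and $\pi:G\to G/K$ the projection. For $\mu_l$ this gives directly $\int f\,d\mu_l=\int_G f(X,eK,gK)\,dg$. For $\mu_r$, since $f(X,p,o)$ depends on $p$ alone, the same formula yields $\int_G f(X,gK,eK)\,dg$; applying the isometry $g^{-1}$ to the triple $(X,gK,eK)$ produces $(X,eK,g^{-1}K)$, so this equals $\int_G f(X,eK,g^{-1}K)\,dg$, and the substitution $g\mapsto g^{-1}$ together with $\int_G\psi(g^{-1})\,dg=\int_G\psi(g)\,\Delta_G(g)^{-1}\,dg$ turns it into $\int_G f(X,eK,gK)\,\Delta_G(g)^{-1}\,dg$. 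Hence $\mu$ is unimodular if and only if
\[
\int_G h(g)\,\bigl(1-\Delta_G(g)^{-1}\bigr)\,dg=0\qquad\text{for every }h\text{ of the form }h(g)=f(X,eK,gK),\ f\geq 0\text{ Borel.}
\]

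From here both implications fall out. If $G$ is unimodular then $\Delta_G\equiv 1$ and the identity is trivial, so $\mu$ is unimodular. Conversely, $\Delta_G$ is continuous and bi-$K$-invariant (its restriction to the compact group $K$ is trivial), so if $\Delta_G\not\equiv 1$ there is $g_0$ with $\Delta_G(g_0)>1$, replacing $g_0$ by $g_0^{-1}$ if necessary. Choosing $f\geq 0$ continuous, supported in a small neighbourhood of $(X,eK,g_0K)\in\mathcal M^d_2$ (using the metric of \S\ref{secmetrizability}) and positive at that point, the function $h(g)=f(X,eK,gK)$ is nonnegative, not identically zero, and supported inside $\{\Delta_G>1\}$, whence $\int_G h(g)\bigl(1-\Delta_G(g)^{-1}\bigr)\,dg>0$ --- contradicting unimodularity. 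So $\mu$ unimodular forces $\Delta_G\equiv 1$.

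The main point to pin down carefully --- and the only real obstacle --- is the bookkeeping relating $\mathcal M^d_2$ to the coset picture: that the maps $q\mapsto(X,o,q)$ and $(p,q)\mapsto(X,p,q)$ into $\mathcal M^d_2$ are continuous (hence Borel), and that the ``fibre over $X$'', namely $\{(M,p,q):(M,p)\cong(X,o)\}$, is homeomorphic to the double coset space $K\backslash G/K$, so that a continuous bi-$K$-invariant function of $g$ as built above really does arise from a continuous function on $\mathcal M^d_2$ and $\Delta_G$ descends to a continuous function near $(X,eK,g_0K)$. These are continuity and measurability facts about the smooth topology that should follow from the appendix; granting them, the remainder is the standard interplay of Haar measure, the modular function, and integration over $G/K$.
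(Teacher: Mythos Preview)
Your proof is correct and follows essentially the same route as the paper: both identify the support of $\mu_l,\mu_r$ with the double coset space $K\backslash G/K$, and reduce unimodularity of $\mu$ to the inversion-invariance of left Haar measure on $G$. The only difference is cosmetic --- the paper argues via a disintegration of Haar measure over $K\backslash G/K$ (noting that inversion permutes the $K$-biinvariant fiber measures $\eta_{KgK}$), while you compute the two integrals directly and read off the factor $\Delta_G^{-1}$ from the substitution $g\mapsto g^{-1}$; these are two phrasings of the same calculation.
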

\begin {proof}
Fix a point $x_0\in X$ and let  $K < G = \mathrm{Isom}(X) $ be the stabilizer of $x_0$, so that we can identify $X = G/K$. Then $(\delta_X)_l$ is supported on $$X_2 := G \, \backslash \, X \times X $$
where $G$ acts diagonally. Since $G$ acts transitively, the natural map
\begin {equation}K \backslash X \overset{[x] \mapsto [(x_0,x)]}{ \xrightarrow{\hspace*{1.7cm}}} X_2.\label {ident}\end {equation}
is a homeomorphism.  With respect to this identification, $(\delta_X)_l$ is just the push forward $\overline \lambda$ of the Riemannian measure $\lambda$ on $X $ to $K \backslash X$.

 Since $K \backslash X=K \backslash G/K$,  the identification \eqref{ident} can also be written as $$K \backslash G / K\overset{KgK\mapsto [(x_0,g(x_0))]}{ \xrightarrow{\hspace*{2.2cm}}} X_2.$$   Conjugating, the involution $[(p,q)] \longrightarrow [(q,p)]$ on $X_2 $  becomes the inversion map
$$\overline i :K \backslash G / K \longrightarrow K \backslash G / K, \ \ \overline i([g])=[g^{-1}].$$
 So, with \eqref{involution} in mind, we want to show that the natural measure $\overline \lambda$ on $K \backslash G / K$ is $\overline i$-invariant if and only if $G $ is unimodular.
 
 Integrate $\lambda $ against the (unique) right $K$-invariant probability measures on the fibers of $G\longrightarrow G/K$; this gives a left Haar measure $\widetilde \lambda$ on $G$.   Then $G $ is unimodular if and only if $\widetilde \lambda$  is invariant under the inversion map $$i : G \longrightarrow G, \ \ i(g)=g^{-1}.$$  By definition, $\widetilde \lambda$  can be  expressed as an integral
 $$\widetilde \lambda = \int_{ KgK \in K \backslash G / K} \eta_{KgK} \, d\overline\lambda ,$$
 where  the fiber measure $\eta_{KgK}$   is the unique probability  measure  on $KgK$ that is $K$-biinvariant.  The action of $i : G\longrightarrow G $ permutes the $\eta_{KgK}$,  which implies that $\widetilde \lambda$ is $i$-invariant if and only if the factor measure $\overline\lambda$ is $\overline i $ invariant.  \end {proof}

As semisimple groups are unimodular, their symmetric spaces $X$ satisfy the assumptions above. One can also see directly that the atomic measure $\delta_X$ is unimodular when $X $ is a model space of constant curvature, i.e.\ when $X=\BR ^n, \BH^n$ or $S^n$.  The measures $(\delta_X)_l$ and $(\delta_X)_r$ are supported on the subset $X_2 \subset \mathcal M_2^d$ consisting of isometry classes of doubly pointed manifolds $(X,p,q)$.  Here, these $(X,p,q)$ are classified up to isometry by $d(p,q)$, which is symmetric in $p,q$. So, the involution $i$ in \eqref{involution} is the identity, and therefore preserves $(\delta_X)_l,(\delta_X)_r$.

\vspace{2mm}

\subsection {The No-Core Principle}
\label {nocoresec}
 This trivial, yet useful, consequence of unimodularity  was mentioned in \S \ref{fingenintro}.

\begin{reptheorem}{nocore}[The No-Core Principle]	 Suppose that $\mu$ is a unimodular probability measure on $\mathcal  M ^ d$ and that $f:\mathcal M^d\longrightarrow \{0,1\}$ is a Borel function. Then for $\mu$-almost every element  $(M,p)\in \mathcal  M ^ d$, we have 
	 $$0<\volume_M \{q\in M \ | \ f(M,q)=1\}<\infty \implies \volume(M)<\infty.$$ 
	\end{reptheorem}

	It is very important here that $\mu$ is a probability measure,  and not just $\sigma$-finite. Otherwise, one could take a fixed Riemannian manifold $N$ with infinite volume and no symmetries, and any finite (nonzero) volume subset $B\subset M$, and define $f(M,p)=1$ if there is an isometry $M \to N$  that takes $p$ into $B$.  As $N$ has no symmetries,  the map $N \longrightarrow\mathcal M^d, \ \ q \longmapsto (N,q)$  is an embedding, so $\volume_N$  pushes forward to a $\sigma$-finite unimodular measure $\mu_N$ on $\mathcal M^d$, and the pair $f$ and $\mu_N$  violates the statement of the theorem.

\begin {proof}
If the theorem fails, then for some $C>0$ the set of all $(M,p)$ such that \begin{equation}
 	0<\volume_M \{q\in M \ | \ f(M,q)=1\}<C \text{ and } \volume(M)=\infty \label{upthere}
 \end{equation}
has positive $\mu$-measure. This set is saturated and Borel, so we may assume after restriction and rescaling that $\mu$ is supported on it, as in Example \ref{restriction}.

 Well, by unimodularity we know that
\begin{align*}\int_{(M,p)\in\mathcal{M}^{d}}
\int_{q\in M} f(M,p) \, d \volume_M \, d\mu
=\int_{(M,p)\in\mathcal{M}^{d}}
\int_{q\in M} f(M,q) \, d \volume_M \, d\mu.
\end{align*}
On the right, the integrand is at most $C$ $\mu$-almost surely, by \eqref{upthere}. So, the right-hand side is finite.  Therefore, the integrand on the left is finite $\mu$-almost surely.  So, for $\mu$-a.e.\ $(M,p)$, we have 
$f(M,p)=0$ by \eqref{upthere}.
This implies that the left side is zero. So, the integrand on the right side is zero $\mu$-a.e., contradicting \eqref{upthere}.
\end {proof}

\subsection{Unimodularity and IRSs}
\label {urmirssec}
In previous work with Bergeron, Gelander, Nikolov, Raimbault and Samet, the authors studied the following group theoretic analogue of URMs, see \cite {Abertgrowth}.  
Let $G$ be a locally compact, second countable topological group and let $\mathrm {Sub}_G$ be the space of closed subgroups of $G $, endowed with the \emph{Chabauty topology}, see \S \ref{Chabautysection}.

\begin {definition}  An \emph {invariant random subgroup} (IRS) of $G $ is a random element of $\mathrm{Sub}_G $ whose law $\mu $ is a Borel measure invariant under conjugation by $G$.  (In an abuse of notation, we will often refer to the law $\mu$ itself as an IRS.)
\end {definition}

Invariant random subgroups supported on discrete groups of unimodular $G$  satisfy a useful group-theoretic unimodularity property.  Fix a Haar measure $\lambda$ for $G$.  If $H$ is a discrete subgroup of $G $, then $\lambda$ pushes forward locally to a Radon measure $\lambda_H$ on the coset space $H \backslash G$.  Let $\coset $ be the set of cosets of closed subgroups of $G $, endowed with its Chabauty topology.

\begin{theorem}[Biringer-Tamuz \cite{Biringerunimodularity}] \label {unimodularIRS} Assume $G$ is unimodular, and $\mu$ is a  Borel probability
  measure on $\subgroup$ such that $\mu$-a.e.\ $H\in\subgroup $ is
  discrete.  Then $\mu $ is an IRS if and only if for every Borel
  function $f \colon \coset \longrightarrow \mathbb R$, we have
  \begin{align*}
 \int_{\subgroup} \int_{H \backslash G}  f(Hg)\, d\lambda_H(Hg) \, d\mu(H) \nonumber  =\int_{\subgroup} \int_{H \backslash G}f(g^{-1}H) \, d\lambda_H(Hg)\, d\mu(H).
  \end{align*}
\end{theorem}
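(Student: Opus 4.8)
The plan is to reduce the desired identity to a statement about conjugation-invariance of $\mu$ via the standard bijection between closed subgroups and their coset spaces, using the fact that $\lambda$ is a \emph{bi-invariant} (hence unimodular) Haar measure. First I would recall that for a discrete $H < G$, the pushforward $\lambda_H$ on $H\backslash G$ is the unique $G$-invariant Radon measure (under right translation $Hg \mapsto Hgg_0$) with the normalization inherited from $\lambda$ on a fundamental domain; this is where unimodularity of $G$ enters, guaranteeing that this measure is well-defined and that $\lambda_H$ is independent of whether we build it from left or right cosets. Then I would recast both integrands: $\int_{H\backslash G} f(Hg)\,d\lambda_H(Hg)$ is an integral over the pair $(H, Hg)$, and $\int_{H\backslash G} f(g^{-1}H)\,d\lambda_H(Hg)$ can be reindexed by the substitution $g \mapsto g^{-1}$, turning $g^{-1}H$ into a left coset of $H$ but also converting $\lambda_H$ into the corresponding measure on $H\backslash G$ read via $Hg \leftrightarrow g^{-1}H \leftrightarrow (g^{-1})^{-1}$ — again legitimate precisely because $G$ is unimodular so that $g \mapsto g^{-1}$ preserves $\lambda$.

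Next I would identify the two sides of the claimed equation with the "left" and "right" measures $\mu_l,\mu_r$ on an appropriate space of "doubly rooted subgroups." Concretely, let $\coset$ be as in the statement, and consider the map that sends $(H,g) \mapsto (H, g^{-1}H g)$, or rather work with the space of pairs (subgroup, coset) and the involution swapping the roles of the identity coset and $Hg$. The left side integrates a paying function where $H$ "pays" the coset $Hg$; the right side integrates where the coset $Hg$ "pays" $H$. The key computation is to show that the involution $(H, Hg) \mapsto (g^{-1}Hg, g^{-1}H)$ on the relevant space is measure-preserving with respect to $\mu \otimes \lambda_H$ \emph{if and only if} $\mu$ is conjugation-invariant. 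The forward direction (IRS $\Rightarrow$ MTP) follows by Fubini: unfold the left side as an integral over $G$ against $\lambda$ of $f$ evaluated at $(H, Hg)$, change variables $H \mapsto g^{-1}Hg$ using conjugation-invariance, and match with the right side. The converse (MTP $\Rightarrow$ IRS) is obtained by specializing $f$: for a fixed compact-open-ish test set, take $f(Hg) = \mathbf{1}_A(g^{-1}Hg)$ for Borel $A \subset \subgroup$ and a carefully chosen normalizing region, so that the MTP identity collapses to $\mu(A) = \mu(g_0 A g_0^{-1})$ for a dense set of $g_0$, and then use continuity/density plus the fact that conjugation is a continuous action on $\subgroup$ to conclude full invariance.

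I expect the main obstacle to be the measure-theoretic bookkeeping in the change of variables $g \mapsto g^{-1}$ on the coset spaces: one must check carefully that $\lambda_H$ transported by inversion becomes $\lambda_{H'}$ for the appropriate conjugate $H'$, and that the resulting "twisted diagonal" action of $G$ on the space of pairs is measure-preserving. This is exactly the point where unimodularity of $G$ is indispensable — without it, $\Delta_G$ factors appear and the two sides genuinely differ — and getting the normalizations to cancel (fundamental domain volumes, the modular function evaluated to $1$) requires care. A secondary, softer obstacle is the converse direction: extracting conjugation-invariance of $\mu$ from the integral identity requires choosing enough test functions $f$ on $\coset$ to separate conjugates of Borel sets in $\subgroup$; here I would lean on the fact (provable from second countability of $G$ and the Chabauty topology being Polish) that Borel sets in $\subgroup$ are generated by a countable family of "Chabauty-box" sets, for each of which a suitable $f$ can be manufactured. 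Everything else — Fubini, the identification of $\lambda_H$ as the unique invariant measure, the continuity of conjugation — is routine and can be cited or dispatched quickly.
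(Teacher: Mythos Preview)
The paper does not prove this theorem; it is stated as a result of Biringer--Tamuz \cite{Biringerunimodularity} and used as a black box in the subsequent arguments (Propositions \ref{urmirsprop} and \ref{urmirsdisc}). The only commentary the paper offers is the remark immediately following the statement, rephrasing the identity as the equality of a ``left'' and a ``right'' measure on $\coset$, which matches the intuition you articulate. So there is no proof in the paper against which to compare your proposal.

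That said, your outline is the standard one and is essentially correct. The forward direction is exactly the change of variables you describe: lift to an integral over $G$, use inversion-invariance of $\lambda$ (this is where unimodularity enters), and then conjugation-invariance of $\mu$ to pass from $H$ to $g^{-1}Hg$. Your identification of the involution $(H,Hg)\mapsto (g^{-1}Hg,\,g^{-1}H)$ as the crux is right, and matches the paper's ``left/right measure'' remark. For the converse, your plan to specialize $f$ is also the right idea, though in practice one typically takes $f(Hg)=\phi(g^{-1}Hg)\psi(Hg)$ for $\phi$ a test function on $\subgroup$ and $\psi$ a bump on $\coset$ concentrated near the identity coset, rather than trying to hit a specific $g_0$; this avoids the density/continuity step you flag as a soft obstacle and gives conjugation-invariance of $\mu$ directly from the integral identity. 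The measure-theoretic bookkeeping you anticipate (that $\lambda_H$ transported by inversion and conjugation becomes $\lambda_{g^{-1}Hg}$) is genuine but routine once unimodularity is in hand.
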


As noted in \cite{Biringerunimodularity}, a more aesthetic version of the equality above is 
\begin {align*} \int_{H \in\mathrm {Sub}_G } \int_{g \in H \backslash G} f (Hg) \, d\lambda_H \, d\mu
= \int_{H \in\mathrm {Sub}_G } \int_{g \in G/H} f (gH) \, d\lambda^H \, d\mu,
\end {align*}
where $\lambda ^ H $ is the measure on $G/H$ obtained by locally pushing forward $\lambda $.  In other words, the `right' measure obtained on $\coset$ by viewing it as the space of \emph{right} cosets and then integrating the natural invariant measures on $H \backslash  G$ against $\mu$ is the same as the analogous `left' measure on $\coset$.  The version given in the theorem is that which we will use here, though.

Suppose now that our unimodular $G $ acts isometrically and transitively with compact stabilizers on a Riemannian $d$-manifold $X $, and write $X = G/K $, where $K$ is a compact subgroup of $G$.   A \emph {$(G,X)$-manifold} is a quotient $H \backslash X$, where $H < G$  acts freely and properly discontinuously.  Let 
\begin {align*}
\mathrm{Sub}^{df}_G &= \{ H \in \mathrm{Sub}_G \  | \ H \text {   acts freely  and properly discontinuously on } X \},\end {align*}
 and let $\mathcal M^{(G,X)} \subset \mathcal M^d$  be the space of all  pointed $(G,X)$-manifolds.

\begin {proposition}[URM vs. IRS, transitive case]\label{urmirsprop}
The continuous map
$$\phi : \mathrm{Sub}^{df}_G \longrightarrow \mathcal M ^{(G,X)}, \ \ \phi (H) = (H \backslash X,[id])$$ induces a (weak* continuous) map
$$\phi_*: \left\{\text {IRSs } \mu \text { of } G \text { with } \mu (\mathrm{Sub}^{df}_G ) = 1 \right\} \longrightarrow\left\{\text {URMs } \nu \text { with } \nu (\mathcal M ^{(G,X)}) = 1 \right\}. $$
If $X $ is simply connected and $G =\mathrm {Isom} (X)$ is the full isometry group of $X $, then $\phi_* $ is a weak* homeomorphism.
\end {proposition}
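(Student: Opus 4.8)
The plan is to verify the statement in three stages: (i) $\phi$ is continuous and descends/pushes forward IRSs to URMs; (ii) when $X$ is simply connected and $G=\isometries(X)$, the map $\phi$ is a bijection onto $\mathcal M^{(G,X)}$ with continuous inverse; (iii) the induced map on measures is a weak* homeomorphism. For (i), continuity of $\phi$ is a Chabauty-to-smooth-topology comparison: a Chabauty-convergent sequence $H_i\to H$ in $\sub^{df}_G$ yields, via the covering $X\to H_i\backslash X$, $C^\infty$-embeddings of large balls around $[id]$ in $H\backslash X$ into $H_i\backslash X$, because for large $R$ the only relations identifying points of $B_X([id],R)$ come from finitely many elements of $H$, and these are Chabauty-approximated by elements of $H_i$; this is essentially the standard fact that geometric convergence of quotients follows from algebraic convergence of the groups. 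That $\phi_*$ sends IRSs to URMs is where I would invoke Theorem~\ref{unimodularIRS}: given an IRS $\mu$ supported on $\sub^{df}_G$, for a nonnegative Borel $f$ on $\mathcal M_2^d$ one computes $\int\int_{q\in M}f(M,[id],q)\,dq\,d(\phi_*\mu)$ by pulling back along $\phi$ and unfolding the inner integral over $M=H\backslash X$ to an integral over the coset space $H\backslash G$ (using $X=G/K$ and that $K$ is compact, so that $\volume_{H\backslash X}$ is locally the push-forward $\lambda_H$); the resulting expression is exactly the left-hand side of the displayed equality in Theorem~\ref{unimodularIRS} applied to the $G$-invariant function on $\coset$ built from $f$, and the right-hand side matches $\int\int_q f(M,q,[id])\,dq\,d(\phi_*\mu)$ after the same unfolding. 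Since the basepoint $[id]$ in a homogeneous-covering quotient can be moved by $G$, averaging over the base point is harmless, and one gets the MTP~\eqref{MTP} for $\phi_*\mu$.

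\textbf{The bijectivity step.} Now assume $X$ simply connected and $G=\isometries(X)$. Surjectivity of $\phi$ onto $\mathcal M^{(G,X)}$ is essentially the definition: a $(G,X)$-manifold is $H\backslash X$ with $H<G$ acting freely properly discontinuously, and choosing $[id]$ as basepoint exhibits it as $\phi(H)$. For injectivity, suppose $\phi(H_1)=\phi(H_2)$ in $\mathcal M^d$, i.e.\ there is a pointed isometry $g\colon(H_1\backslash X,[id])\to(H_2\backslash X,[id])$. Since $X$ is the universal cover of both (it is simply connected) and the covering maps are Riemannian, $g$ lifts to an isometry $\tilde g\colon X\to X$ fixing the chosen lift of the basepoint appropriately; because $G=\isometries(X)$ is the \emph{full} isometry group, $\tilde g\in G$, and conjugation by $\tilde g$ carries $H_1$ to $H_2$ (both realized as the deck groups). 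So $\phi$ is injective on $\sub^{df}_G$ modulo nothing --- wait, it is genuinely injective as stated because $\phi(H)=(H\backslash X,[id])$ pins the basepoint, so $\tilde g$ must normalize... more carefully: the identification forces $\tilde g H_1 \tilde g^{-1}=H_2$ and $\tilde g\in G$ maps the basepoint lift to a point in the same $H_2$-orbit, which one can correct by an element of $H_2$, leaving $H_1=H_2$. Continuity of $\phi^{-1}$ then follows from the fact that smooth convergence of $H_i\backslash X\to H\backslash X$ lets one lift larger and larger pieces to $X$, producing elements of $G$ that conjugate $H$ to groups Chabauty-approaching $H_i$; i.e.\ $\phi$ is a homeomorphism $\sub^{df}_G\to\mathcal M^{(G,X)}$. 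I expect this lifting-of-convergence argument to be the main obstacle, since it requires care to promote $C^\infty$-closeness of quotients to Chabauty-closeness of the groups --- one must track how nearly-isometric embeddings of balls in $X$ interact with the deck actions, and rule out collapsing of injectivity radius (here simple-connectedness and completeness of $X$, plus discreteness of the $H_i$, are what save us, as in the stability results of \S\ref{stabilitysec}).

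\textbf{From a homeomorphism of base spaces to a homeomorphism of measure spaces.} Once $\phi\colon\sub^{df}_G\to\mathcal M^{(G,X)}$ is a homeomorphism of Polish spaces, push-forward $\phi_*$ is automatically a weak* homeomorphism between the space of all Borel probability measures on $\sub^{df}_G$ and those on $\mathcal M^{(G,X)}$, with inverse $(\phi^{-1})_*$. It remains to check that this bijection restricts to the asserted subspaces: an IRS pushes to a URM by step (i), and conversely if $\nu$ is a URM on $\mathcal M^{(G,X)}$ then $(\phi^{-1})_*\nu$ is conjugation-invariant --- conjugation on $\sub_G$ corresponds under $\phi$ to changing the basepoint within a fixed $(G,X)$-manifold (since $g\in G$ acts on $X$ and moves $[id]$, giving $(H\backslash X, [g])\cong(g^{-1}Hg\backslash X,[id])$), and a URM is by definition invariant under the change-of-basepoint structure encoded in the MTP; running Theorem~\ref{unimodularIRS} in reverse, the MTP for $\nu$ yields the displayed coset equality, hence conjugation-invariance of $(\phi^{-1})_*\nu$. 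Weak* continuity in both directions is inherited from the continuity of $\phi$ and $\phi^{-1}$, and from the weak* continuity of $\mu\mapsto(\mu_l,\mu_r)$ noted earlier in this section. This completes the plan; the delicate point throughout, and the one I would write most carefully, is the equivalence of topologies (step ii) and the bookkeeping that the unfolding of $\volume_{H\backslash X}$-integrals into $\lambda_H$-integrals over $\coset$ is measure-exact, so that the two instances of Theorem~\ref{unimodularIRS} line up precisely with the two sides of~\eqref{MTP}.
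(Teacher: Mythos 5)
There is a genuine gap in step (ii), and it propagates through the rest of the argument. You claim $\phi\colon\sub^{df}_G\longrightarrow\mathcal M^{(G,X)}$ is injective, but it is not, and the paper explicitly says so just before its proof: conjugating $H$ by an element $k\in K$ (the stabilizer of the basepoint $[id]=eK$) does not change $\phi(H)$, since the isometry of $X$ induced by $k$ fixes $[id]$ and descends to a pointed isometry $(H\backslash X,[id])\cong(kHk^{-1}\backslash X,[id])$. Your own lifting argument already produces exactly this: after correcting by an element of $H_2$ you get $\tilde g\in K$ with $\tilde g H_1\tilde g^{-1}=H_2$, and the last clause ``leaving $H_1=H_2$'' does not follow --- $K$-conjugation is nontrivial on $\sub^{df}_G$. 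The fibers of $\phi$ are precisely $K$-conjugacy classes, so $\phi$ is a genuine $K$-quotient, not a homeomorphism.

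Because of this, step (iii) as written cannot stand: you cannot transfer measures back and forth by composing with $\phi^{-1}$, since $\phi^{-1}$ does not exist. The actual work of the proof is to show that $\phi_*$ is nonetheless a bijection on the relevant measure spaces. For injectivity of $\phi_*$, one disintegrates an IRS $\mu$ over $\phi$ (Rohlin) and argues that, since the $K$-action by conjugation preserves $\mu$ and the $\phi$-fibers, the conditional fiber measures must be the unique $K$-invariant probability measures $\eta_{(M,p)}$ on each fiber; hence $\mu$ is recoverable from $\phi_*\mu$. For surjectivity of $\phi_*$, one defines the candidate inverse by $\mu=\int\eta_{(M,p)}\,d\nu$ and then must \emph{verify} that the result is conjugation-invariant --- this is not automatic and requires an explicit identity relating $\lambda_H\,d\eta_{(M,p)}(H)$ to $\eta_{(M,p,q)}\,d\volume_M(q)$ so that the MTP for $\nu$ translates into the coset MTP of Theorem~\ref{unimodularIRS} for $\mu$. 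Your outline of the IRS$\Rightarrow$URM direction and the surjectivity of $\phi$ are fine, but the heart of the homeomorphism claim is exactly the fiber-measure analysis you skip by assuming $\phi$ is injective.
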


Recall that a \emph{unimodular random manifold} (URM) is a random element of a unimodular probability measure $\nu$ on $\mathcal M^d$. However, we routinely abuse  terminology  by calling measures IRSs, and  we will similarly call $\nu$ itself a URM.
Note that $\phi$ is surjective, but is not in general injective, since conjugating $H$ by an element of $K$ does not change its image. Continuity of $\phi$ follows from Proposition 3.10 and Lemma 3.7 of \cite{abert2012growth}.

\begin {proof}

Fix a Haar measure $\lambda$ on $G $ normalized so that $\pi_*\lambda= \mathrm{vol_X}$, where $\pi: G \longrightarrow G/K= X$ is the projection.  If $H\in \mathrm{Sub}^{df}_G$ and $\pi_H: H \backslash G \to H \backslash X$ is the natural projection, it follows that $(\pi_{H})_*\lambda_H = \mathrm{vol}_{H \backslash X}.$

\vspace {2mm}

\noindent \it Unimodularity of the image. \rm 
\noindent Let $\mu $ be an IRS with $\mu (\mathrm{Sub}^{df}_G ) = 1 $.  We must show that the measures $(\phi_* \mu)_l $ and $(\phi_*\mu)_r $ in Definition \ref{unimodularity} are equal.  So, let $f:\mathcal M_2 ^ d \longrightarrow \mathbb R$ be a Borel function.  We then define a new function 
\[
\tilde f: \mathrm{Cos}_G^{df} \longrightarrow \mathbb R, \ \ \tilde f (Hg) = f(H \backslash X,[id],[g]),
\]
where $ \mathrm{Cos}_G^{df} $ is the set of cosets of subgroups $H \in \subgroup ^ {df}$.
We then compute: 
\begin {align*}
\integral_{\mathcal M_2 ^ d} \, f \ d (\phi_*\mu)_l & = \int_{(H\backslash X, [id]) \in \mathcal M ^ d} \integral_{[g] \in H \backslash X} f (H \backslash X , [id] , [g]) \ d\volume_{H \backslash X} \, d (\phi_*\mu) \\
& = \int_{H\in \mathrm{Sub}^{}_G} \integral_{Hg \in H \backslash G} \tilde f (Hg) \ d\lambda_H \, d \mu, \\
& =\int_{H \in\mathrm {Sub}_G } \int_{Hg \in H \backslash G} \tilde f ((g^{-1}Hg) g ^ {- 1}) \, d\lambda_H \, d\mu, \\
& =  \int_{(H\backslash X, [id]) \in \mathcal M ^ d} \integral_{[g] \in H \backslash X} f (g ^ {- 1}Hg \backslash X , [id] , [g ^ {- 1}]) \ d\volume_{H \backslash X} \, d (\phi_*\mu) \\
& =  \int_{(H\backslash X, [id]) \in \mathcal M ^ d} \integral_{[g] \in H \backslash X} f (H \backslash X , [g] , [id]) \ d\volume_{H \backslash X} \, d (\phi_*\mu) \\
& =\integral_{\mathcal M_2 ^ d} \, f \ d (\phi_*\mu)_r.
\end {align*}
Here, the first equation is the definition of $d (\phi_*\mu)_l $, keeping in mind that it is enough to integrate over rooted manifolds of the form $(H\backslash X, [id])$. The second and forth equations follow from the normalization of the Haar measure, while the third is Theorem \ref {unimodularIRS}.  The fifth equation reflects the fact that $(H \backslash X , [g] , [id])$ and $(g ^ {- 1}Hg \backslash X , [id] , [g ^ {- 1}])$ are isometric as doubly rooted manifolds.

\vspace {2mm}
\noindent \it The case of the full isometry group. \rm  Assume now that $X $ is simply connected and $G=\mathrm{Isom}(X)$ is the full isometry group of $X $.

We first analyze the fibers of $\phi : \mathrm{Sub}^{df}_G \longrightarrow \mathcal M ^{(G,X)} $.  Conjugate subgroups of $G $ give isometric $X$-quotients, and if two subgroups $H, H'\in \subgroup ^ {df}$ are conjugate by an element of $K $, then the \emph {pointed} manifolds $(H \backslash X,[id]) $ and $ (H' \backslash X,[id])$ are isometric.  Conversely, as $K$ is the full group of isometries of $X$ fixing $[id]$, any based isometry of quotients lifts to a $K$-conjugacy of subgroups, so we have:
\begin {align}\text {fibers of } \phi :  \mathrm{Sub}^{df}_G \longrightarrow \mathcal M ^{(G,X)} \ \ \longleftrightarrow \ \  K\text {-conjugacy classes in } \mathrm{Sub}^{df}_G\label {phifibers}.\end {align}

\vspace {2mm}
\noindent \it Injectivity. \rm Let $\mu$ is an IRS with $\mu (\subgroup^{df})=1$.  By Rohlin's disintegration theorem (see \cite [Theorem 6.2]{Simmonsconditional}), $\mu $ disintegrates as an integral
$$\mu = \int_{(M, p)\in \mathcal M ^{(G,X)}} \eta_{(M, p)} \, d\phi_*(\mu), $$
where $\eta_{(M, p)} $ is a Borel probability measure on the preimage $\phi ^ {- 1} (M, p) $.  The $K $-action by conjugation on $\mathrm{Sub}^{df}_G$ leaves the $\phi $-fibers invariant \eqref{phifibers} and preserves $\mu $, so it must preserve $\phi_*\mu $-a.e.\ fiber measure $\eta_{(M, p)} $.   As each $\phi $-fiber is a $K$-homogeneous space, each $\eta_{(M, p)}$ is just the push forward of the unique Haar probability measure on $K $.  Therefore, $\mu $ can be recovered by integrating the canonical measures $\eta_{(M, p)}$ against $\phi_* \mu$.   So, $\phi_*$ is injective.

\vspace {2mm}
\noindent \it Surjectivity. \rm If $\nu$ is an URM with $\nu (\mathcal M ^{(G,X)}) = 1 $, define a measure on $\subgroup $ by
\begin{equation}\label {inverseirs}\mu = \int_{(M, p)\in \mathcal M ^{(G,X)}} \eta_{(M, p)} \, d\nu, \end{equation}
where as above each $\eta_{(M, p)}$ is the unique $K$-invariant probability measure on $\phi ^ {- 1}(M,p)$.  Then $\phi_*(\mu)=\nu$, and we claim that $\mu $ is an IRS.

 Our strategy will be to use the unimodularity of $\nu$ to establish the equality in Theorem \ref {unimodularIRS} (the mass transport principle for IRSs). Consider the map
$$\phi_2 : \coset^{df} \longrightarrow \mathcal M^{(G,X)}_2, \ \ \phi_2 (Hg) = (H \backslash X,[id],[g]),$$
where $\mathcal M^{(G,X)}_2$ is the space of doubly rooted $X $-manifolds.  The fibers of $\phi_2 $ are exactly the $K\times K $-orbits $(Hg)^{K \times K} $ in $\coset ^ {df} $, where the action is defined as 
\begin {align}K\times K  \curvearrowright \coset, \ \ \ (k,k')\cdot Hg :=(kHk^{-1})kgk'.\label {kkaction}\end {align}

We define $\eta_{(M,p,q)}$ to be the unique $K\times K $-invariant probability measure on the fiber $\phi_2 ^ {- 1} (M,p,q) $, i.e.\ the push forward of the Haar measure on $K\times K$ under the conjugation action.  
\begin {claim} For $(M, p) \in \mathcal M ^{(G,X)}$, we have $ \lambda_H \, d\eta_{(M,p)}(H) = 
 \eta_{(M,p,q)} \, d\mathrm{vol}_{M}(q) . $ \label {measureequality}
\end {claim}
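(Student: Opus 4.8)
\textbf{Proof proposal for Claim \ref{measureequality}.}

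The plan is to verify the claimed identity of measures on $\coset^{df}$ by testing both sides against an arbitrary nonnegative Borel function $F : \coset^{df} \longrightarrow \BR$, unwinding the definitions of the two fiber measures $\eta_{(M,p)}$ and $\eta_{(M,p,q)}$ as pushforwards of Haar measures, and then identifying the two iterated integrals via Fubini and the normalization of Haar measure. Concretely, fix $(M,p) = (H\backslash X, [id]) \in \mathcal M^{(G,X)}$; I should first note that $\eta_{(M,p)}$ is supported on the $K$-conjugacy class of $H$, so the left-hand side $\lambda_{H'}\, d\eta_{(M,p)}(H')$ is a measure on $\coset^{df}$ supported on $\bigcup_{k\in K}\coset$-cosets of $kHk^{-1}$. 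Pushing this forward along $\phi_2$ lands everything in the fiber $\phi_2^{-1}(M,p,q)$ as $q$ ranges over $M$, which is exactly where the right-hand side lives, so the supports match and I only need to match the measures fiberwise.

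The key computation: for $F$ on $\coset^{df}$, write
\[
\int_{\subgroup} \Big(\int_{H'\backslash G} F(H'g)\, d\lambda_{H'}(H'g)\Big)\, d\eta_{(M,p)}(H').
\]
By definition $\eta_{(M,p)}$ is the pushforward of normalized Haar measure $m_K$ on $K$ under $k \mapsto kHk^{-1}$, so this becomes $\int_K \int_{(kHk^{-1})\backslash G} F\, d\lambda_{kHk^{-1}}\, dm_K(k)$. Changing variables in the inner integral by right translation (using that $\lambda$ is bi-invariant since $G$ is unimodular, and that $\lambda_{kHk^{-1}}$ is the local pushforward of $\lambda$), I can rewrite the inner integral as an integral over $H\backslash G$, and then further split $H\backslash G \to H\backslash X = M$ using $(\pi_H)_*\lambda_H = \mathrm{vol}_M$ together with the fiber measure $m_K$ on each $K$-coset in the fibers of $H\backslash G \to H\backslash X$. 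After reorganizing, the $K\times K$ worth of averaging (one $K$ from the conjugation defining $\eta_{(M,p)}$, one $K$ from the fiber of $H\backslash G \to H\backslash X$) is precisely the pushforward of $m_K \times m_K$ under the action \eqref{kkaction}, i.e.\ $\eta_{(M,p,q)}$, while the integral over $q\in M$ against $\mathrm{vol}_M$ is what remains. This yields $\int_M \int F\, d\eta_{(M,p,q)}\, d\mathrm{vol}_M(q)$, which is the right-hand side tested against $F$.

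The main obstacle I anticipate is bookkeeping the two separate roles of $K$ and confirming they assemble correctly into the single $K\times K$-action \eqref{kkaction} on $\coset$: the left factor of $K$ must act by conjugating the subgroup and simultaneously left-translating the coset representative (this is the $k \mapsto kHk^{-1}$, $g \mapsto kg$ part coming from the definition of $\eta_{(M,p)}$), while the right factor of $K$ acts by right translation $g \mapsto gk'$ (coming from the fiber $K$ of $H\backslash G \to H\backslash X$), and one must check these commute and that the orbit map is measure-preserving from $m_K\times m_K$ onto $\eta_{(M,p,q)}$. A secondary technical point is justifying all the changes of variables at the level of the locally-finite measures $\lambda_{H'}$ on coset spaces — here I would lean on unimodularity of $G$ and the normalization $\pi_*\lambda = \mathrm{vol}_X$ fixed at the start of the proof of Proposition \ref{urmirsprop}, exactly as in the computation establishing unimodularity of the image above. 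Everything else is a routine Fubini argument once the Haar-measure disintegrations are set up.
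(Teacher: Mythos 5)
Your proof is correct and rests on the same underlying ingredients as the paper's — the Haar normalization $(\pi_H)_*\lambda_H = \volume_M$, the descriptions of $\eta_{(M,p)}$ and $\eta_{(M,p,q)}$ as pushforwards of Haar measures on $K$ and $K\times K$, and Fubini — but it is organized differently. You test against a Borel function, unwind $\eta_{(M,p)}$ directly to an integral over $K$, change variables to reduce $\lambda_{kHk^{-1}}$ to $\lambda_H$, disintegrate $\lambda_H$ over $M$ with $K$-fibers (using that $H$ acts freely, so each fiber is a full $K$-torsor), and finally recognize the resulting $K\times K$-integral as $\eta_{(M,p,q)}$. The paper instead introduces an intermediate $\pi_r$-disintegration $\eta_{(M,p,q)} = \int_\subgroup \eta^F_{(M,p,q)}\,d\eta_{(M,p)}$ with fiber measures supported on $N_K(F)\times K$-orbits, then disintegrates $\lambda_H$ into the same $N_K(H)\times K$-orbits via \eqref{eta2}, and combines. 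Your route avoids the $N_K(H)\times K$-orbit bookkeeping entirely, at the cost of having to check by hand that the outer conjugation variable and the inner fiber variable assemble into the single $K\times K$-action \eqref{kkaction}; you flagged precisely that as the key verification, and it does go through — after the substitution $g = k g'$, the coset becomes $(kHk^{-1})kg'$, and the fiber parametrization gives $g'\mapsto g'k'$, matching $(k,k')\cdot Hg' = (kHk^{-1})kg'k'$. One small slip worth correcting: the bijection $(kHk^{-1})\backslash G \to H\backslash G$ identifying $\lambda_{kHk^{-1}}$ with $\lambda_H$ is induced by \emph{left} translation $g\mapsto k^{-1}g$, not right translation; since $G$ is unimodular both preserve $\lambda$, so the argument is unaffected, but it should be phrased correctly.
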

We will prove the claim below, but first we use it to prove that $\mu $ is an IRS, by deriving the equality in Theorem \ref {unimodularIRS} from the unimodularity of $\nu$.  Suppose that $ f: \coset^{df} \longrightarrow \mathbb R$ is a Borel function, and define a new function
$$(\phi_2)_* f : \mathcal M ^{(G,X)}_2  \longrightarrow \mathbb R, \ \ (\phi_2)_* f (M, p,q) := \int_{Hg \in \phi_2 ^ {- 1}(M,p,q)} f (Hg) \, d\eta_{(M,p,q)}.$$

We first compute the left side of the equality in Theorem \ref {unimodularIRS}.
\begin {align*}
 & \ \ \ \  \int_{\subgroup} \int_{H \backslash G}  f(Hg)\, d\lambda_H(Hg) \, d\mu(H) \nonumber  \\
 &=\int_{(M,p)\in \mathcal M ^ d} \int_{H \in \phi ^ {- 1}(M,p)} \int_{H \backslash G}  f(Hg)\ d\lambda_H(Hg) \, d\eta_{(M,p)} \, d\nu \nonumber  \\
 &=\int_{(M,p)\in \mathcal M ^ d} \int_{q\in M} \int_{Hg \in \phi ^ {- 1}(M,p,q)} f(Hg)\ d\eta_{(M,p,q)} \, d\mathrm{vol}_{M} \, d\nu \nonumber \\
& =\int_{\mathcal M_2 ^{(G,X)}} (\phi_2)_* f \, d\nu_l \nonumber
\end {align*}
and using a similar argument, we compute the right side:
\begin {align*}
 & \ \ \ \  \int_{\subgroup} \int_{H \backslash G}  f(g^{-1}H)\, d\lambda_H(Hg) \, d\mu(H) \nonumber  \\
 &=\int_{(M,p)\in \mathcal M ^ d} \int_{q\in M} \int_{Hg \in \phi ^ {- 1}(M,p,q)} f(g^{-1}H)\ d\eta_{(M,p,q)} \, d\mathrm{vol}_{M} \, d\nu \nonumber \\
 &=\int_{(M,p)\in \mathcal M ^ d} \int_{q\in M} \int_{Hg \in \phi ^ {- 1}(M,q,p)} f(Hg)\ d\eta_{(M,q,p)} \, d\mathrm{vol}_{M} \, d\nu \nonumber \\
& =\int_{\mathcal M_2 ^{(G,X)}} (\phi_2)_* f \, d\nu_r \nonumber. 
\end {align*}

So, the unimodularity of $\nu$ implies that $\mu $ is an IRS.

\vspace {2mm}

\noindent \it Weak* homeomorphism. \rm  Finally,  recall that \eqref{inverseirs}  defines an inverse for $\phi_*$. Weak* continuity of the inverse will follow if we show that the map
$$\mathcal M^{(G,X)} \longrightarrow \mathcal P(\subgroup), \ \ (M,p) \longmapsto \eta_{(M,p)}$$
is continuous,  where $\mathcal P(\subgroup)$  is the space of Borel probability measures on $\subgroup$,  considered with the weak* topology. However, $ \eta_{(M,p)}$  is the unique $K$-invariant measure on $\phi^{-1}(M,p)$, and if $(M_i,p_i) \to (M,p)$, we can  pass to a subsequence so that $ \eta_{(M_i,p_i)}$ converges. The limit must be supported on $\phi^{-1}(M,p)$, and is $K$-invariant since its approximates are, so must be $ \eta_{(M,p)}$.
\end {proof}

 Finally, we promised to prove Claim  \ref {measureequality} during the proof above:

\begin {proof} [Proof of Claim \ref {measureequality}]
Let $(M, p, q) \in \mathcal M ^{(G,X)}_2$ and let $Hg \in \coset$ such that $\phi_2(Hg)= (M, p, q) $.  
By \eqref {kkaction}, we have a commutative diagram
$$\xymatrix{
K \times K  \ar[d]_{\pi_l}  \ar[rrrr]^{(k,k') \longmapsto (kHk^{-1})kgk'}& & & & \coset    \ar[d]^{\pi_r}\\
 \sfrac{K \times K }{N_K(H) \times K} \ar@{^{(}->}[rrrr]^{[(k,k')]\longmapsto kHk^{-1}} &   & & &  \subgroup,}$$
where $\pi_r:\coset \longrightarrow \subgroup, \ \pi(Hg )= H$.  As we had previously defined  $\eta_{(M,p)}$ as the $K$-invariant probability measure on $H^K \subset \subgroup$, the diagram shows that
 $$(\pi_r)_*(\eta_{(M, p, q)}) =\eta_{(M,p)}. $$

The Haar probability measure on $K\times K $ disintegrates under $\pi_l$ as an integral of invariant probability measures on the cosets of $N_K(H) \times K$ against the pushforward measure on $\sfrac{K \times K }{N_K(H) \times K}$.  Here, the coset $(k,1)N_K(H) \times K$ has a measure invariant under its stabilizer, which is $N_K(kHk^{-1}) \times K$.
 This disintegration pushes forward to a $\pi_r$-disintegration of $\eta_{(M,p,q)}$:
\begin {equation}\eta_{(M,p,q)} = \int_{F \in \subgroup} \eta_{(M,p,q)}^{F} \ d\eta_{(M, p)},\label {eta1}\end {equation}
where $\eta ^{F}_{(M, p, q)}=0$ unless $F$ is a conjugate of $H$, in which case $\eta ^{F}_{(M, p, q)}$ is an invariant probability measure on the $N_K(F) \times K$-orbit in $ \coset$ obtained by intersecting $\phi_2 ^ {-1} (M, p, q) $ with $F \backslash G$.  

Now fix $(M,p) \in \mathcal M ^{(G,X)}$, let $H \in \phi^{-1}(M,p)$ and fix an isometric identification of $(H \backslash G /K,[id])$ with $(M,p)$.  The fibers of the composition
$$\xymatrix@R-2pc{
    H \backslash  G \ar[r] & M  \ar[r] & \mathcal M  ^{(G,X)}_2\\
     Hg \ar@{|->}[r]     & [g]\in H \backslash G /K \ar@{|->}[r]     & (H \backslash G /K,[id],[g]).
}$$
are exactly the $N_K(H)\times K$-orbits in $H \backslash  G$.  As $\lambda_H $ is invariant under the action of $N_K(H)\times K$, it disintegrates as an integral of invariant probability measures on these orbits against its pushforward under the composition.  Under the first map, $\lambda_H $ pushes forward to $\volume_M$, so we may write instead:
\begin {equation}\lambda_H= \int_{q\in M} \eta^H_{(M,p,q)} \, d\mathrm{vol}_M .\label {eta2}\end {equation}

Combining Equations \eqref{eta1} and \eqref{eta2}, we can now prove the claim:
\begin {align*}
\int_\subgroup \lambda_H \, d\eta_{(M,p)} & = \int_\subgroup \int_M \eta ^ H_{(M, p, q)} \, d\volume_M (q) \, d\eta_{(M, p)} \\
& =  \int_M \int_\subgroup \eta ^ H_{(M, p, q)}  \, d\eta_{(M, p)} \, d\volume_M (q)  \\
& =  \int_M \eta_{(M, p, q)} \, d\volume_M (q). \qedhere
\end {align*}
\end {proof}

\vspace{2mm}

We now construct URMs from IRSs of discrete groups. Suppose that $G $ is a discrete group that acts freely and properly discontinuously on a Riemannian $d$-manifold $X $ and that the quotient $G \backslash  X $ has finite volume.  There is a map
$$\mu \text { an IRS of } G \ \ \longmapsto \ \ \overline\mu \text { a probability measure on } \mathcal M^{(G,X)},$$  where a $\overline\mu $-random element of $\mathcal M ^ d $ has the form $(H \backslash  X, [x])$, where we first take $x\in X$ to be an \emph {arbitrary} lift of a random point in $ G \backslash  X $ and then choose $H\in\mathrm{Sub}_G$ $\mu $-randomly.    The conjugation invariance of $\mu $ makes the measure $\overline\mu $ well-defined despite the arbitrary choice of lift.  Alternatively, consider 
\[
B= (\mathrm{Sub}_G \times X)/\Gamma, \text { where } (H, x) \overset {\gamma} {\longmapsto} (\gamma  H\gamma^ {- 1}, \gamma \cdot x).
\]
Then $B $ is a $\mathrm{Sub}_G$-bundle over $G \backslash  X $, and each of its fibers has an identification with $\mathrm{Sub}_G$ that is canonical up to conjugation.  So, as $\mu $ is conjugation invariant there is a well-defined probability measure $\mu_B $ on $B $ obtained as the integral of $\mu$ on each fiber against the (normalized) Riemannian volume of $G \backslash  X $.  The map
\[
\mathrm{Sub}_G \times X \longrightarrow \mathcal M ^ d, \ \ (H,x) \longmapsto (H \backslash X, [x])
\]
factors through the $\Gamma$-action to a map $B \longrightarrow \mathcal M ^ d$, and
$\overline\mu $ is the push forward of $\mu_B $ under this map.

\begin {proposition}[IRS $\implies$ URM, discrete case]\label {urmirsdisc}If $\mu $ is an IRS of $\mathrm{Sub}_G$, then $\overline \mu$ is an URM.
\end {proposition}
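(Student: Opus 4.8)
The plan is to verify the mass transport principle \eqref{MTP} for $\overline\mu$ directly, after unfolding both of its sides to $\mathrm{Sub}_G\times X\times X$ and using the conjugation invariance of $\mu$ to move the conjugating group element off the subgroup coordinate. Fix once and for all a Borel fundamental domain $D\subset X$ for the $G$-action, and for each $H\in\mathrm{Sub}_G$ a Borel fundamental domain $F_H\subset X$ for $H$; concretely one can take $F_H=\bigcup\{\gamma D : \gamma \text{ is least in its right coset } H\gamma\}$ relative to a fixed enumeration of the countable group $G$, and then $F_H$ depends in a Borel way on $H$ because $\{H : \gamma\in H\}$ is clopen in the Chabauty topology.

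By construction $\overline\mu$ is the pushforward of $\tfrac1V\,\mu\times(\volume_X|_D)$ under $(H,x)\mapsto(H\backslash X,[x])$, where $V=\volume(G\backslash X)$. So for nonnegative Borel $f:\mathcal M_2^d\to\BR$, writing $F(H,x,y):=f(H\backslash X,[x],[y])$ and unfolding the inner integral over the leaf $H\backslash X$ against $F_H$ gives
\[
\int f\,d(\overline\mu)_l=\tfrac1V\int_{\mathrm{Sub}_G}\int_{x\in D}\int_{y\in F_H}F(H,x,y)\,d\volume_X(y)\,d\volume_X(x)\,d\mu(H),
\]
and $\int f\,d(\overline\mu)_r$ is the same expression with $F(H,x,y)$ replaced by $F(H,y,x)$. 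Thus it suffices to prove these two integrals coincide. Note that $F$ is invariant under the diagonal $G$-action $\gamma\cdot(H,x,y)=(\gamma H\gamma^{-1},\gamma x,\gamma y)$ and under replacing $x$ or $y$ by a point of its $H$-orbit, and that $\overline\mu$ has total mass $\mu(\mathrm{Sub}_G)=1$, so establishing unimodularity makes it a URM.

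For the second integral, unfold $F_H=\bigsqcup_{H\gamma\in H\backslash G}\gamma D$ so the $y$-integral becomes a sum over right cosets; for each coset apply the isometry of $X$ given by $\gamma^{-1}$, which descends to an isometry from $H\backslash X$ onto $\gamma^{-1}H\gamma\backslash X$, and substitute $x\mapsto\gamma^{-1}x$, to rewrite $\int f\,d(\overline\mu)_r=\tfrac1V\int_{\mathrm{Sub}_G}\sum_{H\gamma\in H\backslash G}\Phi(H,H\gamma)\,d\mu(H)$, where $\Phi(H,H\gamma)=\int_{\gamma^{-1}D}\int_D f(\gamma^{-1}H\gamma\backslash X,[z],[w])\,dz\,dw$ depends only on the coset $H\gamma$. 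Now expand $\sum_{H\gamma}\Phi(H,H\gamma)=\sum_{\gamma\in G}\Phi(H,H\gamma)\,\mathbf 1[\gamma=\min(H\gamma)]$ and, in the $\gamma$-summand, substitute $H\mapsto\gamma H\gamma^{-1}$ using that $\mu$ is conjugation invariant; since $\Phi(\gamma H\gamma^{-1},\gamma H)=\int_{\gamma^{-1}D}\int_D f(H\backslash X,[z],[w])\,dz\,dw$ and $\{\gamma:\gamma=\min(\gamma H)\}$ is a transversal for $G/H$ — so $\{\gamma^{-1}\}$ is a transversal for $H\backslash G$ and $\bigsqcup_\gamma\gamma^{-1}D$ is again a fundamental domain for $H$ — Tonelli lets us collapse the sum, yielding $\int f\,d(\overline\mu)_r=\tfrac1V\int_{\mathrm{Sub}_G}\int_{F_H}\int_D f(H\backslash X,[z],[w])\,dz\,dw\,d\mu(H)$, which by Fubini is exactly the displayed formula for $\int f\,d(\overline\mu)_l$.

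The step that needs the most care is this last coset bookkeeping: one must check that $F_H$, the representative-selection indicators $\mathbf 1[\gamma=\min(H\gamma)]$, and the function $\Phi$ are Borel and well defined independently of the chosen representatives (using that $f$ is a function of isometry classes together with the $H$-orbit invariances of $F$), and that each Tonelli/Fubini interchange is legitimate (all integrands are nonnegative and the ambient measure $\mu\times\volume_X\times\volume_X$ is $\sigma$-finite) — in particular the argument must remain valid when $[G:H]$ is infinite, i.e.\ when the leaves $H\backslash X$ have infinite volume. It is worth noting that the shortcut of invoking Theorem \ref{desingularizing} is not available here, because the natural foliated structure on $B=(\mathrm{Sub}_G\times X)/G$ has leaves isometric to $N_G(H)\backslash X$ rather than $H\backslash X$, so its leaf map is not the map defining $\overline\mu$; hence the direct verification above.
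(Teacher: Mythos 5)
Your proof is correct, and it takes a genuinely different route from the paper's. The paper proves unimodularity of $\hat\mu$ (a scale of $\overline\mu$) by lifting $f$ to a function $\tilde f$ on $\cosetdf$ and then invoking Theorem~\ref{unimodularIRS} (the Biringer--Tamuz coset mass transport principle for discrete IRSs) as a black box; the isometric reidentification $\big(g^{-1}Hg\backslash X,[x],[g^{-1}y]\big)\cong\big(H\backslash X,[gx],[y]\big)$ does the rest. You instead introduce explicit per-$H$ fundamental domains $F_H$, express both $(\overline\mu)_l$ and $(\overline\mu)_r$ as triple integrals over $\mathrm{Sub}_G\times D\times F_H$, and verify their equality directly from conjugation-invariance of $\mu$ by the coset substitution $H\mapsto\gamma H\gamma^{-1}$ together with the observation that inverting a transversal of $G/H$ yields a transversal of $H\backslash G$ (hence a new fundamental domain $\bigsqcup\gamma^{-1}D$). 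In effect, your Steps~7--13 inline a self-contained proof of the discrete case of Theorem~\ref{unimodularIRS}, so your argument has no external dependence on \cite{Biringerunimodularity}. The trade-off is that the paper's version is shorter granted the theorem and makes the conceptual link to the coset MTP visible, while yours is more elementary and keeps everything at the level of $\mathrm{Sub}_G\times X\times X$. Two of your side remarks are worth keeping: the explicit check that $\Phi(H,H\gamma)$ is independent of the coset representative (via the substitution $w\mapsto\gamma^{-1}h\gamma w$), and the observation that $(\mathrm{Sub}_G\times X)/G$ is not a shortcut because its leaves are $N_G(H)\backslash X$ rather than $H\backslash X$, so its leaf map is not the map defining $\overline\mu$. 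Both are correct, and the latter explains why a direct verification of the MTP is unavoidable here, independently of the fact that the desingularization theorem appears only later in the paper.
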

\begin {proof}
Pick a Borel fundamental domain $D\subset X $, i.e.\ a Borel subset such that
\begin {enumerate}
\item $\volume_X (D \cap gD) = 0 $ for every $g\in G $,
\item $\volume_X (X \setminus \cup_{g\in G} gD) = 0 $.
\end {enumerate}
It follows that $\volume_X = \sum_{g\in G} g_* (\volume_D)$ and moreover that if $H\in \mathrm{Sub}_G$, then
\begin {align}\volume_{H \backslash X} = \sum_{Hg\in H \backslash  G} (\pi\circ g)_* (\volume_D), \label{6}\end {align}
where $\pi : X \longrightarrow H \backslash X$ is the quotient map.
We let $\hat\mu $ be the push forward to $\mathcal M ^d $  of $\mu \times \volume_D $ under the function
\begin {align*}\mathrm{Sub}_G \times D \longrightarrow \mathcal M ^ d, \ \  \ \ (H, x) \longmapsto (H\backslash   X, [x]). \end {align*}
This $\hat\mu$ is the scale by $\volume_X (D)$ of our $\overline\mu $ above.  For simplicity of notation, we show that $\hat\mu$ is unimodular instead.

We must show that $\hat\mu_l =\hat\mu_r $, so let $f:\mathcal M_2 ^ d \longrightarrow \mathbb R$ be a Borel function.  We lift $f $ to a function $\tilde f: \mathrm{Cos}_G  \longrightarrow \mathbb R$ by letting 
\[
\tilde f (Hg) = \int_{(x,y) \in D}  f \left ( \,H \backslash X, \, [x],\,  [gy]\, \right) \ d\volume_D^2,
\]
Note that $[gy] \in H \backslash  X $ only depends on the coset $Hg$.  We now compute: 
\begin {align}
& \ \ \ \ \int_{\mathcal M_2 ^ d} f \, d\hat\mu_l \nonumber\\ 
&= \int_{(H \backslash  X,[x]) \in \mathcal M ^ d} \int_{[y] \in H \backslash X} f \left ( \,H \backslash  X, \, [x],\, [y]\, \right)  \, d\mathrm{vol}_{X/H} \, d\hat\mu  \nonumber\\
 & = \int_{(H \backslash X,[x]) \in \mathcal M ^ d} \sum_{Hg\in H \backslash G} \, \int_{x\in D}f \left ( \,H \backslash X, \, [x],\,  [gy]\, \right) \ d\volume_D  \, d\nu_H \, d\hat\mu  \label{7}\\
&= \int_{H \in\mathrm {Sub}_G } \sum_{Hg\in H \backslash G} \, \int_{(x,y) \in D^ 2} f \left ( \,H \backslash X, \, [x],\,  [gy]\, \right)  \ d\volume_D^ 2 \, d\nu_H \, d\mu \nonumber\\
& = \int_{H \in\mathrm {Sub}_G } \sum_{Hg\in H \backslash G} \tilde f (Hg) \ d\nu_H \, d\mu \nonumber \\
& = \int_{H \in\mathrm {Sub}_G } \sum_{Hg\in H \backslash G} \tilde f ((g^{-1}Hg) g^{-1}) \ d\nu_H \, d\mu  \label{8}\\
&= \int_{H \in\mathrm {Sub}_G } \sum_{Hg\in H \backslash G} \, \int_{(x,y) \in D^ 2} f \left ( \,g^{-1}Hg \backslash X, \, [x],\,  [g^{-1}y]\, \right)  \ d\volume_D^ 2 \, d\nu_H \, d\mu \nonumber\\
&= \int_{H \in\mathrm {Sub}_G } \sum_{Hg\in H \backslash G} \, \int_{(x,y) \in D^ 2} f \left ( \,H \backslash X, \, [gx],\,  [y]\, \right)  \ d\volume_D^ 2 \, d\nu_H \, d\mu \label{9}\\
&=\int_{(H \backslash  X,[y]) \in \mathcal M ^ d} \int_{[x] \in H \backslash X} f \left ( \,H \backslash  X, \, [x],\, [y]\, \right)  \, d\mathrm{vol}_{X/H} \, d\hat\mu \label{10}\\
&=\int_{\mathcal M_2 ^ d} f \, d\hat\mu_r.\nonumber
\end {align}
Above, \eqref{7} and \eqref{10} follow from \eqref{6}, while \eqref{8} is Proposition \ref{unimodularIRS}. Line \eqref{9} uses the fact that $ \left( \,g^{-1}Hg \backslash X, \, [x],\,  [g^{-1}y]\, \right)$ and $\left( \,H \backslash  X, \, [gx],\, [y]\, \right)$ are isometric as doubly rooted manifolds.\end {proof}

\vspace {2mm}

\section {Measures on Riemannian foliated spaces}

\label{foliatedsec}
A \emph {foliated space} with tangential dimension $d$ is a separable metrizable space $X$ that has an atlas of charts of the form 
\[
\phi_\alpha: U_\alpha  \longrightarrow L_\alpha \times Z_\alpha , 
\]
 where each $L_\alpha \subset \mathbb{R}^d$ is open and each $Z_\alpha $ is a separable, metrizable space.   Transition maps must preserve and be smooth in the horizontal direction, with partial derivatives that are continuous in the transverse direction. The horizontal fibers piece together to form the \emph {leaves} of $X$.  See \cite{Candelfoliations} and \cite {Mooreglobal} for details. A foliated space $X$ is \emph {Riemannian} if each of its leaves has a smooth, complete Riemannian metric, and if these metrics \emph {vary smoothly in the transverse direction}, in the sense that the charts $\phi_\alpha$ can be chosen so that if $t_n \to t \in Z_\alpha$, the induced Riemannian metrics $g_{t_n}$ on $L_\alpha$ converge smoothly to $g_t$.
 % Each leaf $L$ comes with an associated Riemannian volume $\mathrm{vol}_L$; together, these measures combine to give the \emph{Riemannian measure} $\mathrm{vol} $ of $X$.  

We are interested in measures on a Riemannian foliated space $X $ that are formed by integrating $\mathrm{vol} $ against a `transverse measure'.  To this end, suppose that $\mathcal U =\{( U_\alpha,\phi_\alpha)\} $ is a countable atlas of charts as above and let $Z = \cup_\alpha Z_\alpha $ be the associated `transverse space'.   An \emph {invariant transverse measure} on $X $ is a $\sigma $-finite measure on $Z$ that is invariant under the \emph {holonomy groupoid} of $\mathcal U $.  Here, the holonomy groupoid is that generated by homeomorphisms between an open subset of some $Z_\alpha $ and an open subset of some $Z_\beta $ that are defined by following the leaves of the foliation (see \cite {Candelfoliations}).  The reader can verify that if $\mathcal U$ and $\mathcal U' $ are countable atlases associated to a foliated space $X $, there is a 1-1 correspondence between the invariant transverse measures of $\mathcal U $ and those of $\mathcal U' $.

If $\lambda $ is an invariant transverse measure on a Riemannian foliated space $X$, one can locally integrate  the Riemannian measure $\mathrm{vol}$ against $\lambda$ to give a measure $\mu$ on $X$, specified by writing $d\mu=\mathrm{vol} \, d\lambda$.  For a precise definition, let $\phi_\alpha: U_\alpha  \longrightarrow L_\alpha \times Z_\alpha$  be a foliated chart and define a measure $\mu_\alpha $ on $U_\alpha $ by  the formula
\[
\mu_\alpha  ( E ) = \int_{x \in Z_\alpha} \mathrm{vol} \big(E \cap \phi^{-1}(L_\alpha \times x) \big) \ d\lambda.
\]
Then if $\{f_\alpha\}$ is a partition of unity subordinate to our atlas, we define 
\[\mu= \sum_\alpha f_\alpha \cdot \mu_\alpha .
\]
Using holonomy invariance, one can check that the measure $\mu $ does not depend on the chosen partition of unity. 

Measures $\mu$ on a Riemannian foliated space $X $ satisfying $d\mu =\volume \, d\lambda$ for some  $\lambda $ are usually called \emph {completely invariant}.  Actually, complete invariance just ensures that when $\mu$ is disintegrated locally along the leaves of the foliation, Lebesgue measure is recovered in the tangent direction; that is, a transverse measure $\lambda$ is automatically holonomy invariant whenever the measure $\volume  \, d\lambda$ on the ambient space is well-defined (see \cite{Connessurvey}).

\begin{theorem}
Suppose that $X $ is a Riemannian foliated space and $\mu $ is a $\sigma$-finite Borel measure on $X$.  Then the following are equivalent:
\begin {enumerate}
\item $\mu$ is completely invariant.
\item $\mu $ is unimodular,  as defined in Equation \eqref{folmtp} of \S \ref{uniintro}.  
\item $\mu $ lifts uniformly to a measure $\tilde \mu$ on the unit tangent bundle $T^1X$ that is invariant under geodesic flow, see \eqref{liftu} below.
\end {enumerate}
If the leaves of $X$ have bounded geometry\footnote{ This condition is needed only in $5) \implies 1)$, in order to invoke a theorem of Garnett~\cite{Garnettfoliations}. It means that  there is some  uniform $K $ such that every point $x\in X$ lies in a smooth coordinate patch for its leaf that has derivatives up to order $3$  bounded by $K$, see \cite{Garnettfoliations}.}, then 1)  -- 3) are equivalent to
\begin {enumerate}
\item[\normalfont 4)] $\int_X \mathrm{div}( Y ) \, d\mu = 0$ for every vector field $Y$ on $X $ with integrable leaf-wise divergence.
\item[\normalfont 5)] $\int_X f \cdot \Delta g \, d\mu = \int_X \Delta f\cdot g \, d\mu $ for all continuous functions $f, g: X \longrightarrow \BR $ that are $C ^ 2 $ on each leaf of $X $. 
\end {enumerate}\label{foliations}
\end {theorem}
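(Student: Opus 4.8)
The plan is to prove the equivalence of $(1)$, $(2)$ and $(3)$ via the cycle $(1)\Rightarrow(3)\Rightarrow(2)\Rightarrow(1)$, and then, under bounded geometry, to close the loop $(1)\Rightarrow(4)\Rightarrow(5)\Rightarrow(1)$, appealing to Garnett's theorem \cite{Garnettfoliations} only in the last implication. For $(1)\Rightarrow(3)$, write $d\mu=\volume\,d\lambda$ in a foliated chart; then the uniform lift $d\tilde\mu=\omega_p\,d\mu$ is, in that chart, the leafwise Liouville measure $\volume\times\omega$ integrated against $\lambda$, and leafwise geodesic flow preserves leafwise Liouville measure by Liouville's theorem, hence preserves $\tilde\mu$. (Because individual leaves may be $\mu$-null, one phrases this through the local disintegration of $\mu$, but that is routine; a direct Fubini computation in charts also gives $(1)\Rightarrow(2)$, but the cycle below avoids the bookkeeping of leaves winding through several charts.)

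The geometric heart is $(3)\Rightarrow(2)$. Given Borel $f\ge 0$ on $X\times X$, I would decompose each leafwise integral in geodesic polar coordinates based at $p$,
\[
\int_{q\in L_p} f(p,q)\,d\volume_{L_p}=\int_{v\in T^1_p L_p}\int_0^{c(v)} f\big(p,\exp_p(tv)\big)\,A(t,v)\,dt\,d\omega_p(v),
\]
which is valid on any complete leaf since its cut locus is $\volume$-null; here $c(v)$ is the cut distance and $A$ the Jacobian of $\exp_p$. Integrating over $p$ against $\mu$ rewrites the left side of the mass transport principle as $\int_{T^1X}\int_0^{c(v)} f(\pi v,\pi g_t v)\,A(t,v)\,dt\,d\tilde\mu(v)$, and the right side likewise with $f(\pi g_t v,\pi v)$. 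Now apply the involution $\Phi(v,t)=(-g_t v,\,t)$, which exchanges the endpoints of the length-$t$ geodesic segment issuing from $v$. Its domain $\{t<c(v)\}$ is $\Phi$-invariant because the cut locus is symmetric (one point is a cut point of another along a geodesic iff the reverse holds), and $\Phi$ preserves the measure $A(t,v)\,dt\,d\tilde\mu(v)$: the fiberwise flip preserves $\tilde\mu$ since its conditionals are round, $g_t$ preserves $\tilde\mu$ by hypothesis $(3)$, and $A(t,-g_t v)=A(t,v)$ by the classical reversal symmetry of the Jacobi-field determinant along a geodesic (a Wronskian computation). Thus $\Phi$ exchanges the two sides of the principle, giving unimodularity.

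For $(2)\Rightarrow(1)$, disintegrate $\mu$ over a transversal in a foliated chart, $\mu=\int\mu_z\,d\nu(z)$ with $\mu_z$ carried by the plaque $P_z$. Testing the mass transport principle against functions supported on pairs lying in a common plaque shows that the measure $d\mu_z(p)\,d\volume_{P_z}(q)$ on $P_z\times P_z$ is symmetric for $\nu$-a.e.\ $z$, and testing further against products $\phi(p)\psi(q)$ forces $\mu_z=c_z\volume_{P_z}$ for some measurable $c_z\ge 0$; hence $d\mu=\volume\,d\lambda$ locally, the local transverse measures glue (leafwise volume being intrinsic) to a global one, which is automatically holonomy invariant as noted in the text, so $\mu$ is completely invariant. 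For the bounded-geometry conditions: $(1)\Rightarrow(4)$ is the divergence theorem applied leafwise, valid on complete leaves under the integrability convention on $Y$ (e.g.\ by Gaffney's theorem); $(4)\Rightarrow(5)$ follows by applying $(4)$ to $Y=f\nabla g-g\nabla f$, whose leafwise divergence is $f\Delta g-g\Delta f$; and $(5)\Rightarrow(1)$, which says the leafwise Laplacian is symmetric with respect to $\mu$, is where bounded geometry enters, via Garnett's analysis of the leafwise heat flow on foliated spaces with bounded geometry \cite{Garnettfoliations}, which forces complete invariance.

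I expect the main obstacle to be $(3)\Rightarrow(2)$: marrying the polar-coordinate (Santaló-type) decomposition of leafwise integrals with the reversal symmetry of the exponential-map Jacobian, and verifying that the cut locus is accounted for correctly so that the involution $\Phi$ genuinely preserves the relevant measure on $T^1X\times(0,\infty)$. By comparison, the disintegration argument in $(2)\Rightarrow(1)$ and the regularity bookkeeping in the bounded-geometry implications are routine, and $(5)\Rightarrow(1)$ is essentially a citation to Garnett.
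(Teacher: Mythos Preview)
Your cycle runs the opposite way from the paper's, $(1)\Rightarrow(3)\Rightarrow(2)\Rightarrow(1)$ versus the paper's $(1)\Rightarrow(2)\Rightarrow(3)\Rightarrow(1)$, and the substantive steps are genuinely different. Your $(3)\Rightarrow(2)$ is a Santal\'o-type argument through geodesic polar coordinates, cut-locus symmetry, and the reversal identity $A(t,-g_tv)=A(t,v)$ for the exponential Jacobian; the paper instead proves $(2)\Rightarrow(3)$ by a short diagonal trick with no Jacobian bookkeeping at all: unimodularity of $\mu$ on $X$ lifts to unimodularity of $\tilde\mu$ on $T^1X$ (the fiberwise spherical measures commute with forming $\mu_l,\mu_r$), the measure $\tilde\mu_r$ on $T^1X\times T^1X$ is trivially invariant under flow in the first factor, so $\tilde\mu_l=\tilde\mu_r$ is too, and pushing forward to $T^1X$ gives flow-invariance of $\tilde\mu$. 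For the return, you do $(2)\Rightarrow(1)$ by disintegrating in a chart and using that symmetry of $d\mu_z\otimes d\volume$ on $P_z\times P_z$ forces $\mu_z=c_z\,\volume$; the paper instead proves $(3)\Rightarrow(1)$ via a standalone lemma (Lemma~\ref{flowinvariant}): on any open set in a Riemannian manifold, flow-invariance of the uniform lift forces the base measure to be a multiple of volume, proved by first averaging over short flow times to get absolute continuity and then observing that the Radon--Nikodym derivative is constant along geodesics. The paper's route is shorter and isolates that lemma as an independent tool; yours ties the mass-transport principle directly to geodesic geometry, at the cost of the cut-locus and Jacobian verification you correctly flag as the main obstacle.

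One small correction on the bounded-geometry implications: invoking Gaffney for $(1)\Rightarrow(4)$ would need $Y$ and $\mathrm{div}\,Y$ in $L^1$ of each leaf, which is not what the hypothesis provides. The paper instead uses a partition of unity to reduce to $Y$ compactly supported in a single chart, where the divergence theorem on each plaque is immediate. Your $(4)\Rightarrow(5)$ via $Y=f\nabla g-g\nabla f$ matches the paper's (which uses $Y=f\nabla g$ and symmetry of $-\int\langle\nabla f,\nabla g\rangle\,d\mu$), and your sketch of $(5)\Rightarrow(1)$ is on target, though the paper uses $(5)$ twice: once with $g\equiv 1$ to get harmonicity in Garnett's sense, and once more, after Garnett's decomposition $d\mu=h\,d\volume\,d\lambda$ with $h$ leafwise harmonic, to force $\Delta g\cdot h=\Delta(gh)$ and hence $h$ plaquewise constant.
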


 The meaning of 3)  was explained in the introduction, but briefly, the leaf-wise unit tangent bundle $T^1 X$ maps onto $X$, and the fibers $T^1_p X$ are round spheres. If $\omega_p$ is the Riemannian measure on the fiber $T^1_x X$, then  we can define\begin{equation} d\tilde \mu = \omega_p \, d\mu,\label{liftu}
\end{equation}
so $d\tilde \mu$ is  a measure on $T^1 X$.
The geodesic flows on  the unit tangent bundles  of the leaves of $X$ then  piece together to a well-defined \emph{geodesic flow} on $T^1 X$,  and 3)  says that  this flow leaves the measure $\tilde \mu$  invariant.

As discussed in the introduction, this  result  may be particularly interesting to those familiar with unimodularity in graph theory.  Condition 3)  is similar to the `involution invariance' characterization of unimodularity of Proposition 2.2 in \cite{Aldousprocesses}. Also, in analogy with 5), the `graphings' of \cite {Gaboriaumeasurable} can be characterized via the self adjointness of their Laplacian.  See \cite{Hatamilimits} for one direction; the other direction follows from the arguments in \cite[Proposition 18.49]{Lovaszlarge}.

The equivalence $1) \Leftrightarrow 4)$ is well-known as a consequence of work of Lucy Garnett \cite {Garnettfoliations}, and a version with slightly different hypotheses on the foliated space appears in a recent paper of Catuogno--Ledesma--Ruffino \cite{Catuognofoliated}. However, we include the very brief proof below.

\vspace{2mm}

\begin {proof}[Proof of $1) \implies 2)$]
Suppose that  $d\mu=\mathrm{vol} \, d\lambda$ for some $\lambda$.  Let \[\phi_i : U_i \longrightarrow L_i \times Z_i, \ \ i= 1, 2, \] be two foliated charts for $X$ and assume that there is a homeomorphism $\gamma: Z_1 \longrightarrow Z_2 $ in the holonomy pseudogroup.  We first check that $\mu_r= \mu_l$ on the set $X_{\phi_1,\phi_2,\gamma} \subset X \times X $ defined by
\[
X_{\phi_1,\phi_2,\gamma} = \{ (x_1, x_2) \in U_1 \times U_2 \, | \,  \phi_i(x_i)= (l_i,z_i) \text { where } \gamma (z_1) = z_2 \}.
\]
For a subset $S \subset X_{\phi_1 ,\phi_2, \gamma}$, we then calculate
\begin {align*}
\mu_l (S) & = \int_{z_1 \in Z_1} \int_{x \in \phi_1^{-1}(L_1 \times \{z_1\})} \int_{y \in \phi_2^{-1}(L_2 \times \gamma(z_1))}  1_S(x,y)  \ d\mathrm{vol}\, d\mathrm{vol}\, d\lambda \\
&=\int_{z_2 \in Z_2}  \int_{y \in \phi_2^{-1}(L_2 \times \{z_2\})} \int_{x\in \phi_1^{-1}(L_1 \times \{\gamma^{-1}(z_2)\})} \  1_S(x,y) \ d\mathrm{vol}\, d\mathrm{vol}\, d\lambda \ \ (*) \\ 
&=\mu_r(S),
\end {align*}
Above, $(*)$ follows from a change of variables, the invariance of $\lambda $ under the holonomy pseudogroup and Fubini's Theorem.
Now, both of the measures $\mu_r $ and $\mu_l $ are supported on the equivalence relation $\mathcal R\subset X \times X $ of the foliation.  However, we claim that $\mathcal R $ can be covered by a countable number of the Borel subsets $X_{\phi_1,\phi_2,\gamma}$, which will prove the claim.  First, the separability of $X $ guarantees that $X \times X $ can be covered by a countable number of open sets $U_1 \times U_2$ with $\phi_i : U_i \longrightarrow L_i \times Z_i$ foliated charts.  If a pair of points with coordinates $(l_i,z_i) \in L_i \times Z_i, \ i = 1, 2 $ determines an element of $( U_1 \times U_2 ) \cap  \mathcal R ,$ then $z_2 = \gamma (z_1)$ for some holonomy map $\gamma $.  The set of germs of holonomy maps taking a given $z_1 \in Z_1$ into $Z_2$ is countable, so as $Z_1$ is separable, a countable number of domains and ranges of holonomy maps suffice to cover $ ( U_1 \times U_2 ) \cap  \mathcal R $.
\end{proof}

\vspace {2mm}

\begin {proof}[Proof of $2) \implies 3)$]
Suppose $\mu $ is a unimodular measure on $ X $ and let $\tilde \mu $ be the induced measure on the foliated space $T^1X$.  Each leaf of $T^1X$ is the unit tangent bundle of a leaf of $X $ and the tangential Riemannian metric is the Sasaki metric.  The Riemannian volume on each leaf of $T^1X$ is then the fiberwise product of $\mathrm{vol}$ with the Lebesgue measures $\omega_x$ on the tangent spheres $T^1_x X$.

First, note that $\tilde \mu $ is unimodular.   For $ T ^ 1 X \times T^1 X$ fibers over $X\times X$ and $d\tilde\mu_l=d\omega_y \, d\omega_x \, d\mu_l$ while $d\tilde\mu_r =d\omega_y \, d\omega_x \, d\mu_r,$ so the fact that $\mu_l=\mu_r$ implies that $\tilde \mu_l = \tilde \mu_r$.  Geodesic flow $\phi_t$ lifts to a map $\tilde \phi_t = (\phi_t,id)$ on $T^1 X \times T ^ 1 X$; as Liouville measure is geodesic flow invariant, the measure $\tilde \mu_r$ is clearly $\tilde \phi_t$-invariant.  As $\tilde \mu$ is unimodular, this implies that $\tilde \mu_l$ is $\tilde\phi_t $-invariant.  But under the first coordinate projection $T^1 X \times T ^ 1 X \longrightarrow T ^ 1 X$, $\tilde \mu_l$ pushes forward to $\tilde \mu$ and $\tilde \phi_t $ descends to $\phi_t$, so it follows that $\tilde \mu$ is $\phi_t $ invariant.
\end {proof}

\vspace {4mm}

Before proving that $3) \implies 1)$, we need the following lemma.

\begin {lemma}\label {flowinvariant}
Suppose that $U$ is an open subset of a Riemannian manifold $M$ and denote the geodesic flow on $T^1M$ by $g_t$.  Let $\mu $ be a Borel measure on $U $ and let $d\tilde\mu =\omega_p \, d\mu$ be the lifted measure on $T ^ 1 U $, as in \eqref{liftu}.   Suppose that for all Borel subsets $S \subset T^1U $ and all $t \in \mathbb R $ with $g_t(S) \subset T^1U$ we have $\tilde\mu (g_t (S)) = \tilde\mu(S)$.  Then $\mu $ is a scale of the Riemannian measure on $U $.
\end {lemma}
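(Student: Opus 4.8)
The plan is to show that the lifted measure $\tilde\mu$ on $T^1U$ is locally equivalent to the Liouville measure, and then descend this statement to $\mu$ on $U$. First I would work locally: fix a small ball $B \subset U$ on which geodesic flow is well-defined for a uniform time interval, i.e. there is $\epsilon > 0$ so that $g_t(v)$ stays inside $T^1U$ for all $v \in T^1B$ and $|t| < \epsilon$. The hypothesis then says exactly that $\tilde\mu$ restricted to the flow-saturation of $T^1B$ is invariant under all these partially-defined flow maps $g_t$. The key point is that geodesic flow acts \emph{locally transitively} on $T^1M$ in a strong sense: the local flow-boxes of $g_t$ together with the rotational symmetry of the fibers generate (the germ of) a transitive pseudogroup on $T^1 U$. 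More precisely, I would use that $\tilde\mu$, being of the form $\omega_p \, d\mu$, is already invariant under the fiberwise rotation action $SO(d)$ acting on each $T^1_p U$, together with flow invariance; and these two families of transformations suffice to move any small open set in $T^1U$ to any other.

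The main technical step is to upgrade ``invariant under a locally transitive pseudogroup'' to ``a constant multiple of Liouville measure.'' I would do this by a disintegration/Radon-Nikodym argument: on a flow-box $T^1B \cong (-\epsilon,\epsilon) \times \Sigma$ (where $\Sigma$ is a local cross-section to the flow), flow invariance forces $\tilde\mu$ to be a product of Lebesgue measure on the $(-\epsilon,\epsilon)$ factor with some measure $\nu$ on $\Sigma$. Doing this in two transverse directions (say, using cross-sections adapted to two geodesics through a common point with different directions, or combining with the $SO(d)$-action) pins down $\nu$ up to the rotational Jacobian, which is precisely what is needed for $\tilde\mu$ to be absolutely continuous with respect to Liouville measure with a density that is both flow-invariant and rotation-invariant — hence locally constant, hence constant on the connected set $T^1U$ (recall $U$ is open in a connected manifold, but in any case I only need it on each component, and a scale can be chosen per component or $U$ can be shrunk). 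Concretely: Liouville measure is the unique (up to scale) measure on $T^1M$ that is invariant under geodesic flow and under the $SO(d)$-fiber-rotations simultaneously, and this uniqueness is a standard local computation once both invariances are in hand.

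Finally I would descend: since $d\tilde\mu = \omega_p \, d\mu$ and Liouville measure is $d(\mathrm{Liouv}) = \omega_p \, d\volume_M$, the equality $\tilde\mu = c \cdot \mathrm{Liouv}$ on $T^1U$ gives, after integrating out the fiber (pushing forward under $T^1U \to U$, under which both $\omega_p\,d\mu \mapsto \volume(\mathbb S^{d-1})\, d\mu$ up to the normalization and $\omega_p \, d\volume_M \mapsto \volume(\mathbb S^{d-1}) \, d\volume_M$), that $\mu = c \cdot \volume_M$ on $U$. So $\mu$ is a scale of the Riemannian measure, as claimed.

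The step I expect to be the main obstacle is making the ``locally transitive pseudogroup $\Rightarrow$ Haar-type uniqueness'' argument clean without drowning in coordinates: one must be careful that the hypothesis only gives \emph{partially}-defined flow maps (those with $g_t(S) \subset U$), so all the transitivity must be arranged to happen inside $U$, which is why localizing to small balls $B$ with uniform escape time $\epsilon$ at the outset is essential. Once everything is confined to such a $B$, the argument is the familiar one that a measure invariant under ``enough'' local diffeomorphisms of a manifold is a smooth multiple of any fixed smooth volume, and then the extra rotational symmetry coming for free from the form $\omega_p \, d\mu$ forces that multiple to match Liouville's.
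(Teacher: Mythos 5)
Your route is genuinely different from the paper's: you aim to show $\tilde\mu$ is locally a multiple of Liouville measure by exploiting the locally transitive pseudogroup generated by partially defined geodesic flows together with the fiber-rotation invariance that comes for free from the form $\omega_p\,d\mu$, then descend. The paper instead proceeds in two more concrete steps. It first proves $\mu$ is absolutely continuous with respect to the Riemannian measure $\lambda$ on $U$ by an averaging trick: given a $\lambda$-null $S\subset U$ (subdivided so that $B(p,\epsilon)\subset U$ and $\epsilon<\inj_M(p)$ for all $p\in S$), it draws a random time $t\in(0,\epsilon)$ from an absolutely continuous law $\nu$, uses the local flow-invariance and Fubini to write $\mu(S)=\int_{p\in U}\lambda_p(S)\,d\mu$ where each $\lambda_p=(\pi\circ\tilde\phi_p)_*(\omega_p\times\nu)$ is absolutely continuous to $\lambda$, and concludes $\mu(S)=0$. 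It then observes that the Radon--Nikodym derivative of $\tilde\mu$ with respect to Liouville measure is fiber-constant (since both are of the form $\omega_p\,d(\cdot)$) and flow-invariant, and since any two points of $U$ are joined by a geodesic in the \emph{complete ambient} $M$ while the hypothesis constrains only the endpoints of the flow and not the whole segment, that derivative is globally constant.

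Two issues with your sketch, one minor and one more substantive. The minor one: the ``uniqueness of Liouville under flow- and rotation-invariance'' that you invoke is correct, but it is where essentially all the work lies; the product decomposition $\tilde\mu\cong\mathrm{Leb}\times\nu$ in a flow-box, its compatibility across transverse directions, and the promotion from ``a.e.\ direction'' to ``every direction'' together carry exactly the absolute-continuity content that the paper dispatches with its averaging trick, so treating this as ``a standard local computation'' undersells it. The more substantive one concerns your remark that you ``only need it on each component'' and that ``a scale can be chosen per component or $U$ can be shrunk.'' The lemma asserts a \emph{single} scale on all of $U$, which need not be connected, and this is what is used in the proof of $3)\implies 1)$ to get a single scalar $\eta_z=\tfrac{1}{\volume_z(L\times\{z\})}\volume_z$. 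Once you localize to small balls $B\subset U$ you discard the mechanism for re-globalizing; the paper's second step uses completeness of $M$ and the endpoint-only form of the hypothesis, so one can flow between different components of $U$ through $M\setminus U$ and still force a single constant across all of $U$.
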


The first part of this proof was shown to us by Nir Avni.

\vspace{2mm}

\begin {proof}
We first prove that $\mu $ is absolutely continuous with respect to the Riemannian measure $\lambda $ on $U$, so let $S \subset U$ be a set of $ \lambda $-measure zero, and let $\tilde S $ be the set of all pairs $(p,v)\in T^1U$ where $p\in S$.  After subdividing $S$ into countably many pieces, we may assume that
 there is some $\epsilon >0 $ such that for all $p\in S$, we have $B(p,\epsilon) \subset U $ and $\epsilon < \mathrm{inj}_M(p)$, where $\mathrm{inj}_M(p)$ is the injectivity radius of $M $ at $p $.    

Choose a probability measure $\nu$ supported in $(0,\epsilon)$ that is absolutely continuous with respect to Lebesgue measure.  For each $p \in S$, define a map
\[
\tilde\phi_p : T^1_p  U \times (0,\epsilon) \longrightarrow T^1U, \ \ \tilde\phi_p(v,t) = g_{t}(v),
\]
Note that as $B(p,\epsilon) \subset U $ the image of the map does in fact lie in $U $.  We then define a measure $\lambda_p $ on $U$ via the formula
$$\lambda_p = (\pi \circ \tilde\phi_p)_*(\omega_p \times \nu),$$ where $\pi:T ^ 1 U \longrightarrow U$ is the projection map.  As $\epsilon < \mathrm{inj}_M(p)$, the map $\pi \circ \tilde\phi_p$ is a diffeomorphism onto its image, so the pushforward $\lambda_p $ is absolutely continuous with respect to the Riemannian measure $ \lambda$ on $U$.  Then we have
\begin {align}
\mu(S) = \tilde\mu (\tilde S) &= \int_{t \in (0,\epsilon)} \tilde\mu( g_{-t}(\tilde S) ) \, d\nu \label{first}\\
&= \int_{t \in (0,\epsilon)} \int_{p \in U} \int_{v \in T^1_pU} 1_{ g_{-t}(\tilde S) } \ d\omega_p \, d\mu \,  d\nu \nonumber \\
&= \int_{p \in U} \left (\int_{t \in (0,\epsilon)}  \int_{v \in T^1_pU} 1_{ g_{-t}(\tilde S) } \  d\omega_p \, d\nu\right ) \, d\mu  \nonumber
\end {align}
\begin {align}
\ \ \ \ \ \ \ \ \ \ &= \int_{p \in U} \omega_p \times \nu \left (\{(v,t) \ | \ g_t(v) \in \tilde S \} \right)\ d\mu \nonumber \\
&= \int_{p \in U} \lambda_p(S) \ d\mu \label {second}\\
&= \int_{p \in U} 0 \ d\mu \ \ = \ \ 0. \label {third}
\end {align}
Here, (\ref {first}) comes from the $g_t$-invariance of $\tilde \mu$ and the fact that $\nu$ is a probability measure.  Equation (\ref {second}) follows since $\tilde S $ consists of \emph {all} unit tangent vectors lying above points of $S $ and the projection $\pi$ is injective on the image of $\tilde\phi_p $.  Finally, equation (\ref {third}) is just the fact that $\lambda_p $ is absolutely continuous to the Riemannian measure $\lambda $ on $U $, with respect to which $S$ has measure $0$.  This shows that $\mu$ is absolutely continuous with respect to $\lambda $, which also implies that $\tilde\mu $ is absolutely continuous with respect to the Liouville measure $\tilde \lambda $.

To show that $\mu $ is a scalar multiple of $\lambda, $ consider the commutative triangle
%\xymatrixcolsep{1pc}\entrymodifiers={+!!<0pt,\fontdimen22\textfont2>}
$$\xymatrix{ T ^ 1U \ar [rr]^ {\frac {d\tilde\mu} {d\tilde\lambda}}   \ar [dd] _ -{\pi      }       &    &         \mathbb R \\ \\
U \ar[rruu]_-{\frac {d\mu} {d\lambda}} &  &\\ },$$
where $\frac {d\tilde\mu} {d\tilde\lambda}$ and $\frac {d\mu} {d\lambda}$ are the Radon-Nikodym derivatives.  Since any two points $p,q$ in $U $ can be joined by a geodesic in $M$, there are unit tangent vectors $(p,v)$ and $(q,w)$ with $g_t(p,v) = (q,w)$ for some $t $.  But since both $\tilde\mu$ and $\tilde\lambda$ are geodesic flow invariant, $$\frac {d\mu} {d\lambda}(p)=\frac {d\tilde\mu} {d\tilde\lambda}(p, v) = \frac {d\tilde\mu} {d\tilde\lambda}(q,w) = \frac {d\mu} {d\lambda}(q).$$  It follows that $\mu $ is a scalar multiple of $\lambda $.
\end {proof}

\vspace{2mm}

\begin {proof}[Proof of $3) \implies 1)$]
Let $\phi : U \longrightarrow L \times Z$ be a foliated chart for $X$.  The restriction $\mu |_U $ then disintegrates as $d\mu | _U = \eta_z \, d\nu,$ where 
\begin {itemize}
\item $\nu$ is the pushforward of $\mu$ under the projection $L \times Z \longrightarrow Z$, and
\item  each $\eta_z$ is a Borel probability measure on $L \times \{z\}$.  
\end {itemize}
The map $z \mapsto \eta_z$ is Borel, in the sense that for any Borel $B \subset L \times Z$ we have that $z \mapsto \eta_z(B)$ is Borel.  

Consider now the foliated chart $\tilde\phi : T^1U \longrightarrow T^1 L \times Z$ for $T ^ 1 X $.  The lifted measure $d\tilde\mu =  \omega_p \, d\mu$ then disintegrates as $\omega_{(z,l)} d\eta_z \, d\nu$.
As $\tilde\mu $ is invariant under the geodesic flow $g_t : T^1 X \longrightarrow T ^ 1 X$, it follows that for $\nu$-almost all $z \in Z$, the measure $\omega_{(z,l)} d\eta_z$ is invariant under the geodesic flow of $L \times \{z\}$, regarded as an open subset of its leaf in $X $.  Thus, by Lemma \ref {flowinvariant}, the probability measure $\eta_z = \frac 1{ \mathrm{vol}_z(L \times\{z\})} \mathrm{vol}_z$ for $\nu$-almost all $z \in Z$.  Since this is true within every $U$, there is a holonomy invariant transverse measure $\nu'$, defined locally by $\nu' = \mathrm{vol}_z(L \times\{z\}) \nu$, with $\mu = \mathrm{vol} \, d\nu'.$  This proves the claim.
\end {proof}

\vspace{2mm}

\begin {proof} [Proof of $1 \implies 4$]
Assume that $d\mu = \mathrm {vol} \, d\lambda $ and that $Y $ is a continuous vector-field on $X$ with integrable divergence on each leaf.  Decomposing $Y $ using a partition of unity, we may assume that $Y $ is supported within some compact subset of the domain of a foliated chart $\phi : U \longrightarrow L \times Z$. Then 
\begin {align*} \int_X \mathrm{div}(Y) \, d\mu &= \int_{z \in Z} \int_{ L \times \{z\}} \mathrm{div}(Y) \ d\mathrm {vol_z} \, d\lambda \\
&= \int_{z \in Z} 0 \, d\lambda =0,
\end {align*}
by the divergence theorem applied to each leaf $L \times \{z\}$.
\end {proof}

\vspace{2mm}

\comment{ and    Using a partition of unity, we may assume that $f$ and $g $ are both supported within a compact subset of the domain of some foliated chart $\phi : U \longrightarrow L \times Z$.  Then
\begin {align*} \int_X f\cdot \Delta g \, d\mu &= \int_{z \in Z} \int_{ L \times \{z\}} f \cdot \Delta g \ d\mathrm {vol_z} \, d\lambda \\
&= \int_{z \in Z} \int_{ L \times \{z\}} \Delta f\cdot g \ d\mathrm {vol_z} \, d\lambda \\
&=\int_X \Delta f\cdot g \, d\mu,
\end {align*}
by the self-adjointness of the Laplacian on each leaf $L \times \{z\}$.}

\vspace {2mm}

\begin {proof}[Proof of $4) \implies 5) $]We compute:
\begin {align*}
\int_X f \cdot \Delta g \, d\mu  & = \int_X f \cdot \mathrm{div} (\nabla g) \, d\mu  \\
& = \int_X \mathrm{div} (f \nabla g) - \langle \nabla f, \nabla g\rangle  \, d\mu \\
& = \int_X -\langle \nabla f, \nabla g\rangle \, d\mu,
\end {align*}
by condition 4).  As this is symmetric in $f $ and $g $, condition 5) follows.
\end {proof}

\vspace {2mm}
\begin {proof} [Proof of $5)\implies 1) $]
It follows immediately from $5) $ that $\int_X \Delta f \, d\mu = 0$ for every continuous $f : X \longrightarrow \BR$ that is $C ^ 2 $ on each leaf of $X $.  In the terminology of Garnett \cite {Garnettfoliations}, $\mu $ is \emph {harmonic}.   Using the bounded geometry condition, Garnett proves that in every foliated chart $\phi : U \longrightarrow L \times Z$, a harmonic measure $\mu $ disintegrates as $d\mu =h (l,z) \, d\mathrm {vol_z} \, d\lambda(z) $, where $h $ is a positive leaf-wise harmonic function and $\lambda $ is a measure on the transverse space $Z$.  We must show that $h(\cdot,z) $ is constant for $\lambda $-almost every $z $.  

If $f,g \in C(X)$ are continuous functions supported in some compact subset of $U$ that are $ C ^ 2 $ on each plaque $L \times \{z\}  $, we have by $5) $ that
\begin {align*}
\int_{z \in Z} \int_{ L \times \{z\}}  f \cdot \Delta g \cdot h\, d\mathrm {vol_z} \, d\lambda
& = \int_{z \in Z} \int_{ L \times \{z\}} \Delta f \cdot  g \cdot h\, d\mathrm {vol_z} \, d\lambda \\
& = \int_{z \in Z} \int_{ L \times \{z\}}  f \cdot  \Delta(g \cdot h)\, d\mathrm {vol_z} \, d\lambda 
\end {align*}
As $f $ is arbitrary, this implies that $\Delta g \cdot h = \Delta (g\cdot h)$ on $\lambda $-almost every plaque $L \times \{z\}$.  As $g$ is arbitrary, $h(\cdot,z)$ must be constant for $\lambda $-a.e.\ $z $.
\end {proof}

\section {The foliated structure of $\mathcal M ^ d $}
\label{foliatedstructuresec}

Let $\mathcal M ^ d $ be the space of isometry classes of pointed Riemannian manifolds $(M,p)$, equipped with the smooth topology.  The space $\mathcal M ^ d $ is separable and completely metrizable -- we refer the reader to the appendix \S\ref {smoothsec} for a detailed introduction to the smooth topology and a proof of this result.

\subsection {Regularity of the leaf map}

When $X $ is a $d$-dimensional Riemannian foliated space, there is a `leaf map'

\[
\mathcal L : X \longrightarrow \mathcal M ^ d, \ \ \mathcal L (x) = (L_x,x),
\]
 defined by mapping each point $x$ to the isometry class of the pointed manifold $(L_x,x)$, where $L_x$ is the leaf of $X $ containing $x $.  We claim:

\begin {proposition}[The leaf map is Borel]\label {leafBorel}
If $U \subset \mathcal M ^ d $ is open, then
$\mathcal L^{-1}(U) = \cup_{i\in \BN} O_i \cap C_i,$ where each $O_i$ is open and each $C_i $ is closed in $X $. 
\end {proposition}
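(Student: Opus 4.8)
The plan is to show that the leaf map $\mathcal L : X \to \mathcal M^d$ is a so-called \emph{Baire-class-one} map, i.e.\ the preimage of every open set is an $F_\sigma$; the statement in the proposition, that $\mathcal L^{-1}(U)$ is a countable union of sets of the form $O_i \cap C_i$, is exactly this condition once one recalls that an $F_\sigma$ can always be rewritten so that each closed piece is intersected with the open set it sits inside. The strategy is to work locally: cover $X$ by countably many foliated charts $\phi_\alpha : U_\alpha \to L_\alpha \times Z_\alpha$ (possible by separability), and prove the statement for the restriction of $\mathcal L$ to a single chart $U_\alpha$; a countable union of $F_\sigma$ sets is still $F_\sigma$, so this suffices. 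On a chart, a point is a pair $(\ell, z)$, and the leaf through it is the plaque-union making up $L_z$ with basepoint the image of $\ell$; the relevant continuity is the transverse one, since moving $\ell$ within a fixed leaf is a genuinely continuous operation into $\mathcal M^d$ (isometries of a fixed Riemannian manifold moving the basepoint).

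The key step is an \emph{upper semicontinuity} estimate: I would show that whenever $(\ell_n, z_n) \to (\ell, z)$ in a chart, then for every $R>0$ and every $\epsilon>0$, for $n$ large there is a $C^\infty$-embedding of a slightly-smaller-than-$R$ ball of $(L_z, \ell)$ into $L_{z_n}$ which is $\epsilon$-close to an isometry onto its image. This follows from the transverse smoothness of the Riemannian metrics in the definition of a Riemannian foliated space: the plaques through $(\ell,z)$ fit together into a metric on a neighborhood in the leaf that varies $C^\infty$-continuously with $z$, and one uses Cheeger--Gromov-type compactness (or, more elementarily, the explicit metric on $\mathcal M^d$ built in the appendix via iterated Sasaki metrics on iterated tangent bundles) to convert transverse $C^\infty$-closeness of metrics on plaques into smooth-topology closeness of the pointed leaves, as long as the relevant ball stays inside the part of the leaf visible through the chart. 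The only obstruction to getting an actual bi-Lipschitz/$C^\infty$-close embedding of a fixed $R$-ball is that a large metric ball in $L_{z_n}$ can fail to be contained in the plaque-neighborhood captured by the chart; this is exactly why one only gets \emph{semi}continuity and hence an $F_\sigma$, not continuity. Concretely, $\{(\ell,z)\in U_\alpha : \mathcal L(\ell,z)\in U\}$ is a countable union over $R,\epsilon \in \mathbb Q^+$ (with $B_{\mathcal M^d}(\cdot, \text{stuff}) \subset U$) of sets where "the $R$-ball of the leaf embeds $\epsilon$-isometrically and lies in a single plaque-neighborhood of controlled size"; the semicontinuity estimate makes each such set relatively closed inside an open set on which the plaque-size lower bound holds, giving the $O_i \cap C_i$ decomposition.

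The main obstacle I anticipate is making the previous paragraph precise: one must carefully track, for a point $x\in X$ and radius $R$, how much of the metric $R$-ball $B_{L_x}(x,R)$ is contained in a union of plaques whose transverse extent is uniformly controlled — equivalently, one needs a local lower bound on how far along its leaf a point can travel while staying in a fixed chart, and this bound must be lower-semicontinuous in $x$. Once that bookkeeping is set up, the conversion of "transverse-$C^\infty$-closeness of metrics on a fixed plaque-region" into "smooth-topology-closeness of the corresponding pointed manifolds" is essentially the definition of the metric on $\mathcal M^d$ from \S\ref{secmetrizability}, and the countable-union structure over rational $R$ and $\epsilon$ then exhibits $\mathcal L^{-1}(U)$ as an $F_\sigma$ of the required form. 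I would also remark that this upper semicontinuity is precisely what extends Lessa's result cited in the footnote, so the proof should be organized to make that semicontinuity statement the visible output.
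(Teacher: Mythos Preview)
Your proposal has a genuine gap: the ``key step'' you state is false in general. You claim that if $(\ell_n,z_n)\to(\ell,z)$ then for every $R>0$ the $R$-ball of $(L_z,\ell)$ embeds almost-isometrically into $L_{z_n}$ for large $n$, and that the only obstruction is whether the ball fits inside the visible plaque-region. Consider the Reeb foliation of the solid torus: the boundary leaf is a torus $T^2$ and nearby interior leaves are copies of $\BR^2$. For $R$ larger than the injectivity radius of $T^2$, the $R$-ball around a point of $T^2$ is not simply connected and hence does \emph{not} embed in $\BR^2$, no matter how many charts you use or how carefully you control plaque sizes. The obstruction is topological --- holonomy --- not a bookkeeping issue about chart extents.

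The correct upper semicontinuity statement, which the paper proves as Theorem~\ref{foliationcompactness}, is that $\mathcal L(x_i)$ subconverges in $\mathcal M^d$ to a pointed Riemannian \emph{cover} of $\mathcal L(x)$, not to $\mathcal L(x)$ itself. This introduces a partial order $\succ$ on $\mathcal M^d$ (covering), and the leaf map is upper semicontinuous with respect to $\succ$. Turning this into the claimed $F_\sigma$-type regularity then requires a compatibility between $\succ$ and the smooth topology: the paper proves (Lemma~\ref{BorelLemma}) that each point of $\mathcal M^d$ has a neighborhood basis of sets $U$ whose boundaries avoid all covers of the center point and which are downward-closed under $\succ$ among covers of the center. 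This lemma is where the real work lies --- its proof uses the explicit iterated-Sasaki neighborhoods $\mathring{\mathcal N}^k_{R,\lambda}$ and a nontrivial argument (Claim~\ref{lemmaclaim}) that for generic $R$, near-isometric embeddings of $R$-balls into covers descend to intermediate covers. Your outline does not engage with the covering phenomenon at all, so it cannot reach the conclusion; the chart-based continuity you describe would suffice only on the (generic but not full) set of leaves without holonomy.
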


In \cite[Lemma 2.8]{Lessabrownian}, Lessa showed that the leaf map is measurable when the Borel $\sigma $-algebra of $X$ is completed with respect to any Borel probability measure on $X$.  The proof is a general argument that any construction in a Lebesgue space that does not use the axiom of choice is measurable, and uses the existence of an inaccessible cardinal.  He remarks that a more direct investigation of the regularity of $\mathcal L$ can probably be performed, which is what we do here.  We should also mention that \'Alvarez L\'opez and Candel \cite{Alvarezturbulent} study the leaf map from a foliated space into the Gromov-Hausdorff space of pointed metric spaces, and have observed, for instance, that it is continuous on the union of leaves without  holonomy. See also \cite{Alvarezuniversal}, where together with Barral Lij\'o, they study the leaf map into $\mathcal M^d$.

\vspace{2mm}

The key to proving Proposition \ref {leafBorel} is the following slight extension of a result of Lessa \cite[Theorems 4.1 \& 4.3]{Lessabrownian}, which we prove in the appendix.  

\begin {reptheorem}{foliationcompactness}
Suppose $X $ is a $d$-dimensional Riemannian foliated space in which $x_i \to x$ is a convergent sequence of points. Then $\mathcal L(x_i)$ is pre-compact in $\mathcal M ^ d $, and every accumulation point is a pointed Riemannian cover of $\mathcal L (x)$.
\end {reptheorem}

There is a partial order $\succ$ on $\mathcal M ^ d $, where $(N, q)\succ (M, p)  $ whenever $(N,q)$ is a pointed Riemannian cover of $(M,p)$.  With respect to $\succ$, Theorem \ref {foliationcompactness} asserts an `upper semi-continuity' of the leaf map.  The degree of regularity of $\mathcal L $ indicated in Proposition \ref {leafBorel} is exactly that of upper semicontinuous maps of between ordered spaces, so to get the same conclusion in our setting we must show a compatibility between $\succ$ and the smooth topology on $\mathcal M ^ d $:

\begin {lemma}\label {BorelLemma}
Every point $(M, p)\in\mathcal M ^ d $ has a basis of neighborhoods $\mathcal U$ such that the following properties hold for each $U\in \mathcal U$:
\begin {enumerate}
\item there is no $(N,q) \succ (M, p) $ such that $(N, q)\in \partial U$,
\item if $(N',q') \succ (N,q) \succ (M, p) $ and $(N',q') \in U$, then $(N,q) \in U$.
\end {enumerate}
\end {lemma}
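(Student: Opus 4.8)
The plan is to take metric balls around $(M,p)$ whose radii avoid a countable set, and to reduce property (2) to a monotonicity statement for the metric $d$ of \S\ref{secmetrizability} under the covering order $\succ$.

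First I would record that the set $\mathrm{Cov}(M,p)$ of isometry classes of pointed Riemannian covers of $(M,p)$ is countable. Indeed $M$, being connected and complete, is metrizable, hence paracompact, hence second countable as a connected topological manifold; so $\pi_1(M,p)$ is countable, pointed covers of $(M,p)$ correspond to subgroups of $\pi_1(M,p)$ (a manifold is semilocally simply connected), and there are only countably many. In particular the set $D:=\{\,d((N,q),(M,p)) : (N,q)\in\mathrm{Cov}(M,p),\ (N,q)\neq(M,p)\,\}\subset(0,\infty)$ is countable, so $(0,\epsilon)\setminus D$ is dense in $(0,\epsilon)$ for every $\epsilon>0$.

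Next comes the geometric core. For a cover $(N,q)\succ(M,p)$ with covering projection $\pi\colon N\to M$, $\pi(q)=p$, set $r(N,q):=\sup\{R>0 : \pi|_{B_N(q,R)}\text{ is injective}\}\in(0,\infty]$. The restriction $\pi|_{B_N(q,R)}\colon B_N(q,R)\to B_M(p,R)$ is always a surjective local isometry (lift minimizing geodesics for surjectivity), so for $R<r(N,q)$ it is an isometry and $\pi^{-1}$ gives an isometric embedding $B_M(p,R)\hookrightarrow N$ fixing the basepoint, and symmetrically $\pi$ embeds $B_N(q,R)$ isometrically into $M$. Two facts: (i) \emph{$r$ is $\succ$-monotone}: if $(N',q')\succ(N,q)\succ(M,p)$ the pointed covering $N'\to M$ factors as $N'\xrightarrow{\tau}N\xrightarrow{\sigma}M$, and for $R<r(N',q')$, injectivity of $\sigma\tau$ on $B_{N'}(q',R)$ forces injectivity of $\tau$ there — hence $\tau$ restricts to an isometry $B_{N'}(q',R)\to B_N(q,R)$ — and then of $\sigma$ on $B_N(q,R)$; so $r(N',q')\le r(N,q)$. (ii) \emph{$d((N,q),(M,p))$ is a non-increasing function of $r(N,q)$} on $\mathrm{Cov}(M,p)$: the metric of \S\ref{secmetrizability} measures closeness through bilipschitz distortion of iterated derivatives of diffeomorphisms between large basepoint-balls, and for a cover these diffeomorphisms may be taken to be restrictions of $\pi^{\pm1}$, which are genuine distortion-free isometries up to radius $r(N,q)$; conversely a near-isometric diffeomorphism of $B_M(p,R)$ into $N$ becomes, after composing with $\pi$, a bilipschitz — hence injective — near-isometric map into $M$, which forces $\pi$ to be injective on a ball of comparable radius, so small distance genuinely forces large $r$. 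Combining (i) and (ii) gives the key monotonicity
\[(\star)\qquad (N',q')\succ(N,q)\succ(M,p)\ \Longrightarrow\ d((N',q'),(M,p))\ \ge\ d((N,q),(M,p)).\]
Then the construction is immediate: for $\epsilon>0$ pick $t\in(0,\epsilon)\setminus D$, put $U:=B_d((M,p),t)$, and let $\mathcal U$ be the family of all such balls. Each is an open neighborhood of $(M,p)$, and $\mathcal U$ is a neighborhood basis since $(0,\epsilon)\setminus D$ is dense and $\epsilon$ is arbitrary. For (1): $\partial U\subseteq\{(N,q):d((N,q),(M,p))=t\}$, which contains no proper cover of $(M,p)$ because $t\notin D$ and does not contain $(M,p)$ (which is interior to the ball). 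For (2): if $(N',q')\succ(N,q)\succ(M,p)$ and $(N',q')\in U$, then $(N,q)\in\mathrm{Cov}(M,p)$ and $(\star)$ gives $d((N,q),(M,p))\le d((N',q'),(M,p))<t$, so $(N,q)\in U$.

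The main obstacle will be step (ii), i.e.\ extracting the \emph{sharp} inequality $(\star)$ and not merely an estimate with a lossy constant — passing from ``distance'' to ``mutual isometric embedding radius'' and back through the qualitative description of the smooth topology composes two non-inverse functions and destroys the sharpness. To avoid this I would argue directly from the definition of $d$ in \S\ref{secmetrizability}, using that a cover agrees with $(M,p)$ \emph{exactly} (not just approximately) on the ball of radius $r(N,q)$, together with the stability results of \S\ref{stabilitysec} to control the error terms in the converse direction; this should show $d((N,q),(M,p))$ is controlled by a fixed decreasing function of $r(N,q)$, which with (i) yields $(\star)$. Everything else is routine.
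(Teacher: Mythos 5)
Your proposal has two genuine gaps, both at the points you yourself flag as delicate.

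\textbf{Countability of covers.} Your first step claims $\mathrm{Cov}(M,p)$ is countable because $\pi_1(M,p)$ is countable and pointed covers correspond to subgroups. But a countable group can have uncountably many subgroups (the free group $F_2$ has $2^{\aleph_0}$), and for a generic metric on $M$ with, say, free or surface fundamental group, these yield uncountably many pairwise non-isometric pointed covers. So the set $D$ of distances to proper covers need not be countable, and $(0,\epsilon)\setminus D$ need not be dense or even nonempty. The paper avoids this entirely: it shows the set of covers is \emph{compact} in $\mathcal M^d$, and that for each fixed $R$ only \emph{finitely many} isometry types of $R$-balls occur among covers. Combined with the fact that the boundaries $\partial\mathcal N^k_{R,\lambda}(M,p)$ are pairwise disjoint in $\lambda$, this gives condition (1) after excluding finitely many bad $\lambda$'s, with no countability assumption.

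\textbf{The monotonicity $(\star)$.} This is the heart of condition (2), and your argument for it does not close. You correctly prove that $r$ is $\succ$-monotone, but then need ``$d(\cdot,(M,p))$ is non-increasing in $r$'' on covers, which is false: two covers with the same first self-intersection radius $r$ can be wildly different manifolds beyond radius $r$ (e.g.\ a cylinder and a plane covering the same flat torus), and the metric of \S\ref{secmetrizability} sees all radii. Your fallback --- ``$d$ is controlled by a fixed decreasing function of $r$'' --- is only an \emph{upper} bound $d \le g(r)$; combined with $r(N',q')\le r(N,q)$ this gives $d((N,q),(M,p)) \le g(r(N,q)) \le g(r(N',q'))$, which is an upper bound on the \emph{wrong} quantity and does not yield $d((N',q'),(M,p)) \ge d((N,q),(M,p))$. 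Whether $(\star)$ holds for the specific metric $D$ of \S\ref{secmetrizability} is not clear, and the paper does not attempt to prove anything like it. Instead it proves a purely topological lifting statement (Claim \ref{lemmaclaim}): for a.e.\ $R$ and suitable $\lambda$, any locally $\lambda$-bilipschitz embedding $f\colon B_M(p,R)\to N'$ into a cover $\pi'\colon N'\to M$ has $\pi'$ injective on $f(B_M(p,R))$, because $B_M(p,R)$ lifts to $N'$ and its closure retracts onto it when $R$ is a generalized regular value of $d(p,\cdot)$. Given $(N',q')\succ(N,q)\succ(M,p)$ and such an $f$, the intermediate covering $N'\to N$ is then also injective on $f(B_M(p,R))$, so composing $f$ down to $N$ produces an embedding $B_M(p,R)\to N$ with the same iterated-Sasaki bilipschitz bounds, placing $(N,q)$ in the same neighborhood $\mathring{\mathcal N}^k_{R,\lambda}(M,p)$. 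This gives condition (2) with no metric comparison at all.

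In short: replacing the paper's compactness-plus-lifting argument by a metric-monotonicity argument is an appealing idea, but both the countability input and the monotonicity $(\star)$ fail as stated, and neither is easily repaired without essentially reconstructing the paper's Claim \ref{lemmaclaim}.
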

\begin {proof}%[Proof of Lemma \ref {BorelLemma}]
In \S\ref {secmetrizability}, we define the  \emph {open $k^{th} $-order $(R,\lambda)  $-neighborhood} of $(M, p) $,  $$\mathring{\mathcal N} ^ k_{R,1,\lambda} (M, p), $$ 
to be the set of all $(N,q) $ such that there is an embedding $f: B_M (p, R)\longrightarrow N $ with $f (p) = q $ such that $D  ^ k f: T ^ k U\longrightarrow T ^ k N  $ is locally $\lambda' $-bilipschitz with respect to the iterated Sasaki metrics on the $1$-neighborhood of the zero section in $T^kU$, where $1< \lambda' <\lambda $. 
Any sequence of these neighborhoods is a basis around $(M, p) $ as long as $\lambda\to 1$ and $R,k \to \infty $, and we will show that when $\lambda,R,k$ are chosen appropriately then these neighborhoods satisfy the conditions of the lemma. 

The subset $\mathcal C \subset \mathcal M ^ d$ of pointed covers of $(M, p) $ is compact: if $R >0 $ is given the uniform geometry bounds on $B(p,R)\subset M $ lift to any cover, see Definition \ref {boundedgeometry}, so Theorem \ref {firstcompactness} gives pre-compactness of $\mathcal C\subset\mathcal M ^ d $, and $\mathcal C$ is closed in $\mathcal M ^ d $ since Arzela-Ascoli allows one to take a limit of covering maps.  Now fix some $R>0$.  If $(N_i,q_i) \in\mathcal C $ is a convergent sequence, the isometry type of $B(q_i,R) \subset N_i$ is eventually constant.  So by compactness, the $R$-ball around the base point takes on only finitely many isometry types within $\mathcal C $.

 Arzela-Ascoli's theorem implies that when forming the closure of $\mathring{\mathcal N} ^ k_{R,\lambda} (M, p)$, we just allow $\lambda'=\lambda $.  So, the boundaries $\partial \mathring{\mathcal  N} ^ k_{R,\lambda} (M, p)$ are disjoint for distinct values of $\lambda $.  As there only finitely many isometry types of, say, $2R$-balls around the base point in pointed covers of $(M, p)$, there can be only finitely many $\lambda <2$ such that there is a cover of $(M, p) $ in $\partial \mathcal N ^ k_{R,\lambda} (M, p)$.  So, the first condition in the lemma is satisfied as long as we choose $\lambda <2 $ to avoid these points.

To illustrate which neighborhoods $\mathring{\mathcal N} ^ k_{R,\lambda} (M, p)$ satisfies the second condition of the lemma, we need the following:

\begin {claim}\label {lemmaclaim}
Fix $(M,p)\in \mathcal M ^ d $.  Then for all $R$ in an open, full measure subset of $\BR_{>0}$, there is some $\lambda >1 $ such that whenever
$$\pi' :(N', q')\longrightarrow (M, p) $$
is a pointed Riemannian covering and $$f: B(p,R) \longrightarrow N'$$ is a locally $\lambda $-bilipschitz embedding with $f (p) = q' $, then $\pi'$ is injective on $f(B (p, R)) $.  
\end {claim}
\begin {proof}
If not, there is a sequence indexed by $i $ such that $\lambda_i\rightarrow 1 $, but 
$$\pi'_i:(N'_i, q'_i)\longrightarrow (M, p) $$ is not injective on $f_i(B (p, R)) $.  In the limit, we obtain a Riemannian cover
$$\pi':(N', q'){\longrightarrow} (M, p), $$
such that there is a pointed  {isometry} $$f: B(p,R) \longrightarrow B(q',R) \subset N',$$ but where $\pi' $ is not injective on $\overline {B(q',R)}. $  Here, the non-injectivity persists in the limit since the distance between points of $(N'_i, q'_i)$ with the same projection to $(M,p)$ is bounded below by the injectivity radius of $(M,p)$, which is positive on the compact subset of $(M, p) $ in which we're interested.   

%As $B(p,R)$ and $B(q',R)$ are isometric, the Riemannian covering $$\pi' : B(q',R)\longrightarrow B (p,R)$$ is an isometry.  In other words, $B(p,R)$ lifts to $N'$ under $\pi'$. 

The map $\pi' \circ f:B(p,R) \longrightarrow M$ is an isometry fixing $p$, so it extends to an embedding $F: \overline{B(p,R)} \longrightarrow M$.  As long as $R$ is a (generalized) regular value for the (nonsmooth) function $d(p,\cdot)$ on $M $, a full measure open condition \cite {Riffordmorse}, the inclusion $B(p,R) \hookrightarrow \overline {B (p, R)} $ is a homotopy equivalence, see \cite [Isotopy Lemma 1.4]{Cheegercritical}. So in this case, the map $F$ takes $\pi_1(\overline{B(p,R)}) \cong \pi_1({B(p,R)})$ into the $\pi'$-image of $\pi_1(N',q')$. Hence $F$ lifts to an isometry $\overline{B(p,R)} \longrightarrow B(q',R) \subset N'$,  by the  lifting criterion.  Since $F$ is an embedding, this contradicts that $\pi'$ is non-injective on $\overline {B(q',R)} $.
\end {proof}

As long as $\lambda,R$ are chosen according to Claim \ref {lemmaclaim}, $\mathring{\mathcal N} ^ k_{R,\lambda} (M, p)$ satisfies the second condition of the lemma.  For if 
$$(N',q') \succ (N,q) \succ (M, p) , \ \ \ (N',q') \in \mathring{\mathcal N} ^ k_{R,\lambda} (M, p),$$
then there is a map $f: B(p,R) \longrightarrow N'$ as above, so $\pi': (N',q') \longrightarrow (M, p)$
is injective on $f(B (p, R)) $.  In particular, the covering map 
$$\pi :(N',q')\longrightarrow (N,q)$$ is also injective there, so the composition 
$$\pi \circ f : B(p,R) \longrightarrow N'$$ is an embedding.  As $\pi \circ f $ inherits the same Sasaki-bilipschitz bounds that $f$ has, this shows that $(N,q)\in \mathring{\mathcal N} ^ k_{R,\lambda} (M, p)$ as well. 

Therefore, for any $k$, almost every $R>0$, and $\lambda$ sufficiently close to $1 $,  the neighborhood $ \mathring{\mathcal N} ^ k_{R,\lambda} (M, p)$ satisfies both conditions of the lemma.  As these neighborhoods form a basis for the topology of $\mathcal M ^ d $  at $(M,p)$, we are done.
\end {proof}

Using the lemma, we now complete the proof of Proposition \ref {leafBorel}.  Recall that $X$ is a $d$-dimensional Riemannian foliated space and
$$\mathcal L : X \longrightarrow \mathcal M ^ d, \ \ x \mapsto (L_x,x)$$
is the leaf map.  We want to show that for each open $U \subset \mathcal M ^ d $, the preimage $\mathcal L^{-1}(U) = \cup_{i\in \BN} O_i \cap C_i,$ where each $O_i$ is open and each $C_i $ is closed in $X $.  %Certainly, it suffices to check this when $U $ 

It suffices to check this when $U $ is chosen as in Lemma \ref {BorelLemma}.
 If $ \mathcal L ^ {-1 }(U)$ does not have the form $\cup_{i\in \BN} O_i \cap C_i,$ there is a point $x \in \mathcal L ^ {-1 }(U)$ and a sequence $$x_i \in  \overline {\mathcal L ^ {-1 }(U)} \setminus \mathcal L ^ {-1 }(U), \ \ x_i \rightarrow x \in \mathcal L ^ {-1 }(U). $$
Passing to a subsequence, we may assume by Theorem \ref {foliationcompactness} that 
$$\mathcal L (x_i) \rightarrow (N,q) \succ \mathcal L (x) . $$
Note that as $\mathcal L (x_i)\notin U$ for all $ i $, we have $(N,q) \notin U$ as well.

Each $x_i $ is the limit of some sequence $(y_{i,j})$ in $\mathcal L ^ {-1 }(U)$, and Theorem \ref {foliationcompactness} implies that after passing to a subsequence, we have that for each $i $, $$ \mathcal  L (y_{i,j})\ {\rightarrow}\ (Z_i,z_i) \succ \mathcal L(x_i) \text { as } j \rightarrow\infty.$$   Now fixing $R>0$, since the manifolds $\mathcal L (x_i) $ converge in $\mathcal M ^d $, the $R$-balls around their base points have uniformly bounded geometry, as in Definition \ref {boundedgeometry}.  These geometry bounds lift to pointed covers, so are inherited by the $(Z_i,z_i)$. So by Theorem \ref{firstcompactness}, after passing to a subsequence we may assume that $$ (Z_i, z_i) \rightarrow (N',q')\in \mathcal M ^ d .$$
Moreover, as $ (Z_i,z_i)\succ \mathcal L(x_i)$ for each $i $, we have
$$(N',q') = \lim  (Z_i,z_i) \succ \lim \mathcal L(x_i)=  (N,q),$$
simply by taking a limit of the covering maps.  Remembering now that $(Z_i,z_i)$ was defined as the limit of $\mathcal L (y_{i,j})$ as $j\to \infty $, if we choose for each $i $ some large $j=j(i)$ and abbreviate $y_i=y_{i,j(i)}$, then 
$$\mathcal L (y_i) \rightarrow (N',q')$$
as well.
However, by construction we have $\mathcal L (y_i) \in U$, so $(N',q') \in \overline U$.  

The first part of Lemma \ref {BorelLemma} implies that $(N',q') \in U$, and then the second part shows $(N,q) \in U$.  This is a contradiction, as we said above that $(N,q) \notin U$.

\subsection{Resolving singularities in $\mathcal M ^ d $}
\label {pdsec}

 n this section, we will assume that $d\geq 2$. We saw in Example \ref {onecase} that $\mathcal M^1\cong (0,\infty]$ is completely understood; the reader is encouraged to think through the proofs of our results when $d=1$ on his/her own.

$\mathcal M ^d$ is not a naturally foliated space: although the images of the maps
\[M \longrightarrow \mathcal M ^d, \ \ p \mapsto (M,p) \]
partition $\mathcal M ^d$ as would the leaves of a foliation, these maps are not always injective and their images may not be manifolds. However, the following theorem, discussed in \S \ref{intro}, shows that there is a way to desingularize $\mathcal M^d$  so that the theory of unimodular measures becomes that of completely invariant measures. 

\begin {reptheorem}{desingularizing}[Desingularizing $\mathcal M^d$]If $\mu$ is a completely invariant probability measure on a Riemannian foliated space $X$, then $\mu$ pushes forward  under the leaf map to a unimodular probability measure $\bar \mu$ on $\mathcal M ^ d $. 

Conversely, there is a Polish Riemannian foliated space $\mathcal P ^ d$  such that any $\sigma$-finite unimodular measure on $\mathcal M ^ d $ is the push forward  under the leaf map of some completely invariant measure on $\mathcal P ^ d $.    Moreover, for any fixed  manifold $M $, the preimage of $\{(M,p) \ | \ p \in M\} \subset \mathcal M^d$  under the leaf map is a union of leaves of $\mathcal P^d$, each of which is isometric to $M $.\end {reptheorem}

As  a corollary of this  and Theorem \ref {foliations},  we have the following theorem, which we also discussed in the introduction.

\begin{reptheorem}{gflow}
	A $\sigma$-finite Borel measure on $\mathcal M^d$ is unimodular  if and only if the lifted measure $\tilde \mu$ on $T ^ 1\mathcal M^d$ is  geodesic flow  invariant.
\end{reptheorem}
\begin{proof}
By Theorem \ref{desingularizing}, $\mu$ is the push forward of a completely invariant measure $\nu$  on a Riemannian foliated space $\phi: X \longrightarrow \mathcal M^d$. (Taking $X=\mathcal P^d$.) By Theorem \ref{foliations}, the induced measure $\tilde \nu$ on $T^1 X$ is invariant under geodesic flow. Now, the leafwise  derivative $D\phi: T^1 X \longrightarrow\mathcal T^1\mathcal  M^d$ is  geodesic flow equivariant, so the  push forward measure $D\phi_* \tilde \nu = \tilde \mu$ is geodesic flow  invariant.
\end{proof}

The first assertion of Theorem \ref {desingularizing} is easy to prove. If $$\mathcal R =\{(x,y) \in X \times X \ | \ x, y \text { lie on the same leaf} \} , $$ then the measures $\mu_l, \mu_r$ on $X \times X$ are supported on $\mathcal R$ and push forward to $\bar\mu_l, 
\bar\mu_r$ under the natural map $\mathcal R \longrightarrow \mathcal M ^ d_2.$  By Theorem \ref{foliations}, $\mu_l=\mu_r $, so $\bar\mu_l = \bar\mu_r$.

The idea for the `conversely' statement is to use Poisson processes to obstruct the symmetries of these manifolds, converting $\mathcal M ^ d $ into a foliated space $\mathcal P ^ d $. To do this, we will recall some background on Poisson processes, define $\mathcal P^d$ and show how to translate between measures on $\mathcal M ^ d $ and on $\mathcal P^d$, and then verify that $\mathcal P ^ d $ is a Riemannian foliated space.

\vspace{1mm}

If $M $ is a Riemannian $d $-manifold, the \emph {Poisson process of $M $} is the unique probability measure $\rho_M$ on the space of locally finite subsets $D\subset M $ such that
\begin {enumerate}
\item if $A_1, \ldots, A_n$ are disjoint Borel subsets of $M $, the random variables that record the sizes of the intersections $D\cap A_i$ are independent,
\item if $A \subset M$ is Borel, the size of $D \cap A$ is a random variable having a Poisson distribution with expectation $\volume_M(A)$.
\end {enumerate}
For a finite volume subset $A\subset M$ and $n\in \mathbb{N}$, we have (cf. \cite[Example 7.1(a)]{Daleyintroduction})
\begin {equation}\label {janossy}\mathrm {Prob}\left (\substack{\text {for } (x_1,\ldots,x_n)\in A^n, \text{ we have } D\cap A =\{x_1,\ldots,x_n\}, \\ \text { given that } D\cap A \text { has } n \text { elements.}}\right)=d\volume_M^n(x_1,\ldots,x_n).\end {equation}
In other words, if $D$ is chosen randomly, the elements of $D\cap A$ are distributed within $A$ independently according to $\volume_M $.  

We refer the reader to \cite{Daleyintroduction} for more information on Poisson processes.  In this text, they are not introduced on Riemannian manifolds, but for measures on $\mathbb{R} ^ d$ that are absolutely continuous with respect to Lebesgue measure.   However, as the Poisson process  behaves naturally under  restriction and  disjoint union, it is `local', and  can be defined naturally for manifolds. In fact, the Poisson process really only depends on the Riemannian  measure on $M$,  and not on  the topology of $M$. Since $M$  is isomorphic as a measure space to  the (possibly infinite)  interval $(0,\volume(M))$, see \cite{Rohlinfundamental}, one really only needs to understand the  usual Poisson  process on $\BR_+$, as that of an interval is just its restriction.

\vspace{1mm}

When $M $ is a Riemannian $d $-manifold, let $\CF M$ be its \emph {orthonormal frame bundle}, the bundle in which the fiber over $p\in M$ is the set of orthonormal bases for $T_pM $.  If we regard $\CF M$ with the Sasaki-Mok metric \cite {Mokdifferential}, then
\begin {enumerate}
\item when $f:M\to M$ is an isometry, so is its derivative $Df : \CF M \to \CF M$,
\item the Riemannian measure $\volume_{\CF M}$ is obtained by integrating the Haar probability measure on each fiber $\CF_p M \cong O(d)$ against $\volume_M $.
\end {enumerate}

\begin {lemma}\label {free}
If $M$ is a Riemannian $d$-manifold, $d\geq 2$, then $\isometries(M)$ acts essentially freely, with respect to the Poisson measure $\rho_{\CF M}$, on the set of  {nonempty} locally finite subsets of $\CF M$.
\end {lemma}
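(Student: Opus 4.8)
The plan is to use that $\isometries(M)$ acts \emph{freely} and \emph{properly} on the frame bundle $\CF M$, that the Poisson measure $\rho_{\CF M}$ is $\isometries(M)$-invariant, and then to rule out nontrivial stabilizers by combining a one-element computation with the fact that the stabilizer of a locally finite set is a discrete subgroup. First I would record the structural facts. The action on $\CF M$ is free, since an isometry $f$ of $M$ whose derivative $Df$ fixes a frame $(p;e_1,\dots,e_d)$ satisfies $f(p)=p$ and $df_p=\mathrm{id}_{T_pM}$, hence $f=\mathrm{id}$; and it is proper because $\isometries(M)\actson M$ is proper and $\CF M\to M$ is equivariant with compact fibres $O(d)$. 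Since the derivative of an isometry is an isometry of the Sasaki--Mok metric on $\CF M$, it preserves $\volume_{\CF M}$, and therefore preserves $\rho_{\CF M}$. So it is enough to show that the \emph{bad set}
\[
\mathcal B=\{D\subset\CF M\text{ nonempty, locally finite}:\mathrm{Stab}(D)\neq\{e\}\},\qquad \mathrm{Stab}(D)=\{f\in\isometries(M):Df(D)=D\},
\]
is $\rho_{\CF M}$-null.

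Next I would analyse $\mathrm{Stab}(D)$. For closed $D$ it is a closed subgroup of $\isometries(M)$ --- a limit of stabilizing isometries still stabilizes $D$, because $D$ is discrete. For nonempty $D$ it is moreover $0$-dimensional: a nontrivial $1$-parameter subgroup inside it would move a point of $D$ along a non-constant continuous curve, which is impossible in the discrete set $D$. Hence on $\mathcal B$ the stabilizer is a discrete (so countable) subgroup, and in particular $D$ itself has at least two points, since a nontrivial isometry cannot fix the unique point of $D$. Because $\isometries(M)$ is locally compact and second countable, $\isometries(M)\setminus\{e\}$ is a countable union of compact sets $L_i$, so it suffices to prove that $\mathcal B_L:=\{D\in\mathcal B:\mathrm{Stab}(D)\cap L\neq\emptyset\}$ is $\rho_{\CF M}$-null for every compact $L\subset\isometries(M)\setminus\{e\}$.

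The heart of the proof is the estimate $\rho_{\CF M}(\mathcal B_L)=0$. The model case is a single $f\neq e$: by freeness $Df$ has no fixed point, so I can cover $\CF M$ by countably many balls $B$ with $\overline B$ compact (hence of finite volume) and $\overline B\cap\overline{Df(B)}=\emptyset$; conditioning on $|D\cap B|=n\geq1$, the $n$ points in $B$ are i.i.d.\ $\volume_{\CF M}$-uniform in $B$ by \eqref{janossy}, while the restriction of $D$ to the disjoint set $Df(B)$ is an independent Poisson configuration, so the event $Df(D)=D$ --- which forces that configuration to equal the fixed $n$-tuple $Df(D\cap B)$ --- has probability $0$; summing over $B$ and $n$ gives $\rho_{\CF M}(\{D\neq\emptyset:Df(D)=D\})=0$. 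To promote this to all $f\in L$ at once, I would run the same scheme with two points and the compact set $L$: cover $\CF M$ by balls $B$ with $\overline B$ disjoint from the relatively compact (hence finite-volume) set $\overline{L\cdot B}$, and cover $(\CF M)^2$ minus its diagonal by products of disjoint such balls; if $D\in\mathcal B_L$ has two points in one block, or one point in each of two suitably disjoint blocks, then for the witnessing $f\in L$ the pair of $Df$-images lies in $D$, the first image determines $f$ uniquely by freeness, and the second image is then a fixed point that must also lie in the independent Poisson configuration over $L\cdot B$ --- an event of probability $0$; summing over the countably many blocks, count-values, and the finitely many matchings of $Df(D\cap B)$ into $D\cap(L\cdot B)$ yields $\rho_{\CF M}(\mathcal B_L)=0$, hence $\rho_{\CF M}(\mathcal B)=0$.

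I expect the main obstacle to be the passage from ``each $f\neq e$ fixes a null set of $D$'s'' to ``almost every $D$ has trivial stabilizer''. A soft Fubini argument against Haar measure on $\isometries(M)$ only shows $\mathrm{Stab}(D)$ is Haar-null for a.e.\ $D$, which does \emph{not} exclude an infinite discrete stabilizer (such as a lattice of translations), so one genuinely needs the two-point argument above; and that argument loses its bite exactly when $\dim\isometries(M)=\dim\CF M=\tfrac{d(d+1)}2$, i.e.\ when $M$ is one of the simply connected space forms $\BR^d,\BH^d,S^d$ and $\isometries(M)$ acts with full-dimensional orbits on $\CF M$. In that case I would instead use that $\CF M$ is identified with the unimodular Lie group $\isometries(M)$ acting on itself by left translations, that $\rho_{\CF M}$ is built from bi-invariant Haar measure, and rule out left-translation self-symmetries of the Poisson process directly: infinite-order periods are impossible since the closure of the period subgroup would be positive-dimensional (against discreteness) in the compact case and against independence of Poisson counts on a sequence of disjoint translated balls escaping to infinity in the non-compact case, while for each $m\geq2$ the order-$m$ elements form a compact set of dimension $<\dim\isometries(M)$ (their centralizers being positive-dimensional), so the earlier count again applies. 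Combining these cases gives $\rho_{\CF M}(\mathcal B)=0$, which is the assertion.
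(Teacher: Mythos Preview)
Your overall strategy is sound in outline—freeness and properness of the action on $\CF M$, discreteness of $\mathrm{Stab}(D)$, reduction to compact $L\subset\isometries(M)\setminus\{e\}$—but the execution has a genuine gap, and the route is considerably more laborious than the paper's.

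The paper's proof is much more direct. In the generic case it simply observes that orbits of $\isometries(M)$ on $\CF M$ are properly embedded submanifolds; if they are not full components, they have $\volume_{\CF M}$-measure zero, and then by \eqref{janossy} the event that two points of $D$ lie on the same orbit has probability zero, which immediately kills any nontrivial stabilizer. Your compact-exhaustion-plus-independence machinery is a heavy substitute for this one-line observation. For the remaining case (full-dimensional orbits, forcing $M\in\{S^d,\BR P^d,\BR^d,\BH^d\}$—you omitted $\BR P^d$), the paper fixes any two points $e_1,e_2\in D$ and does a case split on how $g$ permutes them: either $g$ swaps $e_1,e_2$ (then $g$ is an involution, and the set of frames $e_2$ reachable from $e_1$ by an involution has measure zero in the fiber $\CF M_q\cong O(d)$), or $g(e_1)=e_2$ and $g(e_2)=e_3$ is new (forcing $d(e_1,e_2)=d(e_2,e_3)$), or both images are new (forcing $d(e_1,e_2)=d(e_3,e_4)$). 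The last two are measure-zero coincidences because the distance function pushes $\volume_{\CF M}$ to a measure absolutely continuous with respect to Lebesgue on $\BR$.

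Your two-point independence argument does not close in the space-form case, and you correctly sense this, but the substitute you offer for $\BR^d$ and $\BH^d$ does not work. The claim that ``infinite-order periods are impossible \ldots\ against independence of Poisson counts on a sequence of disjoint translated balls'' rules out each \emph{individual} $g$ with $g^n\to\infty$, but there are uncountably many such $g$, and this is precisely the quantifier problem you identified earlier when rejecting the soft Fubini argument. More concretely, even in your two-point scheme the case where the witnessing $f\in L$ is an involution swapping the two sampled points $e_1,e_2$ is not covered: once you condition on $e_1,e_2$ and $D\cap L\cdot B$, the question ``is $Df(e_2)\in D$?'' becomes ``is $Df(e_2)=e_1$?'', which is deterministic and can certainly hold. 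The paper handles exactly this involution case with a separate dimension count. Your compact-case argument for $S^d$ (finite $\mathrm{Stab}(D)$, then order-$m$ elements have positive-dimensional centralizer hence codimension $\geq 1$) is correct and is morally the same involution/finite-order analysis, but you never transport it to $\BR^d,\BH^d$, where order-$m$ elements still exist and are not compact. The cleanest repair is to adopt the paper's distance-coincidence trick, which handles all four space forms uniformly without any case split on compactness or element order.
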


The subset $\emptyset \subset \CF M$ is fixed by $\isometries (M) $ and has $\rho_{\CF M}$-probability\footnote{Via the measure  isomorphism $\mathcal FM\longrightarrow (0,\volume(\mathcal FM))$,  this is just the probability that there are no points in the interval $(0,\volume(\mathcal FM))$,  under the usual Poisson process on $\BR_+$.} $e^{-\volume \CF M}$. So if $M$ has finite volume, we must exclude $\emptyset$ in the statement of the lemma.

 Also, if $M\cong S^1$, then after choosing an orientation, every subset $\{e_1,e_2\} \subset \CF M$, where $e_1,e_2$ have opposite orientations, is stabilized by an involution of $M $.  Since $S^1$ is compact, two-element subsets of $\CF S^1$ appear with positive $\rho_{\CF S^1}$-probability, so the statement of the lemma fails for $1 $-manifolds.

\begin {proof}
%It suffices to show that a locally finite subset $D\subset \CF M$ almost never contains distinct frames $e_1, \ldots,e_4$, such that $d(e_1, e_2)=d (e_3, e_4) $, or distinct frames $e_1, \ldots,e_3$, such that $d(e_1, e_2)=d (e_2, e_3) $.  

The Lie group $\isometries(M)$ acts freely on $\CF M$, so any nonempty subset $D\subset \CF M$ that is stabilized by a nontrivial element $g\in\isometries(M)$ has at least two points. As the action is proper, its orbits are properly embedded submanifolds, so unless one is a
union of components of $\CF M$, all orbits have $\volume_{\CF M}$-measure zero. In this case, the $\volume_{\CF M}$-probability of selecting two points from the same orbit is zero, so  $\rho_{\CF M}$-a.e. $D\subset \CF M$ has trivial stabilizer, by Equation \eqref{janossy}.

So, we may assume from now there is an orbit of $\isometries (M) \circlearrowright \CF M$ that is a union of components of $\CF M$. (The frame bundle $\CF M$ has either 1 or 2 components, depending on whether $M $ is orientable.) Then $\isometries (M)$ acts transitively on $2$-planes in $TM$, so $M$ has constant sectional curvature. As $\isometries (M)$ also acts transitively on $TM$, $M$ is either $S^d,\BR \mathrm P^d, \BR^d$ or $\BH^d$.

%So, breaking the countably many cases, we can fix $A \subset \CF M$ compact and $n\in \mathbb N$, and prove that  \emph {for a locally finite subset $D\subset\CF M$, the probability that $D\cap A$ contains exactly $n$ points, including two points such that their images under some isometry of $M$ also lie in $D\cap A$, is zero.}

 If $D \subset \CF M$ is stabilized by some nontrivial $g\in \isometries (M)$, it has at least two points $e_1,e_2$, and we can consider the images $g(e_1),g(e_2)$. Either $e_1,e_2$ are exchanged by $g$, or one is sent to the other, which is sent to something new, or both elements are sent to new elements of $D$. As the elements of a random $D$ are distributed according to $\volume_{\CF M}$, by  \eqref{janossy}, it suffices to prove that for $( e_1,\ldots, e_4)\in \CF M^4$, the following are $\volume_{\CF M}^n$-measure zero conditions:
\begin {enumerate}
\item $ g(e_1) =e_2,\ g(e_2)=e_1$, for some $g\in \isometries (M)$,
\item $d(e_1,e_2)=d(e_2,e_3)$, 
\item $d(e_1,e_2)=d(e_3,e_4)$.
\end {enumerate}
%Here, the three case depend on which $e_i$'s the images $g(e_1),g(e_2)$ are. In the last two cases, they are $e_2,e_3$, and $e_3,e_4$, respectively, and it will suffice to prove that the equal distance condition is almost never satisfied. However, if $g$ exchanges $e_1$ and $e_2$, the distance condition would be $d(e_1,e_2)=d(e_2,e_1)$, which is always true, so we must deal directly with the existence of the isometry $g$.

An isometry that exchanges two frames must be an involution, since its square fixes a frame. So, for 1) we want to show that the probability of selecting frames $e_1,e_2\in \CF M$ that are exchanged by an involution is zero. The point is that in each of the cases $M=S^d,\BR \mathrm P^d, \BR^d$ or $\BH^d$, an involution exchanging $p,q\in M$ leaves invariant some geodesic $\gamma : [0,1]\longrightarrow M$ joining $p,q$, and then exchanges $-\gamma'(0) \in T_p M$ with $\gamma'(1)\in T_qM$.  So, after fixing a frame $e_1 \in \CF M_p$, the frames in $\CF M_q$ that are images of $e_1$ under involutions form a subset of $\CF M_q$ of dimension at most that of $ O(d-1)$, which has zero Haar measure inside of $\CF M_q \cong O(d)$. Integrating over $q$, we have that for a fixed $e_1$, the probability that a frame $e_2\in \CF M$ is the image of $e_1$ under an involution is zero. Integrating over $e_1$ finishes the proof of part 1).

For 2), note that for a fixed $e_2\in \CF M$, the function 
$$d(e_2,\cdot) : \CF M \longrightarrow \BR$$ pushes forward $\volume_{\CF M}$ to a measure on $\BR$ that is absolutely continuous with respect to Lebesgue measure -- its RN-derivative at $x\in \BR$ is the $(\mathrm{dim}( \CF M) - 1)$-dimensional volume of the metric sphere around $e_2$ of radius $x $.  So, if $e_1$ and $e_3$ are chosen against $\volume_{\CF M}$, the distances $d(e_1,e_2)$ and $d(e_2,e_3)$ will be distributed according to a measure absolutely continuous to Lebesgue measure on $\BR^2$, so will almost never agree. The proof that 3) is a measure zero condition is similar.
\end {proof}

We now show how to convert $\mathcal M ^ d$ into a foliated space by introducing Poisson processes on the frame bundles of each Riemannian $d$-manifold.  Let
$$\mathcal{P} ^ d_{all}= \{(M,p,D) \ | \ \substack{M \text{ a complete Riemannian } d\text {-manifold,} \\ p\in M, \text { and } D \subset \CF M \text { a closed subset}} \}/\sim,$$
where $(M,p,D)\sim(M',p',D') $ if there is an isometry $\phi: M \longrightarrow M$ with $\phi(p)=p'$ whose derivative $d \phi$ takes $D$ to $D'$. There is a Polish \emph {smooth-Chabauty topology} on $\mathcal {P}^d_{all}$ obtained from the smooth topology on $\mathcal M^d$ and the Chabauty topology on the subsets $D$, see \S \ref{smoothChabauty}.  %With respect to this topology, the projection $\mathcal {P} ^ d_{all} \longrightarrow \mathcal M ^d$ is a quotient map. 
Now consider the subset
$$\mathcal {P} ^ d =\{(M,p,D) \in \mathcal {P} ^ d_{all} \ | \ \nexists \text { an isometry } \phi:M\longrightarrow M \text { with } d \phi(D)=D\}.$$ 
The subset $\mathcal {P} ^ d$ is $G_\delta$, since
$\mathcal {P} ^ d_{all} \setminus \mathcal {P} ^ d = \cup_{n \in \BN} F_n,$
where $F_n$ is the set of all $ (M, p,D)$ such that there is an isometry $\phi: M \to M$ with $$d \phi (D) = D \text { and } 1/n \leq d_M(p,\phi(p)) \leq n;$$ here, $F_n$ is closed by the Arzela Ascoli theorem. Hence, by  Alexandrov's theorem,   $\mathcal {P} ^ d $ is  a Polish space.  Note that $\mathcal {P} ^ d $ is dense in $\mathcal {P} ^ d_{all}$, since any Riemannian manifold can be perturbed to have no nontrivial isometries.

\begin {theorem}\label {pd}
$\mathcal {P}^d$ has the structure of a Polish Riemannian foliated space, where  $(M,p,D)$ and $(M',p',D')$ lie in the same leaf  when there is an isometry $$\phi:M\longrightarrow M' , \ \ d\phi(D)=D'. $$
\end {theorem}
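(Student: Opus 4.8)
The plan is to produce explicit foliated charts for $\mathcal{P}^d$ by pulling back the product structure from the frame bundles of manifolds, using the fact that adding a Poisson-like closed subset $D \subset \mathcal{F}M$ with no nontrivial symmetry lets us transport the base point around $M$ without any ambiguity. The key point is that a neighborhood of a class $(M,p,D) \in \mathcal{P}^d$ should look like $B_M(p,R) \times Z$, where the $\BR^d$-factor records how the base point moves inside a fixed $(M,D)$ (more precisely inside a chart of $M$ near $p$), and $Z$ is a transversal parametrizing nearby pairs $(M',D')$ up to the rigidifying data.

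First I would fix $(M_0,p_0,D_0)$ and choose, using the metrizability results of \S\ref{secmetrizability} and the smooth-Chabauty topology of \S\ref{smoothChabauty}, a small neighborhood $U$ of $(M_0,p_0,D_0)$ in $\mathcal{P}^d$ together with radii $R \gg r > 0$ so that every $(M,p,D) \in U$ admits a smooth embedding $f : B_{M_0}(p_0,R) \longrightarrow M$ with $f(p_0) = p$ whose pullback metric is $C^\infty$-close to $g_{M_0}$ and whose frame-bundle lift $Df : \mathcal{F}B_{M_0}(p_0,R) \longrightarrow \mathcal{F}M$ pulls $D$ back to a closed subset $Df^{-1}(D)$ that is Chabauty-close to $D_0$. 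The crucial observation is that, because $(M,p,D) \in \mathcal{P}^d$ has \emph{no} nontrivial isometry preserving $D$, this embedding $f$ is \emph{unique} up to precomposition with an isometry of the source, and hence the pair $(g, \mathcal D) := (f^*g_M, Df^{-1}(D))$ on the fixed topological ball $B_{M_0}(p_0,R)$, together with the marked point $p_0$, is a well-defined invariant of $(M,p,D) \in U$ modulo the (now rigid) choice of $f$. Define the transversal $Z$ to be the set of such pairs $(g,\mathcal D)$ with $p_0$ as base point, topologized via the smooth-Chabauty topology; the chart map sends $(M,p,D)$, presented via $f$ with $f(p_0)=p$, to $(f^{-1}(p'), [(g,\mathcal D)])$ as the base point $p'$ ranges over a small ball, giving the local homeomorphism $U \cong B_{\BR^d}(0,r) \times Z$.

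Next I would verify the foliated-space axioms. The transition maps between two such charts: on overlaps one changes the reference embedding $f$ to another $f'$, and because the rigidity forces $f' = f \circ \iota$ for a genuine isometry $\iota$ of a ball, the transition is smooth along the $\BR^d$-leaf direction (it is an isometry of Riemannian balls, hence smooth) and depends on the transverse coordinate only through its action on the tensor $g$ and the set $\mathcal D$; continuity of partial derivatives in the transverse direction follows from the smooth dependence built into the smooth topology (\S\ref{smoothsec}) plus Chabauty continuity. The leaves are exactly the sets $\{(M,p,D) : p \in M\}$ for fixed $(M,D)$ up to isometry, which carry the Riemannian metric of $M$ by construction; the leafwise metric varies smoothly transversally precisely because $g$ varies smoothly in $Z$. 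Separability and metrizability of $\mathcal{P}^d$ were already established above (it is Polish, being $G_\delta$ in the Polish space $\mathcal{P}^d_{all}$), and the atlas can be taken countable.

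\textbf{The main obstacle} I expect is establishing the \emph{uniqueness and canonicity} of the reference embedding $f$ — i.e.\ showing that the chart map is genuinely well-defined and that transition maps are well-behaved — in the presence of the Chabauty topology on the subsets $D$. One has to know that for $(M,p,D)$ close to $(M_0,p_0,D_0)$, any two embeddings $f, f'$ satisfying the closeness constraints differ by an \emph{isometry} of the source ball, not merely by a diffeomorphism; this uses the rigidity coming from $D_0$ having trivial symmetry together with a stability/compactness argument of the type in \S\ref{stabilitysec}, applied to the pair (metric, closed subset of the frame bundle). Closely related is the need to check that the locally finite, nonempty subsets produced by Poisson processes (Lemma~\ref{free}) land in $\mathcal{P}^d$ with full measure, so that the foliated structure is nonempty on the relevant support — but that is exactly the content of Lemma~\ref{free}, so the remaining work is purely the chart construction and its transition-map compatibility.
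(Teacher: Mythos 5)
Your ``crucial observation'' --- that for $(M,p,D)$ close to $(M_0,p_0,D_0)$, any two almost-isometric embeddings $f,f' : B_{M_0}(p_0,R)\to M$ with Chabauty-close pullbacks of $D$ must differ by a \emph{genuine isometry} of the source ball --- is false, and this is a genuine gap rather than a technicality to be deferred. The hypothesis defining $\mathcal{P}^d$ is that $(M,D)$ has no nontrivial isometries; it says nothing about diffeomorphisms that are merely close to isometric. If $f$ satisfies your closeness constraints and $\phi$ is any diffeomorphism of $B_{M_0}(p_0,R)$ that is $C^\infty$-small (supported away from $p_0$, say), then $f\circ\phi$ also satisfies them, but $\phi$ is not an isometry. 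Your own ``main obstacle'' paragraph concedes this is the point requiring work, but it is not an issue of supplying a stability argument from \S\ref{stabilitysec}: the uniqueness statement simply does not hold, so the pair $(g,\mathcal D) = (f^*g_M, Df^{-1}(D))$ on the fixed ball is \emph{not} a well-defined invariant of $(M,p,D)$, and your chart map has no canonical definition.

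The paper's proof confronts exactly this problem and resolves it by a different mechanism: instead of trying to rigidify the choice of embedding, it rigidifies the choice of base point (or base frame) within each $\sim_1$-equivalence class. Concretely, after establishing that $\sim_1$ is a genuine equivalence relation on small $\mathcal V$ (Lemma \ref{technicalities}) with separable metrizable quotient (Lemma \ref{sepmet}), it works in the framed space $\mathcal{FP}^d$ and constructs a continuous section $s : \mathcal V \to \mathcal{FP}^d$ of the quotient map (Lemma \ref{existencesection}). This is done by first selecting, for each class, a distinguished compact subset $A_X$ consisting of representatives whose $\mathcal{FP}^d$-distance to the fixed reference point $F$ is at most twice the minimum (and proving Chabauty-continuity of $X\mapsto A_X$, Claim \ref{ChabautyAX}), and then taking the Riemannian \emph{circumcenter} $\odot(A_X)$ (Lemma \ref{circumcenter}), which exists and is unique thanks to local geometry bounds. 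The chart $h(v,[X]) = \exp_{s([X])}(v)$ is then well-defined, and the leaf-smoothness of transitions comes from the smooth dependence of exponential maps on the base frame, not from a purported uniqueness of almost-isometries. To repair your proposal you would need to replace the false uniqueness claim with some canonical-selection device of this kind; without it the chart is not a function of $(M,p,D)$.
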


Assuming Theorem \ref {pd} for a moment, let's indicate how to transform a unimodular measure  $\mu $ on $\mathcal M ^ d $ into a completely invariant measure on $\mathcal P ^ d$, which will finish the proof of Theorem \ref {desingularizing}. 
Each fiber of the projection
$$\pi: \mathcal {P} ^ d_{all} \longrightarrow \mathcal M ^d, \ \ (M,p,D)\longmapsto (M,p)$$
is identified with  a set of of closed subsets of $\CF M$,  and this identification is unique up to isometry.  The Poisson process on $\CF M $ (with respect to the natural volume, e.g.\ that induced by the Sasaki-Mok metric) induces a measure $\rho_{(M,p)}$ on $\pi ^ {-1} (M, p) $,  supported on the Borel set of locally finite  subsets. We call  this  measure the \emph {framed Poisson process} on that fiber, and by Lemma~\ref {free} we have $$\rho_{(M,p)}(\mathcal {P} ^ d )=1-e ^ {-\volume \CF M}>0$$ for each $(M, p) \in \mathcal M ^ d $. Moreover, the map
$$\mathcal M ^ d \longrightarrow \mathcal M (\mathcal P ^ d), \ \ (M, p) \longmapsto \rho_{(M, p)}$$
is continuous, where $\mathcal M (\mathcal P ^ d)$ is the space of Borel measures on $\mathcal P ^ d $, considered with the weak$^*$ topology. (This follows from weak* continuity of the Poisson process associated to a space with a measure $\mu$ as the measure varies in the weak* topology, which is a consequence of \eqref{janossy}.)

So, given a measure $\mu $ on $\mathcal M ^ d $, we define a measure $\hat\mu $ on $\mathcal P ^ d$ by 
\begin {equation}\label {muint}\hat \mu = \int_{(M, p)\in\mathcal M ^ d} \frac{\rho_{(M, p)}}{1-e ^ {-\volume \CF M}} \  d\mu.\end {equation}
The push forward of $\hat \mu$ under the projection to $\mathcal M ^ d $ is clearly $\mu $, so we must only check that $\hat \mu $ is completely invariant. 

Suppose that $R \subset\mathcal P ^ d\times\mathcal P ^ d$ is the leaf equivalence relation of $\mathcal P ^ d $, i.e.\ the set of all pairs $ ((M,p,D),(M',p',D'))$ such that there is an isometry $\phi:M\longrightarrow M' $ with $d \phi(D)=D'$. Each such pair determines a tuple $(M',\phi(p),p',D')$, a doubly rooted manifold together with a  closed subset of its frame bundle, that is unique up to isometry. So, there is a map
$$R \longrightarrow\mathcal M_2 ^ d, \ \ ((M,p,D),(M',p',D')) \longmapsto (M',\phi(p),p'),$$
where the fiber over $(M,p,q) \in \mathcal M_2 ^ d $ is canonical identified (up to isometry of $M$) with the set of  closed subsets of $\mathcal F M$ on which $\isometries(M)$ acts freely.  From the construction of $\hat \mu$, the measures $\hat \mu_l$ and $\hat \mu_r$ on $R $ are obtained by integrating (rescaled) Poisson processes on each fiber against $\mu_l$ and $\mu_r$. So, if $\mu_l=\mu_r$, then $\hat \mu_l=\hat \mu_r$, implying $\hat\mu$ is completely invariant by Theorem~\ref{foliations}.

\vspace {2mm}

\subsection {The proof of Theorem \ref {pd}}\label {proofpdsec}
The goal is to cover $\mathcal P ^ d$ by open sets $\mathcal U $, together with homeomorphisms $$h:\BR ^ d \times Z \longrightarrow \mathcal U  ,$$ where $Z=Z(\mathcal U)$ is separable and metrizable,  with transition maps
$$t : \BR ^ d \times Z  \longrightarrow \BR ^ d \times Z', \ \ t(x,z)=(t_1(x,z),t_2(z)),$$
 where $t_1(x,z)$ is smooth in $x$, and where $t_1,t_2$ and all  the partial derivatives $\frac {\partial t_1(x,z)}{\partial x_i}$ are continuous on $\BR^d \times Z$. We also want the leaves of the foliation to be obtained by fixing $M $ and $D\subset M$, and letting the base point $p\in M$ vary. 

The construction of the charts will require some work --- in outline, the idea is as follows. Given $X\in\mathcal P ^ d $, we show that on any small neighborhood $\mathcal U \ni X $, there is an equivalence relation $\sim $ whose equivalence classes are obtained by  taking some $(M,p,D)$ and making slight variations of the base point $p$.  Eventually, these equivalence classes will be the plaques $h(\BR ^ d\times\{z\} )$, and  the quotient space $\mathcal U / \sim$ will be the transverse space $Z$.  That is, we will have a homeomorphism $h$ to complete  the  following commutative diagram: $$\xymatrix{ \BR ^ d \times Z  \ar[r]^{\ \ \  h} \ar[d] & \mathcal U \ar[d] \\ Z \ar[r] & U /\sim} $$ 

Of course, this cannot be done without a careful choice of $\mathcal U $.  It is not hard to choose $\mathcal U $ so that each $\sim$-equivalence class is a small disc of base points  in some manifold $M$  with a distinguished closed subset $D$.  Shrinking $\mathcal U$,  we show that one can construct a continuously varying family of base frames,  one for each of these $M$. Then, we use the Riemannian exponential maps associated to these frames to parameterize the $\sim$-classes, which allows us to identify them with $\BR^d$  in a way that is transversely continuous.

 Most of the proof involves constructing the base frames.   Essentially, this is just a framed version  of  the following, which we can discuss now without introducing more notation.  \emph{Given our neighborhood $\mathcal U $ of $X \in \mathcal P^d$, there is a section $s: \mathcal U/\sim \longrightarrow \mathcal U$ for the  projection map with $s([X])=X$.}  Briefly, the idea  for this is as follows. Fix a metric $d$ on $\mathcal P^d$, and for each $\sim $-class $\mathcal E$, set $$\mathcal E_0 = \{ p \in \mathcal E \ | \ d(p,X) \leq 2 d(\mathcal E,X)\}.$$ The point $s(\mathcal E) \in \mathcal E$ is  then defined to be the `circumcenter' of $\mathcal E_0 $, when we regard $\mathcal E_0$  as a small subset inside of a Riemannian $M$ as above. (In what follows, this  argument will be done for base frames,  and the notation will be different.)

\vspace{2mm}

Before starting the proof in earnest, we record the following two lemmas, which should convince the reader that such a foliated structure is likely.

\begin {lemma}[Leaf inclusions]\label {leafinclusions}
Suppose that $X=(M, p,D)\in \mathcal P ^ d $, and define 
$$L : M \longrightarrow\mathcal P^d, \ \ L(q)=(M,q,D).$$
Then $L$ is a continuous injection, so the restriction of $L$ to any precompact subset of $M $ is a homeomorphism onto its image.
\end{lemma}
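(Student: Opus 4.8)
The plan is to establish the three assertions in turn: injectivity, continuity, and then the homeomorphism-onto-image statement, which will be formal.

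\emph{Injectivity} is immediate from the definition of $\mathcal P^d$. Suppose $L(q_1)=L(q_2)$, i.e.\ the triples $(M,q_1,D)$ and $(M,q_2,D)$ are identified in $\mathcal P^d_{all}$; then there is an isometry $\phi:M\longrightarrow M$ with $d\phi(D)=D$ and $\phi(q_1)=q_2$. But $X=(M,q_1,D)\in\mathcal P^d$ means precisely (in view of the description $\mathcal P^d_{all}\setminus\mathcal P^d=\cup_n F_n$) that no isometry of $M$ stabilizing $D$ moves the base point, so $\phi(q_1)=q_1$ and hence $q_1=q_2$.

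\emph{Continuity} is the real content. Fix $q_i\to q$ in $M$; I want $(M,q_i,D)\to(M,q,D)$ in the smooth--Chabauty topology of \S\ref{smoothChabauty}. The Chabauty factor is trivial here because $D\subset\mathcal F M$ is literally the same closed set for every $i$, so it suffices to produce, for each $R>0$, diffeomorphisms $f_i:M\longrightarrow M$ with $f_i(q)=q_i$ and $f_i\to\mathrm{id}$ in $C^\infty_{\mathrm{loc}}$; restricting $f_i$ to $B_M(q,R)$ then gives the embeddings required for smooth convergence $(M,q_i)\to(M,q)$ (since $f_i\to\mathrm{id}$ forces $f_i^\ast g_M\to g_M$ in $C^\infty$ on $B_M(q,R)$), and the induced frame-bundle maps carry the fixed set $D$ to sets Chabauty-converging to $D$. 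To build the $f_i$: fix $\epsilon<\mathrm{inj}_M(q)$, work in normal coordinates on $B_M(q,\epsilon)$, and pick a bump function $\beta$ equal to $1$ on $B_M(q,\epsilon/2)$ and supported in $B_M(q,\epsilon)$. For $i$ large set $v_i=\exp_q^{-1}(q_i)\in T_qM$, a minimizing initial vector, so $v_i\to 0$; let $W_i$ be $\beta$ times the coordinate-constant vector field equal to $v_i$, extended by zero to a smooth compactly supported field on $M$. Then $\|W_i\|_{C^k}\to 0$ for every $k$, so the time-$1$ flow $f_i:=\Phi^{W_i}_1$ is a diffeomorphism of $M$ with $f_i\to\mathrm{id}$ in $C^\infty_{\mathrm{loc}}$; and since on $B_M(q,\epsilon/2)$ the flow line of $W_i$ through $q$ is $t\mapsto\exp_q(tv_i)$, which stays in $B_M(q,\epsilon/2)$ for $t\in[0,1]$ once $|v_i|<\epsilon/2$, we get $f_i(q)=\exp_q(v_i)=q_i$.

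\emph{Conclusion.} With continuity and injectivity in hand the last claim is formal: $\mathcal P^d$ is Polish, hence Hausdorff, and $M$ is locally compact, so for precompact $K\subset M$ the closure $\overline K$ is compact, $L|_{\overline K}$ is a continuous injection of a compact space into a Hausdorff space and therefore a homeomorphism onto its image, and restricting to $K$ shows $L|_K$ is a homeomorphism onto $L(K)$. The one point needing care is matching the flow construction to whatever normalization \S\ref{smoothChabauty} uses to promote the near-isometries $f_i$ to maps of frame bundles; since $f_i\to\mathrm{id}$ and $f_i^\ast g_M\to g_M$, any such normalization yields frame maps converging to the identity, so I expect this to be routine rather than a genuine obstacle. (Alternatively, one can first note that $q\mapsto(M,q)$ is continuous into $\mathcal M^d$ by exactly the flow argument above, and observe that $L$ is the lift of this map determined by the constant closed set $D$, reducing continuity of $L$ to continuity of the section defining the smooth--Chabauty topology along such a sequence.)
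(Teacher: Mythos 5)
Your proof is correct and follows the paper's own route quite closely: injectivity is read off from the requirement that $(M,D)$ admit no nontrivial isometry stabilizing $D$; continuity is established by constructing compactly supported self-diffeomorphisms $f_i$ of $M$ carrying $q$ to $q_i$ with $f_i\to\mathrm{id}$ in $C^\infty$ on compacta; and the final claim is point-set topology since $\mathcal P^d$ is Hausdorff. The one place where your argument is genuinely more careful is in producing the $f_i$. The paper proposes conjugating a fixed bump-function diffeomorphism of $\BR^d$ by a Euclidean similarity so as to obtain a map $f$ supported in a $2\epsilon$-ball around $q$ whose derivatives tend to zero with $\epsilon$; but if $f = S\circ h_0\circ S^{-1}$ with $S(x)=q+\epsilon Rx$, then $f-\mathrm{id}$ at $x$ equals $\epsilon R\,(h_0-\mathrm{id})\!\left(\tfrac{1}{\epsilon}R^{-1}(x-q)\right)$, so $D^k(f-\mathrm{id})$ is of order $\epsilon^{1-k}$: constant for $k=1$ and blowing up for $k\geq 2$. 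That is not enough to conclude $f_i^\ast g_M\to g_M$ in $C^\infty$. Your version avoids this: flowing $W_i=\beta\cdot v_i$ for time one keeps the support radius \emph{fixed} as $v_i\to 0$, so $\|W_i\|_{C^k}\to 0$ for every $k$, hence $f_i\to\mathrm{id}$ in $C^\infty_{\mathrm{loc}}$, which is precisely what the smooth topology requires (an equivalent fix is to translate the bump-function diffeomorphism by $v_i$ without rescaling its support). Your handling of the Chabauty factor is also fine: with $D$ fixed on both sides and $Df_i\to\mathrm{id}$ uniformly on frame bundles over compacta, $(Df_i)^{-1}(D)\to D$ in the Chabauty topology.
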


\begin {proof}
For continuity of $L $, note that if $q,q'\in \BR ^ d$ and $|q-q'|=\epsilon$, there is a diffeomorphism $f$ of $\BR ^ d$ taking $q$ to $q'$, such that
\begin {enumerate}
\item $f$ is supported in a $2\epsilon $-ball around $q$,
\item the $k^{th}$ partial derivatives of $f$ are all bounded by some $C(k,\epsilon)$, where $C(k,\epsilon) \rightarrow 0 $ as $\epsilon\rightarrow 0$.
\end {enumerate}
For instance, one can just fix any bump function taking the origin to $(1,0,\ldots,0)$ that is supported in a $2$-ball around the origin, and conjugate it using appropriate Euclidean similarities. So if $q,q' \in M$ are close, we can choose an $\BR ^d$-chart around them and transfer such an $f$ to $M$ to give an almost isometric map verifying that $(M,q,D)$ and $(M,q',D)$ are close in $\mathcal P ^ d$.

Injectivity of $L $ comes from the definition of $\mathcal P^d$, which requires that there are no nontrivial isometries of the pair $(M,D)$. The statement about compact subsets of $M $ follows from point set topology, as $\mathcal P^d$ is Hausdorff.
\end {proof}
We optimistically call the image of $L$ the \emph {leaf} through $X\in \mathcal P ^ d $, and write
$$L_X :=L(M)\subset\mathcal P ^ d.$$ Via $L$, \emph {we will  from now on regard $L_X$ as a smooth Riemannian $d$-manifold that is topologically embedded (non-properly) in the space $\mathcal P^d$}.   (The distance function on a leaf $L_X$ will usually be written as $d_{L_X}$.) Under this identification, the point $p$  becomes $X $ and  the subset $D\subset\mathcal FM$ becomes a distinguished subset of the frame bundle of $L_X$. Note that the natural (manifold)   topology on $L_X$ is not  the subspace   topology induced from  the inclusion $L_X\hookrightarrow \mathcal P ^ d $.
 
\begin {lemma}[Chabauty convergence of leaves]\label {leafinclusions2}
 Suppose $X_i \rightarrow X$ in $\mathcal P ^ d$. Then
 \begin {enumerate}
 \item for every point $Y\in L_X$, there  is a sequence $Y_i \in L_{X_i}$ with  $Y_i \rightarrow Y $ in $\mathcal P^d$ and $d_{L_{X_i}}(X_i,Y_i) \rightarrow d_{L_X}(X,Y)$.
 \item  if $Y_i\in L_{X_i}$ with $d_{L_{X_i}}(X_i,Y_i)<\infty$, then after passing to a subsequence we have $Y_i \to Y\in L_X$, and $d_{L_{X_i}}(X_i,Y_i) \to d_{L_X}(X,Y)$.
 \end {enumerate}\end {lemma}

  Note that  there may be sequences $Y_i \in L_{X_i}$  that converge in $\mathcal P ^ d $, but where $d_{L_{X_i}}(X_i,Y_i) \rightarrow \infty$,  so the statements about distance  above have content.

\begin {proof}
By  definition of the  convergence $X_i\rightarrow X $, see \S \ref {smoothChabauty}, after conjugating by the leaf inclusions, there are almost isometric maps
 \begin {equation}f_i : L_X \dashmap L_{X_i},\label {almostisometric}\end {equation}
with $f_i(X)=X_i$ and where the derivatives $Df _i$  pull back the distinguished subsets of the frame  bundles $L_{X_i}$  to a sequence that  Chabauty converges to the distinguished subset of $L_X$. (Here, $\dashmap$ means that $f_i$  is defined on some ball around  the base point, where the  domains exhaust $L_X$ as $i\to \infty$, see \ref {smoothChabauty}.)

For 1), the sequence $Y_i = f_i(Y)$  is defined for large $i $ and converges to $Y $ in $\mathcal P  ^ d$: one can  use the $ f_i $ in the definition of convergence. For 2), the sequence $f_i^{-1} (Y_i) $  is defined for large $i $, and is pre-compact in $L_X$ by the  condition on distances.  So, after passing to a subsequence, it converges to  some point $Y\in L_X$, and we will have $Y_i \rightarrow Y$ in  $\mathcal P ^ d$  as well,  again using the $ f_i $ in the definition of convergence.
\end {proof}

\subsubsection{Constructing the equivalence relation.}

For any $\epsilon>0$,  define a relation $\sim_\epsilon $ on $\mathcal {P} ^d $ by letting 
$$X \sim Y \ \ \text{ if } \ \ Y \in L_X \text { and } d_{L_X}(X,Y)\leq \epsilon. $$
 Note that $\sim_\epsilon $ is reflexive and symmetric, but we only have
$$X \sim_\epsilon Y \text { and } Y \sim_\delta Z \Longrightarrow X \sim_{\epsilon +\delta} Z,$$  rather than  transitivity for a particular $\sim_\epsilon$.    In particular, the equivalence class of $X$ \emph {with respect to the transitive closure of $\sim_\epsilon$} is exactly $L_X $.

However, each $\sim_\epsilon $ \emph {is} transitive on sufficiently small  subsets of $\mathcal P ^ d $. 

\begin {lemma}\label {technicalities}
If $O\in \mathcal {P}^ d $, then for fixed $\delta,\epsilon>0$, there is a neighborhood $\mathcal U \ni O $ on which the relations $\sim_\delta $ and $\sim_\epsilon$ agree. Hence, if $\mathcal U$  is chosen so that $\sim_\epsilon =\sim_{2\epsilon}$ on $\mathcal U$,  then $\sim_\epsilon $ is an equivalence relation on $\mathcal U $.
\end{lemma}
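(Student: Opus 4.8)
The plan is to prove the first assertion by contradiction, using the Chabauty-convergence property of leaves from Lemma~\ref{leafinclusions2}. First I would note that the statement is symmetric in $\delta$ and $\epsilon$ and vacuous when $\delta=\epsilon$, so I may assume $\delta<\epsilon$; since $d_{L_X}(X,Y)\le\delta$ then implies $d_{L_X}(X,Y)\le\epsilon$, one always has $\sim_\delta\,\subseteq\,\sim_\epsilon$, and it suffices to find a neighborhood $\mathcal U\ni O$ on which the reverse inclusion holds, i.e.\ such that whenever $X,Y\in\mathcal U$ with $Y\in L_X$ and $d_{L_X}(X,Y)\le\epsilon$, one in fact has $d_{L_X}(X,Y)\le\delta$.

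Suppose no such $\mathcal U$ exists. Since $\mathcal P^d$ is metrizable, fix a countable decreasing neighborhood basis $\mathcal U_1\supseteq\mathcal U_2\supseteq\cdots$ at $O$; for each $i$, the failure of the statement on $\mathcal U_i$ together with $\sim_\delta\,\subseteq\,\sim_\epsilon$ yields points $X_i,Y_i\in\mathcal U_i$ with $Y_i\in L_{X_i}$ and $\delta<d_{L_{X_i}}(X_i,Y_i)\le\epsilon$. In particular $X_i\to O$ and $Y_i\to O$ in $\mathcal P^d$. Because the leaf-distances $d_{L_{X_i}}(X_i,Y_i)$ are uniformly bounded by $\epsilon$, Lemma~\ref{leafinclusions2}(2) applies: after passing to a subsequence, $Y_i$ converges in $\mathcal P^d$ to some $Y\in L_O$ with $d_{L_{X_i}}(X_i,Y_i)\to d_{L_O}(O,Y)$. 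But $\mathcal P^d$ is Hausdorff and $Y_i\to O$, so $Y=O$, whence $d_{L_{X_i}}(X_i,Y_i)\to d_{L_O}(O,O)=0$, contradicting $d_{L_{X_i}}(X_i,Y_i)>\delta$. This establishes the first assertion.

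For the ``Hence'' clause I would apply the first assertion with the parameter pair $\epsilon$ and $2\epsilon$ to get a neighborhood $\mathcal U$ on which $\sim_\epsilon\,=\,\sim_{2\epsilon}$, and then check the equivalence-relation axioms on $\mathcal U$. Reflexivity and symmetry of $\sim_\epsilon$ hold on all of $\mathcal P^d$ and were already recorded before the lemma (reflexivity since $d_{L_X}(X,X)=0$; symmetry since $Y\in L_X$ forces $L_Y=L_X$ as a Riemannian manifold, so $d_{L_X}(X,Y)=d_{L_Y}(X,Y)$). For transitivity, if $X,Y,Z\in\mathcal U$ with $X\sim_\epsilon Y$ and $Y\sim_\epsilon Z$, the triangle inequality for leaf-distance gives $X\sim_{2\epsilon}Z$, and since $X,Z\in\mathcal U$ this coincides with $X\sim_\epsilon Z$.

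The only real obstacle is making the contradiction step airtight: one must be sure that Lemma~\ref{leafinclusions2}(2) genuinely supplies both the $\mathcal P^d$-convergence $Y_i\to Y\in L_O$ and the convergence of leaf-distances under the single hypothesis that $d_{L_{X_i}}(X_i,Y_i)$ stays finite (here, bounded by $\epsilon$), and that the Hausdorff property of $\mathcal P^d$ forces $Y=O$. Both are in hand. Note that the subtlety flagged right after Lemma~\ref{leafinclusions2}---that a $\mathcal P^d$-convergent sequence of points lying on the leaves $L_{X_i}$ may nonetheless have $d_{L_{X_i}}$-distance tending to infinity---is exactly what makes the uniform bound $d_{L_{X_i}}(X_i,Y_i)\le\epsilon$ coming from $\sim_\epsilon$ indispensable.
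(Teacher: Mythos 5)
Your proposal is correct and follows the paper's argument exactly: contradiction, produce sequences $X_i,Y_i\to O$ with $\delta<d_{L_{X_i}}(X_i,Y_i)\le\epsilon$, apply Lemma~\ref{leafinclusions2}(2) to force the leaf-distance to $d_{L_O}(O,O)=0$, and deduce transitivity from the already-noted $\sim_\epsilon\circ\sim_\epsilon\subseteq\sim_{2\epsilon}$. The only thing you spell out that the paper leaves tacit is that the subsequential limit $Y$ furnished by Lemma~\ref{leafinclusions2}(2) must equal $O$ because $Y_i\to O$ and $\mathcal P^d$ is Hausdorff — a worthwhile clarification, but not a different route.
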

\begin {proof}
Assuming this is not the case for some $\delta <\epsilon $, let  $X_i $ and $Y_i$ be sequences in $\CP ^ d $ that converge to $ O$, with $X_i \sim_{\epsilon} Y_i,$ but $ X_i \not\sim_{\delta} Y_i.$
Since $X_i \sim_{\epsilon} Y_i$, the  distance  $d_{L_X}(X,Y)\leq \epsilon < \infty $. So, Lemma \ref {leafinclusions2} 2) applies, and we must have $d_{L_{X_i}}(X_i,Y_i)\rightarrow d_{L_O}(O,O)=0$, violating that $d_{L_{X_i}}(X_i,Y_i)\geq \delta $ for all $i$.\end {proof}

From now on, \emph {we assume that all our neighborhoods $\mathcal U $ are small enough so that $\sim_1=\sim_2$, in which case $\sim_1$ is an equivalence relation}.

\begin {lemma}\label {sepmet}
The quotient topology on $\mathcal U / \sim_1$ is separable and metrizable.
\end {lemma}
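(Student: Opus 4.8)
I want to show that the quotient topology on $\mathcal U/\sim_1$ is separable and metrizable, where $\mathcal U$ is a small enough neighborhood in $\mathcal P^d$ that $\sim_1$ is an equivalence relation whose classes are precompact discs of base points inside fixed manifolds-with-distinguished-subsets. Let me think about how to do this.

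Separability should be immediate: $\mathcal P^d$ is Polish, hence separable; $\mathcal U$ is separable as a subspace; and a continuous image of a separable space is separable, so $\mathcal U/\sim_1$ is separable. The real content is metrizability, i.e., I need the quotient to be Hausdorff and to have a countable basis (then Urysohn, once I know it's regular) — or more directly, I should produce a metric.

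For metrizability, the cleanest route: the quotient map $q: \mathcal U \to \mathcal U/\sim_1$ is open (because $\sim_1$-saturations of open sets are open — I need to check this, using Lemma \ref{leafinclusions} to move a small open piece of one leaf around, or rather: the saturation of an open $V$ is $\bigcup_{X \in V}\{Y : Y\sim_1 X\}$, and I'd want to argue this is open; actually openness of $q$ for foliation-type quotients usually follows because the relation is "locally a projection", but here I don't yet have the chart, so I should argue directly). Hmm — let me reconsider.

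Actually the honest approach given what's available: I'd show $\mathcal U/\sim_1$ is second countable and Hausdorff and regular, then invoke the Urysohn metrization theorem. Second countability: $q$ maps a second-countable space onto it; if $q$ is open then $q$ of a countable basis is a countable basis, done. Hausdorff: take $X \not\sim_1 Y$ in $\mathcal U$; I need saturated open sets separating them. Suppose not — then there are $X_i \to X_\infty$, $Y_i \to Y_\infty$ with $X_i \sim_1 Y_i$ but $X_\infty \not\sim_1 Y_\infty$; since $d_{L_{X_i}}(X_i, Y_i) \le 1$, Lemma \ref{leafinclusions2}(2) gives a subsequence with $Y_i \to Y'\in L_{X_\infty}$ and $d_{L_{X_i}}(X_i,Y_i) \to d_{L_{X_\infty}}(X_\infty, Y')\le 1$; by uniqueness of limits $Y' = Y_\infty$, so $Y_\infty \in L_{X_\infty}$ with distance $\le 1$, i.e. $X_\infty \sim_1 Y_\infty$ — wait, that's $\sim_1$ provided we're on a $\mathcal U$ where $\sim_1 = \sim_2$; need the distance bound $\le 1 < 2$, fine. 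That's the contradiction, so the quotient is Hausdorff (this is essentially a rerun of Lemma \ref{technicalities}). Openness of $q$: the $\sim_1$-saturation of an open $V \subset \mathcal U$ is $\{Y \in \mathcal U : \exists X \in V,\ Y \in L_X,\ d_{L_X}(X,Y)\le 1\}$; given such a $Y$ and using $\sim_1 = \sim_2$ on $\mathcal U$ I can find $X' \in V$ with $X' \sim_1 Y$ and $d_{L_Y}(Y,X') < 1$ strictly (by shrinking; a point slightly along the leaf from $X$ toward... no — rather take $X$ itself, perturb within $V$ along the leaf to get strict inequality, possible since leaves are manifolds and $V$ open), and then Lemma \ref{leafinclusions2}(1) lets me produce, for any $Z$ near $Y$, a point $Z' \in L_Z$ near $X' \in V$ with $d_{L_Z}(Z,Z') < 1$ still, so $Z' \in V$ (for $Z$ close enough) and $Z \sim_1 Z'$ hence $Z$ is in the saturation — giving a neighborhood of $Y$ inside the saturation. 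So $q$ is open, second countability follows, and regularity of the Hausdorff open quotient of a (metrizable, hence regular) space under an open map is standard. Then Urysohn's metrization theorem finishes it.

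**The main obstacle.** The delicate point is openness of the quotient map, specifically producing, near a given point $Y$ of the saturation of $V$, a strict-inequality witness $X' \in V$ with $d_{L_Y}(Y,X') < 1$ and then propagating it to nearby leaves via Lemma \ref{leafinclusions2}(1) while keeping the distance $< 1$ and staying in $V$; one has to be a little careful that "nearby in $\mathcal P^d$" and "nearby along the leaf" cooperate, and that the strict inequality is preserved in the limit statement of \ref{leafinclusions2}(1) (it gives $d_{L_{X_i}}(X_i,Y_i)\to d_{L_X}(X,Y)$, so for large $i$ the distance stays below $1$ if it was $<1$ in the limit). Everything else — Hausdorff, separability, and the Urysohn conclusion — is routine given the lemmas already proved. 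I'll also note the alternative of just writing down the metric $d_{\mathcal U/\sim_1}([X],[Y]) = \inf\{ d_{\mathcal P^d}(X', Y') : X' \sim_1 X,\ Y' \sim_1 Y\}$ on saturations, but verifying this is a genuine metric (positivity needs Hausdorff anyway, triangle inequality needs a compactness argument on the precompact classes) is no shorter than the Urysohn route, so I'll go with Urysohn.
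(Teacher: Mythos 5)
Your route is genuinely different from the paper's. The paper constructs an explicit quasi-metric on $\mathcal U/\sim_1$ by relaxing the metric construction of \S\ref{smoothChabauty}: the comparison maps $f : B_M(p,R)\to M'$ are allowed to move base points within $\epsilon$ of each other, and a limiting argument as in Lemma~\ref{technicalities} shows that for $\mathcal U$ small this slack does not break the (quasi-)triangle inequality. You instead argue abstractly: show $q:\mathcal U\to\mathcal U/\sim_1$ is open and that the graph of $\sim_1$ is closed (hence the quotient is Hausdorff), then invoke Urysohn. Your closed-graph and openness arguments are correct and use Lemma~\ref{leafinclusions2} in essentially the same way the paper does in the surrounding lemmas.

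The regularity step, however, is a genuine gap. You assert that regularity of the Hausdorff open quotient of a metrizable space ``is standard,'' but this is false in general: by a theorem of Ponomarev, the open continuous images of metrizable spaces are exactly the first-countable $T_0$ spaces, and such a space can be second countable and Hausdorff without being regular --- the deleted-sequence topology on $\BR$ (Euclidean opens together with complements of $\{1/n : n\in\BN\}$) is one such example. So Urysohn cannot be applied yet. To close the gap one must use the (pre)compactness of the $\sim_1$-classes more seriously: for instance, first shrink $\mathcal U$ so that each class $\{Y : Y\sim_1 X\}$ with $X\in\mathcal U$ is a \emph{compact} subset of $\mathcal U$, and then either prove regularity by hand from that compactness, or metrize $\mathcal U/\sim_1$ directly using the Hausdorff distance between classes. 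Relatedly, the alternative you mention, $d([X],[Y])=\inf\{d_{\mathcal P^d}(X',Y') : X'\sim_1 X,\ Y'\sim_1 Y\}$, is not a metric even when the classes are compact: infimum distance between sets generically violates the triangle inequality (a diameter-two set sitting between two far-apart singletons gives a counterexample); the Hausdorff distance is the correct substitute, and with it the openness argument you already gave is exactly what is needed to identify the metric topology with the quotient topology.
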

\begin {proof}
Separability of $\mathcal U / \sim_1$ is immediate, as $\mathcal U $ is separable. Metrizability of $\mathcal U / \sim_1$ can be proved in the same way that we prove it for $\mathcal P ^ d$ in Section \ref{smoothChabauty}. The only difference is that when comparing two triples $(M,p,D)$ and $(M',p
,D')$, we now let our maps $f : B_M(p,R) \longrightarrow M'$ take $p$ to any point within an $\epsilon $-neighborhood of $p'$. Ordinarily, such flexibility would make it hard to establish a triangle inequality, but if $\mathcal U $ is sufficiently small, then such a map that realizes the distance between $(M,p,D)$ and $(M',p
,D')$ will have small distortion, and using a limiting argument as in Lemma \ref{technicalities}, one can show that in fact $f(p)$ must be (arbitrarily) close to $p'$, so that the compositon of two such maps still takes base points within $\epsilon $ of base points.
\end {proof}

\subsubsection{A section of base frames.} We describe here how to construct, for each equivalence class in $\mathcal U /\sim_1 $, a base frame for the corresponding $M $. To make a precise statement, we need a framed version of our space $\mathcal P ^ d $. Define
$$\mathcal{FP} ^ d= \left\{(M,e,D) \ \Big | \ \substack{M \text{ a complete Riemannian } d\text {-manifold,} \\ e\in \mathcal FM, \text { and } D \subset  \mathcal FM \text { a closed subset} \\ \text {such that }\not \exists \text { an isometry } f:M\longrightarrow M, \ Df(D)=D } \right\}/\text {isometry}.$$
 There is a natural  Polish topology on $\mathcal{FP} ^ d $, coming from the framed smooth topology on the pairs $(M, e) $ and the Chabauty topology on the subsets $D $; compare with Section \ref {smoothChabauty}. 
 We denote the natural projection by $$\pi: \mathcal {FP} ^ d\longrightarrow \mathcal {P} ^ d .$$

Lemmas \ref {leafinclusions} and \ref {leafinclusions2} have framed analogues. If $X=(M,e,D)\in\mathcal {FP} ^ d ,$ then $$L : \mathcal FM \longrightarrow \mathcal {FP} ^ d, \   L(e')=(M,e',D) $$ is  a continuous injection, and via $L$  we will view its  image $L_X \subset \mathcal {FP} ^ d $ as (the frame bundle of) a smooth Riemannian manifold.   Moreover, if $$X_i= (M_i, e_i, D_i) \converges X= (M, e,D)  $$ in $\mathcal {FP} ^ d$,  then  by definition, see \S \ref{smoothChabauty}, there are  almost isometric maps $$f_i: M \dashmap M_i$$
such that the derivatives $Df_i$ map $e$ to $e_i$, and pull back $(D_i)$  to a sequence  of  subsets of $\mathcal FM$ that Chabauty converges to $D$.    Identifying frame bundles with  the corresponding leaves in $\mathcal {FP}^d$ as  above,  these derivatives become maps
\begin {equation}\label {framedais} F_i : L_X \dashmap L_{X_i}\end {equation}
 These maps are almost isometric, in the sense that if $L_X$ and $L_{X_i}$ are equipped with the Sasaki-Mok metrics $g_i$ induced by  the Riemannian metrics on $M_i$ and $M$, see \cite{Mokdifferential}, then  we have $F_i  ^*(g_i)\rightarrow g$  in the $C^\infty $-topology. Using the $F_i$,  one can prove a framed analogue of Lemma \ref{leafinclusions2}:

\begin {lemma}[Chabauty convergence of leaves]\label {framedleafinclusions2}
 If $X_i \rightarrow X$ in $\mathcal {FP} ^ d$, then
 \begin {enumerate}
 \item for every point $Y\in L_X$, there  is a sequence $Y_i \in L_{X_i}$ with  $Y_i \rightarrow Y $ in $\mathcal {FP}^d$ and $d_{L_{X_i}}(X_i,Y_i) \rightarrow d_{L_X}(X,Y)$.
\item if $Y_i\in L_{X_i}$ with $d_{L_{X_i}}(X_i,Y_i)<\infty$, then after passing to a subsequence we have $Y_i \to Y\in L_X$, and $d_{L_{X_i}}(X_i,Y_i) \to d_{L_X}(X,Y)$. \end {enumerate}
\end {lemma}

Finally, the relation $\sim_1$   on $\mathcal P ^ d $ pulls back  under $\pi$ to a relation on $\mathcal {FP}^d$,  which we will abusively call $\sim_1$  as well.   On small subsets $\mathcal V \subset \mathcal {FP}^d$, $\sim_1$  is an equivalence relation on $\mathcal V$,  just as in the previous section.

\begin {lemma}\label {existencesection}
Every $F\in \mathcal {FP} ^ d $ has a neighborhood $\mathcal V$ on which there is a continuous map $s:\mathcal V \longrightarrow\mathcal {FP}^d$ with $s (F) = F $ that is constant on $\sim_1 $-equivalence classes and satisfies $s (X)\sim_1 X$ for all $X\in\mathcal V$. 
\end {lemma}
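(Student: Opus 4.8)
The plan is to carry out, in the framed setting, the ``circumcenter'' construction sketched at the beginning of \S\ref{proofpdsec}: pin down a canonical compact ``core'' $\mathcal E_0(X)$ inside the $\sim_1$--class of each $X$ near $F$, and let $s(X)$ be its circumcenter (Chebyshev center) inside the ambient frame bundle. Fix once and for all a metric $d$ on $\mathcal{FP}^d$ inducing its topology, write $F=(M_0,e_0,D_0)$, and recall that each leaf $L_X\subset\mathcal{FP}^d$ is identified (via the framed leaf inclusion) with the frame bundle $\mathcal FM$ of the underlying manifold, equipped with its Sasaki--Mok metric~\cite{Mokdifferential}; ``leaf-distance'' and ``leaf-diameter'' refer to this metric. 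The framed convergence lemma, Lemma~\ref{framedleafinclusions2}, and the framed almost-isometries of \eqref{framedais} are the main tools.

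First I would make two reductions. \emph{(a) Uniform local geometry.} If $X_i=(M_i,e_i,D_i)\to F$ in $\mathcal{FP}^d$, then by definition of the smooth-Chabauty topology the Riemannian metric of $M_i$ near $e_i$ converges $C^\infty$ to that of $M_0$ near $e_0$, and since the Sasaki--Mok metric is built locally and smoothly from the metric, the same holds for the frame bundles; a compactness argument then yields a neighborhood $\mathcal V_0\ni F$ and a $\rho>0$ such that for every $X=(M,e,D)\in\mathcal V_0$ the Sasaki--Mok metric of $\mathcal FM$ has convexity radius $\ge\rho$ throughout the $\rho$-ball about $e$. In particular any leaf-bounded subset of a leaf of leaf-diameter $<\rho$ near $F$ lies in a strongly convex ball, hence (being also leaf-closed, and leaves are complete) is leaf-compact and has a unique circumcenter, which depends continuously on the set in the Hausdorff metric. \emph{(b) Small classes.} By Lemma~\ref{framedleafinclusions2}(2) and a contradiction argument exactly as in Lemma~\ref{technicalities} --- if $P_i\sim_1 Q_i$ and $P_i,Q_i\to F$, then, since the $Q_i$ lie on $L_{P_i}$ at bounded leaf-distance from $P_i$, one gets $d_{L_{P_i}}(P_i,Q_i)\to d_{L_F}(F,F)=0$, a contradiction --- I would pass to a ball $\mathcal V=B(F,r)\subseteq\mathcal V_0$ on which $\sim_1$ is an equivalence relation, on which every $\sim_1$-class $\mathcal E\subset\mathcal V$ has leaf-diameter $\le\eta$ for a fixed $\eta$ with $2\eta<1$ and $\eta<\rho$, and on which $L_F\cap\mathcal V$ is a single $\sim_1$-class $\mathcal E(F)\ni F$.

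Now define $s$ on $\mathcal V$ as follows. For $X\in\mathcal V$ let $\mathcal E(X)$ be its $\sim_1$-class in $\mathcal V$, let $\alpha(X)=d(\mathcal E(X),F)=\inf_{P\in\mathcal E(X)}d(P,F)$, and set $\mathcal E_0(X)=\overline{\{\,P\in\mathcal E(X):d(P,F)<2\alpha(X)\,\}}$ (leaf-closure) if $\alpha(X)>0$, and $\mathcal E_0(X)=\{F\}$ if $\alpha(X)=0$; then put $s(X)=(M,c,D)$, where $c$ is the circumcenter of $\mathcal E_0(X)$ in $\mathcal FM$. This makes sense: $(M,c,D)$ is a legitimate element of $\mathcal{FP}^d$ lying on $L_X$ (the pair $(M,D)$, fixed along $L_X$, admits no nontrivial isometry because $X\in\mathcal{FP}^d$), and $\mathcal E_0(X)$ is nonempty, leaf-closed and of leaf-diameter $\le\eta<\rho$, so it has a unique circumcenter by reduction (a). Nonemptiness when $\alpha(X)>0$ is clear from the definition of the infimum; and if $X\notin L_F$ then $\alpha(X)>0$, since by Lemma~\ref{framedleafinclusions2}(2) a sequence on $L_X$ cannot converge in $\mathcal{FP}^d$ to $F\notin L_X$, while if $X\in L_F\cap\mathcal V$ then $F\in\mathcal E(X)$, so $\alpha(X)=0$ and $s(X)=F$. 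The three required properties are now immediate: $s$ is constant on $\sim_1$-classes, because $\mathcal E(X)$, $\alpha(X)$ and $\mathcal E_0(X)$ depend only on the class; $s(F)=F$, since $\alpha(F)=0$; and $s(X)\sim_1 X$, because for any $P\in\mathcal E_0(X)\subseteq\overline{\mathcal E(X)}$ one has $d_{L_X}(X,s(X))\le d_{L_X}(X,P)+d_{L_X}(P,s(X))\le\eta+\eta<1$, the second term being bounded by the circumradius, which is at most the leaf-diameter of $\mathcal E_0(X)$.

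The remaining, and only delicate, point is \emph{continuity of $s$ on $\mathcal V$} --- this is the main obstacle. I would first show $X\mapsto\alpha(X)$ is continuous: upper semicontinuity follows from Lemma~\ref{framedleafinclusions2}(1) (pull a near-minimizer $P\in\mathcal E(X_0)$ with $d_{L_{X_0}}(X_0,P)<1$ back to $P_i\in\mathcal E(X_i)$ with $P_i\to P$), and lower semicontinuity from Lemma~\ref{framedleafinclusions2}(2) (a sequence $P_i\in\mathcal E(X_i)$ with $d(P_i,F)\to\liminf\alpha(X_i)$ subconverges to a point of $\overline{\mathcal E(X_0)}$). Next, on the open set $\{\alpha>0\}$, the same two parts of Lemma~\ref{framedleafinclusions2} together with continuity of $\alpha$ and of $d$ show that $X\mapsto\mathcal E_0(X)$ is Hausdorff-continuous (transplanting sets to a common model via the almost-isometries \eqref{framedais}); here the choice of $\mathcal E_0$ as the closure of the \emph{strict} sublevel set is exactly what makes the lower-semicontinuity step go through, preventing the sublevel set from jumping at the boundary value $2\alpha(X)$. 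Continuity of the circumcenter in its argument, from reduction (a), then gives continuity of $s$ at every $X_0$ with $\alpha(X_0)>0$. At $X_0\in L_F\cap\mathcal V$, where $\alpha(X_0)=0$ and $s(X_0)=F$, one instead notes that $\alpha(X_i)\to0$ (by Lemma~\ref{framedleafinclusions2}(1), pulling $F\in\mathcal E(X_0)$ back into $\mathcal E(X_i)$), whence $\mathcal E_0(X_i)\subseteq\overline{B(F,2\alpha(X_i))}\to\{F\}$ and so $s(X_i)\to F=s(X_0)$. This completes the construction; the heart of the argument is the interplay between the framed leaf-convergence lemma and the elementary metric geometry of circumcenters in strongly convex balls.
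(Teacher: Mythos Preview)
Your proposal is correct and follows essentially the same approach as the paper: define $\alpha$ (the paper's $\delta$), prove its continuity via Lemma~\ref{framedleafinclusions2}, form $\mathcal E_0(X)$ (the paper's $A_X$) as the closure of the strict sublevel set, establish its Chabauty/Hausdorff continuity, and take circumcenters. The only cosmetic difference is in the justification of circumcenter uniqueness---you invoke a uniform convexity-radius bound for the Sasaki--Mok metrics, while the paper isolates an explicit upper-curvature comparison as Lemma~\ref{circumcenter}---but both rest on the same $C^\infty$ control of the metrics near $F$.
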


So, $s$ gives a continuous section for the map $\mathcal V \longrightarrow \mathcal V/\sim_1 $ near $F$.

\vspace{2mm}

The  proof of Lemma \ref {existencesection}  will occupy the rest of the section.   The idea is as follows. We first select a distinguished subset of each equivalence class, essentially consisting of those points whose distances to $F$ are at most twice the minimum distance to $F$ in that equivalence class.  We then show that these subsets vary continuously with the equivalence class. The desired section is constructed by always choosing the `circumcenter' of the distinguished subset.

Fix a metric $d_{\mathcal {FP} ^d } $ on $\mathcal {FP} ^ d $ and suppose $\mathcal V $ is a metric ball around $F $.  Define
$$\delta:\mathcal V \longrightarrow \BR, \ \ \delta (X) = \inf\{d_{\mathcal {FP} ^ d}(X',F) \ | \ X'\in { \mathcal V }, \ X'\sim_1 X\}. $$
Each $\sim_1 $-equivalence class in $\mathcal V$ is pre-compact in $\mathcal {FP} ^ d $, since it is the image of a pre-compact subset under a continuous map $\mathcal FM \longrightarrow \mathcal {FP} ^ d$ from the frame bundle of some manifold $M $.  And if $X\in \mathcal V $, the metric ball $\mathcal V$ contains all points of $\mathcal {FP} ^ d $ that are closer to $F $ than $X $.  So, the infimum is always achieved.

\begin {claim}
The map $\delta $ is continuous.
\end {claim}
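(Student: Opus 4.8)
The plan is to show that $\delta$ is both upper and lower semicontinuous at every $X\in\mathcal V$, using Lemma \ref{framedleafinclusions2} as the main tool together with the fact (just noted) that the infimum defining $\delta(X)$ is always attained. Before starting, I would shrink $\mathcal V$ once more so that, in addition to $\sim_1$ being an equivalence relation on it, the following \emph{smallness property} holds: whenever $Y,Y'\in\mathcal V$ with $Y'\sim_1 Y$, we in fact have $d_{L_Y}(Y,Y')\le 1/2$. Such a $\mathcal V$ exists by a compactness/Hausdorff argument: if it failed for every radius, there would be $Y_i,Y_i'\to F$ in $\mathcal{FP}^d$ with $Y_i'\in L_{Y_i}$ and $1/2< d_{L_{Y_i}}(Y_i,Y_i')\le 1$; Lemma \ref{framedleafinclusions2}.2, applied with $X_i=Y_i\to F$, produces along a subsequence a point $Z\in L_F$ with $Y_i'\to Z$ and $d_{L_{Y_i}}(Y_i,Y_i')\to d_{L_F}(F,Z)$, but since also $Y_i'\to F$ and $\mathcal{FP}^d$ is Hausdorff we get $Z=F$, hence $d_{L_{Y_i}}(Y_i,Y_i')\to 0$, a contradiction. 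In particular, for any $X\in\mathcal V$ a point $X'\sim_1 X$ realizing $\delta(X)$ satisfies $d_{L_X}(X,X')\le 1/2$, which supplies the room I need below. I will also take $\mathcal V$ to be the open $r$-ball about $F$.

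For upper semicontinuity at $X$, given $X_i\to X$ in $\mathcal V$ I would take $X'\sim_1 X$ realizing $\delta(X)$, so $d_{L_X}(X,X')\le 1/2$, and apply Lemma \ref{framedleafinclusions2}.1 to get $X_i'\in L_{X_i}$ with $X_i'\to X'$ in $\mathcal{FP}^d$ and $d_{L_{X_i}}(X_i,X_i')\to d_{L_X}(X,X')\le 1/2$. For all large $i$ this forces $d_{L_{X_i}}(X_i,X_i')\le 1$, i.e.\ $X_i'\sim_1 X_i$, and $d_{\mathcal{FP}^d}(X_i',F)\to d_{\mathcal{FP}^d}(X',F)=\delta(X)<r$, so $X_i'\in\mathcal V$ for large $i$; hence $\delta(X_i)\le d_{\mathcal{FP}^d}(X_i',F)$ for large $i$, and letting $i\to\infty$ gives $\limsup_i\delta(X_i)\le\delta(X)$.

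For lower semicontinuity I would argue by contradiction: if some subsequence has $\delta(X_i)\to c<\delta(X)$, pick $X_i'\in\mathcal V$ with $X_i'\sim_1 X_i$ realizing $\delta(X_i)=d_{\mathcal{FP}^d}(X_i',F)$; then $X_i'\in L_{X_i}$ with $d_{L_{X_i}}(X_i,X_i')\le 1$, so Lemma \ref{framedleafinclusions2}.2 yields, along a further subsequence, a limit $X'\in L_X$ with $X_i'\to X'$ and $d_{L_{X_i}}(X_i,X_i')\to d_{L_X}(X,X')\le 1$, so $X'\sim_1 X$. Since $d_{\mathcal{FP}^d}(X',F)=\lim_i d_{\mathcal{FP}^d}(X_i',F)=c<\delta(X)<r$, we also get $X'\in\mathcal V$, so $X'$ competes in the infimum defining $\delta(X)$ and therefore $\delta(X)\le c$, contradicting $c<\delta(X)$. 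Combining the two estimates, $\delta$ is continuous.

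The step I expect to be the real obstacle is upper semicontinuity, specifically the danger that the minimizer $X'$ sits at leaf-distance exactly $1$ from $X$, so that the approximants $X_i'$ from Lemma \ref{framedleafinclusions2}.1 could land at leaf-distance slightly larger than $1$ and fail to be $\sim_1$-equivalent to $X_i$. The smallness property established at the outset is exactly what rules this out, and verifying it (and checking compatibility with the earlier smallness requirements on $\mathcal V$) is the one place where the argument has content beyond bookkeeping. A minor point to be careful about is in which ambient space the convergences of Lemma \ref{framedleafinclusions2} occur: I will use convergence in $\mathcal{FP}^d$, so that $d_{\mathcal{FP}^d}(X_i',F)\to d_{\mathcal{FP}^d}(X',F)$ is automatic.
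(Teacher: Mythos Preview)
Your proof is correct and follows essentially the same two-step (lower/upper semicontinuity via Lemma~\ref{framedleafinclusions2}) argument as the paper. The only difference is cosmetic: you shrink $\mathcal V$ further to force $\sim_1$-related points to have leaf-distance at most $1/2$, whereas the paper instead uses the standing assumption $\sim_1=\sim_2$ on $\mathcal V$ to absorb the possible overshoot (if $d_{L_{X_i}}(X_i,X_i')\to d_{L_X}(X,X')\le 1$, then eventually $d_{L_{X_i}}(X_i,X_i')\le 2$, and $\sim_2=\sim_1$ gives $X_i'\sim_1 X_i$), so your extra shrinking step is not needed.
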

\begin {proof}
 Suppose that in $\mathcal  V $ we have
$$X_i= (M_i, e_i, D_i) \rightarrow X= (M, e,D)  $$ and that $X_i'\sim_1 X_i$ realize the infimums defining $\delta (X_i) $.  We can assume that $\sup d_{L_{X_i}}(X_i,X_i') \leq 2$, so after passing to a subsequence, Lemma \ref{framedleafinclusions2} 2) implies that $X_i'$  converges in $\mathcal {FP}^d$ to a point $X' \in L_X$. However,
$$d_{\mathcal {FP} ^ d}(F,X')=\lim_i d_{\mathcal {FP} ^ d}(F,X_i') \leq \lim_i d_{\mathcal {FP} ^ d}(F,X) = d_{\mathcal {FP} ^ d}(F,X),$$
so as $\mathcal V $ is a metric ball around $F$  that contains $X $,  we have $X' \in \mathcal V $. So, $X'$  can be used in the definition of $\delta (X)$,  implying that
$$\lim_i \delta (X_i)=\lim_i d_{\mathcal {FP} ^ d}(F,X_i')  = d_{\mathcal {FP} ^ d}(F,X') \geq \delta(X).$$
The reverse inequality is proved similarly: we assume that $X'\sim_1 X $ realizes the infimum defining $\delta (X) $, then we reroot the $X_i $ using Lemma \ref{framedleafinclusions2} 1) to produce elements $X_i'\in\mathcal {V}$ with $X_i\sim_1 X_i' $ and $X_i'\to X' $, and use the continuity of  the distance function $d_{\mathcal {FP}^d}$.	
\end {proof}
Moving toward the definition of the map $s $, we first define a map that selects a small subset of each $\sim_1 $-equivalence class in $\mathcal V$.  If $X \sim_1 F$, set 
$$A_X=\{F\}.$$
Otherwise, if $X \not \sim_1 F$ define 
\begin {align*}A_X &=\overline {\big\{Y\in\mathcal {V} \ \big| \  \ Y \sim_1 X \ \text { and } \ d_{\mathcal {FP} ^ d} (Y, F)<2\delta (X)\big\}} \subset\mathcal {FP} ^ d.
\end {align*}
Note that by continuity of $d_{\mathcal {FP} ^ d}$ and positivity of $\delta(X)$, the set $A_X$ is always nonempty. (However, if $X\sim_1 F$ then $\delta(X)=0$, so this latter definition of $A_X$ \emph{would} give the empty set, which is the reason we set $A_X=\{F\}$ in that case.) Moreover, $A_X$ is compact: for if $X=(M,e,D)$ and 
$$L : \mathcal FM \longrightarrow\mathcal {FP}^d, \ \ L(e')=(M,e',D)$$
is the framed analogue of the leaf inclusion map of Lemma \ref {leafinclusions}, then the conditions $L(e')\sim_1 X$ and $ d_{\mathcal {FP} ^ d} (L(e'), F)< 2\delta (X)$ define a pre-compact subset of $M$, whose closure is the (compact) preimage $L^{-1}(A_X)$. 

\begin {figure}
\centering
\includegraphics{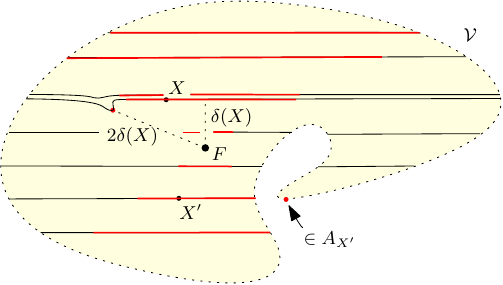}
\caption {Each $A_X \subset [X] \subset \mathcal V$ is drawn in red.}
\label {ux}
\end {figure}

\begin {claim}[Chabauty continuity of $X \mapsto A_X$]\label {ChabautyAX}
 After  possibly shrinking $\mathcal V$,  we have that if $X_i\rightarrow X$ in $\mathcal V$, then $A_{X_i} \rightarrow A_X$ in the Chabauty topology: every accumulation point in $\mathcal{FP}^d$ of a sequence $Z_i \in A_{X_i}$ lies in $A_X$, and every point of $A_X$ is the limit of a sequence $Z_i \in A_{X_i}$.
\end {claim}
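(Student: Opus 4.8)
The plan is to verify the two halves of Chabauty convergence separately, using as the main inputs the continuity of $\delta$ (established just above), Lemma \ref{framedleafinclusions2} (Chabauty convergence of leaves in $\mathcal{FP}^d$), Lemma \ref{leafinclusions} (which lets us treat each $L_X$ as a genuine Riemannian manifold), and continuity of the metric $d_{\mathcal{FP}^d}$. First I would dispose of the case $X\sim_1 F$: then $\delta(X)=0$ and $A_X=\{F\}$, and for $Z_i\in A_{X_i}$ one has $d_{\mathcal{FP}^d}(Z_i,F)\le 2\delta(X_i)\to 0$, so $Z_i\to F$; the same estimate produces, for each $i$, a point of $A_{X_i}$ within $2\delta(X_i)$ of $F$, giving both directions. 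So we may assume $X\not\sim_1 F$, hence $\delta(X)>0$ and (by continuity of $\delta$) $X_i\not\sim_1 F$ for large $i$. I would also record the useful observation that, writing
\[
U_X=\{\,Y\ :\ d_{L_X}(X,Y)<1,\ d_{\mathcal{FP}^d}(Y,F)<2\delta(X)\,\},
\]
one has $A_X=\overline{U_X}$: indeed any $Y\sim_1 X$ with $d_{\mathcal{FP}^d}(Y,F)<2\delta(X)$ can be pushed slightly toward $X$ along a minimizing geodesic of $L_X$, which strictly decreases $d_{L_X}(X,\cdot)$ while moving $d_{\mathcal{FP}^d}(\cdot,F)$ by an arbitrarily small amount.

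For the \emph{density} half, let $Z\in A_X=\overline{U_X}$; it suffices to approximate each $W\in U_X$ by points of $A_{X_i}$ and then diagonalize along a sequence in $U_X$ tending to $Z$. Given $W\in U_X$, Lemma \ref{framedleafinclusions2}(1) yields $W_i\in L_{X_i}$ with $W_i\to W$ and $d_{L_{X_i}}(X_i,W_i)\to d_{L_X}(X,W)<1$, so $W_i\sim_1 X_i$ for large $i$; since $d_{\mathcal{FP}^d}(W_i,F)\to d_{\mathcal{FP}^d}(W,F)<2\delta(X)$ and $2\delta(X_i)\to 2\delta(X)$, we get $W_i\in U_{X_i}\subseteq A_{X_i}$ for large $i$. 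For the \emph{accumulation} half, take $Z_i\in A_{X_i}$ with $Z_i\to Z$ (along a subsequence), choose $W_i\in U_{X_i}$ with $d_{\mathcal{FP}^d}(W_i,Z_i)<1/i$ so that $W_i\to Z$ as well, and apply Lemma \ref{framedleafinclusions2}(2): after passing to a further subsequence, $W_i\to Y\in L_X$ with $d_{L_{X_i}}(X_i,W_i)\to d_{L_X}(X,Y)\le 1$, so $Z=Y\in L_X$ and $Z\sim_1 X$; passing to the limit in $d_{\mathcal{FP}^d}(W_i,F)<2\delta(X_i)$ gives $d_{\mathcal{FP}^d}(Z,F)\le 2\delta(X)$. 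If $d_{\mathcal{FP}^d}(Z,F)<2\delta(X)$, or if $d_{\mathcal{FP}^d}(Z,F)=2\delta(X)$ but $d_{L_X}(X,Z)=1$ (push $Z$ infinitesimally toward $X$), then $Z\in\overline{U_X}=A_X$.

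The step I expect to be the main obstacle is the remaining boundary case $d_{\mathcal{FP}^d}(Z,F)=2\delta(X)$ with $d_{L_X}(X,Z)<1$: a priori such a $Z$ could be a strict local minimum of $d_{\mathcal{FP}^d}(\cdot,F)|_{L_X}$, and then it need not lie in $\overline{U_X}$. This is exactly where the freedom to \emph{shrink $\mathcal V$} is used: one shrinks $\mathcal V$ so that, for every $X\in\mathcal V$, the restriction of $d_{\mathcal{FP}^d}(\cdot,F)$ to the relevant portion of $L_X$ (the part lying in $\mathcal V$ and $\sim_1$-related to $X$) attains its infimum $\delta(X)$ at an interior point of $\{d_{L_X}(X,\cdot)\le 1\}$ and has no other local minimum, so that $\overline{U_X}$ equals the closed set $\{Y:Y\sim_1 X,\ d_{\mathcal{FP}^d}(Y,F)\le 2\delta(X)\}$. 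That such a shrinking exists is the technical heart of the argument, and I would extract it from the almost-isometric identifications $F_i:L_X\dashmap L_{X_i}$ of \eqref{framedais}: they show that on a small enough $\mathcal V$ the family of functions $d_{\mathcal{FP}^d}(\cdot,F)$ on nearby leaves varies $C^0$-continuously and is uniformly controlled near $F$, leaving the global minimum as the only critical behavior in play. Once $A_X$ is identified with this closed set, the accumulation direction closes immediately, since $Z\sim_1 X$ together with $d_{\mathcal{FP}^d}(Z,F)\le 2\delta(X)$ already forces $Z\in A_X$.
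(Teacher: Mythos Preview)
Your treatment of the density half, and of the special case $X\sim_1 F$, is correct and is essentially the paper's own argument (modulo diagonalization bookkeeping; note that the relevant input from Lemma~\ref{framedleafinclusions2} here is part 1), as you use it).

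For the accumulation half the paper is far more terse than you: after shrinking $\mathcal V$ only so that the constraint ``$Y\in\mathcal V$'' in the definition of $A_X$ becomes superfluous, the paper simply asserts that this direction ``follows immediately from Lemma~\ref{framedleafinclusions2} 2) and the continuity of $d_{\mathcal{FP}^d}$ and $\delta$.'' In other words, the paper obtains $Z\sim_1 X$ and $d_{\mathcal{FP}^d}(Z,F)\le 2\delta(X)$ and stops there. You are right to notice that these two conditions do not literally force $Z\in\overline{U_X}$ in the exceptional case where $Z$ is a strict interior local minimum of $d_{\mathcal{FP}^d}(\cdot,F)|_{L_X}$ with value exactly $2\delta(X)$; the paper does not address this.

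However, your proposed repair does not close the gap. Shrinking $\mathcal V$ only restricts which base points $X$ are under consideration; it does not alter the function $Y\mapsto d_{\mathcal{FP}^d}(Y,F)$ on any individual leaf $L_X$, nor does it restrict the portion of that leaf on which the function is being examined (the $\sim_1$-class is fixed once $X$ is). The almost-isometric identifications $F_i$ of \eqref{framedais} give only $C^0$-closeness of these functions across nearby leaves, and $C^0$-closeness is not enough to rule out spurious local minima---a family of functions with no local minima can $C^0$-converge to one that has them. So your assertion that on a small enough $\mathcal V$ one is ``leaving the global minimum as the only critical behavior in play'' is unsupported, and the accumulation direction remains incomplete at precisely the point you flagged.
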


 Figure \ref {ux} indicates two issues related to this continuity.

\begin {enumerate}
\item The shape of $\mathcal V$ may be a problem: in the figure, the indicated point of $A_{X'}$ cannot be approached from `below' within the red subsets. This is resolved by  shrinking $ V$. 
\item It is important to define $A_X$ as the closure of a set defined by a strict inequality, as opposed to a set defined by a non-strict inequality. For the leftmost point in the figure that is equivalent to $X$ has distance exactly $2\delta(X)$ from $F$, but cannot be approached from above by red points.
\end {enumerate}

\begin {proof}
Pick $\mathcal V'$ small enough so that for each $X\in\mathcal V' $, the closed $2\delta (X)$-ball around $X \in\mathcal {FP}^d$ is contained in the interior of $\mathcal V$. This is possible since $\delta(X)\rightarrow 0 $ as $X\rightarrow F $.  Note that with $\mathcal V'$ so chosen, the condition that $Y\in \mathcal V $, rather than just in $\mathcal{FP} ^ d $, is superfluous in the definition of $A_X$.

Assume that $X_i\rightarrow X$ in $\mathcal V'$. The fact that every accumulation point in $\mathcal{FP}^d$ of a sequence $Z_i \in A_{X_i}$ lies in $A_X$ follows immediately from Lemma~\ref {framedleafinclusions2} 2) and the continuity of $d_{\mathcal {FP} ^ d}$ and $\delta$.  

If $Y\in A_X $, pick a sequence $Y_i \sim_1 X$ with $d_{\mathcal {FP} ^ d} (Y_i, F)<2\delta (X)$ such that $Y_i\rightarrow Y $, as in the definition of $A_X $.  Passing to subsequences of $(X_i)$ and $(Y_i)$ and using Lemma~\ref {framedleafinclusions2} 2), pick for each $i $ some $Z_i\sim_1 X_i$ with 
\begin {equation}\label{close}d_{\mathcal {FP} ^ d} (Y_i, Z_i)<\frac 1i.\end{equation}
 Note that we can assume $Z_i\in \mathcal V'$, simply because the latter is open.
As $d_{\mathcal {FP} ^ d} $ and $\delta$ are continuous on $\mathcal{FP}^d$, for large $i$ we have
$$d_{\mathcal {FP} ^ d}(Z_i,X_i)<\delta(X_i),$$
so $Z_i \in A_{X_i}$. But $Z_i \rightarrow Y$, so we are done.
\end {proof}

The goal now is define a map  $s : \mathcal V \longrightarrow \mathcal {FP}^d$ by taking $s(X)$ to be the `circumcenter' of $A_X$, in the following sense.

\begin {lemma}[Circumcenters]\label {circumcenter}
Let $M$ be a Riemannian manifold, let $p\in M$ and $R=\mathrm{inj}_M(p)$.  Suppose that the sectional curvature of $M $ is bounded above by $\kappa$ on $B(p,R)$, and let $R'=\min\{R,\frac \pi{16\sqrt{\kappa}}\}$. If $A \subset B(p,R')$, the function
\begin {equation}\label {function}q\in M \longmapsto \sup\{d_{M}(q,a)\ | \ a\in A\} \end {equation}
has a {unique} minimizer $\odot(A)\in M$, called the \emph {circumcenter} of $A$. 
\end {lemma}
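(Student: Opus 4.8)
The plan is to run a standard comparison-geometry argument based on strict convexity of squared-distance functions in regions where the sectional curvature is bounded above and where one stays well inside the injectivity radius. First I would set $f(q) = \sup_{a \in A} d_M(q,a)^2$ (working with the square is more convenient than the distance itself, and the minimizers agree with those of \eqref{function} by monotonicity). I would observe that $f$ is continuous on $M$, and that on the closed ball $\overline{B(p,R')}$ it is achieved: indeed, since $A \subset B(p,R')$, any point $q$ with $d_M(q,p)$ large has $f(q)$ large, so minimizing $f$ over $M$ is the same as minimizing over a fixed compact ball around $p$; hence a minimizer $\odot(A)$ exists. The remaining, and essential, point is \emph{uniqueness}.

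For uniqueness I would invoke the fact that under the hypotheses — curvature $\le \kappa$ on $B(p,R)$ and $R' \le \min\{R, \tfrac{\pi}{16\sqrt\kappa}\}$, so that in particular $2R' < \tfrac{\pi}{\sqrt\kappa}$ and $2R' < 2R$ — any two points $x,y$ within distance, say, $4R'$ of $p$ are joined by a unique minimizing geodesic $\gamma$, which moreover stays in $B(p, cR')$ for a controlled constant $c$, and along that geodesic the function $t \mapsto d_M(\gamma(t), a)^2$ is \emph{strictly convex} for every fixed $a \in B(p,R')$: this is the classical Rauch/Toponogov-type estimate comparing with the model space of curvature $\kappa$, valid as long as all the relevant minimizing geodesics have length $< \pi/\sqrt\kappa$, which the choice of $R'$ guarantees with room to spare (the factor $16$ is there precisely to absorb the constant $c$ and keep all auxiliary geodesics short). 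A supremum of strictly convex functions is strictly convex, so $t \mapsto f(\gamma(t))$ is strictly convex on any such geodesic segment. Now if $\odot(A)$ and $\odot'(A)$ were two distinct minimizers, both lie in $\overline{B(p,R')}$; join them by the unique minimizing geodesic $\gamma: [0,1] \to M$, which lies in $B(p,cR')$; then $f(\gamma(1/2)) < \max\{f(\gamma(0)), f(\gamma(1))\} = \min f$, a contradiction. Hence the minimizer is unique, and we call it $\odot(A)$.

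The step I expect to be the main obstacle — or at least the one requiring the most care — is the strict convexity claim, specifically making sure the choice $R' = \min\{R, \tfrac{\pi}{16\sqrt\kappa}\}$ really does keep \emph{every} geodesic that appears in the argument (not just those between points of $A$, but the geodesic between two candidate circumcenters in $\overline{B(p,R')}$, and the geodesics from points on that geodesic back to points of $A$) within $B(p,R)$ and of length bounded away from $\pi/\sqrt\kappa$, so that the comparison estimate applies. Concretely: a circumcenter $\odot(A)$ satisfies $f(\odot(A)) \le f(p) = \sup_{a}d(p,a)^2 \le (R')^2$, so $d(\odot(A), a) \le R'$ for all $a \in A$, hence $d(\odot(A), p) \le 2R'$; the midpoint of the geodesic between two circumcenters is then within $3R'$ of $p$, and its distance to any $a \in A$ is at most $5R' < \pi/(3\sqrt\kappa) < \pi/\sqrt\kappa$ and at most $5R' < R$ provided one is slightly more careful with the constant — this is exactly the bookkeeping the factor $16$ is designed to handle, and I would carry it out explicitly. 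Everything else (existence, continuity, reduction to a compact ball) is routine. If one prefers to avoid re-deriving the Hessian comparison for $d(\cdot,a)^2$, one can instead cite the standard statement that on a geodesic ball of radius $< \min\{\mathrm{inj}, \tfrac{\pi}{2\sqrt\kappa}\}$ in a manifold with sectional curvature $\le \kappa$ the function $d(\cdot, a)^2$ is strictly convex (see e.g.\ the discussion of convex balls / centers of mass in do Carmo or Karcher), and simply note that $B(p, cR')$ is such a ball.
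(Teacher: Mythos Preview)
Your argument is correct and follows the standard Karcher center-of-mass approach: existence via properness and continuity, uniqueness via strict convexity of $q\mapsto d(q,a)^2$ along geodesics (Hessian comparison under an upper curvature bound), hence of the supremum. The paper's proof shares the same skeleton---existence by continuity/properness, uniqueness by examining the midpoint $q$ of two putative minimizers $q_1,q_2$---but instead of invoking convexity of squared distance it argues directly with Alexandrov's triangle comparison (Bridson--Haefliger 1A.6): if the midpoint $q$ were no better, there would be a point $a\in\overline A$ with $d(q,a)\ge\max\{d(q_1,a),d(q_2,a)\}$, and the comparison triangle in $M_\kappa$ shows this is impossible once all side lengths are at most $4R'\le\pi/(4\sqrt\kappa)$. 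The two arguments are really the same comparison-geometry fact in different clothing; your version has the advantage that the bookkeeping is absorbed into a single citable statement about convexity of $d(\cdot,a)^2$ on small balls, while the paper's version keeps everything at the level of a single triangle and avoids differentiating. Your constant-tracking (circumcenters within $2R'$ of $p$, midpoint within a few $R'$, all relevant geodesics of length $\ll\pi/\sqrt\kappa$) is exactly the bookkeeping the paper does, and the factor $16$ is indeed just slack to make it go through.
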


Here, $\mathrm {inj}_M(p)$ is the \emph {injectivity radius} of $M $ at $p $, and the notation for $\odot(A)$  reflects that it is the center of a minimal radius ball containing $A$. 

\begin {proof}
As $A \subset B(p,D)$ is pre-compact, the  supremum above  is always realized on  the closure of $A $. Also, pre-compactness implies that the function \eqref{function} is continuous and proper, so it must have at least one minimum. 

Suppose $q_1\neq q_2$ both realize the minimum, which we'll say is $D$, and let $q $ be the midpoint of the unique minimal geodesic connecting $q_1, q_2$.   Since $D $ is the minimum,  there must be some point $a \in \overline A$ with $$d(q,a) \geq D.$$  By definition of $D$, we also have $$d(q_i,a) \leq D \ \ \text{ for } i = 1,2. $$
In other words, we have a triangle $\Delta(q_1,a,q_2)$ in $M$ where the distance from $a$ to the midpoint $q$ of $q_1q_2$ is  at least both $d(a,q_1)$ and $d(a,q_2)$.   Note also that because $A\subset  B (p, R')$, we have 
$q_1,q_2 \in \overline {B(p,2R')}$, so all  side lengths of our triangle  are  at most
$4R' \leq \frac{\pi}{4\sqrt{\kappa}}.$

\begin {figure}
\centering
\includegraphics{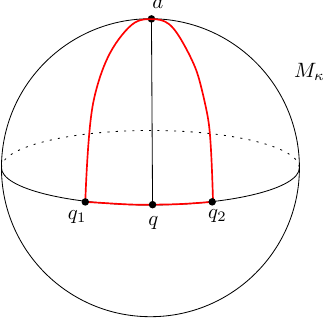}
\caption {A triangle  on a sphere in which a  midpoint-vertex distance is the same as the adjacent side  lengths.  This happens only if the side lengths are  at least half the diameter of the sphere.}
\label {modelfig}
\end {figure}

We claim this is impossible.  A theorem of Alexandrov, see \cite[1A.6]{Bridsonmetric},  implies that distances  between points in the triangle $\Delta(q_1,a,q_2)$ are  at most those in a `comparison triangle' with the same side lengths in the model space $M_\kappa$ with constant curvature $\kappa$.   (Here is where we use that $R'$  is less than the injectivity radius at $p$.) In particular, $d(q,a)$ is at most the corresponding midpoint-vertex distance of the comparison triangle.  And in $M_\kappa$, midpoint-vertex distances are always less than the maxima of the adjacent side lengths, unless $M_\kappa$  is a sphere and  some side of the triangle has length at least  half the  diameter of the sphere, i.e.\  at least $\frac{\pi}{2\sqrt{\kappa}}$, see Figure \ref {modelfig}.  But  our triangle   has side lengths  at most $\frac{\pi}{4\sqrt{\kappa}}$.\end {proof}

\begin {claim}\label {lastclaim}
After  possibly shrinking $\mathcal V $, there is a well-defined,  continuous map
$$s : \mathcal V \longrightarrow \mathcal {FP}^d, \ \ s (X) =\odot(A_X),$$ 
where $\odot$ is the circumcenter map on $L_X$, as in Lemma~\ref{circumcenter}.
\end {claim}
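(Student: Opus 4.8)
The plan is to shrink $\mathcal V$ around $F$ so that, first, Lemma~\ref{circumcenter} applies to $A_X\subset L_X$ for every $X\in\mathcal V$ (giving that $s$ is well-defined) and, second, the resulting circumcenters vary continuously.

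\emph{Well-definedness.} Two uniform estimates are needed. For the first, I claim that after shrinking $\mathcal V$ there are constants $R_0\in(0,1]$ and $\kappa_0$ so that for every $X\in\mathcal V$ one has $\mathrm{inj}_{L_X}\geq R_0$ and sectional curvature $\leq\kappa_0$ on $B_{L_X}(X,2)\supseteq A_X$ (using $A_X\subseteq[X]\subseteq B_{L_X}(X,1)$). Indeed, if $X_i\to F$ in $\mathcal{FP}^d$ then by definition of this convergence there are almost-isometric maps $F_i:L_F\dashmap L_{X_i}$ with $F_i(F)=X_i$ and $F_i^{*}(g_i)\to g_{L_F}$ in $C^\infty$ on compacta; since closed balls in the complete manifold $L_F$ are compact, the injectivity radius and an upper sectional-curvature bound of $L_{X_i}$ on $B_{L_{X_i}}(X_i,2)$ converge to the (positive, finite) corresponding quantities of $L_F$ on a fixed ball about $F$, so the bounds hold for $i$ large, hence for $X$ in a small enough $\mathcal V$. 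Set $R_0'=\min\{R_0,\pi/16\sqrt{\kappa_0}\}$ (or $R_0'=R_0$ if $\kappa_0\leq0$). For the second estimate I claim that, after further shrinking, $\mathrm{diam}_{L_X}(A_X)<R_0'$ for all $X\in\mathcal V$; it suffices to show $\mathrm{diam}_{L_X}(A_X)\to0$ as $X\to F$. If not, there are $X_i\to F$ and $Y_i,Z_i\in A_{X_i}$ with $d_{L_{X_i}}(Y_i,Z_i)\geq c>0$. Since $\delta$ is continuous with $\delta(F)=0$, the points of $A_{X_i}$ lie within $2\delta(X_i)\to0$ of $F$ in $\mathcal{FP}^d$, so $Y_i\to F$ and $Z_i\to F$; applying Lemma~\ref{framedleafinclusions2}(2) with basepoints $Y_i\to F$ to the sequence $Z_i\in L_{Y_i}=L_{X_i}$ (whose leaf-distances $d_{L_{Y_i}}(Y_i,Z_i)\leq2$ are bounded, as $Y_i,Z_i\in[X_i]$) yields $d_{L_{X_i}}(Y_i,Z_i)\to d_{L_F}(F,F)=0$, a contradiction. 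Now Lemma~\ref{circumcenter} applies with $M=L_X$, $A=A_X$ and $p$ any point of $A_X$, so $\odot(A_X)\in L_X$ exists and is unique, and $s(X):=\odot(A_X)\in L_X\subseteq\mathcal{FP}^d$ is well-defined.

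\emph{Continuity.} Let $X_i\to X$ in $\mathcal V$, with associated almost-isometric maps $F_i:L_X\dashmap L_{X_i}$, $F_i(X)=X_i$, $F_i^{*}(g_i)\to g$ in $C^\infty$; for $i$ large these restrict to isometric embeddings of balls $U_i\subseteq L_X$ (with respect to $F_i^{*}g_i$) onto neighborhoods in $(L_{X_i},g_i)$, with $U_i$ exhausting $L_X$. Set $B_i:=F_i^{-1}(A_{X_i})\subseteq U_i$, defined for large $i$ by the containments above. I will argue that $B_i\to A_X$ in Hausdorff distance inside $L_X$: this follows from the Chabauty continuity $A_{X_i}\to A_X$ in $\mathcal{FP}^d$ of Claim~\ref{ChabautyAX} together with Lemma~\ref{framedleafinclusions2}, which lets one pass between $\mathcal{FP}^d$-convergence and leaf-convergence on the relevant fixed compact region. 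Because $F_i$ is an isometry onto its image and circumcenters of small sets are realized near the set, $F_i$ carries $\odot_{F_i^{*}g_i}(B_i)$ to $\odot_{g_i}(A_{X_i})=s(X_i)$, i.e.\ $F_i^{-1}(s(X_i))=\odot_{F_i^{*}g_i}(B_i)$. Since $B_i\to A_X$ (Hausdorff), $F_i^{*}g_i\to g$ near $X$, and the circumcenter is unique (Lemma~\ref{circumcenter}), a compactness argument gives $\odot_{F_i^{*}g_i}(B_i)\to\odot_g(A_X)=s(X)$ in $L_X$. Finally, since $p_i:=F_i^{-1}(s(X_i))\to s(X)$ in $L_X$ and the $F_i$ realize $X_i\to X$ in $\mathcal{FP}^d$, it follows that $s(X_i)=F_i(p_i)\to s(X)$ in $\mathcal{FP}^d$.

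\emph{Main obstacle.} The delicate step is the Hausdorff convergence $B_i=F_i^{-1}(A_{X_i})\to A_X$ in $L_X$: one must reconcile Claim~\ref{ChabautyAX}, phrased in the ambient Polish space $\mathcal{FP}^d$, with the leaf-intrinsic metric needed to speak of circumcenters, which forces a systematic use of the semicontinuity statement Lemma~\ref{framedleafinclusions2}. A secondary, routine point is the joint continuity of the circumcenter in the pair (set, metric) --- in Hausdorff distance and in $C^2$ on a fixed ball, respectively --- which holds since $q\mapsto\sup_{a\in A}d_g(q,a)$ depends continuously on $(A,g)$ and its minimizer is unique by Lemma~\ref{circumcenter}.
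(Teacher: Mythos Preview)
Your proof is correct and follows essentially the same approach as the paper's. Both establish well-definedness by shrinking $\mathcal V$ so that the relevant sets have small leaf-diameter (you bound $\mathrm{diam}_{L_X}(A_X)$ directly via $\delta(X)\to 0$ and Lemma~\ref{framedleafinclusions2}; the paper bounds $\mathrm{diam}_{L_X}([X])$ via the argument of Lemma~\ref{technicalities}) and then invoke Lemma~\ref{circumcenter}; both prove continuity by pulling $A_{X_i}$ back along the almost-isometric maps $F_i$, using Claim~\ref{ChabautyAX} with Lemma~\ref{framedleafinclusions2} to get Hausdorff convergence $F_i^{-1}(A_{X_i})\to A_X$ in $L_X$, and then running a subsequence/uniqueness argument for the circumcenter.
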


 This will  finish the proof of  Lemma~\ref {existencesection}, for  as defined above, $s$  is constant on $\sim_1$-equivalence classes and $X \sim_1 s(X)$.  So, it remains to prove the claim.

\begin {proof}
 Given $\epsilon >0  $, we can choose $\mathcal V $ small enough such that for every $X\in \mathcal V $, the $\sim_1$-equivalence class $[X] \subset  L_X $ has $d_{L_X}$-diameter at most $\epsilon$.  (The argument is almost  the same as that used  to prove Lemma~\ref {technicalities}.)  As injectivity radius and sectional curvature near the base point vary continuously in  $\mathcal{FM}^d$, we can assume that $\mathcal V $ is small enough so that by Lemma \ref {circumcenter},  each subset $[X] \subset L_X$ has a  well-defined circumcenter.  On such a  neighborhood $\mathcal V $,  the map in the statement of the claim is well-defined.
 
For continuity, suppose that $X_i\rightarrow X$ in $\mathcal V $. Let $$F_i : L_X \dashmap L_{X_i}$$ be the almost isometric maps of \eqref{framedais}. Combining (the proof of) Lemma \ref {framedleafinclusions2} and Claim \ref {ChabautyAX},  we have 
\begin {equation}\label {Hconv}F_i   ^ {-1} (A_{X_i}) \rightarrow A_X\end {equation}  in the Chabauty topology on closed subsets of $L_X$.  As these sets are all contained in a compact subset of $L_X $, Chabauty convergence means that  $F_i   ^ {-1} (A_{X_i}) $ and $ A_X$ are contained in $\epsilon $-neighborhoods of each other, with $\epsilon\rightarrow 0$ as $i\rightarrow \infty $. 

We  need to show that  if $ p_i  =\odot(A_{X_i})$  is the circumcenter  in $L_{X_i}$, then 
$$F_i^{-1}(p_i) \rightarrow \odot(A_X),$$  the corresponding circumcenter in $L_X$.   After passing to a subsequence, the points $F_i^{-1}(p_i)$  converge to a point $p\in L_X$. If $R_i$ is the minimum radius of a closed ball around $ p_i$ that contains $ A_{X_i}$, then as the $F_i$ are  almost  isometric, the sets $F_i^{-1}(B(p_i,R_i))$ Chabauty converge to a ball $B(p,R)\supset A_X$, where $R_i \rightarrow R$. But $A_X$ cannot be contained in a ball with radius less than $R$, since then a slight enlargement of such a ball would contain $F_i   ^ {-1} (A_{X_i}) $ for large $i$, contradicting the fact that $R_i \rightarrow R$. So, $p$ is a circumcenter for $A_X$.
\end {proof}

\subsubsection{Constructing the charts}

 Let us recall  our setup.  We have the two spaces  
\begin{align*}
\mathcal {P} ^ d &=\left\{(M,p,D) \ \Big | \ \substack{M \text{ a complete Riemannian } d\text {-manifold,} \\ p\in  M, \text { and } D \subset  M \text { a closed subset} \\ \text {such that }\not \exists \text { an isometry } f:M\longrightarrow M, \ f(D)=D } \right\}/ \text {isometry}\\
\mathcal{FP} ^ d &= \left\{(M,e,D) \ \Big | \ \substack{M \text{ a complete Riemannian } d\text {-manifold,} \\ e\in \mathcal FM, \text { and } D \subset \CF M \text { a closed subset} \\ \text {such that }\not \exists \text { an isometry } f:M\longrightarrow M, \ Df(D)=D } \right\}/\text {isometry},
\end{align*}
together with the projection map $\pi:\mathcal {FP}^d\longrightarrow\mathcal P^d$.    The relevant smooth-Chabauty topologies  are discussed in \S \ref{smoothChabauty}. Note that $\pi $ is an open map:  for if $(M,p,D)$ and $(M',p',D')$  are close in $\mathcal P ^ d$,  there is a (locally defined)  almost isometric map $f$ between them  that takes $p$ to $p'$, and then  given $e\in \mathcal FM_p$ we have  that $(M,e,D)$ and $(M',Df(e),D')$ are close in $\mathcal {FP}^d$.

Choose a point $O \in \mathcal P ^ d $, and a point $F\in\mathcal {FP}^d$  with $\pi (F) = O$. Let $\mathcal V$  be a neighborhood of $F$  that is small enough so that  Lemma \ref {existencesection}  applies,  and so that $\mathcal U=\pi (\mathcal V) $ satisfies the assumptions of Lemma \ref {technicalities}. Since the  equivalence relation  $\sim_1$ on $\mathcal V$ is a $\pi$-pullback, Lemma \ref {existencesection} gives a continuous map
$$s: \mathcal U / \sim_1 \  \longrightarrow\mathcal {FP}^d$$  such that $\pi(s([X]))\sim_1 X$ for all $X\in \mathcal U $.   We define a chart
 $$h:  \BR ^ d \times \, \mathcal U/\sim_1 \longrightarrow \mathcal {P}^d, $$
as follows. Each $Z\in \mathcal {FP}^d$ gives an \emph {exponential map} $$\exp_Z : \BR ^ d\longrightarrow L_{\pi(Z)} \subset \mathcal P ^ d,$$
  where if $Z=(M,e,D) $ then $\exp_Z$  is the  exponential map of $M$,  with respect to the frame $e\in \mathcal FM$, but composed with the leaf inclusions of Lemma \ref {leafinclusions},  so that it can be  considered as a map into $L_{\pi(Z)} \subset \mathcal {P}^d$.  Then
  $$h(v,[X]):=\exp_{s([X])}(v).$$

   Since injectivity radius  at the base point varies continuously in  $\mathcal P^d$,  after possibly shrinking $\mathcal V$  we may  fix $0<\epsilon<\frac 14$  such that for each $X \in \mathcal U$,
   the  map  \begin {equation}
  	\label {expeq}
\exp_{s([X])} : B(0,2\epsilon) \longrightarrow L_{s([X])}
  \end {equation} 
    is a diffeomorphism  onto its image.   We claim:
    
    \begin {claim}
    $h :  B (0,\epsilon) \times \mathcal U/ \sim_1 \, \longrightarrow\mathcal P ^ d $  is a homeomorphism onto its image.
    \end {claim}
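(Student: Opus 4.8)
The plan is to show $h$ is a continuous bijection onto its image and then argue the inverse is continuous, using the local compactness of the leaf-chunks to promote continuity of the inverse to a homeomorphism. First I would check that $h$ is well-defined and continuous. Well-definedness is immediate: $s$ is constant on $\sim_1$-classes, so $h(v,[X])$ depends only on the class of $X$; and $\exp_{s([X])}(v)\in L_{s([X])}=L_{\pi(s([X]))}$ lands in $\mathcal P^d$ via the leaf inclusions of Lemma~\ref{leafinclusions}. For continuity, suppose $(v_i,[X_i])\to(v,[X])$ in $B(0,\epsilon)\times U/\sim_1$. By continuity of $s$ (Claim~\ref{lastclaim}), $s([X_i])\to s([X])$ in $\mathcal{FP}^d$, so by the definition of convergence in $\mathcal{FP}^d$ there are almost isometric maps $F_i:L_{s([X])}\dashmap L_{s([X_i])}$ as in \eqref{framedais}, with $F_i^*(g_i)\to g$ in $C^\infty$; since $\exp$ depends continuously (indeed smoothly) on the Riemannian metric and the frame, one gets $\exp_{s([X_i])}(v_i)$ close to $\exp_{s([X])}(v)$ inside $L_{s([X_i])}$, hence $h(v_i,[X_i])\to h(v,[X])$ in $\mathcal P^d$ (using the $F_i$ themselves to witness the convergence, exactly as in the proof of Lemma~\ref{leafinclusions2}).

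Next I would prove injectivity. Suppose $h(v,[X])=h(w,[Y])$. This common point $Z$ lies in $L_{s([X])}\cap L_{s([Y])}$; but two leaves of $\mathcal P^d$ either coincide or are disjoint (two points lie on a common leaf iff the underlying framed manifolds-with-distinguished-subset are isometric, an equivalence relation), so $L_{s([X])}=L_{s([Y])}=:L$. Now $\exp_{s([X])}(v)=\exp_{s([Y])}(w)$, and since \eqref{expeq} says $\exp_{s([X])}$ is a diffeomorphism on $B(0,2\epsilon)\supset B(0,\epsilon)$ and likewise for $s([Y])$, these are two normal-coordinate parametrizations of $L$ whose images overlap. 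Injectivity on the $\BR^d$ factor for fixed $[X]$ is then immediate from \eqref{expeq}. To get $[X]=[Y]$, observe that $Z\sim_1 s([X])$ (as $d_{L}(Z,s([X]))\le 2\epsilon<1$, after we also arrange $\epsilon$ small enough that balls of radius $2\epsilon$ in leaves stay inside a single $\sim_1$-class — this uses $\sim_1=\sim_2$ on $\mathcal U$ as in Lemma~\ref{technicalities}); similarly $Z\sim_1 s([Y])$, and $s([X])\sim_1 X$, $s([Y])\sim_1 Y$. Transitivity of $\sim_1$ on $\mathcal U$ then gives $X\sim_1 Y$, i.e. $[X]=[Y]$. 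Consequently $s([X])=s([Y])$, and $v=w$ by injectivity of \eqref{expeq}.

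Finally, continuity of the inverse. The main obstacle is that the manifold topology on a leaf $L_X$ is \emph{not} the subspace topology from $\mathcal P^d$ (as flagged after Lemma~\ref{leafinclusions}), so one cannot simply invert $\exp$ pointwise and be done — one must control the transverse coordinate too. Given $Z_i=h(v_i,[X_i])\to Z=h(v,[X])$ in $\mathcal P^d$, I would first note that $[X_i]\to[X]$ in $U/\sim_1$: indeed $Z_i\sim_1 s([X_i])$ and the projection $\mathcal P^d\to U/\sim_1$ is continuous (it is an open quotient map by Lemma~\ref{sepmet} and the fact, from Lemma~\ref{technicalities}, that $\sim_1$-saturations of small opens are open), so $[Z_i]\to[Z]$ and $[Z_i]=[s([X_i])]=[X_i]$, $[Z]=[X]$. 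Then by continuity of $s$, $s([X_i])\to s([X])$, so we again have almost isometric maps $F_i:L_{s([X])}\dashmap L_{s([X_i])}$. The point $F_i^{-1}(Z_i)\in L_{s([X])}$: it satisfies $\exp_{s([X_i])}(v_i)=Z_i$, and since $F_i$ is $C^\infty$-close to an isometry fixing the base frame, $F_i^{-1}\circ\exp_{s([X_i])}$ is $C^\infty$-close to $\exp_{s([X])}$ on $B(0,2\epsilon)$; combined with $Z_i\to Z=\exp_{s([X])}(v)$ in $\mathcal P^d$ (hence, restricted to the compact leaf-chunk $\exp_{s([X])}(\overline{B(0,3\epsilon/2)})$ where the manifold and subspace topologies agree by Lemma~\ref{leafinclusions}, also in the manifold topology), one gets $v_i=(\exp_{s([X_i])})^{-1}(Z_i)\to v$. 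This shows $h^{-1}$ is continuous, completing the proof that $h$ is a homeomorphism onto its image. The one genuinely delicate point, which I would spell out carefully, is the passage in this last step from convergence $Z_i\to Z$ in the \emph{ambient} topology of $\mathcal P^d$ to convergence of $F_i^{-1}(Z_i)\to v$ in the \emph{leaf} topology — this is exactly where the almost-isometries $F_i$ and the precompactness of the relevant leaf-chunk (so that Chabauty/leaf convergence and ambient convergence coincide, as in Lemma~\ref{leafinclusions2}) must be used together.
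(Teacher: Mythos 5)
Your route genuinely differs from the paper's: the paper works with the closed ball $\overline{B(0,\epsilon)}$, proves $h$ is a continuous proper injection, and invokes metrizability of $\mathcal P^d$ to conclude it is a homeomorphism onto its image; you prove injectivity and continuity of $h^{-1}$ directly. The raw ingredients (Lemma~\ref{leafinclusions2}, continuity of $s$ and of $\exp$) are shared, but the properness route is cleaner in that it never needs to say anything about the quotient topology on $U/\sim_1$ beyond the automatic continuity of the quotient map.

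The genuine gap is in your continuity-of-$h^{-1}$ argument, at the step ``$[X_i]\to[X]$.'' You invoke a continuous ``projection $\mathcal P^d \to U/\sim_1$'' to pass from $Z_i \to Z$ in $\mathcal P^d$ to $[Z_i] \to [Z]$. No such map exists: the quotient map is defined only on $\mathcal U$, and the points $Z_i = \exp_{s([X_i])}(v_i)$ need not lie in $\mathcal U$ --- the circumcenter $s([X_i])$ and its exponential image are merely $\sim_1$-close to $X_i \in \mathcal U$, and $\mathcal U$ is not $\sim_1$-saturated. (The openness and quotient-map-is-open assertions you cite from Lemmas~\ref{sepmet} and~\ref{technicalities} do not give you a map out of all of $\mathcal P^d$.) The repair is essentially the paper's properness argument in disguise: from $Z_i \to Z$ and the uniform bound on $d_{L_{Z_i}}(Z_i,X_i)$, Lemma~\ref{leafinclusions2}~2) yields a subsequence with $X_i \to X'$ in $\mathcal P^d$, $X' \in L_Z = L_X$, and $X' \sim_1 X$; then Lemma~\ref{leafinclusions2}~1) applied at $X$ produces $Y_i \in L_{X_i}$ with $Y_i \sim_1 X_i$ and $Y_i \to X$, and since $\mathcal U$ is open we have $Y_i \in \mathcal U$ for large $i$, so $[X_i]=[Y_i]\to[X]$ via the quotient map on $\mathcal U$. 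Your injectivity argument has a parallel, milder issue: the transitivity of $\sim_1$ you invoke is established only on $\mathcal U$, but the intermediate points $Z$, $\pi(s([X]))$, $\pi(s([Y]))$ may lie outside $\mathcal U$; this is precisely why the paper's Lemma~\ref{placquepre} carries the explicit hypothesis that $\sim_1$ is an equivalence relation on the \emph{image} of $h$, which requires a further round of shrinking that your proof should acknowledge.
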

    \begin {proof}
   For convenience, we work with the closed ball $\overline {B(0,\epsilon)} $.  We'll  show that 
       $$h : \overline {B (0,\epsilon) } \times \mathcal U/ \sim_1 \, \longrightarrow\mathcal P ^ d $$
        is a continuous, proper injection. As  $\mathcal P ^ d$  is Chabauty, this  will imply that $h$ is a homeomorphism onto its image.

   Injectivity follows immediately from the definition of $\epsilon$: the reason $2\epsilon $ appears in \eqref{expeq} is  to ensure the exponential maps stay injective on the closed balls $\overline {B(0,\epsilon)} $. 
   
 For continuity,  remember that $h(v,[X])=\exp_{s([X])}(v)$ and note that
    $$\exp : \mathcal {FP}^d \times \BR^d \longrightarrow\mathcal P ^d, \ \ (Z,v)\longmapsto \exp_Z(v)$$
     is continuous, since the Riemannian exponential map varies smoothly when the metric is varied smoothly, a consequence of the smooth variation of solutions to smoothly varying families of ODEs. Hence, $h$ is continuous.

  We now claim that $h$ is proper.  Assume that $(v_i,[X_i])$ is a sequence in $ \overline {B (0,\epsilon) } \times \mathcal U/ \sim_1$, and that
  $h(v_i,[X_i]) \rightarrow Y \in \mathcal P ^ d $.  As $$d_{L_{X_i}}\big(X_i,h(v_i,[X_i])\big)<\epsilon  ,$$ Lemma \ref {leafinclusions2} 2) implies that $(X_i)$ has a subsequence that converges to some $X\in \mathcal P^d$. By compactness, $v_i$ has a subsequence that converges to some $v\in \overline {B (0,\epsilon)} $, so  the sequence $(v_i,[X_i])$  is pre-compact in $ \overline {B (0,\epsilon) }\times \mathcal U/ \sim_1$.
    \end {proof}

  We now want to show that the set of all maps $h$ constructed as above is a foliated atlas for $\mathcal  P ^ d $.  The key  is the following lemma:
  
 \begin {lemma}\label {placquepre}
Suppose that $\mathcal U$ and $\epsilon$ are chosen to be small enough so that $\sim_1$ is an equivalence relation on the image $h(B (0,\epsilon)\times \mathcal U/ \sim_1)$. Then if
$$(v,[X]), (w,[Y])\in B (0,\epsilon)\times \mathcal U/ \sim_1,$$  we have that $[X]=[Y] \iff h(v,[X]) \sim_1 h(w,[Y]) .$
   \end {lemma}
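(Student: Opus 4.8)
The plan is to prove both implications by unwinding the definitions of $h$ and $\sim_1$, using the fact that each $\exp_{s([X])}$ parametrizes (a neighborhood in) the leaf $L_{s([X])}$, which is identified via the leaf inclusions of Lemma~\ref{leafinclusions} with the Riemannian manifold $M$ (respectively its frame bundle) underlying the point $s([X])$. Recall that by construction $s$ is constant on $\sim_1$-classes, $\pi(s([X]))\sim_1 X$, and $h(v,[X])=\exp_{s([X])}(v)\in L_{\pi(s([X]))}=L_X$. So a point $h(v,[X])$ always lies on the same leaf as $X$, at $d_{L_X}$-distance at most $\epsilon$ from $X$.

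For the implication $[X]=[Y]\implies h(v,[X])\sim_1 h(w,[Y])$: if $[X]=[Y]$, then $s([X])=s([Y])$, so $h(v,[X])$ and $h(w,[Y])$ both lie on the common leaf $L:=L_{s([X])}$, at distances at most $\epsilon$ from the common base point $X$ (more precisely, from $\pi(s([X]))$, which is $\sim_1$-equivalent, hence at $d_L$-distance $\le 1$ — but after shrinking $\mathcal U$ as in the proof of Lemma~\ref{technicalities} the $\sim_1$-class $[X]$ has $d_L$-diameter $<\epsilon$, and $\exp_{s([X])}(B(0,\epsilon))$ also has radius $<\epsilon$). Hence $d_L\big(h(v,[X]),h(w,[Y])\big)<2\epsilon + (\text{diam of }[X])$, which we may assume is $<1$ after shrinking $\mathcal U$ and $\epsilon$. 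Since $\sim_1$ is transitive on the chart image by hypothesis and both points are $\sim_1$-equivalent to $X$, or directly since their $d_L$-distance is $<1$, we get $h(v,[X])\sim_1 h(w,[Y])$.

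For the converse $h(v,[X])\sim_1 h(w,[Y])\implies [X]=[Y]$: suppose $h(v,[X])\sim_1 h(w,[Y])$. Since $h(v,[X])\sim_1 X$ and $h(w,[Y])\sim_1 Y$ (as just noted, using that the relevant $d_L$-distances are small), transitivity of $\sim_1$ on the chart image gives $X\sim_1 Y$. It remains to deduce $[X]=[Y]$, i.e.\ that $X$ and $Y$ determine the same $\sim_1$-class in $\mathcal U$; but $X\sim_1 Y$ says exactly that $Y\in L_X$ with $d_{L_X}(X,Y)\le 1$, i.e.\ $X$ and $Y$ are in the same $\sim_1$-class, so $[X]=[Y]$. (Here one uses that $\sim_1$ is an equivalence relation on $\mathcal U$, by Lemma~\ref{technicalities}, so ``being in the same class'' is literally ``$\sim_1$-related''.)

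The only delicate point — and the one requiring the hypothesis that $U,\epsilon$ be small — is making sure that all the auxiliary $d_L$-distances appearing above (the $\sim_1$-class diameter, the radius of the exponential image, and their sum under the triangle inequality) stay below the threshold $1$ defining $\sim_1$, so that the various $\sim_1$-relations I invoke are genuinely available and transitivity can be applied within the chart image as assumed. This is exactly the role of the hypothesis ``$U$ and $\epsilon$ are chosen to be small enough so that $\sim_1$ is an equivalence relation on $h(U/\sim_1 \times B(0,\epsilon))$,'' and it is secured by the same limiting/compactness argument as in Lemma~\ref{technicalities}; I expect this bookkeeping, rather than any conceptual difficulty, to be the main obstacle.
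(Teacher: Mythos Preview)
Your proposal is correct and takes essentially the same approach as the paper: the forward direction uses that $h(v,[X])$ and $h(w,[Y])$ lie on the common leaf $L_{s([X])}$ at distance $<2\epsilon<1$, and the converse runs a transitivity chain through the hypothesis. The paper's version is slightly tidier---it invokes the already-fixed bound $\epsilon<\tfrac14$ (so no further shrinking is needed in the forward direction, and the ``diam of $[X]$'' term is unnecessary), and in the converse it writes the chain as $X\sim_1\pi(s([X]))\sim_1 h(v,[X])\sim_1 h(w,[Y])\sim_1\pi(s([Y]))\sim_1 Y$, inserting the base points $\pi(s([\cdot]))=h(0,[\cdot])$ so that each link is manifestly a $\sim_1$-relation.
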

\begin {proof}
 The forward direction is immediate, since if $s([X])=(M,e,D)$ then $h(v,[X])=(M,\exp_e(v),D)$ and $h(w,[Y])=(M,\exp_e(w), D)$. Since $\epsilon < \frac 14,$ we have $v,w\in B\left(0,\frac 12\right )$.  So $d_M\left ( \exp_e(v),\exp_e(w)\right) <1 $, as desired.

 Conversely,  suppose $h(v,[X]) \sim_1 h(w,[Y])$. Then
$$X \sim_1 \pi(s([X])) \sim_1 h(v,[X]) \sim_1 h(w,[Y]) \sim_1 \pi(s([Y])) \sim_1 Y.\qedhere$$
\end {proof}

\noindent Lemma \ref{placquepre} implies that transition maps between the charts $h$ have the form
$$t : B \times \mathcal U / \sim_1  \longrightarrow B' \times \mathcal U' / \sim_1, \ \ t(v,[X])=(t_1(v,[X]),t_2([X])),$$
where $B,B'$ are neighborhoods of the origin  in $\BR ^ d $ and $\mathcal  U, \mathcal U'$ are open in $\mathcal P^d$.    Furthermore, for each fixed $[X]$, the map $v \mapsto t_1(v,[X])$ is a transition map between two exponential maps for the same Riemannian manifold $M$, but taken with respect to different base frames. So, $t_1$ is smooth in $v$.  As $X$ varies in $\mathcal P^d$, the Riemannian manifolds $M$ vary smoothly, and the base frames vary continuously, so the $v$-partial derivatives of $t_1$ also vary continuously.

Therefore, $\mathcal P^d$ is a foliated space. The charts $h :  B (0,\epsilon) \times \mathcal U/ \sim_1 \, \longrightarrow\mathcal P ^ d $ above are chosen exactly so that when $[X_i]\to [X]$ in $\mathcal U / \sim_1$,  the pullback metrics $g_{X_i}$ on $B (0,\epsilon)$  converge in the $C^\infty$ topology to $g_X$. The point  is just that if a sequence of framed manifolds converges in the framed smooth topology, then the  almost isometric maps of Equation \eqref{framedalmost} in \S \ref{vectored}  can be chosen to almost commute with the exponential maps  in the small neighborhoods of the base frames.
 Finally, by Lemma \ref{placquepre},  the leaf equivalence relation of $\mathcal P^d$ is generated by $\sim_1$, so leaves are obtained  as promised by fixing $(M,D)$ and varying the base point $p\in M$.

\subsection{Application: an ergodic decomposition theorem}

A  unimodular probability measure $\mu$ on $\mathcal M^d$ is \emph{ergodic} if whenever $B$ is saturated and Borel, either $\mu(B)=0$ or $\mu(B)=1$.  Here, we use  the desingularization theorem (Theorem \ref{desingularizing}) to show that any unimodular probability measure on $\mathcal M^d$  can be expressed as an integral of ergodic such measures.

Ergodic decomposition theorems are usually proved in one of two ways.   Phrased in our context, one of the usual approaches is to disintegrate $\mu$ with respect to the $\sigma$-algebra of saturated subsets, see \cite{Fremlinmeasure4}, and then prove that the  conditional measures are ergodic.  The other approach  uses that ergodic  probability measures are the extreme points of the  convex set of all unimodular probability measures, and then appeals to Choquet's theorem \cite{Phelpslectures}.

Neither of these approaches quite applies to unimodular measures on $\mathcal M^d$ in full generality.  For the first approach, the usual way to prove that the conditional measures are ergodic is to appeal to a pointwise ergodic theorem. While Garnett~\cite{Garnettfoliations} has proved a foliated ergodic theorem with respect to Brownian motion --- this could be applied in our setting  using Theorem \ref{desingularizing} --- her theory requires that the leaves of the foliation have uniformly bound geometry.   The problem with the  second approach is that  to our knowledge, there is currently no version of Choquet's theorem that applies in this generality.  Namely, the original requires that the convex set in question be compact, see \cite{Phelpslectures}, and more general versions such as Edgar's \cite{Edgarnoncompact} require separability assumptions and that the underlying Banach space has the Radon-Nikodym property.

Our approach is to use Theorem \ref{desingularizing} to reduce the problem to an ergodic decomposition theorem for Polish foliated spaces, and then to reduce that to a decomposition theorem for measures on a complete transversal.   Essentially, if one traces through all the arguments in the referenced papers,  in particular in \cite{Greschonigergodic}, the argument does boil down to Choquet's theorem, but considering measures on the transversal allows one to use Varadarajan's compact model theorem \cite[Theorem 3.2]{Varadarajangroups} (or rather, the easier countable case thereof) to reduce  everything to the  case of an ergodic decomposition for a countable group acting by continuous maps on a compact metric space, which is a  setup that Choquet's theorem \cite{Phelpslectures} can handle.

\begin{proposition}[Ergodic decomposition]\label {ergodicdecomposition}
Let $\mu$ be a unimodular Borel probability measure on $\mathcal M^d$. Then there is a standard probability space $(X,\nu)$ and a  family  $\{\eta_x \ | \ x\in X\}$ of ergodic unimodular Borel probability measures on $\mathcal M^d$ such that for every Borel $B \subset \mathcal M^d$,  the map $x \mapsto \eta_x(B)$ is Borel and 
$$\mu(B) = \int \eta_x(B) \, d\nu.$$
\end{proposition}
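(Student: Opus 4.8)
The strategy is to transport the problem from $\mathcal M^d$ to the foliated space $\mathcal P^d$ via the desingularization theorem (Theorem~\ref{desingularizing}), then from a foliated space to its complete transversal, and finally to invoke a classical ergodic decomposition for a countable group acting on a Polish space. First I would use Theorem~\ref{desingularizing} to lift $\mu$ to a completely invariant probability measure $\hat\mu$ on $\mathcal P^d$ (this is precisely the measure $\hat\mu$ constructed in \eqref{muint}, normalized to be a probability measure), whose push-forward under the leaf map is $\mu$. The key observation is that the leaf map $\mathcal L:\mathcal P^d\longrightarrow\mathcal M^d$ carries saturated sets to saturated sets and pulls them back bijectively at the level of Borel $\sigma$-algebras modulo null sets: a set $B\subset\mathcal M^d$ is saturated iff $\mathcal L^{-1}(B)$ is a union of leaves of $\mathcal P^d$, and the framed Poisson fibers are all non-null, so ergodicity of $\mu$ (with respect to the saturated $\sigma$-algebra on $\mathcal M^d$) is equivalent to leaf-wise ergodicity of $\hat\mu$ on $\mathcal P^d$. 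Hence it suffices to prove an ergodic decomposition for completely invariant probability measures on a Polish Riemannian foliated space, and then push everything forward under $\mathcal L$.

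For the foliated space, the plan is to pass to a complete Borel transversal $T\subset\mathcal P^d$ (a Borel set meeting each leaf in a countable, non-empty set, obtainable from a countable foliated atlas). Restricting the holonomy groupoid to $T$ gives a countable Borel equivalence relation $\mathcal R_T$ on $T$ generated by a countable group $\Gamma$ of partial Borel isomorphisms. By the disintegration described in \S\ref{foliatedsec}, the completely invariant measure $\hat\mu$ corresponds to an invariant transverse measure $\lambda$ on $T$, and complete invariance means exactly $\Gamma$-quasi-invariance with the right Radon--Nikodym cocycle (in fact, here, genuine invariance when normalized by the plaque volumes — one works with the measure class and remembers the cocycle). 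Applying Varadarajan's compact model theorem, one realizes $(T,\lambda,\Gamma)$ as a countable group of homeomorphisms of a compact metric space with a quasi-invariant measure; Choquet's theorem then decomposes $\lambda$ as an integral $\lambda=\int_X \lambda_x\,d\nu$ of ergodic $\Gamma$-quasi-invariant measures over a standard probability space $(X,\nu)$. Each $\lambda_x$ carries the same cocycle, so each is an invariant transverse measure, hence gives a completely invariant probability measure $\hat\eta_x$ on $\mathcal P^d$ by re-integrating $\volume$ against it; leaf-wise ergodicity of $\hat\eta_x$ is exactly ergodicity of $\lambda_x$ for $\mathcal R_T$. Finally set $\eta_x := \mathcal L_*\hat\eta_x$; by the equivalence above these are ergodic unimodular probability measures on $\mathcal M^d$, the map $x\mapsto\eta_x(B)=\hat\eta_x(\mathcal L^{-1}B)$ is Borel because the corresponding statement for $\lambda_x$ is part of the disintegration conclusion, and $\mu(B)=\mathcal L_*\hat\mu(B)=\int_X \hat\eta_x(\mathcal L^{-1}B)\,d\nu=\int_X\eta_x(B)\,d\nu$.

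The main obstacle I anticipate is the bookkeeping around \emph{complete invariance versus harmonicity/quasi-invariance} on the transversal: Garnett's pointwise ergodic theorem (the usual engine for showing conditional measures are ergodic) requires bounded geometry of the leaves, which we do not have, so I must instead route through the countable-equivalence-relation picture and make sure that (i) the transverse measure really is invariant (not merely quasi-invariant) after the correct normalization by plaque volumes, so that Choquet's theorem applied on the compact model yields ergodic pieces that are still genuine invariant transverse measures, and (ii) measurability of $x\mapsto\eta_x$ survives all the transfers — through the disintegration of $\lambda$, through re-integration against $\volume$, and through the push-forward $\mathcal L_*$ (here Proposition~\ref{leafBorel}, the Borel-ness of the leaf map, is what makes $\mathcal L_*$ behave well). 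A secondary point requiring care is that $\mathcal P^d$, though Polish, is only a $G_\delta$ in $\mathcal P^d_{all}$, so I should either work directly with its own Polish structure or note that an ergodic decomposition on the standard Borel space $\mathcal P^d$ is all that is needed, and standard Borel spaces are exactly what Varadarajan's and Choquet's machinery accept.
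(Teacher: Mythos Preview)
Your approach is essentially the same as the paper's: lift to $\mathcal P^d$ via Theorem~\ref{desingularizing}, pass to a complete Borel transversal, decompose the transverse measure there, re-integrate, and push forward under the leaf map. The only packaging difference is that the paper cites the ergodic decomposition of Greschonig--Schmidt for the countable-group action on the transversal (after invoking Feldman--Moore to realize the restricted leaf relation as the orbit relation of a genuine countable group), whereas you intend to open that black box via Varadarajan's compact model plus Choquet; the paper's discussion preceding the proposition explicitly notes that this is what lies inside Greschonig--Schmidt anyway.

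One unnecessary complication in your plan: there is no quasi-invariance or cocycle bookkeeping to track. A completely invariant measure on $\mathcal P^d$ is, by definition, $\mathrm{vol}\,d\lambda$ for a genuinely holonomy-\emph{invariant} transverse measure $\lambda$; once Feldman--Moore turns the leaf relation on $T$ into the orbit relation of a countable group $G$, Corollary~1 of Feldman--Moore gives that $\lambda$ is honestly $G$-invariant (not merely quasi-invariant). So you are decomposing inside the simplex of $G$-invariant probability measures, and the ergodic components are automatically invariant transverse measures---no normalization by plaque volumes or cocycle-matching is needed.
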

\begin {proof}
Let $\mathcal P^d $ be the  Polish foliated space defined in Theorem \ref{desingularizing}. We say that a completely invariant probability measure $\mu$ on  $\mathcal P^d $   is \emph{ergodic}  if every Borel,  leaf-saturated subset has measure $0$ or $1$.   The leaf map of Proposition~\ref{leafBorel} pushes forward (ergodic) completely invariant measures on $\mathcal P^d $ to (ergodic) unimodular measures on $ \mathcal M^d$. In light of Theorem \ref{desingularizing}, it then suffices to show that for every  completely invariant Borel probability measure $\mu $  on $\mathcal P^d$, there is a standard probability space $(X,\nu)$ and a  family  $\{\eta_x \ | \ x\in X\}$ of ergodic completely invariant Borel probability measures on $\mathcal P^d$ such that for every Borel $B \subset \mathcal P^d$,  the map $x \mapsto \eta_x(B)$ is Borel and 
$$\mu(B) = \int \eta_x(B) \, d\nu.$$

 Choose a complete Borel transversal $T $ for the foliated space $\mathcal P^d $, i.e.\ a Borel subset that intersects each leaf in a nonempty countable set, and assume $T$  is a Polish space. (The existence of such a $T$ follows from the fact that $\mathcal P^d $ is Polish, and from the foliated structure.)  The leaf equivalence relation restricts to an equivalence relation $\sim$ on $T $ with countable equivalence classes, and $T$ with its Borel $\sigma$-algebra  is a standard Borel space.  So by Feldman-Moore \cite{Feldmanergodic}, $\sim$  is the orbit equivalence relation  of some Borel action $G \circlearrowright T$  of a countable group $G$.  

 The measure $\mu$  is the result of integrating the Riemannian measures on the leaves of $\mathcal P^d $  against a holonomy invariant transverse measure $\lambda$ on $T$.  Since the  action of the holonomy groupoid generates $\sim$ and preserves $\lambda$, the action $G \circlearrowright T$ above must also preserve  $\lambda $, by Corollary 1 of \cite{Feldmanergodic}.  

We now apply the ergodic decomposition of Greschonig-Schmidt~\cite{Greschonigergodic} to the transverse measure $\lambda $. They  show that there is a standard probability space $(X,\nu)$ and a  family  $\{\xi_x \ | \ x\in X\}$ of $G$-ergodic Borel probability measures on $T$ such that for every Borel $B \subset  T$,  the map $x \mapsto \xi_x(B)$ is Borel and 
$$\lambda(B) = \int \xi_x(B) \, d\nu.$$
 Integrating each $\xi_x$ against the Riemannian measures on the leaves of $\mathcal P^d $  gives a completely invariant measure $\xi_x$ on $\mathcal P^d $, and it follows that for every Borel $B \subset \mathcal P^d$,  the map $x \mapsto \xi_x(B)$ is Borel and 
$$\mu(B) = \int \xi_x(B) \, d\nu.\qedhere$$
\end {proof}

\section{Compactness theorems}

\label{hypcompactsec}

Cheeger-Gromov's $C^{1,1}$-compactness theorem \cite{Gromovmetric}  states that  for every $c,\epsilon>0$, the set of pointed Riemannian $d$-manifolds $(M,p)$ such that
\begin{enumerate}
	\item $|K_M(\tau)| \leq c$ for every $2$-plane $\tau$,
	\item $\inj_M(p) \geq \epsilon >0$
\end{enumerate}
 is precompact with respect to Lipschitz convergence, and that the limits are $C^{1,1}$-manifolds, Riemannian manifolds where the metric tensor is only Lipschitz. Here, $K_M$ is the sectional curvature tensor and $\inj_M(p)$ is the injectivity radius of $M$ at $p$.  Variants of this theorem, see e.g. \cite[Chapter 10]{Petersenriemannian}, ensure greater regularity of the limits when the  derivatives of $K_M$ are bounded.  For instance, if we fix some sequence $(c_j)$ in $\BR$  and replace 1)\ by
 \begin {enumerate}
 \item[1')] $|\nabla^j K_M |\leq c_j$, for all $j\in \BN \cup \{0\}$,
 \end {enumerate}
 then  the space of pointed Riemannian $d$-manifolds $(M,p)$  satisfying 1')\ and 2)\ is compact  in the smooth  topology, i.e.\  as a subset of $\mathcal M^d$, see Lessa \cite[Theorem 4.11]{Lessabrownian}.   We also discuss a  similar compactness theorem in \S \ref{smoothsec}.

By the Riesz representation theorem and Alaoglu's theorem, the set of Borel probability measures on a compact metric space is weak* compact, so in particular we have weak* compactness for probability measures supported on the space of pointed manifold satisfying 1')\ and 2)\ above. As unimodularity is weak* closed,  this also gives compactness for unimodular probability measures.

For the Cheeger-Gromov compactness theorems, a lower bound on injectivity radius at the base point is necessary, as no sequence $(M_i,p_i)$  with $\inj_{M_i}(p_i)\to 0$ can converge in the smooth (or even lipschitz) topology.   In this section, however, we prove that for manifolds with pinched negative curvature, a condition on the injectivity radius is not necessary if we are only interested in the weak* compactness of the space of unimodular  probability measures.

\vspace{2mm}

 More precisely, fixing $a,b>0$ and a sequence $c_j \in \BR$, let $\mathcal M^d_{PNC} \subset \mathcal M ^ d$ be the set of all pointed $d$-manifolds $(M,p)$ satisfying 1')\ and 
 \begin {enumerate}
 \item[3)] $-a^2\leq K_M(\tau) \leq -b^2 < 0$ for every $2$-plane $\tau$,  
 \end {enumerate}
and let $\mathcal M^d_{PNC,inj\geq \epsilon}$ be the subset of all $(M,p)$ that satisfy 1'), 2)\ and 3). The former space $\mathcal M^d_{PNC}$ is not compact, but even so we have:

\begin {reptheorem}{hypcompact}[Compactness in pinched negative  curvature]
The space of unimodular probability measures on $\mathcal M^d_{PNC}$ is compact in the weak$^*$  topology.
 \end {reptheorem}
 
The point is that the $\epsilon $-thin part of a manifold $M$ with pinched negative curvature  only takes up a uniformly small proportion of its volume.  More precisely,

\begin{proposition}[Thick at the basepoint]\label{thickbase}
If $\mu$ is a unimodular probability measure on $\mathcal M^d_{PNC}$ and $\epsilon>0$,  there is some $C=C(\epsilon,d,a,b)$ such that
\begin{itemize}
	\item $\lim_{\epsilon \to 0} C=0$,
\item  $\mu(\mathcal M^d_{PNC,inj\geq \epsilon})
\geq 1-C.$
\end{itemize} 
\end{proposition}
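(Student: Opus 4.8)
The plan is to combine the mass transport principle with a purely geometric fact about thin parts in pinched negative curvature. We may assume $\epsilon$ is small. Fix a Margulis constant $\epsilon_0=\epsilon_0(d,a,b)>0$ small enough that for every complete $M$ with $-a^2\le K_M\le -b^2$ the $\epsilon_0$-thin part $M_{<\epsilon_0}=\{x\in M:\inj_M(x)<\epsilon_0\}$ has components that are Margulis tubes around closed geodesics of length $<2\epsilon_0$ and cusp neighborhoods (the Margulis lemma in pinched negative curvature). For a component $T$ of $M_{<\epsilon_0}$, let its \emph{thick collar} be $\operatorname{Col}(T)=\{x\in M:\inj_M(x)\ge\epsilon_0,\ d_M(x,T)<1\}$. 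The first step is to establish the following.

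\emph{Geometric Lemma.} There is $C(\epsilon)=C(\epsilon,d,a,b)$ with $\lim_{\epsilon\to0}C(\epsilon)=0$ such that for every $M\in\mathcal M^d_{PNC}$, every component $T$ of $M_{<\epsilon_0}$ satisfies $\operatorname{vol}_M(\operatorname{Col}(T))>0$ and
$$\operatorname{vol}_M\!\big(T\cap\{\inj_M<\epsilon\}\big)\ \le\ C(\epsilon)\cdot\operatorname{vol}_M(\operatorname{Col}(T)),$$
and moreover every point of $M$ lies in $\operatorname{Col}(T)$ for at most $N_0=N_0(d,a,b)$ components $T$. The content is that the $\epsilon$-deep part of a thin component occupies a fraction $\to0$ of its boundary collar. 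For a Margulis tube around a geodesic of length $\ell<2\epsilon$, writing $\rho$ for distance to the core one has $\inj\approx\tfrac12\ell\cosh\rho$ and $\operatorname{vol}\{\rho<r\}\asymp\ell\sinh^{d-1}r$ by model comparison, so $\operatorname{vol}(T\cap\{\inj<\epsilon\})\lesssim\ell(\epsilon/\ell)^{d-1}$ while $\operatorname{vol}(\operatorname{Col}(T))\gtrsim\ell(\epsilon_0/\ell)^{d-1}$, a ratio $\lesssim(\epsilon/\epsilon_0)^{d-1}$; for cusp neighborhoods the same bookkeeping (now $\inj\asymp\sigma e^{-t}$ and cross‑sectional volume $\asymp e^{-kt}$ at depth $t$) again gives a positive power of $\epsilon/\epsilon_0$. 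Positivity of $\operatorname{vol}_M(\operatorname{Col}(T))$ and the bound $N_0$ on overlapping collars are standard Margulis-type packing/counting facts. Note the curvature‑derivative bounds 1') are not used; only 3).

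\textbf{Transport via the MTP.} Granting the lemma, define $f\colon\mathcal M_2^d\to[0,\infty]$ by
$$f(M,p,q)=\frac{\mathbf 1[\inj_M(p)<\epsilon]\cdot\mathbf 1[q\in\operatorname{Col}(T_p)]}{\operatorname{vol}_M(\operatorname{Col}(T_p))},$$
where $T_p$ is the component of $M_{<\epsilon_0}$ containing $p$ (well-defined when $\inj_M(p)<\epsilon<\epsilon_0$, and the numerator vanishes otherwise). This is a Borel (at worst universally measurable, which suffices) function of $(M,p,q)$, since injectivity radius, Riemannian volume, and the relation ``$p,q$ lie in the same component of $M_{<\epsilon_0}$'' are Borel on the relevant spaces. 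Since $\int_{q\in M}f(M,p,q)\,dq=\mathbf 1[\inj_M(p)<\epsilon]$, the left side of \eqref{MTP} for this $f$ equals $\mu(\{\inj_M<\epsilon\})$. On the right side, the integrand at $(M,p)$ is
$$\int_{q\in M}f(M,q,p)\,dq=\sum_{T:\ p\in\operatorname{Col}(T)}\frac{\operatorname{vol}_M(T\cap\{\inj_M<\epsilon\})}{\operatorname{vol}_M(\operatorname{Col}(T))}\ \le\ N_0\,C(\epsilon)$$
by the Geometric Lemma. Hence $\mu(\{\inj_M<\epsilon\})\le N_0\,C(\epsilon)$, and as $\mu$ is concentrated on $\mathcal M^d_{PNC}$ this yields $\mu(\mathcal M^d_{PNC,\,\inj\ge\epsilon})\ge 1-N_0\,C(\epsilon)$, proving the proposition with $C$ replaced by $N_0C$ (and $C\equiv1$ for $\epsilon\ge\epsilon_0$).

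\textbf{Main obstacle.} The mass-transport step is essentially formal; the real work is the Geometric Lemma --- pinning down the structure of components of the $\epsilon_0$-thin part in variable pinched negative curvature precisely enough to run the volume comparison in every case (thin or ``stubby'' tubes, higher-rank cusps) together with the collar-packing bound. A secondary, routine, point is verifying that $f$ is measurable on $\mathcal M_2^d$. Crucially, finiteness of $\operatorname{vol}(M)$ never enters: all estimates are local to each thin component and its collar, which is exactly why the proposition --- and hence Theorem~\ref{hypcompact} --- applies to unimodular measures concentrated on infinite-volume manifolds.
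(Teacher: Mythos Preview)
Your approach is correct and follows the same overall strategy as the paper---Margulis decomposition, a geometric estimate comparing the deep thin part to a collar near the thick/thin boundary, and then the mass transport principle---but the execution is organized differently.

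The paper does \emph{not} prove a volume-ratio lemma directly. Instead it constructs an explicit piecewise-smooth homeomorphism $g_M:M_{<\epsilon}^\circ\to M_{<\epsilon_0}$ that slides each point outward along the geodesic leaf orthogonal to the core (or asymptotic to the cusp point) into the unit collar of $\partial M_{<\epsilon_0}$, and proves the Jacobian bound $|\det dg_M|\ge D(\epsilon)\to\infty$ via Rauch/Warner comparison. The transport function is then $F(M,p,q)=|\det dg_M(p)|\cdot\mathbf 1[d(g_M(p),q)\le 1]$, and the incoming integral is bounded by a change of variables, so no bounded-overlap constant $N_0$ is needed: global injectivity of $g_M$ does that work. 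Your transport function is cleaner, but you pay for it by needing the packing bound $N_0$ (which, as you say, is a standard counting argument using $\inj_M(q)\ge\epsilon_0$).

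Where your sketch is thin is exactly where the paper spends its effort. Your heuristic $\inj\approx\tfrac12\ell\cosh\rho$ for tubes is a constant-curvature formula with trivial holonomy; in variable pinched curvature with rotational part in $O(d-1)$ the relationship between distance-to-core and displacement is subtler, and the paper handles this by a compactness argument in $O(d-1)$ together with Lemma~\ref{comparisons}. Similarly, the volume growth $\asymp\ell\sinh^{d-1}r$ and the cusp estimates need the Jacobi-field comparison the paper invokes (Warner's extension of Rauch). None of this breaks your outline---your Geometric Lemma is true and is essentially equivalent to the paper's two Claims combined---but the ``by model comparison'' needs to become genuine comparison geometry, and that is the bulk of the proof.
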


Since each $\mathcal M^d_{PNC,inj\geq \epsilon}$ is compact, Prokhorov's theorem \cite[IX.65]{Bourbakiintegration}  implies that  the set of unimodular probability measures on $\mathcal M^d_{PNC}$ is weak* precompact in the space of all probability measures, and therefore compact since unimodularity is a closed condition. So, Theorem~\ref{hypcompact}  follows from Proposition \ref{thickbase}.
In fact, since any sequence $(M_i,p_i)\in \mathcal M^d_{PNC}$ such that $\inj_{M_i}(p_i) \to 0$ must diverge, Theorem~\ref{hypcompact} and  Proposition \ref{thickbase} are formally equivalent.

\vspace{2mm}

We defer the proof of Proposition \ref{thickbase} to the next section, and finish here with some remarks about Theorem \ref{hypcompact}.  First, as in the compactness theorems for pointed manifolds, control on the derivatives of sectional curvature is not necessary if one is willing to consider limits that are measures supported on $C^{1,1}$-Riemannian manifolds, and where the convergence is Lipschitz. The derivative bounds in 1')\  do not factor into  the proof of Proposition \ref{thickbase},  and are used only when appealing to the compactness of $\mathcal M^d_{PNC,inj\geq \epsilon}$. (See \cite[Ch 10]{Petersenriemannian}.)

 Second, suppose that $M_i$ is a sequence of  finite volume Riemannian manifolds and $\epsilon_i \to 0$  is a sequence such that
\begin {equation}\frac {\mathrm{vol}( (M_i)_{<\epsilon_i} )}{\mathrm{vol} \, M_i} \nrightarrow 0,\label {quotienteq}\end {equation}
 where $(M_i)_{<\epsilon_i}$  is the $\epsilon_i $-thin part of $M_i$, i.e.\  the set of points with injectivity radius less than $\epsilon_i$.  Then the  corresponding unimodular probability measures $(\mu_i/\mathrm{vol}(M_i))$, see \S \ref{urms}, form a sequence with no convergent subsequence.  So for example, a \emph {uniform} lower bound on curvature is required in Theorem \ref{hypcompact}, since  when the metric on a closed hyperbolic surface  is scaled by $1/i$,  the whole manifold will be $\epsilon_i$-thin  for some $\epsilon_i\to 0$.  A similar argument shows that there is no analogue of Theorem \ref{hypcompact} for flat manifolds.
  
\begin {example}\label {uppernegbound}The uniform negative upper bound curvature is also necessary.  To see this, construct Riemannian surfaces $S_i$ by cutting a hyperbolic surface along a closed geodesic with length $\epsilon_i\to 0$, and inserting a  flat annulus $A_i$ with  boundary length $\epsilon_i $ and width $1/\epsilon_i$ in between.  The surfaces $S_i $ have bounded volume, and the $\epsilon_i$-thin part of $S_i$ has volume at least some constant, so \eqref{quotienteq} holds. The metrics on the $S_i$ can then be perturbed so  that the metric is smooth everywhere, and slightly negative on the annuli $A_i$, without affecting \eqref{quotienteq}.
\end {example}

 Although there is no  general analogue of Theorem \ref{hypcompact}  in nonpositive curvature, there is  a similar compactness result for nonpositively curved locally symmetric spaces. In \S \ref{localsymcomp},  we will give an algebraic proof of this, and will also indicate how our geometric arguments  might be adapted to  this setting.  We will  also discuss the possibility of a universal theorem that  implies  both Theorem~\ref{hypcompact}  and its locally symmetric analogue.

\subsection{The proof of Proposition \ref{thickbase}}

\label{proofpropsec}
The idea is simple. One needs to show that in a manifold with pinched negative curvatures, the $\epsilon $-thin part takes up a small proportion of the volume when $\epsilon$ is small. Then one transfers this estimate to $\mu$ using unimodularity.  Of course, our manifolds and their thin parts may have infinite volume, so one needs a local version of `small proportion' that is robust enough to work in this setting. More precisely, we will show how to push volume from the $\epsilon $-thin part into a region near the boundary of the $\epsilon_0$-thin part, where $\epsilon_0$ is the Margulis constant, without incurring a large Radon-Nikodym derivative. 

\vspace{2mm}

Before starting the proof in earnest, we record the following facts, which should be well known to those familiar with the literature.

\begin {lemma}\label{comparisons}
Let $M$ be a simply connected Riemannian $d$-manifold with	curvature $-a^2 \leq K_M \leq  0 $. Below,  all geodesics have unit speed.
\begin{enumerate}\item If $\alpha,\beta$ are geodesics that intersect at $\alpha(0)=\beta(0)$ with angle $\theta$, then $$ d(\alpha(t),\beta(t))\leq {\theta} \, \frac{ \sinh(a t)}a. $$
\item If $\alpha,\beta$ are geodesics that both intersect a geodesic $\gamma$ orthogonally at $\alpha(0)$ and $\beta(0)$, and $\frac{d\alpha}{dt}(0),\frac{d\beta}{dt}(0)$ are parallel vectors along $\gamma$, then
$$ d(\alpha(t),\beta(t))\leq d(\alpha(0),\beta(0))\, \cosh(at).$$

	\item If $\xi \in \partial_\infty M$ and $\alpha,\beta$ are geodesics such that $\alpha(0)$  and $\beta(0)$ lie on a common horosphere around $\xi$ and $\lim_{t\to -\infty} \alpha(t) = \lim_{t\to -\infty} \beta(t) = \xi$, \begin{equation*}
\label{spread}  d\left(\alpha(t),\beta(t) \right ) \leq   d(\alpha(0),\beta(0))  \, {e^{a t}},
\end{equation*}

\item Given $\epsilon, T>0$,  there is some $\delta=\delta(\epsilon, T, a) >0$  such that if $\alpha,\beta$ are geodesics and $d(\alpha (t),\beta (t)) \leq \delta$  for all $t\in [0,1]$, then
 $$d(\alpha (t),\beta (t)) \leq \epsilon, \ \ \forall t\in [0,T].$$
\end{enumerate}
\end {lemma}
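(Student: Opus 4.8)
\textbf{Proof plan for Lemma \ref{comparisons}.}

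All four statements are standard Rauch-type comparison estimates for simply connected manifolds with curvature pinched between $-a^2$ and $0$, and the plan is to derive each by comparing Jacobi fields (or more precisely, the lengths of variation fields along the relevant geodesic variations) against the model space $\mathbb H^2_{-a^2}$ of constant curvature $-a^2$ on one side and flat space $\mathbb R^2$ on the other. The unifying tool is the following: if $J$ is a Jacobi field along a unit-speed geodesic in a manifold with $-a^2 \le K \le 0$, then $\|J(t)\|$ is a convex function of $t$ (since $\|J\|'' \ge K\|J\| \cdot(\text{something}) \ge -a^2\|J\|$ is not quite convexity, but $K\le 0$ gives $\|J\|''\ge 0$ wherever $J\neq 0$), and moreover $\|J(t)\|$ is bounded above by the solution of $f'' = a^2 f$ with matching initial data $f(0)=\|J(0)\|$, $f'(0)=\|J'(0)\|$ (this upper bound uses $K\ge -a^2$). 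The plan is to set up each of 1)--4) as an estimate on $d(\alpha(t),\beta(t))$ via an appropriate geodesic variation interpolating between $\alpha$ and $\beta$, express the length of the connecting curve as an integral of Jacobi-field norms, and invoke the above.

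For 1), I would take the variation $\gamma_s(t)$, $s\in[0,\theta]$, of geodesics emanating from the common point $\alpha(0)=\beta(0)$ with initial velocities interpolating by angle $s$ between $\alpha'(0)$ and $\beta'(0)$; the variation field $J_s$ along $\gamma_s$ satisfies $J_s(0)=0$, $\|J_s'(0)\|=1$, so by the Rauch comparison with $\mathbb H^2_{-a^2}$ we get $\|J_s(t)\| \le \sinh(at)/a$, and integrating in $s$ over an interval of length $\theta$ gives $d(\alpha(t),\beta(t)) \le \theta\sinh(at)/a$. For 2), the variation is by geodesics meeting $\gamma$ orthogonally, with the velocities parallel along $\gamma$; now the variation field has $\|J(0)\| = d(\alpha(0),\beta(0))\cdot(\text{normalization})$ and $J'(0)=0$ (orthogonality plus parallelism kills the derivative term), so the comparison solution is $f(t) = \|J(0)\|\cosh(at)$, yielding the $\cosh$ bound after integrating. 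For 3), the relevant variation interpolates along the horosphere, and the connecting Jacobi fields are the \emph{stable} Jacobi fields for the geodesics asymptotic to $\xi$; on the model space these decay like $e^{at}$ as one moves toward $\xi$ and hence grow like $e^{at}$ moving away, and the pinching $K\le -a^2 $ is \emph{not} what one wants here --- rather one wants $K \ge -a^2$ to bound the stable field from above by the model's stable field, which gives exactly $d(\alpha(t),\beta(t)) \le d(\alpha(0),\beta(0))e^{at}$. I should be slightly careful to cite the existence and basic properties of stable Jacobi fields (e.g.\ via the theory of Busemann functions / stable manifolds for the geodesic flow in nonpositive curvature, cf.\ Heintze--Im Hof or Eberlein), since this is the one case not covered by the most elementary Rauch argument. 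Statement 4) is a soft consequence of 1): two geodesics that are $\delta$-close on $[0,1]$ have, in particular, $\delta$-close positions and (by an estimate on how fast the angle between parallel-transported velocities can be recovered from position data) nearly-parallel velocities at, say, $t=\tfrac12$; then applying 1) and 2) forward from $t=\tfrac12$ with the small initial separation and small initial angle, both of which are controlled by a constant times $\delta$, gives $d(\alpha(t),\beta(t)) \le C(a)\,\delta\,e^{aT}$ on $[0,T]$, and choosing $\delta = \epsilon/(C(a)e^{aT})$ finishes it.

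The main obstacle I anticipate is purely bookkeeping rather than conceptual: making precise, in part 4), the step ``$\delta$-close on an interval implies nearly-parallel velocities,'' which requires controlling the second-order behavior of the distance function $d(\alpha(t),\beta(t))$ or equivalently estimating $\angle(\alpha'(t), P_t\beta'(t))$ (where $P_t$ is parallel transport along a shortest path from $\alpha(t)$ to $\beta(t)$) in terms of the $C^0$ separation over a unit time interval. This is doable --- one can extract it from the convexity of $t\mapsto d(\alpha(t),\beta(t))$ together with the upper comparison bound, since a convex function that stays below $\delta$ on $[0,1]$ must have derivative bounded by $\delta$ somewhere, and the derivative of the distance function is exactly (minus) the cosine of the relevant angle --- but it needs to be stated carefully. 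Everything else reduces to quoting the Rauch comparison theorem (cf.\ \cite{Bridsonmetric} or do Carmo) and integrating, so I would keep those parts terse.
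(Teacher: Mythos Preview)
Your plan is correct and would yield all four parts, but it diverges from the paper's argument in two places worth noting.

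For parts 1) and 2) you and the paper are doing essentially the same thing: the paper invokes Toponogov's hinge comparison for 1) and Berger's extension of Rauch (Cheeger--Ebin, Theorem 1.34) for 2), which is exactly your Jacobi-field integration dressed in slightly different language.

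For part 3) the approaches genuinely differ. The paper avoids stable Jacobi fields entirely: it applies (an ideal-vertex limit of) Toponogov's theorem to get a comparison triangle in $\BH^2_{-a^2}$ with one ideal vertex and two vertices on a common horocycle, and then does an explicit upper-half-plane computation---flowing a geodesic segment away from the ideal point scales its length by $e^{at}$. This is completely elementary once one accepts the ideal Toponogov statement (which follows by limiting from the finite-triangle version). Your route through stable Jacobi fields and the Heintze--Im Hof machinery is correct but heavier: you need a comparison theorem for Jacobi fields with boundary conditions at infinity rather than initial conditions, which is not the basic Rauch statement and requires a separate citation, as you note. The paper's argument buys self-containedness; yours buys a more uniform Jacobi-field framework across all four parts.

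For part 4) the paper also takes a slightly more direct route: rather than extracting an angle bound and feeding it back into 1) and 2), it uses convexity of $t\mapsto d(\alpha(t),\beta(t))$ to control an auxiliary point, then applies Toponogov to a single triangle to get the bound in one step. Your approach via ``$\delta$-close on $[0,1]$ implies small angle, then propagate'' is fine and the bookkeeping you flag is indeed manageable exactly as you say (convexity bounds the derivative of the distance function, which is a cosine of the relevant angle).
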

\begin {proof}
 1) is an immediate consequence of Toponogov's theorem. 2) follows from Berger's extension of Rauch's comparison theorem \cite[Theorem 1.34]{Cheegercomparison}, by interpolating between $\alpha $ and $\beta $ by a one parameter family of geodesics $\alpha_s(t) $  such that $\frac {d\alpha_s}{dt} |_{t=0}$ is a parallel vector field along $\gamma.$
 
  For 3), let $\overline{\Delta}$ be a triangle in the model space $\BH^2_{-a^2}$ with curvature $-a^2$, chosen with one ideal vertex $\overline{\xi}$ and so that the other two vertices lie on a common horocycle centered at $\overline{\xi}$, at a distance of $d(\alpha(t), \beta(t))$ from each other.
 Parametrize the infinite sides of $\overline{\Delta}$ by arc length using $\overline{\alpha},\overline{\beta} : (-\infty, t] \longrightarrow \BH^2_{-a^2}$.
By Toponogov's theorem\footnote{This ideal version of Toponogov's theorem follows the usual one since $\overline{\Delta}$  is a limit of comparison triangles associated to $\alpha(t), \beta(t), \alpha(T) $, as $T \to -\infty.$ For $T<<0$, these triangles will be almost isosceles (since $\alpha(t),\beta(t)$ lie on a common horosphere) which implies that the limit triangle in $\BH^2_{-a^2}$ will have two vertices on a horocycle centered at the other vertex.},  $d(\overline{\alpha}(0),\overline{\beta}(0)) \leq d(\alpha(0),\beta(0))$. If $\gamma$ is the geodesic in $\BH^2_{-a^2}$ from $\overline{\alpha}(0)$ to $\overline{\beta}(0) $, we can flow every point on $\gamma$ away from $\overline{\xi}$ a distance of $t$ to produce a path $\gamma_t$ from $\overline{\alpha}(t)$ to $\overline{\beta}(t)$. In the upper half plane model for $\BH^2_{-a^2}$ with $\overline{\xi}=\infty$, the path $\gamma_t$ is created by dividing all the $y$-coordinates of the points on $\gamma$ by $e^{at}$. Hence, $\mathrm{length} (\gamma_t) = e^{at} \, \mathrm{length} (\gamma)$. This gives the estimate $$d(\alpha(t),\beta(t)) = d(\overline{\alpha}(t),\overline{\beta}(t)) \leq  d(\overline{\alpha}(0),\overline{\beta}(0)) {e^{a t}} \leq   d(\alpha(0),\beta(0)) {e^{a t}}.$$

For 4), see Figure \ref{topfig}.
\end{proof}

 \begin {figure}
 \centering
 \includegraphics {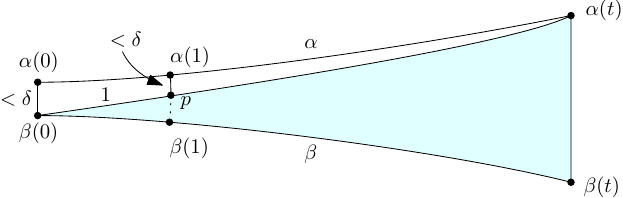}	
 \caption{Above, $d(\alpha(1),p)<\delta$ by convexity of the distance function. Then $d(\beta(1),p)<2\delta$, so applying Toponogov's theorem to the blue triangle gives an explicit upper bound for $d(\alpha(t),\beta(t))$ that depends only on $\delta,t$ and $a$. This upper bound is that which one gets in $\BH^2_{-a^2}$, so goes to zero as $\delta \to 0$. So for small $\delta$, we have $d(\alpha(t),\beta(t))\leq\epsilon$ for $t\leq T$.}
 \label {topfig}
 \end {figure}

\begin{proof}[Proof of Proposition \ref{thickbase}]
Let $\epsilon_0$  be the Margulis constant for manifolds $M$ with curvature bounds $-a^2\leq K_M(\tau)\leq -b^2 <0$, and let $\tilde M$ be the universal cover of $M$.  If $0< \epsilon \leq \epsilon_0$, and $M$ is such a manifold, consider the set $M_{<\epsilon}$ of all points $p\in M$ with $\inj_M(p)<\epsilon$. By the Margulis lemma, see \cite[\S 10]{Ballmannmanifolds}, the components $N$ of $M_{<\epsilon}$ come in two types. 
\begin {enumerate}
\item \emph{Margulis tubes}.  $N$ is the quotient of a tubular neighborhood $\tilde N \subset \tilde M$ of a geodesic $\tilde \gamma$ in $\tilde M$ by an infinite cyclic group $\Gamma$ of hyperbolic-type  isometries that stabilize $\gamma$. Hence, $N$ is a tubular neighborhood of a closed geodesic in $M$, which we call its \emph{core geodesic}.
\item \emph{Cusp neighborhoods}. $N$ is the quotient of an open set $\tilde N \subset \tilde M$ by a virtually nilpotent group $\Gamma$ of parabolic isometries that all have a common fixed point $\xi\in \partial_\infty \tilde M$.
\end {enumerate}

In both cases, the closure of $N$ is a codimension-0 submanifold of $M $ that has piecewise-smooth boundary. To see this, given any compact set $K \subset \tilde M$, proper discontinuity implies that $\tilde N\cap K$ is a finite union of sets of the form $$U_{g}\cap K, \ \text{where} \ U_g=\{x\in \tilde M \ | \ d(x,g(x))< \epsilon\},$$ and the $g$ are deck transformations. So, it suffices to show that the frontier $\partial U_g$ of each $U_g$ is a smooth submanifold.  But $\partial U_g$  is cut out by the equation $d(x,g(x))=\epsilon$, and we claim that $\epsilon$ is a regular value for $x \mapsto d(x,g(x))$. 

As $\tilde M$ is a simply connected manifold with negative curvature, it is uniquely geodesic, so the distance function of $\tilde M$ is smooth off the diagonal.  But $g $ has no fixed points, so this means that  the map $x\mapsto d(x,g(x))$ is smooth. Suppose that $x\in \tilde M$ is a critical point for this map, and let $\gamma: [0,D]\longrightarrow \tilde M$ be the geodesic from $x$ to $g(x)$.  As we start to move along $\gamma$ from $x$, the distance $d(x,g(x))$ is constant to first order. So, by the first variational formula, $dg(\gamma'(0))=\gamma'(D)$. Hence, the biinfinite geodesic extending $\gamma$ is invariant under $g$. This would mean that $\gamma$ is the axis of minimal displacement for $g$, which is impossible since $g$ translates points on $\gamma$ by $\epsilon$, but has translation length on $\tilde M$ less than $\epsilon$.

Let $M_{<\epsilon}^\circ$  be the subset obtained from $M_{<\epsilon}$  by removing the core geodesics of all  Margulis tubes, let $\partial M_{<\epsilon}$  be the boundary of the closure of $M_{<\epsilon}$, and define $M_{<\epsilon_0}^\circ$ and $\partial M_{<\epsilon_0}$  similarly. Note that there is a natural foliation of $M_{<\epsilon_0}^\circ$ by (the interiors of) geodesics with one endpoint on $\partial M_{<\epsilon_0} $, and where the other end either terminates at an orthogonal intersection with the core of a Margulis tube, or is a geodesic ray exiting a cusp\footnote{In the universal cover $\tilde M$, the normal exponential map $\exp : N\gamma \longrightarrow \tilde M $ is a diffeomorphism for any geodesic $\gamma$. For a given $g$, the displacement function $x \mapsto d(x,g(x))$ is convex along (the orthogonal) geodesics, so each $U_g$ from before is a star-shaped neighborhood of $\gamma$. The picture is similar for cusp neighborhoods, except that now we consider the foliation of $\tilde M$ by geodesics with one endpoint at some $p\in \partial_\infty \tilde M$.}. We will call these \emph{geodesic leaves}.

\begin{claim}\label {disthin}
There is some $C=C(\epsilon,d,a)$, with $C\to \infty$  as $\epsilon \to 0$, such that the distance along any geodesic leaf from $\partial M_{<\epsilon}$ to $\partial M_{<\epsilon_0}$  is at least $C$.
\end{claim}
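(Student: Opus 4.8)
The plan is to estimate from below the distance one must travel along a geodesic leaf, starting from $\partial M_{<\epsilon}$ and moving in the direction of decreasing injectivity radius, until one reaches $\partial M_{<\epsilon_0}$. I would work in the universal cover $\tilde M$, using the standard description of the thin parts via Margulis's lemma recalled just before the claim: each component of $M_{<\delta}$ is covered by a star-shaped set $\tilde N$ which is either a tubular neighborhood of the axis of a hyperbolic isometry $g$, or a cusp neighborhood associated to a parabolic $\xi\in\partial_\infty\tilde M$; in both cases it is swept out by the orthogonal/radial geodesic leaves along which the displacement function $x\mapsto d(x,g(x))$ is convex. A point $p$ lies in $M_{<\delta}$ precisely when some nontrivial deck transformation $g$ displaces it by less than $\delta$, i.e.\ $d(p,g(p))<\delta$, and $\inj_M(p) = \tfrac12\inf_{g\neq 1} d(p,g(p))$.

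The key step is a spreading estimate: I would fix a geodesic leaf $\alpha$ (unit speed) inside a component of $M_{<\epsilon_0}^\circ$, parametrized so that $\alpha(0)\in\partial M_{<\epsilon}$ with $d(\alpha(0),g(\alpha(0)))=2\epsilon$ for the relevant $g$, and moving toward larger displacement. In the Margulis-tube case, $\alpha$ and $g\alpha$ are geodesics meeting the core geodesic $\tilde\gamma$ orthogonally with initial velocities parallel along $\tilde\gamma$, so part 2) of Lemma~\ref{comparisons} gives $d(\alpha(t),g(\alpha(t)))\le d(\alpha(0),g(\alpha(0)))\cosh(at) = 2\epsilon\cosh(at)$. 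In the cusp case, $\alpha(0)$ and $g(\alpha(0))$ lie on a common horosphere centered at $\xi$ and $\alpha,g\alpha$ both limit to $\xi$, so part 3) gives $d(\alpha(t),g(\alpha(t)))\le 2\epsilon\, e^{at}$; either way this upper bound stays below $\epsilon_0$ — hence $\alpha(t)$ stays inside $M_{<\epsilon_0}$ — until $t$ is of order $\tfrac1a\log(\epsilon_0/\epsilon)$. Thus I can take
$$C = C(\epsilon,d,a) = \frac1a\log\!\Big(\frac{\epsilon_0}{\epsilon}\Big),$$
which tends to $\infty$ as $\epsilon\to0$; the constant $\epsilon_0$ depends only on $d,a,b$, so $C$ depends only on $\epsilon,d,a$ as claimed (the dimension enters only through $\epsilon_0$).

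The main obstacle, and the part requiring care, is making sure the displacement bound above really controls membership in $M_{<\epsilon_0}$ — a priori $\alpha(t)$ could leave the tube/cusp component governed by $g$ and enter a \emph{different} thin component governed by some other deck transformation $h$. Here one uses the Margulis lemma properly: if $\alpha(t)\in M_{<\epsilon_0}$ then $\alpha(t)$ is displaced less than $\epsilon_0$ by some $h\neq 1$, and since $\alpha$ started in the thin part controlled by $g$, the standard nilpotency/commutation argument of the Margulis lemma (see \cite[\S10]{Ballmannmanifolds}) forces $h$ to lie in the same virtually nilpotent subgroup, so the component is the same one and the convexity/spreading picture persists along the whole leaf. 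I would also need the elementary observation that the function $t\mapsto d(\alpha(t),g(\alpha(t)))$ is monotone increasing away from its minimum along the leaf (convexity), so that $\alpha$ genuinely exits at a well-defined value of $t$; this is exactly the statement that geodesic leaves run from $\partial M_{<\epsilon}$ to $\partial M_{<\epsilon_0}$ without re-entering, which was asserted in the paragraph preceding the claim. Putting these together gives the lower bound on leaf length, and letting $\epsilon\to0$ forces $C\to\infty$.
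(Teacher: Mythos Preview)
Your cusp argument is fine and is essentially what the paper does. The gap is in the Margulis tube case. You assert that $\alpha$ and $g\alpha$ meet the core geodesic $\tilde\gamma$ orthogonally \emph{with initial velocities parallel along $\tilde\gamma$}, and then invoke Lemma~\ref{comparisons}(2). But a hyperbolic-type isometry $g$ stabilising $\tilde\gamma$ acts on the normal bundle of $\tilde\gamma$ as translation along $\tilde\gamma$ composed with an element of $O(d-1)$; when this rotational part is nontrivial, $dg(\alpha')$ is \emph{not} the parallel transport of $\alpha'$, so the hypothesis of part~(2) fails. (There is also a parametrisation mismatch: part~(2) requires $\alpha(0),\beta(0)\in\tilde\gamma$, whereas you place $\alpha(0)$ on $\partial M_{<\epsilon}$; but that is secondary to the rotational issue.) Concretely, if $g$ has very short translation length but rotational part near $-\mathrm{id}$, the displacement $d(\alpha(t),g\alpha(t))$ can jump from $2\epsilon$ to order one over a unit interval, so the bound $2\epsilon\cosh(at)$ is simply false for this $g$.

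This is exactly why the paper's tube argument is more elaborate. It first uses compactness of $O(d-1)$ to pass to a bounded power $g^k$ whose rotational part is close to the identity, and combines the angular estimate (part~(1)) with the parallel estimate (part~(2)) to prove the intermediate statement~$(\star)$: the core-to-$\partial M_{<\epsilon_0}$ distance is large when the core is short. It then handles the remaining case (when $\partial M_{<\epsilon}$ is already at distance $\geq 1$ from the core) via part~(4) of Lemma~\ref{comparisons}, which controls the spread of two geodesics that are $\epsilon$-close on a unit interval without any parallelism assumption. Your ``main obstacle'' paragraph about possibly switching thin components is not the real difficulty --- the geodesic leaf lives in a single component of $M_{<\epsilon_0}$ by construction --- the missing ingredient is the treatment of the rotational part of $g$.
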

\begin {proof}
Fixing some $R>0$, we want to show that if $\epsilon$ is sufficiently small, then the distance along any geodesic leaf from $\partial M_{<\epsilon}$ to $\partial M_{<\epsilon_0}$ is at least $R$.   

 This is  easiest to show for components $N \subset \partial M_{<\epsilon}$  that are neighborhoods of cusps. If $\tilde N \subset \tilde M$  is a component of the preimage of $N$, the geodesic leaves of $N$ lift to geodesic rays in $\tilde N$ that limit to a point $\xi\in \partial_\infty \tilde M $. If $x\in \tilde N$, there is some deck transformation $g$ that is parabolic with $g(\xi)=\xi$ and $d(x,g(x))< \epsilon.$

Let $\alpha$ be the unit speed geodesic in $\tilde M$ with $\alpha(0)=x$ and $\lim_{t\to -\infty} \alpha(t)=\xi.$ The image $\beta=g\circ \alpha$ is  also a geodesic limiting to $\xi$, and for all $t$, the points $\alpha(t)$ and $\beta (t)$ lie on a common horosphere centered at $\xi$.  By Lemma \ref{comparisons} 3), 
\begin{equation}
\label{spread} d\left(\alpha(t),\beta(t) \right ) \leq \epsilon \, \frac{e^{a t}}a.
\end{equation} 
So as long as $t < C:=\frac 1{a} \log( \frac{a \epsilon_0}{\epsilon} )$, the element $g$ will move $\alpha(t)$  at most $\epsilon_0$. So, the distance along $\alpha$ from $\partial M_{<\epsilon}$ to $\partial M_{<\epsilon_0}$  is at least $C$.

Now consider a component of $M_{<\epsilon_0}$ that is a Margulis tube with core  geodesic $c$, which we lift to a neighborhood $\tilde N$ of a geodesic $\tilde c$ in $\tilde M$. Let $\Gamma$  be the group of deck transformations stabilizing $\tilde N$. Here, all the non-identity elements $\Gamma $ act as nontrivial translations along $\tilde  c$, coupled with the action of some element of $O(d-1)$ in the orthogonal direction. We claim
\begin {quote}
 	($\star$) \it  Given $C>0$, there is some $\epsilon>0$ such that when the  length of the core  geodesic $c$ is less than $\epsilon$, we have $d(c,\partial M_{<\epsilon_0}) \geq C.$  \label {quoteclaim}
\end {quote}

To prove this,  first use Lemma~\ref {comparisons} 1)  to choose  some $\alpha >0 $ small enough ($\alpha={a \epsilon_0}/({2\sinh(a)}) $ works) so that any two geodesic rays $\gamma_1(t),\gamma_2(t)$ in $\tilde M$  that  intersect at $t=0$ with an angle at most $ \alpha$  satisfy
\begin{equation} \label {stayclose}d(\gamma_1(t),\gamma_2(t))\leq \epsilon_0/2, \ \text { for all } t\leq C .\end{equation}
Since $O(d-1)$ is compact, there is some $n\in\BN$ such that any $A\in O(d-1)$ has a power $A^k$ with $k=k(A)\leq n$ that is close enough to the identity so that $$\angle (A^k(v),v)<{\alpha}, \ \ \text{for all } v\in \BR^{d-1}.$$  Finally, using Lemma \ref{comparisons} 2), let $\beta>0$ be small enough so that  any two geodesic rays $\gamma_1(t),\gamma_2(t)$ in $\BH^d$ that start out perpendicular to a third geodesic, parallel to each other and at distance at most $\beta $  from each other,   must also satisfy \eqref{stayclose}.  (For instance, take $\beta = \frac { \epsilon_0}{2\cosh(a)}.$) Now take $\epsilon={\beta \over n}$.

 If $g\in \Gamma$ is any isometry with translational part less than $\epsilon$, then for some $k\leq n$, the isometry $g^{k}$ has translational part at most $\beta$ and rotates vectors orthogonal to the core geodesic $c$ by at most an angle of $\alpha $.  From this, we see that $g^{k}$ has displacement at most $\epsilon_0$ everywhere on the $C$-neighborhood of $\tilde  c$. So in any component of $M_{<\epsilon_0}$, the distance from the core curve $c$ to the boundary $\partial M_{<\epsilon_0}$ is at least $C$ whenever the core has length less than $\epsilon$,  proving $(\star)$.

 To complete the proof of the claim, given $C>0$ we choose $\epsilon$ as in $(\star)$,  but using $C+1$  instead of $C $.    Using Lemma \ref {comparisons} 4), we may also assume that $\epsilon $ is small enough so that any two geodesics $\gamma_1(t),\gamma_2(t)$ in $\tilde M$ satisfy \begin {equation}\label {lastclaimequation} d(\gamma_1(t),\gamma_2(t))\leq \epsilon, \, \forall t \in [0,1] \ \ \Longrightarrow \ \ d(\gamma_1(t),\gamma_2(t))\leq \epsilon_0, \, \forall t \in [0,C+1] .\end {equation}
 
 We want to show that  the distance from $\partial M_{<\epsilon}$ to $\partial M_{<\epsilon_0}$ is at least $C$. If the distance from the core $c$ to  $\partial M_{<\epsilon}$  is less than $1$, then  we are done since we know  by  $(\star) $ that $d(c,\partial M_{<\epsilon_0})\geq C+1$.  So, we may assume that the distance from the core to $\partial M_{<\epsilon}$  is  at least $1$. 
 
If $\ell$ is a geodesic leaf in $M_{<\epsilon_0}$,  parameterized  by arc length so that $\ell(1)$ is on the  boundary $\partial M_{<\epsilon}$, there are two lifts $\gamma_1,\gamma_2$ of $\ell$ in $\tilde N \subset \tilde M$ with $d(\gamma_1(t),\gamma_2(t))\leq \epsilon$ throughout the preimage of $M_{<\epsilon}$,  so for all $t\in [0,1]$.   Therefore, \eqref{lastclaimequation}  implies that the injectivity radius along $\ell$ stays  less than $\epsilon_0$ for all $t \in [0,C+1]$, i.e.\  for at least a length of $C $ after exiting $M_{<\epsilon}$. \end {proof}

 \begin {figure}
 \centering
 \includegraphics {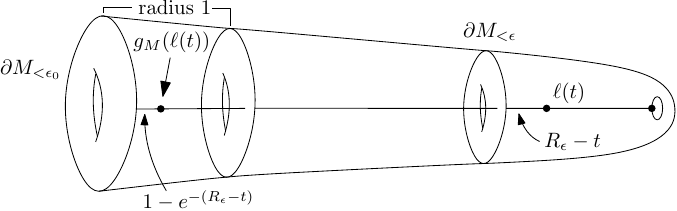}	
 \caption{The definition of the map $g_M$, pictured in a Margulis tube in dimension $3$. Note that the image of $g_M$ lies in a radius $1$ neighborhood of $\partial M_{<\epsilon_0}.$}
 \label {RPC}
 \end {figure}

We now  define a map
$$g_M : M_{<\epsilon}^\circ \longrightarrow M_{< \epsilon_0} $$
as follows. If $\ell$  is a geodesic leaf, parameterized by arc length so that $\ell(0)$ is on the core,  suppose that $\ell(R_\epsilon) \in \partial M_{< \epsilon}$ and $\ell(R_{\epsilon_0}) \in \partial M_{< \epsilon_0}$. Then  define 

$$g_M(\ell(t)) = \ell( R_ {\epsilon_0} - 1 + e^{b(t-R_\epsilon )}).$$ 

 In other words, $g_M$  is constructed  to map $M_{<\epsilon} ^\circ$  to a $1 $-neighborhood  of $\partial M_{<\epsilon_0} $, as shown in Figure \ref{RPC}.
 This $g_M$ is a  piecewise smooth homeomorphism onto its image, since  $R_\epsilon $ and $R_{\epsilon_0} $ are piecewise smooth functions of the point $\ell(t)\in M_{<\epsilon}^\circ$.
 
 \begin{claim}\label {voldistortion}
 	At every point $p$ where $g_M$ is smooth, we have 
\begin{equation*} | \det dg_M(p) |  \geq  D(\epsilon),\end{equation*}
 for some $D(\epsilon)$  that tends to infinity as $\epsilon\to 0$. 
 \end{claim}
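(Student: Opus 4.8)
The plan is to compute the Jacobian determinant of $g_M$ by splitting the tangent space at a point $p = \ell(t)$ into the direction along the geodesic leaf $\ell$ and the $(d-1)$-dimensional orthogonal complement, which we identify with the tangent space to the level set $\{\inj_M = \text{const}\}$ through $p$ (or more precisely, the image of an equidistant surface to $\partial M_{<\epsilon_0}$). First I would record that in the leaf direction, $g_M$ acts by the one-variable map $t \mapsto R_{\epsilon_0} - 1 + e^{b(t - R_\epsilon)}$, whose derivative is $b\, e^{b(t-R_\epsilon)}$. Since $t$ ranges over $[R_\epsilon, R_{\epsilon_0}]$, this derivative is at least $b$ — so the leaf-direction stretch is bounded below by a constant independent of $\epsilon$. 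That is the easy factor; the real content is the transverse directions, where $g_M$ pushes a point out along its leaf from parameter $t$ to parameter $\tau(t) := R_{\epsilon_0} - 1 + e^{b(t-R_\epsilon)} \geq t$, and we must see how the geodesic leaves spread apart as one flows outward.

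The key geometric input is that in pinched negative curvature, nearby geodesic leaves of the foliation of $M_{<\epsilon_0}^\circ$ diverge at least exponentially: two leaves of a Margulis tube emanating orthogonally from the core spread by a factor controlled below by $\cosh(b \cdot (\text{distance moved}))$, and leaves exiting a cusp spread by a factor bounded below by $e^{b \cdot (\text{distance moved})}$ — this is exactly the lower-bound analogue of the comparison estimates in Lemma \ref{comparisons}, applied with the \emph{lower} curvature bound $-a^2 \le K_M \le -b^2$ so that the model space is $\BH^2_{-b^2}$. Concretely, I would argue that the derivative $dg_M(p)$ restricted to the $(d-1)$ transverse directions has, as its contribution to the determinant, a factor at least of the order of $\left( \cosh(b(\tau(t) - t)) \right)^{d-1}$ in the Margulis-tube case (respectively $e^{b(d-1)(\tau(t)-t)}$ in the cusp case). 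Since $g_M$ moves $\ell(R_\epsilon)$ — the point on $\partial M_{<\epsilon}$ — to within distance $1$ of $\partial M_{<\epsilon_0}$, and by Claim \ref{disthin} the leaf-distance from $\partial M_{<\epsilon}$ to $\partial M_{<\epsilon_0}$ is at least $C(\epsilon) \to \infty$, we get $\tau(t) - t \to \infty$ uniformly as $\epsilon \to 0$ for every $p \in M_{<\epsilon}^\circ$. Hence the transverse factor is at least $\left(\cosh(bC(\epsilon))\right)^{d-1}$, which tends to infinity.

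Combining the two factors, $|\det dg_M(p)| \geq b \cdot \left(\cosh(bC(\epsilon))\right)^{d-1} =: D(\epsilon)$, with $D(\epsilon) \to \infty$ as $\epsilon \to 0$, proving the claim. I expect the main obstacle to be making the transverse-spreading estimate rigorous and uniform: one must control the Jacobi fields along the geodesic leaves that are orthogonal to the leaf and tangent to the equidistant hypersurfaces, show that their norm is non-decreasing and in fact grows at the model rate dictated by the upper curvature bound $-b^2$, and verify that this lower bound on Jacobi-field growth really does bound the transverse part of $\det dg_M$ from below (rather than just bounding the full differential's operator norm). This requires a careful choice of orthonormal frames at $p$ and at $g_M(p)$ adapted to the foliation, together with a Rauch-type comparison in the direction of increasing Jacobi-field norm; the cusp and Margulis-tube cases should be handled in parallel using items (2) and (3) of Lemma \ref{comparisons} as the templates, replacing $a$ by $b$ and reversing the inequalities.
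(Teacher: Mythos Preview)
Your overall strategy matches the paper's---split into the leaf direction and the $(d-1)$ transverse directions, and use a Rauch-type lower bound on Jacobi-field growth for the latter---but your leaf-direction estimate contains a genuine error. The domain of $g_M$ is $M_{<\epsilon}^\circ$, so for $p=\ell(t_0)$ we have $t_0\le R_\epsilon$, \emph{not} $t_0\in[R_\epsilon,R_{\epsilon_0}]$. Hence the leaf-direction stretch $b\,e^{b(t_0-R_\epsilon)}$ is at most $b$, and in fact can be arbitrarily small as $t_0$ moves deep into the thin part. Your claimed lower bound $\ge b$ for the leaf factor is therefore false, and the product bound $b\cdot(\cosh(bC(\epsilon)))^{d-1}$ does not follow.

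The repair is to expand the transverse factor more carefully, as the paper does. From
\[
\cosh\!\big(b(\tau(t_0)-t_0)\big)\;\ge\;\tfrac12\,e^{\,b(R_{\epsilon_0}-1-t_0)}\;=\;\tfrac12\,e^{\,b((R_{\epsilon_0}-R_\epsilon)-1)}\cdot e^{\,b(R_\epsilon-t_0)},
\]
each transverse direction carries an extra factor $e^{b(R_\epsilon-t_0)}$ that exactly cancels the contracting leaf factor $e^{b(t_0-R_\epsilon)}$. Since $d\ge2$, one transverse direction suffices to absorb this, and what remains is $e^{b((R_{\epsilon_0}-R_\epsilon)-1)}=:D(\epsilon)$, which tends to infinity by Claim~\ref{disthin}. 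The paper makes the transverse Jacobi-field lower bound rigorous via Warner's extension of the Rauch comparison theorem (using convexity of the distance spheres around the core, respectively of horospheres in the cusp case); this is the precise version of the ``reverse the inequalities'' step you allude to.
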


Here, the determinant is calculated with respect to orthonormal bases  in the two  relevant  tangent spaces,  and so measures the volume distortion of $g_M$.

\begin{proof}

First, note that at $p=\ell(t_0)$, the map $g_M$ stretches lengths  in the direction of $\ell$ by a factor of $\frac d{dt}(R_ {\epsilon_0} - 1 + e^{b(t-R_\epsilon)})|_{t=t_0} = be^{b(t_0-R_\epsilon)} $.

Assume now that  we are working within a component of $\partial M_{<\epsilon_0}  $ that is a Margulis tube with core geodesic $c$. Given a point $p$ at which $g_M$  is smooth, let $\ell_s(t)$ be a one  parameter family of geodesic leaves satisfying:
\begin {enumerate}
\item $\ell_0(t_0) = p$ and $| \frac d{ds} \ell_s(t_0) | =1$.
\item $|\frac d{dt} \ell_s(t)| = 1$ for all $s,t$, and $\ell_s(0)\in c$  for all $s$,
\end {enumerate}
The path $s \mapsto \ell_s(t_0)$  passes through $p$ at $s=0$ and  moves along the boundary $\partial B(c,t)$ of the  radius $t_0$-neighborhood of $c$ in $M$, orthogonally to the leaves.  Its image under $g_M$  is just the path $s \mapsto \ell_s(R_ {\epsilon_0} - 1 + e^{a(t_0-R_\epsilon )})$.

The equation $J(t) = \frac d{ds} \ell_s(t) |_{s=0}$ defines a Jacobi field  along $\ell_0$, and 
$$dg_M(J(t)) = J(R_ {\epsilon_0} - 1 + e^{b(t-R_\epsilon )}).$$
As the path $s \mapsto \ell_s(t_0)$ lies in $\partial B(c,t_0)$, and $B(c,t_0)\subset M$ is convex, Warner's extension of the Rauch comparison theorem \cite[Theorem 4.3 (b)]{Warnerextensions}  implies that for $t \geq t_0$  we have\footnote{Here, $\frac{\cosh(b(t-t_0))}b$ is the length of a Jacobi field in $\BH^2_{-b^2}$  obtained by differentiating a unit speed geodesic variation $\gamma_s(t)$, where $s \mapsto \gamma_s(t_0)$ is also a unit speed geodesic, and is  perpendicular to all the geodesics $t \mapsto \gamma_s(t)$. Warner's theorem requires the sectional curvatures in $M$  to be less than those in the comparison space, the two Jacobi fields  to have the same length at $t=t_0$, and the path $s \mapsto \ell_s(t_0)$ to lie in a codimension one submanifold $S$ orthogonal to $\frac d{dt} \ell_s(t)$ all of whose principal curvatures are larger than those of  a corresponding submanifold $S'$ containing  $s \mapsto \gamma_s(t_0)$.  In our case,  both Jacobi fields  have  length one at $t=t_0$.  Convexity implies that the  principal  curvatures of $S=\partial B(c,t_0)$  are nonnegative, when calculated with respect to the outward normal $\frac d{dt} \ell_s(t)|_{t=t_0}$, so they are larger than those of the geodesic $s \mapsto \gamma_s(t_0)$,  which are zero. 
} $|J(t)| \geq {\cosh(b(t-t_0))}.$  So in particular,
\begin {align*} {|J(R_ {\epsilon_0} - 1 + e^{b(t_0-R_\epsilon )})|} &\geq   \cosh(b( R_ {\epsilon_0} - 1 + e^{b(t_0-R_\epsilon )} - t_0)) \\
&\geq  \frac 12 e^{b( R_ {\epsilon_0} - 1 - t_0) }\\
&\geq \frac 12 e^{b((R_ {\epsilon_0} - R_{\epsilon}) - 1)} e^{b(R_\epsilon - t_0)}.
\end {align*}
As $|J(t_0)|=1$, this means that $dg_M$  scales the length of $J(t_0)$ by at least a factor of $\frac 12 e^{b((R_ {\epsilon_0} - R_{\epsilon}) - 1)} e^{b(R_\epsilon - t_0)}$.   But above, $J(t_0)$ can be taken to be any vector in $TM_p$ orthogonal to $\ell$,  by choosing the variation $\ell_s$  appropriately. So,
\begin {align*}
	| \det dg_M(p) |  &\geq b e^{b(t_0-R_\epsilon)} \cdot   \Big ( \frac 12 e^{b((R_ {\epsilon_0} - R_{\epsilon}) - 1)} e^{b(R_\epsilon - t_0)} \Big )^{d-1} \\
	& \geq  \frac b{2^{d-1}} e^{b((R_ {\epsilon_0} - R_{\epsilon}) - 1)} := D(\epsilon),
\end {align*}
 a constant that tends to infinity as $\epsilon \to 0$, by Claim \ref{disthin}.  The argument for a component of $\partial M_{<\epsilon_0}$  that is a neighborhood of a cusp  is almost exactly the same.  Instead of parameterizing the geodesic variations $\ell_s$ so that  for constant $t_0$, the path $s \mapsto \ell_s(t_0)$ lies along a metric  sphere around the core of the Margulis tube, we parameterize so that $s \mapsto \ell_s(t_0)$ is contained in a horosphere. Horospheres are $C^2$ \cite{Heintzegeometry} and convex \cite{Eberleingeodesic},  so one can still apply Warner's comparison theorem.\end{proof}

 We now use unimodularity to finish the proof of Proposition \ref{thickbase}.   Let $\mathcal M^d_{PNC,2}$  be the space of  all isometry classes of doubly pointed $d$-manifolds $(M,p,q)$ with pinched negative curvature $-a^2\leq K_M \leq -b^2<0$ and geometry bounds as in 1') at the beginning of the section. Define a Borel function $F : \mathcal M^d_{PNC,2} \longrightarrow \BR_+$ via
$$F(M,p,q)=\begin {cases}  | \det dg_M(p) | & \text { if } p \in M_{<\epsilon} \text { and }   d(g_M(p),q)\leq 1 \\ 0 & \text{otherwise}\end {cases}$$
Using Claim \ref{disthin}, fix some $\delta<\epsilon_0$ with $C(\delta)\geq 1$.  By definition, the image of $g_M$ lies in a radius $1$ neighborhood of $\partial M_{\epsilon_0}$, so the injectivity radius  at every $g_M(p)$ is at least $\delta$.  Setting $V(\delta,\kappa)$  to be the volume of a radius $\delta$  ball in the $d$-dimensional model space of constant curvature $\kappa$, we have that the volume of  a ball of radius $1$ around each $g_M(p)$ satisfies $$V(\delta,-b^2) \leq \volume( B_M(g_M(p),1) \leq V(1,-a^2).$$
So, using this and \eqref{voldistortion}, we compute:
\begin{align}
	\mu(\mathcal H^d_{< \epsilon}) 
&\leq \int_{(M,p)\in \mathcal M^d_{PNC,inj<\epsilon} } \frac{| \det dg_M(p) |}{D(\epsilon)} \, \frac{\volume( B_M(g_M(p),1) )}{V(\delta,-b^2)} \, d\mu \nonumber  \\
& = \frac {1}{D(\epsilon)V(\delta,-b^2)} \int_{(M,p)\in \mathcal M^d_{PNC,inj<\epsilon} } \int_{q\in M} F(M,p,q) \, d\volume \, d\mu  \nonumber \\ 
& =\frac {1}{D(\epsilon)V(\delta,-b^2)}  \int_{(M,p)\in \mathcal M^d_{PNC,inj<\epsilon} } \int_{q\in M} F(M,q,p) \, d\volume \, d\mu \label {eee}\\ 
& =\frac {1}{D(\epsilon)V(\delta,-b^2)}  \int_{(M,p)\in \mathcal M^d_{PNC,inj<\epsilon} } \int_{q\in M_{< \epsilon} \cap g_M^{-1}(B_M(p,1))} | \det dg_M(q) | \, d\volume \, d\mu \nonumber  \\ 
& \leq\frac {1}{D(\epsilon)V(\delta,-b^2)}  \int_{(M,p)\in \mathcal M^d_{PNC,inj<\epsilon} }  \volume (B_M(p,1))\, d\mu \label {fff} \\  
& \leq \frac {V(1,-a^2)}{D(\epsilon)V(\delta,-b^2)},\nonumber
\end{align}
where here $\mathcal M^d_{PNC,inj<\epsilon}$ is the set of all $(M,p) \in \mathcal M^d_{PNC}$ where $inj_M(p)<\epsilon$.
Equation \eqref{eee} is unimodularity, while \eqref{fff} is just the change of variables formula. The last line goes to zero as $\epsilon\to 0$, which proves Proposition \ref{thickbase}.
\end{proof}

\subsection{Locally symmetric spaces}
\label{localsymcomp}
Let $X$ be a  symmetric space of nonpositive curvature with no Euclidean factors,  and let $G  = \mathrm{Isom}(X)$. An \emph{$X$-manifold} is a quotient $X / \Gamma$, where $\Gamma<G $  is discrete and torsion free. Let $\mathcal M^X \subset \mathcal M^d$ be the subset of pointed $X$-manifolds.

\begin{reptheorem}{Xcompact}[Compactness for locally symmetric spaces]
The space of unimodular probability measures on $\mathcal M^X$ is weak*-compact. 
\end{reptheorem}

By Proposition \ref{urmirsprop},  there is a dictionary between unimodular measures on $\mathcal M^X $ and discrete, torsion free invariant random subgroups of $G$.   The space of all invariant random subgroups of $G$ is compact, since the Chabauty topology on  the set of closed subgroups of any  locally compact group is compact. As $G$  is semi-simple, c.f. \cite{Helgasondifferential}, it suffices to prove the  following proposition:

\begin {proposition} \label {discreteclosed}
 For IRSs  in a semi-simple Lie group $G $, `discrete' is a closed condition.  For discrete subgroups $H < G$, `torsion-free'  is a closed condition.
\end {proposition}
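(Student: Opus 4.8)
The statement splits into two closed-condition assertions, and I would treat them separately, both via sequential characterizations of the Chabauty topology (a convergent sequence $H_i \to H$ in $\mathrm{Sub}_G$ means: every $h \in H$ is a limit of elements $h_i \in H_i$, and every convergent subsequence $h_{i_k} \in H_{i_k}$ has limit in $H$).

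\emph{Discreteness is closed for IRSs.} The key point is that discreteness of a closed subgroup $H < G$ is equivalent to $H$ having no nontrivial elements in a fixed Zassenhaus neighborhood $U$ of the identity in $G$ (the Kazhdan--Margulis theorem provides such a $U$, depending only on $G$, so that any discrete subgroup meets $U$ only in elements generating a nilpotent subgroup; combined with Borel density / no small subgroups in semi-simple $G$, one in fact gets a neighborhood $U$ with $H \cap U = \{e\}$ for all discrete $H$). Now suppose $\mu$ is an IRS of $G$ that is a weak* limit of IRSs $\mu_i$ each supported on discrete subgroups. The set $\mathcal{D} = \{H \in \mathrm{Sub}_G \mid H \cap U = \{e\}\}$ is \emph{not} closed, so one cannot argue purely topologically; instead one uses the IRS hypothesis. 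The standard argument (following the Kazhdan--Margulis / Gelander-type approach, or ABBGNRS \cite{abert2012growth}) is: for a conjugation-invariant measure, the ``amount of $H$'' inside a small ball around the identity is controlled by a volume estimate, because $\int_{\mathrm{Sub}_G} \#(H \cap B_\epsilon) \, d\mu(H)$ can be bounded using invariance and the structure of $G/\!\!\sim$. Concretely, I would show that the function $H \mapsto \inf\{d(e,h) \mid h \in H \setminus \{e\}\}$ (with value $+\infty$ if $H = \{e\}$) is lower semicontinuous off a $\mu$-null set for IRS limits: if $H_i \to H$ with $H_i$ discrete and $H$ not discrete, then $H$ contains a one-parameter subgroup or at least arbitrarily small elements, and one can conjugate to produce, with positive $\mu$-measure, subgroups with nontrivial elements in $U$; a Fatou/weak* argument against the $\mu_i$ then forces a contradiction with the uniform Zassenhaus/Kazhdan--Margulis bound valid for the discrete $\mu_i$. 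The cleanest route is to invoke directly that in a semi-simple $G$ the set of IRSs supported on discrete subgroups is weak* closed — this is essentially \cite[\S2]{abert2012growth} or \cite{Gelanderinvariant} — but since the paper wants a self-contained statement I would reproduce the short argument: pick the Kazhdan--Margulis neighborhood $U$, note $\{H : H\cap U = \{e\}\}$ is Chabauty-open, and observe that for an IRS the complement has $\mu$-measure controlled by a uniform bound that vanishes on the discrete $\mu_i$, so $\mu(\{H : H \cap U \neq \{e\}\}) = 0$.

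\emph{Torsion-free is closed among discrete subgroups.} Here the argument is purely topological and much easier. Fix a bound: a torsion element $g$ of finite order $n$ in $G$ fixes a point of $X$ (since $X$ is a nonpositively curved symmetric space, the elliptic elements are exactly those with a fixed point). If $H_i \to H$ in $\mathrm{Sub}_G$ with each $H_i$ discrete and torsion-free, and $H$ is discrete (which it is, by the first part applied pointwise, or simply because discreteness is Chabauty-closed on the subspace of discrete groups via a local triviality/no-small-subgroups argument) but contains a torsion element $h \neq e$ of order $n$, then $h = \lim h_i$ with $h_i \in H_i$. The elements $h_i^k$, $k = 1, \dots, n$, are all distinct and converge to $h^k$; since $h^n = e$, for large $i$ the elements $h_i, h_i^2, \dots, h_i^n$ lie in a small neighborhood of $\{e, h, \dots, h^{n-1}\}$, and in particular $h_i^n$ is very close to $e$ but nontrivial (as $H_i$ is torsion-free), which contradicts discreteness of $H_i$ \emph{uniformly} — one needs the Kazhdan--Margulis neighborhood again to rule out $h_i^n$ being a genuine small nontrivial element. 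So the two parts are linked: the uniform discreteness estimate does the work in both. I would phrase it as: $h_i$ nearly has order $n$, so $h_i$ generates a subgroup with a nontrivial element in the fixed Zassenhaus neighborhood for large $i$, contradicting that the $H_i$ are uniformly discrete (which, for an IRS, is again the content of the first part).

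\emph{Main obstacle.} The genuinely delicate point is the first part — that discreteness passes to IRS limits — because it is false for arbitrary (non-IRS) limits of measures: one must use conjugation-invariance in an essential way, via the Kazhdan--Margulis/Borel-density input that gives a \emph{uniform} identity neighborhood meeting every discrete subgroup trivially, together with a weak* semicontinuity argument for the measure of the (non-closed) set of subgroups that avoid that neighborhood. Once that uniform neighborhood $U$ is in hand, the torsion-free part is a short compactness argument. I expect to spend most of the write-up carefully stating the Kazhdan--Margulis neighborhood and the weak* lower-semicontinuity step.
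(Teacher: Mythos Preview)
Your proposal rests on a claim that is false: there is \emph{no} uniform Zassenhaus neighborhood $U$ of the identity in a semi-simple Lie group $G$ such that $H \cap U = \{e\}$ for \emph{every} discrete subgroup $H$. In $SL_2(\BR)$, the unipotent $\begin{psmallmatrix}1 & t\\ 0 & 1\end{psmallmatrix}$ generates a discrete infinite cyclic group for every $t \neq 0$, and as $t \to 0$ the generator enters any prescribed neighborhood of $e$. Kazhdan--Margulis only gives such a neighborhood \emph{after passing to a conjugate}, and Borel density concerns lattices, not arbitrary discrete subgroups. Since this uniform $U$ is the load-bearing input in both halves of your plan, neither argument goes through: for the first part, your weak* lower-semicontinuity step computes $\mu(\{H : H \cap U \neq \{e\}\}) \leq \liminf \mu_i(\{H : H \cap U \neq \{e\}\})$, but the right-hand side is \emph{not} zero for the discrete $\mu_i$; for the second part, the small element $h_i^n$ near $e$ does not by itself contradict discreteness of $H_i$.

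The paper takes a different route that you should adopt. For discreteness: a Chabauty limit of discrete subgroups always has \emph{nilpotent} identity component (Margulis--Zassenhaus, as in \cite[Theorem 4.1.7]{Thurstongeometry}); separately, any \emph{connected} IRS in a semi-simple $G$ must be a normal subgroup of $G^\circ$ (this is a Borel-density-type argument applied to the adjoint action on the Grassmannian of $\mathfrak g$, using that $G^\circ$ has no nontrivial compact quotients). Since semi-simple groups have no nontrivial nilpotent normal subgroups, the identity component of a limit-of-discrete IRS is trivial. For torsion-freeness: from $\gamma_n^k \to e$ nontrivial, use exponential coordinates to take suitable powers of $\gamma_n^k$ so as to place, for each fixed annulus $B_i \setminus \overline{B_{i+1}}$ around $e$ and all large $n$, an element of $H_n$ in that annulus; Chabauty convergence then forces $H$ itself to have elements in every such annulus, contradicting discreteness of the \emph{limit} $H$ rather than of the $H_n$. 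The switch of target from $H_n$ to $H$ is exactly what sidesteps the nonexistent uniform neighborhood.
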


Note that  unlike the second sentence, the first is true only on the level of IRSs, since  there are always  discrete, cyclic subgroups of $G$ that  limit in the Chabauty  topology to  subgroups isomorphic to $\BR$.

 \begin {lemma}\label{connectedIRS}
If $G $ is a semi-simple Lie group,  every connected IRS $H\leq G$ is a normal subgroup of the identity component $G^{\circ} \leq G$.
 \end {lemma}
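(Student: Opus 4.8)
The statement asserts that if $\mu$ is an IRS of a semi-simple Lie group $G$ that is concentrated on connected closed subgroups, then $\mu$-a.e.\ $H$ is normal in the identity component $G^\circ$. The natural approach is to reduce to the Lie algebra level. A connected closed subgroup $H \leq G$ is determined by its Lie algebra $\mathfrak{h} \leq \mathfrak{g}$, and conjugation of $H$ by $g \in G$ corresponds to the adjoint action $\mathrm{Ad}(g)$ on $\mathfrak{h}$. Thus an IRS concentrated on connected subgroups induces an $\mathrm{Ad}(G)$-invariant probability measure $\bar\mu$ on the variety of Lie subalgebras of $\mathfrak{g}$ — a subvariety of a suitable Grassmannian — and it suffices to show $\bar\mu$ is supported on the ideals of $\mathfrak{g}$ that are normalized by $\mathrm{Ad}(G^\circ)$, equivalently on those $\mathfrak{h}$ with $[\mathfrak{g},\mathfrak{h}] \subseteq \mathfrak{h}$.

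\textbf{Key steps.} First I would pass from $H$ to $\mathfrak{h}$ and check measurability: the map sending a connected closed subgroup to its Lie algebra is Borel for the Chabauty topology (one can recover $\mathfrak{h}$ as the set of tangent vectors to curves in $H$ through the identity, or via the logarithm on a neighborhood of $e$), so $\bar\mu$ is a well-defined $\mathrm{Ad}(G)$-invariant Borel probability measure on the Grassmannian of $\dim\mathfrak{h}$-planes in $\mathfrak{g}$ — or, to handle varying dimension, on the disjoint union over all dimensions, splitting $\mu$ into finitely many pieces by $\dim H$ and working with each separately. Second, I would apply Borel's density theorem, or rather the Furstenberg-type lemma on invariant measures for algebraic actions (c.f.\ \cite{Furstenbergnote}): the $\mathrm{Ad}(G^\circ)$-action on the Grassmannian is algebraic, $G^\circ$ is generated by unipotent one-parameter subgroups together with the fact that $\mathfrak{g}$ is semisimple (so $G^\circ$ has no nontrivial characters and the relevant stabilizers are controlled), and any $\mathrm{Ad}(G^\circ)$-invariant probability measure on a variety on which $\mathrm{Ad}(G^\circ)$ acts must be supported on the fixed-point set of $\mathrm{Ad}(G^\circ)$. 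Concretely: for each $g \in G^\circ$, the set $\{\mathfrak{h} : \mathrm{Ad}(g)\mathfrak{h} = \mathfrak{h}\}$ is Zariski closed, and invariance under the one-parameter unipotent subgroups forces $\bar\mu$ onto the common fixed set by the standard argument that the only invariant probability measure for a unipotent flow on an affine variety is supported on the fixed points (using that orbits of unipotents are either points or unbounded, polynomial in the parameter). Third, the fixed-point set of $\mathrm{Ad}(G^\circ)$ on the Grassmannian is exactly the set of $\mathrm{Ad}(G^\circ)$-invariant subspaces, which for a semisimple $\mathfrak{g}$ are precisely the sums of simple ideals — in particular they are ideals of $\mathfrak{g}$, hence normalized by all of $G^\circ$. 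Translating back, $\mu$-a.e.\ $H$ has Lie algebra an ideal of $\mathfrak{g}$, and since $H$ is connected, $H \trianglelefteq G^\circ$.

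\textbf{Main obstacle.} The technical heart is the Borel measurability of $H \mapsto \mathfrak{h}$ and the simultaneous handling of all possible dimensions of $H$; once $\bar\mu$ is set up as an invariant measure for an algebraic action, the rest is a routine invocation of the Borel density / Furstenberg invariant-measure machinery. I expect the cleanest route is: decompose $\mathrm{Sub}_G$ into the Borel sets $\{H : \dim H = k\}$ for $k = 0,1,\dots,\dim G$, note that on each such set the map $H \mapsto \mathfrak{h} \in \mathrm{Gr}_k(\mathfrak{g})$ is continuous for the Chabauty topology restricted there (convergence of subgroups of a fixed dimension forces convergence of Lie algebras), push $\mu|_{\{\dim H = k\}}$ forward, and argue as above that the pushforward is supported on $\mathrm{Ad}(G^\circ)$-fixed points of $\mathrm{Gr}_k(\mathfrak{g})$. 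The subtlety to watch is that continuity of $H \mapsto \mathfrak h$ can fail at the boundary between dimension strata (a dimension drop in the limit), which is exactly why one must stratify first rather than work on all of $\mathrm{Sub}_G$ at once.
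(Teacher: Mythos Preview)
Your proposal is correct and follows essentially the same route as the paper: pass to Lie algebras to obtain an $\mathrm{Ad}(G)$-invariant probability measure on a Grassmannian, then invoke the Furstenberg/Borel-density argument to conclude that this measure is supported on $\mathrm{Ad}(G^\circ)$-invariant subspaces, which are ideals. The only cosmetic difference is that the paper first reduces to an ergodic IRS (so that $\dim H$ is a.s.\ constant) rather than stratifying by dimension as you do; these are equivalent ways of landing on a single $\mathrm{Gr}(k,\mathfrak g)$.
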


\begin{proof}
If $H<G$ is any ergodic connected IRS, the Lie algebra $\mathfrak h $ is a random $k$-dimensional subspace of $\mathfrak g$,  for some $k$, and the distribution of $\mathfrak h $ is invariant under the adjoint action of $G $ on the Grassmannian $\mathrm{Gr}(k,\mathfrak g)$.  Since $G^{\circ} $ has no nontrivial compact quotients,  applying the arguments of \cite[Lemmas 2,3]{Furstenbergnote} to $\mathrm{Gr}(k,\mathfrak g)$  instead of  the projective space $P(\mathfrak g)$, we see that the distribution of $\mathfrak h $ is concentrated on $\mathrm{Ad} \, G^{\circ} $-invariant subspaces of $\mathfrak g$. \end{proof}

 \begin {proof}[Proof of Proposition \ref {discreteclosed}]
 Any subgroup $H\leq G$ that is a Chabauty  limit of discrete groups  has a nilpotent identity component, c.f.\ \cite[Theorem 4.1.7]{Thurstongeometry}.   So, any IRS  that is a limit of discrete IRSs also  as this property. However, a semi-simple Lie group does not have any nontrivial nilpotent normal subgroups, so Lemma \ref {connectedIRS}  implies that a limit of discrete IRSs is discrete.
 
Next,  we show that `torsion-free'  is a closed condition within the space of discrete $H <G$. For suppose $H_n \to H$ are discrete and  the only torsion is in the limit, and take $1 \neq \gamma \in H$ with $\gamma^k=1$.  Picking $\gamma_n \in H_n$  that converge to $\gamma$,  the sequence $(\gamma_n^k)$  consists of nontrivial elements converging to $1$.  
  Aiming for a contradiction,  choose open balls $B_1, B_2 , \cdots \ni 1 $ in $G$ with $B_i \supset \overline{B_{i+1}}$. Exponentiating the $\gamma_n^k$ by  appropriate powers determined using exponential coordinates,  it follows that for any fixed $i$,  and large enough $n$,  there is an element of $H_n$  inside $B_i \smallsetminus \overline{B_{i+1}}$. This contradicts discreteness of $H$.
\end{proof}

%The value of Theorem \ref{Xcompact} is that unimodular measures then compactify the set of finite volume $(G,X)$-manifolds $M$,  where we view $M$ as the associated probability measure $\mu_M /\volume(M)$ on $\mathcal M^d$, as mentioned in the introduction. 

 Using the same arguments as in the paragraph after the statement of Proposition \ref{thickbase}, Theorem \ref {Xcompact} is equivalent to the following:
\begin{proposition}\label {thickbaseX}If $\mu$ is a unimodular probability measure on $\mathcal M^X$ and $\epsilon>0$,  there is some $C=C(\epsilon,X)$ such that
\begin{itemize}
	\item $\lim_{\epsilon \to 0} C=0$,
\item  $\mu(\mathcal M^X_{inj \geq \epsilon })
\geq 1-C,$
\end{itemize} 
where $\mathcal M^X_{inj \geq \epsilon } $ be the set of pointed $X$-manifolds $(M,p)$ such that $ \inj_M(p)\geq\epsilon $. \end {proposition}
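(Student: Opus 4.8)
The plan is to transfer the statement to the language of invariant random subgroups, where it becomes an assertion about how much mass an IRS of $G=\mathrm{Isom}(X)$ can concentrate near the ``thin'' subgroups, and then to deduce it from the weak* compactness of the space of discrete, torsion-free IRSs established — via Proposition~\ref{discreteclosed} — in the proof of Theorem~\ref{Xcompact}. In fact, exactly as in the paragraph following Proposition~\ref{thickbase}, the proposition is formally equivalent to Theorem~\ref{Xcompact}: given the proposition, each $\mathcal M^X_{inj\geq\epsilon}$ is compact — $X$-manifolds are locally isometric to $X$, so they satisfy uniform bounds on sectional curvature and all its covariant derivatives, and Cheeger--Gromov applies — so Prokhorov's theorem gives precompactness and the weak*-closedness of unimodularity gives compactness; conversely, since any sequence $(M_i,p_i)\in\mathcal M^X$ with $\inj_{M_i}(p_i)\to 0$ must diverge, a failure of the proposition produces a sequence of unimodular measures with no convergent subsequence, contradicting Theorem~\ref{Xcompact}. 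I would nonetheless like to spell out the direct argument on the IRS side, since it makes the role of the hypothesis transparent.

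Fix a basepoint $o\in X$, let $K=\mathrm{Stab}_G(o)$ be the corresponding maximal compact subgroup, and for $\epsilon>0$ set $\Omega_\epsilon=\{g\in G : d_X(o,g\cdot o)<2\epsilon\}$, an open, relatively compact neighborhood of $K$ in $G$. As $X$ is simply connected and $G=\mathrm{Isom}(X)$, Proposition~\ref{urmirsprop} identifies the unimodular probability measures on $\mathcal M^X$ with the set $\mathcal I$ of discrete, torsion-free IRSs of $G$, carrying $(M,p)=(H\backslash X,[o])$ to the conjugation-invariant law of the random subgroup $H$. Under this identification a pointed $X$-manifold has $\inj<\epsilon$ at its basepoint precisely when $H\cap(\Omega_\epsilon\setminus\{1\})\neq\emptyset$, so, writing $S_\epsilon=\{H\in\subgroup : H\cap(\Omega_\epsilon\setminus\{1\})\neq\emptyset\}$ and $C(\epsilon,X)=\sup_{\mu\in\mathcal I}\mu(\overline{S_\epsilon})$ for the Chabauty closure $\overline{S_\epsilon}$, it suffices to show $C(\epsilon,X)\to 0$ as $\epsilon\to 0$. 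By Proposition~\ref{discreteclosed} the conditions ``discrete'' and, for discrete subgroups, ``torsion-free / acting freely'' cut out a weak*-closed subset $\mathcal I$ of the (compact) space of all IRSs of $G$, so $\mathcal I$ is weak*-compact; and since $\overline{S_\epsilon}$ is closed, $\mu\mapsto\mu(\overline{S_\epsilon})$ is upper semicontinuous, so the supremum defining $C(\epsilon,X)$ is attained by some $\mu_\epsilon\in\mathcal I$.

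If $C(\epsilon,X)\not\to 0$, choose $\delta>0$ and $\epsilon_n\downarrow 0$ with $C(\epsilon_n,X)\geq\delta$, pass to a subsequence along which $\mu_{\epsilon_n}\to\mu\in\mathcal I$, and use that $\overline{S_{\epsilon_n}}\subset\overline{S_{\epsilon_m}}$ once $\epsilon_n<\epsilon_m$, together with upper semicontinuity, to conclude $\mu(\overline{S_{\epsilon_m}})\geq\limsup_n\mu_{\epsilon_n}(\overline{S_{\epsilon_n}})\geq\delta$ for every $m$, hence $\mu(\bigcap_m\overline{S_{\epsilon_m}})\geq\delta$. It then remains to see that $\bigcap_\epsilon\overline{S_\epsilon}$ is $\mu$-null for every $\mu\in\mathcal I$, and this is the crux. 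The key point is that
$$\overline{S_\epsilon}\subset\{H : \exists\, h\in H\setminus\{1\},\ d_X(o,h\cdot o)\leq 2\epsilon\}\ \cup\ \{H : H\text{ is not discrete}\}.$$
Indeed, if $H=\lim_k H_k$ with $h_k\in H_k\setminus\{1\}$ and $d_X(o,h_k\cdot o)<2\epsilon$, then along a subsequence either the $h_k$ stay bounded away from the identity, in which case they converge to a nontrivial element of $H$ moving $o$ by at most $2\epsilon$, or $h_k\to 1$; in the latter case, writing $h_k=\exp(v_k)$ with $v_k\to 0$ and taking $n_k=\lfloor t/\lVert v_k\rVert\rfloor$, the elements $h_k^{n_k}=\exp(n_kv_k)\in H_k$ converge to $\exp(tw)$, where $w$ is a subsequential limit of $v_k/\lVert v_k\rVert$, so $H$ contains the nontrivial one-parameter subgroup $\{\exp(tw):t\in\BR\}$ and is not discrete. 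Since $\mu$-a.e.\ $H$ is discrete, the second set is $\mu$-null; and the intersection over $\epsilon$ of the first set is $\{H : \inf_{h\in H\setminus\{1\}}d_X(o,h\cdot o)=0\}$, which for $\mu$-a.e.\ $H$ — discrete and acting freely, hence with positive minimal displacement at $o$ — is vacuous, so $\mu$-null. This contradicts $\mu(\bigcap_m\overline{S_{\epsilon_m}})\geq\delta$, and completes the argument.

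I expect the main obstacle to be the bookkeeping around the Chabauty closure $\overline{S_\epsilon}$ — pinning down the displayed dichotomy and, within it, the elementary but essential fact that a Chabauty limit of subgroups containing nontrivial elements tending to the identity is never discrete. Everything else is soft. It is worth stressing that the whole argument hinges on Proposition~\ref{discreteclosed}, i.e.\ on $G$ being semisimple (equivalently, on $X$ having no Euclidean factor): if $X$ had a Euclidean factor, $\mathcal I$ would fail to be weak*-closed — thin lattices in an $\BR^k$ factor converge to non-discrete subgroups — and, as Example~\ref{uppernegbound} and the surrounding discussion show, the statement itself would be false.
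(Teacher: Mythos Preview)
Your proposal is correct and follows essentially the same route as the paper: the paper proves Theorem~\ref{Xcompact} via the IRS dictionary and Proposition~\ref{discreteclosed}, then simply declares Proposition~\ref{thickbaseX} equivalent to it ``using the same arguments as in the paragraph after the statement of Proposition~\ref{thickbase}.'' Your first paragraph reproduces exactly this equivalence, and your ``direct'' IRS argument is just an explicit unpacking of the Prokhorov/Portmanteau direction on the $\subgroup$ side rather than on $\mathcal M^X$---same input (compactness of $\mathcal I$ from Proposition~\ref{discreteclosed}), same mechanism, just written out in full where the paper leaves it to the reader.
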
 

While the algebraic proof of Theorem \ref {Xcompact} is quite direct, it is natural to ask whether there is a geometric proof of Proposition \ref{thickbaseX}. In particular, why is it true for locally symmetric spaces when it fails for general spaces of nonpositive, or even unpinched negative, curvature? (See Example \ref{uppernegbound}.)

The  biggest difference  is that when $X $ is a non-positively curved symmetric space without Euclidean factors, its Ricci curvature 
 is bounded away from zero,  and hence the same is true for any $X$-manifold. (In contrast, the surfaces of Example \ref{uppernegbound}  have points where the Ricci curvature is arbitrarily close to zero.)  Specifically, the Ricci curvature tensor of $X=G/K$  is a  constant negative multiple of the Killing form on $\mathfrak g$ \cite[Theorem B.24]{Ballmannlectures}, so is negative definite. There is  then some  constant $b=b(X)>0$  such that \begin {equation}
 	\mathrm{Ric}(v,v)\leq -b^2/(d-1), \ \ \forall v \in T^1X, \label {riceq} \end {equation}
since $X$ is homogenous. Here, $d$ is dimension,  and we prefer to write the upper bound as above since then  it follows that every $v$  is contained in a $2$-plane  with sectional curvature at most $-b^2$. 
We ask:

\begin{question}
 Consider the space $\mathcal M^d(a,b,c_j)$ of all $(M,p) \in \mathcal M^d$ with
\begin{enumerate}
	\item[\normalfont 1')] $|\nabla^j K_M|\leq c_j$ for all $ j\in \BN \cup \{0\},$
\item[\normalfont  3')] $-a^2 \leq K_M(\tau) \leq 0$, and $\mathrm{Ric}(v,v) \leq -b^2$, for  every $2$-plane $\tau$  and unit vector $v\in TM_p$,
\end{enumerate}
where $\mathrm{Ric}$ is the Ricci curvature. Is the set of unimodular probability measures on $\mathcal M^d(a,b,c_j)$ weak* compact?  More concretely, does  condition 3')	  imply a uniform bound $C=C(\epsilon,a,b,d)$ on the proportion  of volume that the $\epsilon$-thin part occupies in (say) a finite volume $(M,p)\in \mathcal M(a,b,c_j)$, with $C\to 0$  as $\epsilon\to 0$?
\end{question}

 A proof of such a compactness result would be possible, for instance, if one could prove (say, for Hadamard manifolds) that an upper bound on Ricci curvature gives volume comparison results   analogous to those given by Bishop-Gromov, see \cite[Lemma  7.1.4]{Petersenriemannian}, for Ricci lower bounds. 

We do expect that our proof in  pinched negative curvature can be adapted to  the locally symmetric setting---that is, to give a geometric proof of Proposition~\ref {thickbaseX}---without proving the volume comparison results mentioned above.   Much of the argument goes through unchanged, although it becomes more subtle to develop an analogue of the foliation by geodesic leaves when the curvature is only nonpositive.

\section {Unimodular random hyperbolic  manifolds}
\label{fingensec}

Recall that the \emph{limit set}  of a subgroup $\Gamma $ of $\isometries(\BH^d)$  is the subset of $\partial \BH^d \cong S^{d-1}$ consisting of all accumulation points  at infinity of any orbit of $\Gamma \actson \BH^d$.  In \cite{abert2012growth}, it is shown that if $\mu$ is an invariant random subgroup of $\isometries(\BH^d)$ without an atom at the identity, then the limit set of $H \subset \isometries (\BH ^ d) $ must have full limit set $\mu$-almost surely.   In two dimensions, this has the following corollary:

\begin {corollary}\label{fingen2dim}
Any unimodular random hyperbolic surface with finitely generated fundamental group is either $\BH^2$ or has finite volume.
\end {corollary}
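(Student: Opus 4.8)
The plan is to combine the full-limit-set result cited from \cite{abert2012growth} with the classical geometric finiteness theory of Fuchsian groups, exactly as was done for regular covers in the discussion of the $d=2$ case in \S\ref{fingenintro}. First I would reduce to the ergodic case: by the ergodic decomposition (Proposition~\ref{ergodicdecomposition}), it suffices to prove the statement for an ergodic unimodular probability measure $\mu$ concentrated on pointed hyperbolic surfaces with finitely generated $\pi_1$. (Strictly speaking one should check that ``finitely generated'' — indeed ``$\pi_1$ isomorphic to a fixed group'' — and ``finite volume'' are both saturated Borel conditions, so that on an ergodic component the rank of $\pi_1$ is a.s.\ constant and the component is either a.s.\ finite volume or a.s.\ infinite volume; this is routine.) If $\mu$ is the atomic measure on $\BH^2$ we are done, so assume $\mu$ gives zero mass to $\BH^2$.

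Next I would invoke the result of ABBGNRS: since $\mu$ has no atom at $\BH^2$, the corresponding IRS of $\isometries(\BH^2)$ — or more directly, the statement quoted at the start of \S\ref{fingensec} — has full limit set almost surely. So for $\mu$-a.e.\ $(S,p)$, writing $S=\BH^2/\Gamma$, the Fuchsian group $\Gamma$ has limit set all of $\partial_\infty\BH^2=S^1$. Now the argument of \S\ref{fingenintro} applies pointwise: a surface $S=\BH^2/\Gamma$ with $\Gamma$ finitely generated is geometrically finite by \cite[Theorem 4.6.1]{Katoklectures}, and a geometrically finite Fuchsian group whose limit set is the whole circle is either a lattice or... in fact geometric finiteness together with $\Lambda(\Gamma)=S^1$ forces $\Gamma$ to be a lattice, so $S$ has finite volume, by \cite[Theorem 4.5.1]{Katoklectures} (the convex core is all of $S$ and is compact modulo cusps). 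This holds for $\mu$-a.e.\ $(S,p)$, contradicting the assumption that $\mu$ is concentrated on infinite-volume surfaces. Hence $\mu$ must be the atom on $\BH^2$ or be concentrated on finite-volume surfaces, as claimed.

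The one genuinely fiddly point — and the place I would spend the most care — is the measurability/ergodicity bookkeeping: verifying that the set of $(S,p)\in\mathcal M^2$ with $S$ of finite volume is Borel and saturated, that $\pi_1$ having a given finite rank is a Borel saturated condition, and that on an ergodic component one may therefore assume all these quantities are almost surely constant. This is where the structure theory developed earlier (the Polish topology on $\mathcal M^d$ from \S\ref{secmetrizability}, the foliated desingularization $\mathcal P^d$, and the ergodic decomposition) does the real work; the hyperbolic geometry input is entirely off-the-shelf. Once that is in place, the proof is essentially two citations glued together. I do not expect any serious obstacle: the corollary is, as the text says, immediate from \cite[Proposition 11.3]{abert2012growth} plus the standard two-dimensional rigidity of full-limit-set geometrically finite groups.
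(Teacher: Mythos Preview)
Your proposal is correct and matches the paper's primary argument: the full-limit-set result from \cite{abert2012growth} combined with two-dimensional geometric finiteness, exactly as sketched in \S\ref{fingenintro}. The ergodic decomposition is unnecessary, though---the full-limit-set statement already holds $\mu$-a.e.\ once the atom at the trivial group is removed, so one can simply restrict $\mu$ to the saturated Borel set $\{(S,p):S\not\cong\BH^2\}$ via Example~\ref{restriction} and argue pointwise, without ever invoking Proposition~\ref{ergodicdecomposition} or fixing the rank of $\pi_1$.

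It is worth noting that the paper also supplies a second, self-contained proof via the No-Core Principle (Theorem~\ref{nocore}): set $f(S,p)=1$ when $p$ lies in the open unit neighborhood of the convex core of $S$, check that $f^{-1}(1)$ is open (using density of closed geodesics in the convex core and the Morse Lemma), and observe that any $S\neq\BH^2$ with finitely generated $\pi_1$ has compact convex core. This route avoids the external appeal to \cite{abert2012growth} and serves as the template for the three-dimensional Theorem~\ref{fingen3dim}, where the limit-set argument is no longer available.
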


 As usual, the translation between IRSs and URHSs is through  Proposition~\ref{urmirsprop}.  Recall that a unimodular  random manifold is a random element of $\mathcal M^d$ whose law is a unimodular  probability measure, and is  hyperbolic if its law is supported on hyperbolic manifolds. For those allergic to probabilistic language, the statement of the corollary is that if $\mu$ is a unimodular measure on $\mathcal H ^ 2 $  then $\mu$-a.e.\ pointed hyperbolic surface $(S,p)$ with  finitely generated $\pi_1$ is either isometric to $\BH^2$  or has finite volume.

  There is  an alternative way to prove Corollary \ref{fingen2dim}, using the No-Core Principle (Theorem \ref{nocore}).  Recall that the \emph{convex core} of a hyperbolic surface $S$ is the  smallest convex subsurface  whose inclusion is a homotopy equivalence. When $S$ has finitely generated fundamental group, its convex core is compact.  Alternatively, the ends of $S$ are geometrically either infinite volume {flares}  or finite volume  {cusps}.  Here, a \emph{flare} is cut off by a closed geodesic, and is isometric to half of the quotient of $\BH^2$ by the group generated by a single isometry of hyperbolic type. The convex core of $S$  is obtained by chopping off all flares at the bounding closed geodesics. See \cite{Benedettilectures} for details.

Define a function $f:\mathcal H^2 \longrightarrow \{0,1\}$ by setting $f(S,p)=1$  whenever $p$ is in the open radius $1$ neighborhood of the convex core of $S $.  We claim $f^{-1}(1)$ is open,  so that $f$ is Borel. If $f(S,p)=1$, then $p$  lies at distance  less than $1$  from some closed geodesic  $\gamma$ in $S$, by density of closed geodesics in the convex core.  Using the almost isometric maps defining the smooth topology, see \S \ref{smoothsec},  $\gamma$ can be transferred to a closed $(1+\epsilon)$-quasigeodesic in any nearby $(S',p')$ and then tightened to a closed geodesic  $\gamma'$ using the Morse Lemma \cite[Theorem 1.7, pg 401]{Bridsonmetric}.  This geodesic $\gamma'$  is contained in the convex core of $S'$, and will lie at a distance less than $1$ from $p
$  as long as $(S',p') \approx (S,p)$.%. Hence the distance from $p'$ to  the convex core of $S'$  will be less than $1$ if $(S',p') \approx (S,p)$.

So, by Theorem \ref{nocore},  we have for $\mu$-a.e.\ $(S,p)$ that 
	 $$0<\volume_S \{q\in M \ | \ f(S,q)=1\}<\infty \implies \volume(S)<\infty.$$ 
As a hyperbolic surface with  finitely generated $\pi_1$ either is $\BH^2$ or has a finite volume core, this  proves Corollary \ref{fingen2dim}.

\vspace{2mm}

 As explained in the introduction, there are examples of unimodular random hyperbolic $3$-manifolds  with finitely generated fundamental group other than $\BH^3$ and  finite volume manifolds, e.g.\  cyclic covers  $\hat M$ of  closed hyperbolic $3$-manifolds $M$ fibering over the circle.  We now show  that in $3$-dimensions, every URHM with finitely generated $\pi_1$  looks coarsely like such $\hat M$.  To do this, though, we need to recall some  background about the geometry of ends.

 Suppose that $M $ is a hyperbolic $3$-manifold with  finitely generated fundamental group. Then $M$ is homeomorphic to the interior of a compact  $3$-manifold, by the Tameness Theorem  of Agol \cite {Agoltameness} and Calegari-Gabai \cite {Calegarishrinkwrapping}. Each end  $\mathcal E$ of $M$  then has a neighborhood that is a topological product $S \times (0,\infty)$, for some closed surface $S$, and can be classified geometrically according to its relationship with the convex core of $M$. Here, the \emph {convex core} of $M$  is the smallest convex submanifold of $M$  whose inclusion is a homotopy equivalence. When $M$  has no cusps\footnote{In general, cusps may split the topological ends of $M$ into `geometric ends' with smaller genus, which have a similar classification. See \cite{Matsuzakihyperbolic}.}, work of Bonahon \cite{Bonahonbouts} and Canary \cite{Canaryends} implies that either $\mathcal E$ has a neighborhood disjoint from the convex core,  in which case we call it a \emph{convex cocompact} end, or it has a neighborhood  completely contained in the convex core, in which case $\mathcal E $  is \emph {degenerate}.  See \cite{Kapovichhyperbolic}  for more details.

A  cyclic cover $\hat M$ of a mapping torus is  homeomorphic to $S \times \BR$, and both of its two ends are degenerate---the convex core of $\hat M$ is the entire manifold. In ABBGNRS \cite[\S 12.5]{abert2012growth}, we constructed more general examples of IRSs that give doubly degenerate unimodular random hyperbolic structures on $S \times \BR$; for instance,  our examples do not cover any  finite volume $3$-manifold.  

\begin {reptheorem}{fingen3dim}
 Every 	unimodular random hyperbolic $3$-manifold with finitely generated fundamental group  either is isometric to  $\BH^3$,  has finite volume,  or is a doubly degenerate hyperbolic structure on $S\times \BR $ for some finite type surface $S$.
\end {reptheorem}

The proof is another application of the No-Core Principle, and the idea is that any infinite volume hyperbolic $3$-manifold  that is not $\BH^3$ and is not a doubly degenerate hyperbolic structure on $S\times \BR $ has a geometrically defined `core'. Intuitively, the core should just be obtained by cutting off the ends of the manifold, but  the difficult part is  doing this in a canonical enough way that for as $(M,p)$  varies through $ \mathcal H^3$, the condition that $p $ lies in the core is Borel. For simplicity, we'll first prove Theorem \ref{fingen3dim} only for manifolds  with no cusps, and then at the end we will make some brief comments about the modifications needed to extend to the general case.

Fix  once and for all some $\epsilon>0$  less than the Margulis constant and let $M $ be a hyperbolic  $3$-manifold with no cusps. The \emph {$\epsilon $-electric distance} between two points $p,q$ in $M$  is the infimum  over all smooth paths $\gamma$ joining $p,q$ of  the length of the intersection of $\gamma$ with $M_{\geq \epsilon} $,  the $\epsilon $-thick part of $M$.
  Given $R>0$, an \it $R$-core \rm for $M$ is a compact $3 $-dimensional submanifold $N \subset M$  such that 
\begin {enumerate}
\item  the diameter of $N $ is less than $ R$,
\item $N$  is contained in an open radius $1$  neighborhood of the convex core of $M$, 
\item  the component $E \subset M \smallsetminus N$  facing each component $S\subset \partial N$  is a neighborhood of an end that is homeomorphic to $S\times \BR$,
\item if $E$, as in 3), is a neighborhood of a convex cocompact end of $M$, then $E$  lies completely outside the convex core of $M$,
\item if $E$, as in 3), is a neighborhood of a degenerate end of $M$ and $p\in E$ has  $\epsilon $-electric distance  more than $R$  from $N$, there is a level surface $\Sigma$ in $E \cong S \times\BR$  that passes through $p$ and has $\epsilon$-electric diameter  less than $R$.
\end {enumerate}

 Here, a \emph {level surface} of $S \times \BR$ is any embedded surface isotopic to a fiber $S\times \{t\}$.    The point of the definition above is that $N$ is a small-diameter `core' for $M$,  obtained topologically by chopping off the ends of $M$. Conditions  2), 4) and 5)  require that convex cocompact ends  are chopped off near the convex core boundary, and  the removed  neighborhoods of degenerate ends have cross-sections with small electric diameter.
 
Every hyperbolic $3 $-manifold with finitely generated $\pi_1$  and no cusps, that is not isometric to $\BH^3$, has an $R$-core for some $R>0$.  Namely,  the Tameness Theorem gives  such an $N$  satisfying 1) and 3)  for some $R$.   The  complement of the convex core of $M$  consists of product neighborhoods of the convex cocompact ends \cite[II.1.3]{Canarynotes}, so we may pick $N$ to satisfy 2) and 4).  Finally, Canary's Filling Theorem \cite{Canarycovering} states that there is a neighborhood $E$ of each degenerate end of $M$   such that  $E$  is homeomorphic to $ S \times \BR $  and is exhausted by the images of \emph{simplicial hyperbolic surfaces}\footnote{A  simplicial hyperbolic surface is a  map from a triangulated surface that is totally geodesic on each triangle, and where  the total angle around each vertex is at least $2\pi$, \cite{Canarycovering}.} in the homotopy class of the fiber, on which no essential simple closed curves of length less than $\epsilon$  are null-homotopic in $M$.  By the Bounded Diameter Lemma \cite[Lemma 4.5]{Canarycovering}, such surfaces have $\epsilon $-electric diameter  bounded above by some  constant depending only on $\epsilon $ and $\chi(S)$. So increasing $R$ and enlarging $N$,  we have 5) using Freedman-Hass-Scott \cite{Freedmanleast} to replace the simplicial hyperbolic surfaces  by embedded level surfaces, compare with \cite[Corollary 3.5]{Biringerranks}.

\begin{proposition}[Borel-parametrized cores]\label {hypcores}
Suppose $M\not \cong \BH^3$ is a hyperbolic $3 $-manifold with finitely generated fundamental group and no cusps, let $R>0$ and $C_R(M)$ be the union of all $R$-cores of $M$. Then:
\begin {enumerate}
\item  Unless $M$ is  a doubly degenerate  hyperbolic metric on $S \times \BR$, for some closed surface $S $, the subset $C_R(M) \subset M$  has finite, nonzero volume for sufficiently large $R $.
\item If we set $C_R(M)=\emptyset$  for all other $(M,p)\in \mathcal H^3$,  the subset $\mathcal C_R \subset \mathcal H^3$  consisting of all $(M,p)$ with $p\in C_R(M)$ is Borel.
\end {enumerate}
\end{proposition}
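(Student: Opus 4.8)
The plan is to prove the two assertions in turn, with the bulk of the work going into part 2), the Borel measurability of $\mathcal C_R$. For part 1), I would argue as follows. Suppose $M \not\cong \BH^3$ is a hyperbolic $3$-manifold with finitely generated fundamental group, no cusps, and that $M$ is not a doubly degenerate hyperbolic structure on $S\times\BR$. As recalled above, $M$ has \emph{some} $R_0$-core $N_0$; fix $R \geq R_0$ large enough that all the features used below are present. First I would observe that any $R$-core $N$ is contained in the closed $1$-neighborhood of the convex core and has diameter $<R$, so $C_R(M)$, being a union of sets each of diameter $<R$ all meeting (say) a fixed $R_0$-core, is bounded, hence precompact; therefore $\volume(C_R(M))<\infty$. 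Nonvanishing is clear since $C_R(M)$ contains $N_0$, which is a codimension-zero submanifold. The one thing to check is that $C_R(M)$ is not all of $M$: if $M$ has a convex cocompact end, the corresponding product neighborhood outside the convex core is disjoint from the $1$-neighborhood of the convex core once we go far enough out, so such points lie in no $R$-core; if $M$ has a degenerate end but is not doubly degenerate (so $M$ has at least two ends, one of which is convex cocompact, \emph{or} $M$ has exactly one end, or $M$ has a degenerate end and some non-product topology) — in the remaining cases one still produces points far out a degenerate end that cannot lie within diameter $R$ of any $R_0$-core. This case analysis, using the Tameness Theorem and Canary's covering/filling theorem to enumerate the possible topological types, is the content of part 1); it is bookkeeping, not a serious obstacle.

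For part 2), I would show that $\mathcal C_R = \{(M,p) : p \in C_R(M)\}$ is Borel by writing it as a countable combination of conditions each of which is Borel in $(M,p) \in \mathcal H^3$, exploiting the almost-isometry charts defining the smooth topology (\S\ref{smoothsec}) to transport the relevant geometric data between nearby manifolds. The key point is that ``$p$ lies in some $R$-core'' should be expressible in terms of the geometry of a bounded-radius neighborhood of $p$ together with the coarse structure of $M$ out to distance comparable to $R$; since $M \not\cong\BH^3$ with finitely generated $\pi_1$ has bounded convex core (up to the product ends), and $R$-cores have diameter $<R$, the existence of an $R$-core containing $p$ is ``determined at a bounded scale.'' Concretely, I would: (i) verify that the convex core, hence its $1$-neighborhood, varies in a Borel way — more precisely, that $\{(M,p) : p \in \mathcal{N}_1(\mathrm{core}(M))\}$ is Borel, using that closed geodesics are dense in the convex core and transfer to nearby manifolds as quasigeodesics that tighten via the Morse lemma, exactly as in the two-dimensional argument with the function $f$ above; (ii) verify that membership of $p$ in a degenerate-end neighborhood admitting a level surface through $p$ of $\epsilon$-electric diameter $<R$ is a Borel condition — here one uses that such a surface, being embedded with bounded electric diameter, lies in a bounded neighborhood of $p$ and can be detected by quantifying over embedded surfaces with controlled geometry in $B_M(p, R')$ for $R'$ depending on $R$; (iii) verify that ``$E$ lies outside the convex core'' for a convex-cocompact end neighborhood is Borel, again by (i); and (iv) assemble: being in an $R$-core means one is \emph{not} cut off — i.e.\ there is a compact submanifold of diameter $<R$ through $p$ whose complementary ends satisfy conditions 3), 4), 5), and this existential quantifier over submanifolds can be replaced by a countable quantifier over finite combinatorial data (triangulations with bounded complexity and rational vertex coordinates, say) read off in the charts, each giving a Borel set.

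The main obstacle, and where I would spend most of the care, is step (ii) together with the assembly in (iv): making precise the claim that the defining conditions 1)--5) of an $R$-core are ``locally checkable'' and invariant under small perturbations of $(M,p)$. The subtlety is that conditions 3) and 5) involve the global topology of the complementary end $E$ (that it is homeomorphic to $S\times\BR$ and that level surfaces exist with the right electric diameter), which a priori is not visible in a bounded neighborhood of $p$. The resolution is that for a manifold with finitely generated $\pi_1$ and no cusps, the Tameness Theorem bounds the topological complexity of $M$ in terms of $\mathrm{rank}(\pi_1 M)$, and on the support of a unimodular measure one does not actually need a uniform such bound — one only needs, for each individual $M$, that $C_R(M)$ is a \emph{fixed} Borel-from-local set; and the existence of \emph{some} $R$-core (established in the paragraph before the proposition) lets us anchor the analysis to a bounded region. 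Thus I would phrase (iv) as: $p \in C_R(M)$ iff there is a smoothly embedded compact submanifold $N \ni p$, with $\partial N$ a disjoint union of surfaces, diameter$(N)<R$, $N$ inside $\mathcal N_1(\mathrm{core}(M))$, such that for each boundary component $S$ the end $E$ it faces admits a level-surface structure with the electric-diameter control of 5) or lies outside the convex core as in 4) — and each clause here is Borel by (i)--(iii), with the quantifier over $N$ made countable via rational combinatorial approximations in the smooth charts. Once $\mathcal C_R$ is Borel with the properties of part 1), Theorem~\ref{nocore} (the No-Core Principle) applied to $f = 1_{\mathcal C_R}$, letting $R \to \infty$ along a sequence, yields Theorem~\ref{fingen3dim} in the no-cusps case; the cusped case is handled by the same scheme after replacing ``convex core'' by its analogue with the cusp neighborhoods removed and using the geometric-ends classification of \cite{Matsuzakihyperbolic}.
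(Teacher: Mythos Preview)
Your proposal has the right overall architecture, but there is a genuine gap in part 1) and the execution of part 2) is too loose at the key technical step.

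\textbf{Part 1).} You assert that $C_R(M)$ is bounded because the $R$-cores, each of diameter $<R$, ``all meet (say) a fixed $R_0$-core,'' and then dismiss the rest as bookkeeping. But why should every $R$-core intersect a fixed one? This is precisely the content of part 1), not a triviality. The paper itself remarks, just before the proposition, that conditions 4) and 5) in the definition of an $R$-core are both \emph{essential} for boundedness: without 4), $R$-cores can escape out the degenerate end of a singly degenerate $S\times\BR$; without 5), they can escape out the end of a degenerate handlebody (rewritten as $S\times\BR$ for a punctured $S$). The paper's argument assumes a sequence $C_i$ of $R$-cores exits a (necessarily degenerate) end, uses condition 5) to produce bounded-electric-diameter level surfaces $\Sigma'_1,\Sigma'_2$ on the ``other side'' of $C_i$, sandwiches between them a level surface $\Sigma$ of the original end, and invokes Waldhausen's cobordism theorem to conclude $\Sigma$ is a level surface of the region between $\Sigma'_1$ and $\Sigma'_2$, forcing $M$ to be doubly degenerate $S\times\BR$. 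Your case analysis does not touch this; showing that points far out a degenerate end are far from the fixed $R_0$-core is not what is needed --- you need that they lie in \emph{no} $R$-core at all.

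\textbf{Part 2).} Your plan to quantify over ``triangulations with bounded complexity and rational vertex coordinates'' in the smooth charts does not quite work: as $(M,p)$ varies in $\mathcal H^3$ the underlying manifold changes, so there is no common coordinate system in which rational approximations make sense. The paper instead fixes a countable list of \emph{model} compact $3$-manifolds $N_0$ with Riemannian metrics and a bilipschitz bound $\lambda$, and asks for an embedding $N_0\hookrightarrow M$ whose iterated total derivatives $D^kf$, $k\le 2$, are locally $\lambda$-bilipschitz; Arzela--Ascoli then forces the images of nearby such embeddings to be isotopic, which is what makes the Borel argument go through. More seriously, your treatment of the global conditions (that complementary components are products $S\times\BR$, and that condition 5) holds along the whole end) is not addressed: these are infinite-range conditions that cannot be checked on a single bounded neighborhood. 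The paper handles this by an exhaustion $\mathcal S=\bigcap_T \mathcal S_T$, where $\mathcal S_T$ demands an auxiliary submanifold $N'\supset B(p,T)$ with $N'\setminus N$ a union of products satisfying a truncated version of 4) and 5); openness of each $\mathcal S_T$ uses that the convex core varies continuously on the stratum $\mathcal H^3_N$ of manifolds with injectivity radius $\le N$ on the core (McMullen), a fact your sketch does not invoke.
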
 

Both conditions 4) and 5)  in the definition of an $R$-core are necessary for the `finite volume'  part of this proposition.   Condition 4) is  needed in order to prevent a sequence of $R$-cores  from exiting  the degenerate end of a hyperbolic $3$-manifold homeomorphic to $S \times \BR$  that has one convex cocompact end and one degenerate end.  Less obviously, condition 5)  is needed to prevent  a sequence of $R$-cores  from exiting a  hyperbolic $3$-manifold   homeomorphic to the interior of a handlebody    whose single end is degenerate. Here, the  point is to express the  interior of the handlebody as $S \times \BR$, where $S$  is a  closed, orientable surface with a single puncture, and then take appropriate `cores' of the form $\hat S \times [t-1,t+1]$, where $\hat S$  is  obtained from $S$ by truncating its cusp.

 Deferring the proof, let's understand how Proposition~\ref {hypcores} implies Theorem~\ref{fingen3dim} in the no-cusp case.  Suppose $\mu$ is a unimodular measure on $\mathcal H^3$, and apply the No-Core Principle (Theorem \ref{nocore}) to the characteristic function of each $\mathcal C_R$.  We obtain that for each $R>0$, the following holds for $\mu$-a.e.\ $(M,p)$: $$0< \volume \, C_R(M)<\infty \implies \volume \, M<\infty.$$  Taking a countable union of measure zero sets, we have for $\mu$-a.e.\ $(M,p)$ that $$0< \volume \, C_R(M)<\infty \text{ for some } R\in \BN \ \  \implies \ \ \volume \, M<\infty.$$
 So by Proposition~\ref {hypcores},  we have that $\mu$-a.e.\  $(M,p)$ with finitely generated fundamental group and no cusps is either finite volume (i.e.\ closed), $\BH^3$ or a  doubly degenerate hyperbolic $3$-manifold  homeomorphic to $S \times \BR$.

\subsubsection{Proof  of Proposition \ref {hypcores}}
 
 We first prove 1), so  assume that $M$  has finitely generated $ \pi_1$, no cusps  and is not isometric to $\BH^3$.  As mentioned above, $M$ admits an $R$-core for some $R $. So, for  large $R $ the subset $C_R(M)$  has nonzero volume. Our  goal is to show that $C_R(M)$ is  always bounded in $M$, so always has finite volume. 
 
 Suppose on the  contrary that there is a sequence $C_i$  of $R$-cores of $M$ that exits an end $\mathcal E$ of $M$. By condition 2) in the definition, $R$-cores lie near the convex core, so $\mathcal E$  is a degenerate end of $M$. Increasing $R $ if necessary, pick a neighborhood $E$ of $\mathcal E$  that is homeomorphic to a product $S\times\BR $ and is exhausted by level surfaces with $\epsilon $-electric diameter at most $R$. 
 
 We claim that $M$ is  a doubly degenerate hyperbolic structure on $S\times\BR$.  For large $i $, the $\epsilon $-electric distance from $C_i \subset E$ to the frontier of $E $ is at least $6R$.  This electric distance is realized by a path $\gamma $ in some component $D \subset M \smallsetminus C_i$.  Then $D \cong S'\times\BR$  for some closed surface $S'$, and since $\gamma$  is contained in the convex core of $M$, this $D$  cannot be a neighborhood of a convex cocompact end by condition 4)  in the definition of an $R$-core. So, by condition 5), there  are surfaces $\Sigma'_1,\Sigma'_2 \subset M$  that are level surfaces of $D \cong S'\times\BR$,  have $\epsilon $-electric diameter less than $R$, and which  pass through points on $\gamma$  at $\epsilon $-electric distance $R$ and $5R$,  respectively,  from $C_i $. These surfaces must then be disjoint, so they bound a  submanifold $N$ homeomorphic to $S' \times [0,1]$. However, we can also pick a level surface $\Sigma$ in $E \cong S \times\BR$ with $\epsilon $-electric diameter less than $R$ that  passes through a point on $\gamma$  at $\epsilon $-electric distance $3R$  from $C_i $. This $\Sigma \subset N \cong S' \times [0,1]$, and is  incompressible since it is  incompressible in $E$,  which contains $N$.  Therefore, by Waldhausen's Cobordism  Theorem \cite{Waldhausenirreducible},  $\Sigma$  is a level surface of $N$, which in turn means that  $\Sigma'_1 $ and $\Sigma'_2$ both bound product neighborhoods of degenerate ends of $M$ \emph {on both sides}, one contained in $D$  and the other in $E$. Hence, $M$  is  a doubly degenerate  hyperbolic metric on $S\times\BR $.

For the second part of the proposition,  we need to show that a Borel subset  of $\mathcal H^3$  is defined if we require $(M,p)$  to satisfy the following three conditions:
 \begin {enumerate}
 \item[(A)] $M$  has no cusps,
 \item[(B)]  $\pi_1 M$ is finitely generated,
 \item[(C)]  $p$ lies in an $R$-core of $M$.
 \end {enumerate}
We'll  first show that (A) and (B) each define Borel  subsets, and deal with (C)  afterwards. Note that (C) doesn't make sense on its own, since $R$-cores are only defined for manifolds with   finitely generated $\pi_1$ and no cusps.

To see that (A)  defines a Borel set, we check that for each $R>0 $ and small $\epsilon>0$, the set of all $(M,p)$ where  there is a cuspidal $\epsilon$-thin part at distance at most $R$ from $p$ is closed, and then take a union over $R\in\BN$. For if $(M_i,p_i) \to (M_\infty,p_\infty)$, we can write $M_i = \BH^3/\Gamma_i$  in such a way that $\Gamma_i \to \Gamma_\infty$ in the Chabauty topology on subgroups of $\PSL_2\BC$, and the points $p_i$  are all projections of a fixed $\tilde p\in \BH^3$, see \cite[Chapter 7]{Matsuzakihyperbolic}. Cuspidal $\epsilon$-thin parts at distance at most $R$ from $p_i\in M_i$ then give elements $1 \neq \gamma_i\in \Gamma_i$ such that
\begin{enumerate}
	\item $ |tr \, \gamma_i|=2$ for all $i$,
\item  there are points $\tilde x_i \in \BH^3$ with $d(\tilde x_i,\tilde p ) \leq R$ and $d(\tilde x_i,\gamma_i(\tilde x_i))\leq 2 \epsilon$.
\end{enumerate}  After passing to a subsequence, $\tilde x_i \to \tilde x_\infty\in \BH^3$ and $\gamma_i \to \gamma_\infty \in \Gamma_\infty$, where $d(\tilde x_\infty,\tilde p ) \leq R$  and $ |tr \, \gamma_\infty|=2$.  Passing to the quotient, we have a cuspidal $\epsilon$-thin part at  distance at most $R$ from $p_\infty\in M_\infty$.

To prove that (B)  is a Borel condition, we use:

\begin{lemma}\label{Borelhyplemma} Fix a compact $3$-manifold $N_0$, a Riemannian  metric on $N_0$, and a constant $\lambda>1$.  Let $\mathcal S$ be the set of all $(M,p)\in \mathcal H^3$ that  admit a smooth embedding $f: N_0 \hookrightarrow M$ such that
\begin{enumerate}
\item the iterated derivative maps $D^kf : T^k N_0 \longrightarrow T^kM$ are locally $\lambda$-bilipschitz on the $1$-neighborhood of the zero section in $T^k N_0$, for $k=0,1,2$,
	\item the image $N=f(N_0)$ contains $p$,
\item  each component of $M \smallsetminus N$  is homeomorphic to $S \times \BR$,  for some closed surface $S$.
 \end{enumerate} 
Then $\mathcal S$ is a Borel subset of $\mathcal H^3$.
\end{lemma}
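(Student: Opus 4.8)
The plan is to show that conditions 1)--3) in Lemma~\ref{Borelhyplemma} define a subset of $\mathcal H^3$ that is both open and has closed complement in a suitable exhausting sense, so that it is Borel. The cleanest route is to prove that the set is the countable union over $R\in\BN$ of closed sets, or directly that it is $F_\sigma$; but since the data $(N_0, g_{N_0}, \lambda)$ is fixed, I expect the natural argument to show directly that the set is \emph{open}. First I would recall from \S\ref{secmetrizability} that the smooth topology on $\mathcal M^d$ is exactly set up so that $(M,p)$ close to $(M',p')$ means there is an almost-isometric diffeomorphism between large neighborhoods of the base points whose iterated total derivatives $D^kf:T^kM\to T^kM'$ have bilipschitz distortion close to $1$. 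So if $(M,p)$ admits $f:N_0\hookrightarrow M$ with distortion bounds strictly less than $\lambda$ (say $\lambda'<\lambda$) on $D^k$, $k=0,1,2$, and image containing $p$, then composing $f$ with the almost-isometric map from a neighborhood of $p$ in $M$ to a neighborhood of $p'$ in $M'$ gives an embedding $N_0\hookrightarrow M'$ with distortion bounds still less than $\lambda$, whose image contains a point near $p'$. A small perturbation (using the bump-function trick from the proof of Lemma~\ref{leafinclusions}, reparametrizing $N_0$) moves this point onto $p'$ while keeping the distortion bounds. This handles conditions 1) and 2).

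The subtle point is condition 3): one must check that if $f(N_0)=N\subset M$ has complement consisting of pieces homeomorphic to $S\times\BR$, then the nearby embedding $N'=f'(N_0)\subset M'$ has the same property. The key observation is that this is not automatic from closeness of $(M,p)$ and $(M',p')$ alone, since $M$ and $M'$ agree only on a large ball around the base point; but a component of $M\smallsetminus N$ of the form $S\times\BR$ carries topological information that is \emph{localized near $N$}. Concretely, a neighborhood $N\subset M$ whose frontier components $S_j=\partial N$ each face a product end $S_j\times\BR$ is characterized by the fact that a slightly larger smooth compact neighborhood $N^+\supset N$ has the property that $N^+\smallsetminus N$ is a disjoint union of product regions $S_j\times[0,1]$ and each $S_j$ is incompressible towards the end. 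This is detectable inside a bounded neighborhood of $N$, hence inside a large ball around $p$, hence transfers across the almost-isometry to $M'$: the image $N'^+\smallsetminus N'$ is again a union of product regions, so by Waldhausen's theorem (cf.\ \cite{Waldhausenirreducible}) the corresponding components of $M'\smallsetminus N'$ are $S_j\times\BR$. I would phrase this by first choosing $R$ large enough that the ``local'' witnesses for condition 3) all lie in $B_M(p,R)$, then noting the set of $(M,p)$ admitting such $f$ with all witnesses inside $B_M(p,R)$ is open, and finally taking the union over $R\in\BN$; the union is then Borel (indeed open, since each set is open).

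The main obstacle I anticipate is making the transfer of condition 3) rigorous: one has to argue that ``each component of $M\smallsetminus N$ is $S\times\BR$'' can be certified by finitely much topological data sitting in a compact region, so that the certificate survives passage to a $C^\infty$-close manifold that agrees with $M$ only on a ball. This requires invoking tameness-type reasoning in a careful way --- really, one uses that $M\smallsetminus N$ being a product is equivalent to an irreducibility-plus-incompressibility statement about a bounded collar $N^+\smallsetminus N$, together with the fact that the outer boundary of the collar still faces the end. Once that localization is in hand, the rest of the proof is a routine application of the definition of the smooth topology together with the perturbation lemma used for Lemma~\ref{leafinclusions}, and the Borel measurability follows from writing the set as a countable union of open sets.
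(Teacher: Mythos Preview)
Your approach has a genuine gap: the set $\mathcal S$ you are trying to describe is \emph{not} open, and condition 3) cannot be certified by data inside any bounded ball around $p$. Here is a concrete obstruction. Let $\Gamma$ be a quasi-Fuchsian surface group and $M=\BH^3/\Gamma$, so $M\cong S\times\BR$ with compact core $N_0\cong S\times[0,1]$. For each $i$, adjoin to $\Gamma$ a loxodromic element whose axis lies at distance $>i$ from the convex hull; Klein combination gives discrete groups $\Gamma_i$ with $\Gamma_i\to\Gamma$ in the Chabauty topology, hence $(M_i,p_i)\to(M,p)$ in $\mathcal H^3$. But $\pi_1 M_i\cong\pi_1 S*\BZ$, so $M_i$ is not homeomorphic to $S\times\BR$, and no embedding of $S\times[0,1]$ into $M_i$ can have complementary components that are all surface-cross-line. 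Thus $(M,p)\in\mathcal S$ while $(M_i,p_i)\notin\mathcal S$, and your proposed local characterization (``$N^+\smallsetminus N$ is a product and $S_j$ is incompressible toward the end'') fails: incompressibility toward the end is itself global, and even granting it, the topology of $M\smallsetminus N^+$ is unconstrained.

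The paper's proof runs in the opposite direction: it exhibits $\mathcal S$ as a countable \emph{intersection} $\cap_R\mathcal S_R$ of open sets rather than a union. Here $\mathcal S_R$ asks for an embedding $f:N_0\hookrightarrow M$ satisfying 1) and 2) together with a compact $N'\supset B(p,R)$ such that $N'\smallsetminus\mathrm{int}(N)$ is a union of products $S\times[0,1]$. Each $\mathcal S_R$ is genuinely open because the witnesses $N,N'$ are compact and transfer under the almost-isometries of the smooth topology. The nontrivial step is the reverse inclusion $\cap_R\mathcal S_R\subset\mathcal S$: one takes $R_i\to\infty$, gets embeddings $f_i$ and submanifolds $N_i'$, and then uses the bilipschitz bounds in condition 1) together with Arzela--Ascoli to arrange that all the $f_i$ differ by small isotopies from a single $f$. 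With a fixed $N=f(N_0)$, the nested products $N_i'\smallsetminus N$ then exhaust each complementary component and force it to be $S\times\BR$. So the role of the bilipschitz bound on the iterated derivatives is not merely to make conditions 1)--2) open, as you use it, but to pin down a single core that works for all $R$ simultaneously; without that compactness argument the intersection statement would fail.
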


See \S \ref{smoothtopology} for  details about iterated derivatives.  The point of 1) is the following, though.
As shown in at the end of the proof of Theorem \ref{manifoldmetrizable}, bounds on iterated total derivatives up to order 2 give bounds  in fixed local coordinates for the $C^2$-norm of $f$. So by Arzela-Ascoli, if $f_i: N_0 \hookrightarrow M$  is a sequence of embeddings satisfying 1) and 2), then  after passing to a subsequence  we may assume that $f_i$ converges in the $C^1$-topology to some $C^1$-embedding $f : N_0 \longrightarrow M$. Working in normal-bundle coordinates in a regular neighborhood of  $\partial f(N_0)$, we can then move any $f_i$ (with $i$ large) by a small $C^1$-isotopy so that the image  agrees with $f(N_0)$.  In particular, \emph{the images of all such $f_i$  differ by small isotopies.}

\begin{proof}[Proof of Lemma \ref{Borelhyplemma}]
We'll write $\mathcal S $ for the set of all $(M,p)$  admitting an embedding as above. Fix $R>0$ and consider the set $\mathcal S_R$ of all $(M,p)$ such that there is an embedding $N_0 \hookrightarrow M$  satisfying 1) and 2), and also a compact submanifold $N' \subset M$ that contains the radius $R$-ball around $p$, and where  every component of $N' \smallsetminus int(N)$  is homeomorphic to $S \times [0, 1]$  for some closed surface $S$.
This $\mathcal S_R$ is open in the  smooth topology, since the approximate isometries defining the smooth topology (see \S \ref{smoothsec})  allow us to transfer compact submanifolds of $(M,p) $ to nearby $(N,q)$, with small metric distortion.

We claim that $\mathcal S = \cap_R \mathcal S_R$, which will imply $\mathcal S$ is Borel.  The forward inclusion is obvious, so assume $(M,p)$ is in the intersection. Then there is a sequence $R_i \to \infty$  such that if $f_i : N_0 \hookrightarrow M$, $N_i$ and $N_i'$ are the corresponding embeddings and submanifolds, we have $N_1' \subset N_2' \subset  \cdots $. Passing to a subsequence, we may assume by  1) and 2) that the  embeddings $f_i : N_0 \hookrightarrow M$ all differ by small isotopies. Hence,  the complement of $int(N_1)$, say, in every $N_i'$ is a union of topological products $S \times [0,1]$.  Consequently, each $N_{i+1}' \smallsetminus int(N_i')$ is   also a  union of products $S \times [0,1]$, so taking a union over $i$, the components of $M \smallsetminus N_1$ are homeomorphic to  $S \times \BR$. \end{proof}

To prove (B) defines a Borel set, apply the lemma and take a union over the countably  many homeomorphism types of compact $3$-manifolds $N_0$, over a  countable dense subset of the space of Riemannian metrics on a given $N_0$, and over $\lambda \in \BN$.  This proves that a Borel subset is defined  by the condition that there is  a submanifold $N\subset M$  containing $p$ whose complementary components are homeomorphic to products $S \times \BR$. By  the Tameness Theorem,  this is equivalent to $\pi_1 M$  being finitely generated.

 The proof that (C) defines a Borel set uses a more complicated version of Lemma~\ref{Borelhyplemma},   but the argument is  very similar. 
Fixing  a compact $3$-manifold $N_0$, a  Riemannian metric on $N_0$, and  a constant $\lambda >1$, let $\mathcal S$ be the set of all $(M,p) \in \mathcal H^3$ with no cusps and  finitely generated $\pi_1$, that admit a smooth embedding $N_0 \hookrightarrow M$ satisfying 1) and 2) of Lemma~\ref{Borelhyplemma}, and whose image is an $R$-core of $M$.

We claim that $\mathcal S$  is Borel.  To that end, fix $T>0$ and consider the set $\mathcal S_T$ of all $(M,p)$ with  finitely generated $\pi_1$ and no cusps, satisfying the following conditions. We require that there is an embedding $N_0 \hookrightarrow M$   with bilipschitz constant less than $\lambda$, whose image $N$ contains $p$ and satisfies
\begin {enumerate}
\item  the diameter of $N $ is less than $ R$,
\item $N$  is contained in an open radius $1$  neighborhood of the convex core of $M$.
\end {enumerate}
We also require that $N$ is contained in a compact submanifold $N' \subset M$  whose interior contains the closed radius $T$ ball around $N$, such that
\begin {enumerate}
\item[3)]  each component $E$ of  $N' \setminus int(N)$ is homeomorphic to $S\times [0,1]$,  for some closed surface $S$,
\item[4)] if a component $E$ of  $N' \setminus int(N)$ intersects the convex core of $M $, then through each point $p\in E$ that lies more than an  $\epsilon $-electric distance of $R$ from $M \setminus E$, there is a level surface $\Sigma$ in $E \cong S \times[0,1]$  that passes through $p$ that has $\epsilon$-electric diameter less than $R$.
\end {enumerate}

 We now claim that $\mathcal S_T$ is Borel.  As we have  shown above, the conditions that $M$ has no cusps, and that $\pi_1 M$  is  finitely generated are both Borel.  Furthermore, the approximate isometries defining the smooth topology (see \S \ref{smoothsec})  allow us to transfer compact submanifolds of $(M,p) $ to nearby $(N,q)$, with small metric distortion. So, the existence of an embedding $N_0 \hookrightarrow M$ as above that satisfies 1) and 3)  is an open condition.

To  deal with the other conditions, let $N>0$ and let $\mathcal H^3_N$  be the set of all $(M,p)\in \mathcal H^3$  where the injectivity radius of $(M,p)$  is bounded above by $N$  throughout the convex core of $M $.
   Each $\mathcal H^3_N$ is a  closed subset of $\mathcal H^3$, and the convex core of $M$ varies continuously as $(M,p)$ varies within  $\mathcal H^3_N$, by \cite[Proposition 2.4]{McMullenrenormalization}.   Using this and the  previous paragraph, one can see that 2) and 4) are also open conditions within $\mathcal H^3_N$, so $\mathcal S_T \cap \mathcal H^3_N$ is a Borel subset of $\mathcal H^3_N$ for all $N $. As any  hyperbolic $3$-manifold with finitely generated $\pi_1$ has  an upper bound for the injectivity radius over its convex core, \cite[Corollary A]{Canarycovering}, this expresses $\mathcal S_T$ as a union of Borel sets, so $\mathcal S_T$ is Borel.

We claim that $\mathcal S = \cap_T \mathcal S_T$, which will imply $\mathcal S$ is Borel.  The forward inclusion is obvious, so assume $(M,p)$ is in the intersection.  Arguing as in  the proof of Lemma \ref{Borelhyplemma}, we may assume that we  have a fixed embedding $f : N_0 \hookrightarrow M$  satisfying 1) and 2), and a  sequence $T_i \to \infty$ such that there are $N_i'$ whose interiors contain the closed $T_i$-ball around $N=f(N_0)$ and satisfy 3) and 4). Again as in the proof of Lemma \ref{Borelhyplemma},  the complementary components of $N$ are all  homeomorphic to products $S \times \BR$. But then conditions 3) and 4)  in the definition of an $R$-core  follow for $N$ from condition 4) above. For if a component of $M\smallsetminus N$  intersects the convex core, it must have the necessary level surfaces with bounded electric diameter (and is a neighborhood of a degenerate end). Otherwise, we have condition 4)  in the definition of an $R$-core.

 This proves that $\mathcal S$ is Borel, which proves (C) is a Borel condition, and Proposition \ref{hypcores} follows.

\subsubsection{The case with cusps}

 In the presence of cusps, the proof is the same, but more complicated. When $M$ has  finitely generated $\pi_1$, the topological ends of $M$ may be split by cusps into `geometric ends'.   More precisely, fixing $\epsilon>0$ we let $M_{np}$ be  the manifold with boundary that is the complement of the cuspidal  $\epsilon$-thin part of $M$. Then the  topological ends of $M_{np}$  are called \emph{geometrically finite} or \emph{degenerate}  depending on whether they have neighborhoods  disjoint from, or contained in,  the convex core of $M$.  The definition of an $R$-core is  basically the same, except that $M$  should be replaced by $M_{np}$, complementary components should  now be homeomorphic to $S \times \BR$, where $S$ is a surface with boundary, and the $\epsilon$  defining electric distance should be chosen smaller than that defining $M_{np}$.

Proposition \ref{hypcores} should now allow cusps, and exclude in 1) products $S \times \BR$, where $S$ is a finite type surface. Its  proof is the same,  as long as one keeps track of the relationship between  the new definition of $R$-cores and $M_{np}$. However, the increase in the already formidable amount of notation will be unpalatable.

\appendix

\section {Spaces of Riemannian manifolds}

In this appendix, we discuss the smooth topology  on the space of pointed Riemannian manifolds,  and related topologies on similar spaces. 

\subsection {Smooth convergence and compactness theorems}
As introduced in the introduction, let $\mathcal M ^ d $ be the set of isometry classes of  connected, complete, pointed Riemannian manifolds $(M,p) $.  
\label {smoothsec}
\begin {remark}
The class of all pointed manifolds is not a set.  However, every connected manifold has at most the cardinality of the continuum, so one can discuss $\mathcal {M}^ d $ rigorously by only considering manifolds whose underlying sets are identified with subsets of $\BR $.  The same trick works in all the related spaces below, so we will make no more mention of set theoretic technicalities.
\end {remark}

A sequence of pointed Riemannian manifolds $(M_i, p_i) $ \emph {$C^k$-converges} to $(M, p)$  if there  is a sequence of $C^k$-embeddings
\begin {equation} f_i:B(p,R_i)\longrightarrow M_i \label {fi}\end {equation} with $R_i \rightarrow\infty$ and $f_i (p) = p_i $,  such that $f_i ^*g_i\rightarrow g $ in the $C ^k $-topology, where $g_i,g $ are the Riemannian metrics on $M_i,M $.  Here,  $C ^k $-convergence of tensors is defined locally: in each pre-compact coordinate patch, the coordinates of the tensors and all their derivatives up to order $k$ should converge uniformly.   Note that each  metric $f_i ^*g_i$  is only partially defined on $M$, but their  domains of definition exhaust $M$, so  it still makes sense to say that  $f_i ^*g_i\rightarrow g $  on all of $M $. 
We say that $(M_i, p_i) \rightarrow (M, p)$ \emph {smoothly} if the convergence is $C ^ k $ for all $k \in \mathbb N $.

Whether the convergence is $C^k$ or smooth,  we will call such an $(f_i)$ a \emph {sequence of almost isometric maps} witnessing the convergence $M_i \rightarrow M$.  When the particular radii $R_i$  do not matter, we will denote our  partially defined maps by $$f_i : M \dashmap M_i,$$  where the notation indicates that each $f_i$  is only partially defined on $M$, but  any  point $p\in M$ is in the domain of $f_i$  for all large $i$.%The topology of smooth convergence on $\mathcal M  ^ d $ is the \emph {smooth topology}: a neighborhood of $(M, p) $ is obtained by fixing $R $ and requiring that the $f_i ^*g_i$ lie in a fixed $C^\infty $-neighborhood of $g $. 

Of course,  another way to define  $C ^ k$-convergence would be to require  that for every fixed radius $R>0$, there is a  sequence of maps $f_i:B(p,R)\longrightarrow M_i $  satisfying the  properties above.  To translate between the two definitions, we can restrict the $f_i$ in \eqref{fi} from $R_i$-balls to $R$-balls,  or in the other direction, we can take a diagonal sequence where $R$  increases with $i$. Most of the time, we will use the `fixed $R$'  perspective in this appendix.

In some references, e.g.\ Petersen \cite [\S 10.3.2]{Petersenriemannian}, $R$ is fixed and the maps $f_i $ are defined on open sets containing $B(p,R)$ and their images are required \emph{to contain $B(p_i,R)\subset M_i$}. Such restrictions on the image of the $f_i$ do not change the resulting $C^k$-topology, though, since for large $i$ the maps $f_i$ are locally $2$-bilipschitz embeddings, and we can appeal to the following Lemma.

\begin {lemma}\label {invert}
Suppose $M, N $ are complete Riemannian $d $-manifolds, $p\in M $.  If $f:B(p,R)\longrightarrow N $ is a smooth locally $\lambda $-bilipschitz embedding,  $$f(B(p,R))\supset B (f  (p),R/\lambda) . $$
\end {lemma}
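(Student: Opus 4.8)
The plan is to prove this by a path-lifting argument, treating $f$ as a local diffeomorphism. First I would record that a smooth, locally $\lambda$-bilipschitz map has everywhere-invertible differential: if $df_x(v)=0$ for some $v\neq 0$, then $d_N\!\big(f(\exp_x(tv)),f(x)\big)=o(t)$ while $d_M(\exp_x(tv),x)\sim t|v|$, violating the lower bilipschitz bound. Hence $f$ is a local diffeomorphism, and since it is an embedding it carries $B(p,R)$ diffeomorphically onto an open subset of $N$. (If one reads ``locally $\lambda$-bilipschitz'' as the differential estimate $\lambda^{-1}|v|\le |df_x(v)|\le\lambda|v|$, as in \S\ref{secmetrizability}, invertibility of $df_x$ is immediate.)

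Next, fix $q\in B(f(p),R/\lambda)$ and set $\ell=d_N(f(p),q)<R/\lambda$. By completeness of $N$ and Hopf--Rinow there is a unit-speed minimizing geodesic $\sigma:[0,\ell]\longrightarrow N$ with $\sigma(0)=f(p)$ and $\sigma(\ell)=q$. I would then lift $\sigma$ through $f$: let $I\subseteq[0,\ell]$ be the set of $t$ for which $\sigma|_{[0,t]}$ admits a continuous lift $\tilde\sigma:[0,t]\longrightarrow B(p,R)$ with $\tilde\sigma(0)=p$ and $f\circ\tilde\sigma=\sigma|_{[0,t]}$. The key a priori estimate is that along any such lift, $\dot\sigma(t)=df_{\tilde\sigma(t)}(\dot{\tilde\sigma}(t))$ together with the bilipschitz bound forces $|\dot{\tilde\sigma}(t)|\le\lambda$, so $\mathrm{length}(\tilde\sigma|_{[0,t]})\le\lambda t$ and therefore $d_M(p,\tilde\sigma(t))\le\lambda t\le\lambda\ell<R$. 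Thus every partial lift stays inside the compact set $\overline{B(p,\lambda\ell)}\subset B(p,R)$, uniformly away from the boundary.

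With this estimate in hand I would show $I=[0,\ell]$. The set $I$ is a nonempty interval containing $0$; it is open to the right because near any interior point of $B(p,R)$ a local inverse of $f$ extends the lift a little further (and the extension stays in $B(p,R)$ by the distance bound). Writing $t^*=\sup I$, the inequality $d_M(\tilde\sigma(t),\tilde\sigma(s))\le\lambda|t-s|$ makes $\tilde\sigma$ Lipschitz on $[0,t^*)$, hence uniformly continuous, so completeness of $M$ lets it extend continuously to $[0,t^*]$, with $\tilde\sigma(t^*)\in\overline{B(p,\lambda t^*)}\subset B(p,R)$; this extension is still a lift, so $t^*\in I$. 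If $t^*<\ell$, the openness to the right would contradict maximality, so $t^*=\ell$. Then $q=\sigma(\ell)=f(\tilde\sigma(\ell))$ with $\tilde\sigma(\ell)\in B(p,R)$, which gives $q\in f(B(p,R))$ and proves the lemma.

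I expect the only real subtlety --- the main obstacle --- to be the endpoint bookkeeping for the lift: ensuring $t^*\in I$ and that the lift never runs into $\partial B(p,R)$. Both are controlled entirely by the single inequality $d_M(p,\tilde\sigma(t))\le\lambda t$, which is why I would isolate that estimate at the outset; once it is available, the remainder is the standard ``lifting along an interval'' argument.
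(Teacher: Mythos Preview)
Your proof is correct and follows essentially the same path-lifting argument as the paper: take a minimizing geodesic in $N$ from $f(p)$ to $q$, lift it through the local diffeomorphism $f$, and use the bilipschitz bound to confine the lift to a compact subset $\overline{B(p,\lambda\ell)}\subset B(p,R)$, so the open--closed argument on the lifting interval goes through. The only cosmetic differences are your explicit discussion of why $df$ is invertible and your unit-speed parametrization on $[0,\ell]$ rather than $[0,1]$.
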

\begin {proof}
Fix a point $q\in B (f  (p),R/\lambda)$ and let $\gamma : [0, 1]\longrightarrow N $ be a length-minimizing geodesic with $\gamma (0)= f (p)$ and $\gamma (1)=q$.  Let
$$\mathcal T =\{t\ | \ \exists \, \gamma_t: [0, t]\longrightarrow B(p,R) \text { with } \gamma_t (0) = p, \ f \circ \gamma_t =\gamma |_{[0, t]}\} \subset [0,1]. $$ 
The set $\mathcal T $ is a subinterval of $[0, 1] $ that contains $0 $.  It is open in $[0, 1]$, since $f $ is a local diffeomorphism.  We claim that it is also closed.  First, as $f $ is a local diffeomorphism the lifts $\gamma_t $ are unique if they exist, so if $t\in [0,1]$ is a limit point of $\mathcal T$ then the limiting lifts patch together to give $$\alpha : [0,t)\longrightarrow \BR, \ \  \alpha (0) = p, \ f \circ \alpha =\gamma |_{[0, t)}. $$ 
The path $\alpha$ is itself locally $\lambda $-bilipschitz, so its image is contained within the compact subset $\overline {B (p,  \lambda d(f(p),q))}\subset B(p,R)$.  From this and the fact that it is locally bilipschitz, $\alpha $ can be continuously extended to a map $[0,t]\longrightarrow \BR $.  This map must lift $\gamma $, so $t\in \mathcal T$.
Therefore, $\mathcal T = [0,1]$, implying in particular that $q =\gamma (1)\in f (B(p,R)) $. 
\end {proof}

One reason that these convergence notions are useful is that sequences of manifolds with `uniformly bounded geometry' have convergent subsequences. 

\begin {definition} \label {boundedgeometry} Suppose that $M$ is a complete $C ^ k $-Riemannian manifold.  A subset $A \subset M $ has \emph {$C ^ k $-bounded geometry} if for some fixed $r,\epsilon,L >0 $ there is a system of coordinate charts $$\phi_s : B_{\BR ^ d} (0, r)\longrightarrow U_s\subset M$$ with the following properties:
\begin {enumerate}
\item[1)] for every $p \in A$, the ball $B(p,\epsilon) \subset U_s$ for some $s$,
\item[2)] $\phi_s $ is locally $L$-bilipschitz,
\item[3)] all coordinates of the metric tensor $(\phi_s ^* g)_{ij}$ have $C ^ k$-norm at most $L$,
\item[4)] the transition maps $\phi_s ^ {-1}\circ\phi_t$ have $C ^ {k + 1} $-norm at most $L$.
\end {enumerate}
A sequence of subsets $A_i \subset M_i$ of complete Riemannian manifolds has \emph {uniformly $C ^ k $-bounded geometry} if  the constants $r,\epsilon, L$ can be chosen independently of $i $.
\end {definition}

The conditions above are a simplification of those given by Petersen \cite [pp. 289, 297]{Petersenriemannian} that are sufficient for the following theorem.

\begin {theorem}\label {firstcompactness}
Suppose that $(M_i, p_i) $ is a sequence of complete pointed $C ^ k $-Riemannian manifolds and that for some fixed $R >0$ the balls $B(p_i,R) \subset M_i$ have uniformly $C ^ k $-bounded geometry.    Then there is a complete pointed $C ^ {k - 1} $-Riemannian manifold $(M,p)$ and, for sufficiently large $i $, embeddings
 $$f_i:B(p,R-1)\longrightarrow M_i $$ with $f_i (p) = p_i $ such that $f_i ^*g_i\rightarrow g $  in the $C ^{k-1} $-topology on $B(p,R-1)$, where $g_i,g $ are the Riemannian metrics on $M_i,M $.
\end {theorem}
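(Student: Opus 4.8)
The statement is the classical Cheeger--Gromov compactness theorem, adapted to pointed manifolds, and the plan is to follow the well-trodden route via harmonic (or geodesic normal) coordinates, essentially as in Petersen \cite[Chapter 10]{Petersenriemannian}. The bounded geometry hypothesis gives us, for each large $i$, a system of charts $\phi_{s,i}: B_{\BR^d}(0,r)\longrightarrow U_{s,i}\subset M_i$ covering $B(p_i,R)$, with uniform bilipschitz constant $L$, uniform $C^k$-bounds on the metric coefficients, and uniform $C^{k+1}$-bounds on the transition maps. The first step is to organize this combinatorial data: choose a maximal $\epsilon/2$-separated net $\{q_{\alpha,i}\}$ in $B(p_i,R-1/2)$, with $q_{0,i}=p_i$. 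Uniform volume comparison (a consequence of the curvature/metric bounds) shows the number of net points is bounded independently of $i$, and each point lies in some chart $U_{s_\alpha,i}$; the chart bounds give a uniform upper bound on the number of charts that any fixed chart can overlap. Passing to a subsequence, I may assume the nerve of this cover — the pattern of which balls $B(q_{\alpha,i},\epsilon)$ intersect which — is the same for all $i$, and that the net cardinality is constant, say $N$.

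\textbf{Extracting the limit.} Next, for each $\alpha$ transfer the metric tensor into the coordinate chart centered near $q_{\alpha,i}$: on $B_{\BR^d}(0,r)$ we have metric coefficients $(g_{\alpha,i})_{jk}$ with uniformly bounded $C^k$-norm, so by Arzel\`a--Ascoli (applied coordinate-by-coordinate, using that $C^k$-bounded sets are precompact in $C^{k-1}$) a subsequence converges in $C^{k-1}$ to a limiting tensor $(g_\alpha)_{jk}$ on $B_{\BR^d}(0,r)$, which is still positive-definite and bilipschitz-equivalent to the Euclidean metric with the same constant $L$. Simultaneously the transition maps $\phi_{s_\alpha,i}^{-1}\circ\phi_{s_\beta,i}$, having uniformly bounded $C^{k+1}$-norm, converge in $C^k$ to limiting transition maps $\psi_{\alpha\beta}$; these satisfy the cocycle condition in the limit and are compatible with the limiting metrics, so gluing the charts $B_{\BR^d}(0,r)$ along the $\psi_{\alpha\beta}$ produces a $C^k$-manifold $M^\circ$ with a $C^{k-1}$-Riemannian metric $g$, containing a distinguished point $p$ coming from the center of the $\alpha=0$ chart. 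The convergence of metrics and transition maps also yields, for large $i$, diffeomorphisms of each chart that patch (after a partition-of-unity smoothing, or by the standard chart-matching argument) into a $C^k$-embedding $f_i$ of a neighborhood of $p$ into $M_i$ with $f_i(p)=p_i$ and $f_i^*g_i\to g$ in $C^{k-1}$; one checks that the image contains, and the domain is contained in, balls of the appropriate radius, using Lemma \ref{invert} to control how metric balls behave under the nearly-bilipschitz $f_i$.

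\textbf{Completing the limit and the radius bookkeeping.} The manifold $M^\circ$ built above is only the glued-up piece corresponding to $B(p_i,R-1/2)$; to get a \emph{complete} pointed manifold one must either iterate the construction on a growing sequence of radii and take a direct limit, or — cleaner here, since the theorem only asks for embeddings on $B(p,R-1)$ — invoke the standard fact that a precompact Riemannian manifold-with-almost-nice-ends of this type can be completed (e.g.\ by noting $\overline{B(p,R-1)}$ in $M^\circ$ is compact and attaching an arbitrary complete metric outside, or by a reflection/doubling trick); the point is only that we may take $(M,p)$ complete with the prescribed behavior on $B(p,R-1)$. The loss from $R$ to $R-1$ is exactly the slack needed so that the net in $B(p_i,R-1/2)$ and the $\epsilon$-balls around its points stay inside $B(p_i,R)$, where the hypotheses hold. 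I expect the main obstacle to be purely bookkeeping rather than conceptual: carefully matching the charts into a single honest embedding $f_i$ (as opposed to a collection of locally defined maps that merely $C^{k-1}$-agree), controlling the overlaps uniformly in $i$, and tracking which radius guarantees are available at each stage. The one genuinely delicate analytic input is that $C^k$-bounds on metric coefficients and $C^{k+1}$-bounds on transition maps are exactly strong enough to survive the Arzel\`a--Ascoli limit with one derivative to spare, producing a $C^{k-1}$-metric on a $C^k$-manifold; this is why the conclusion drops regularity by one, and it matches the smooth case ($k=\infty$) where no regularity is lost, recovering the compactness statements invoked elsewhere in the paper.
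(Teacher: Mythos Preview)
Your proposal is correct and follows essentially the same route as the paper's proof, which is itself just a two-paragraph sketch deferring to Petersen \cite[Ch.\ 10, Thm.\ 3.3]{Petersenriemannian}: take a subsequential (Gromov--Hausdorff) limit of the balls $B_{M_i}(p_i,R)$, extract a $C^{k-1}$-atlas on the limit as an Arzel\`a--Ascoli limit of the uniformly $C^k$-bounded charts, upgrade the convergence to $C^{k-1}$, and finally choose a complete $(M,p)$ isometrically containing the limit ball. Your writeup simply unpacks that reference in more detail (nets, nerve stabilization, chart-by-chart convergence of metrics and transition maps, then completion), which is exactly the content of the Petersen argument the paper invokes.
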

%one can deduce subsequential $C^{k - 1} $-convergence from only a $C ^ {k - 1} $-geometry bound and a certain H\"older condition.  For simplicity, though, we will omit any discussion of H\"older conditions here.

In other words, if the balls $B(p_i,R)\in M_i $ have uniformly $C ^ k $-bounded geometry, there is a subsequence on which we see $C^ {k - 1} $-convergence of the $(M_i,p_i)$, at least within a distance of $R-1$ from the base points.  
This is a version of Cheeger's compactness theorem \cite [Ch.\ 10, Thm.\ 3.3]{Petersenriemannian} for $R$-balls -- Cheeger's theorem usually requires uniform geometry bounds over the entire manifolds and then gives a fully convergent subsequence.  

\begin {proof}[Proof of Theorem \ref {firstcompactness}]
The theorem is not implied by the statement of Cheeger's compactness theorem given in \cite [Ch.\ 10, Thm.\ 3.3]{Petersenriemannian}, since the latter requires uniform $C ^ k $ bounds over the \emph {entire} manifolds $(M_i,p_i)$, but the proof is the same.  

The ideas to take a subsequential Gromov-Hausdorff limit $(X,x)$ of the balls $B_ {M_i}(p_i, R)$, which exists by the uniform geometry bounds.  An atlas of $C^{k-1}$-charts for $X$ is obtained as a limit of the charts for $B_ {M_i}(p_i, R)$ with uniformly $C^k$-bounded geometry, and then one shows that the convergence is $C^{k-1}$ in addition to Gromov-Hausdorff.  Then choose a complete pointed Riemannian manifold $(M,p)$ such that $B(p,R-1) \subset M $ and $B (x, R - 1)\subset X $ are isometric.  

The details are entirely the same as those of \cite [Ch.\ 10, Thm.\ 3.3]{Petersenriemannian}.
\end {proof}

Theorem \ref{firstcompactness} also gives a strong version of Cheeger's theorem in which the uniform geometry bounds may depend on the distance to the base point:

\begin {corollary}\label {2ndcompactness}
Let $(M_i, p_i) $ be a sequence of complete pointed Riemannian manifolds such that for every $R >0$ and $k\in \mathbb N$, the balls $B(p_i,R) \subset M_i$ have uniformly $C ^ k $-bounded geometry, where the bounds may depend on $R,k$ but not on $i$. Then $(M_i, p_i) $ has a smoothly convergent subsequence.
\end {corollary}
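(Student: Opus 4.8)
The plan is to deduce Corollary~\ref{2ndcompactness} from Theorem~\ref{firstcompactness} by a standard diagonal argument. First I would fix an exhausting sequence of radii, say $R_j = j+1$, and apply Theorem~\ref{firstcompactness} to each $R_j$ in turn. For $j=1$, the balls $B(p_i, R_1)\subset M_i$ have uniformly $C^k$-bounded geometry for every $k$, so in particular for every fixed $k$ we get a complete pointed manifold and almost isometric embeddings; by a further diagonalization over $k$ (or, more cleanly, by noting that the limit metric produced is actually $C^\infty$ once we have $C^{k}$ control for all $k$, as in Lessa~\cite[Theorem 4.11]{Lessabrownian}) we extract a subsequence $(M_i, p_i)_{i\in S_1}$ and a pointed manifold $(N_1, q_1)$ together with smooth almost isometric embeddings $f_i^{(1)}: B(q_1, R_1 - 1) \longrightarrow M_i$ with $f_i^{(1)}(q_1)=p_i$ and $(f_i^{(1)})^* g_i \to g_{N_1}$ in the $C^\infty$ topology, as $i\to\infty$ through $S_1$.

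Next I would iterate: given the subsequence $S_{j-1}$ and the data $(N_{j-1}, q_{j-1})$, apply Theorem~\ref{firstcompactness} again along $S_{j-1}$ with radius $R_j$, obtaining a further subsequence $S_j \subset S_{j-1}$ and a pointed manifold $(N_j, q_j)$ with smooth almost isometric embeddings on $B(q_j, R_j - 1)$. The key compatibility observation is that for each $j$ the ball $B(q_j, R_{j-1}-1)$ is isometric to $B(q_{j-1}, R_{j-1}-1)$: both are smooth limits, along the nested subsequence $S_j$, of the balls $B(p_i, R_{j-1}-1)\subset M_i$, and a smooth limit of a convergent sequence of pointed Riemannian balls is unique up to pointed isometry (compose the two families of almost isometric maps and take an Arzel\`a--Ascoli limit, noting $q_{j-1}, q_j$ are both sent to $p_i$). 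Hence the $(N_j, q_j)$ form a directed system of isometric inclusions of larger and larger balls, and one can glue them to a single complete pointed Riemannian manifold $(M, p)$ in which $B(p, R_j - 1)$ is isometric to $B(q_j, R_j - 1)$ for every $j$.

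Finally I would diagonalize over $i$: pick $i_1 < i_2 < \cdots$ with $i_j \in S_j$, chosen large enough that the embedding $f_{i_j}^{(j)}: B(q_j, R_j - 1)\longrightarrow M_{i_j}$, transported via the fixed isometry $B(p, R_j-1)\cong B(q_j, R_j-1)$ to a map $g_j : B(p, R_j - 1) \longrightarrow M_{i_j}$, satisfies $\| g_j^* g_{M_{i_j}} - g_M \|_{C^j(B(p, R_j - 2))} < 1/j$. Since $R_j - 2 = j - 1 \to \infty$ and the $C^j$-norms control all lower-order norms, the sequence $g_j$ witnesses smooth convergence $(M_{i_j}, p_{i_j}) \to (M, p)$ in the sense of \S\ref{smoothsec}, so $(M_{i_j}, p_{i_j})$ is the desired smoothly convergent subsequence.

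The main obstacle is the gluing/uniqueness step: one must check carefully that the successive limit balls $(N_j, q_j)$ are genuinely pointed-isometric on their common domains so that the direct limit $(M, p)$ is a well-defined complete manifold, and that the almost isometric maps from $B(q_j, R_j-1)$ into $M_i$ can be taken compatibly (up to the harmless $1/j$ errors absorbed by the diagonal choice) with the gluing isometries. This is where the uniqueness-of-smooth-limits lemma — proved by composing the two competing families of almost isometric embeddings and applying Arzel\`a--Ascoli, using that each is eventually $2$-bilipschitz and that $\mathcal M^d$ is Hausdorff — does the real work; the rest is bookkeeping of radii and orders of derivatives.
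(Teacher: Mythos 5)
Your proof is correct and follows essentially the same approach as the paper: iteratively apply Theorem~\ref{firstcompactness} with increasing radius, extract nested subsequences via a diagonal argument, use Arzel\`a--Ascoli to identify the successive limit balls, and glue them by a direct limit into a complete smooth manifold to which a diagonal subsequence converges. Your write-up spells out the compatibility bookkeeping in a bit more detail, but the structure matches the paper's proof.
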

\begin {proof}%[Proof of Corollary \ref {2ndcompactness}]
Applying a diagonal argument and passing to a subsequence, we may assume that for every $k \in \mathbb N$, there is a complete pointed $C ^ {k - 1} $-Riemannian manifold $(L_k,q_k)$ and, for sufficiently large $i $, embeddings
 $$f_{i,k}:B(p,k-1)\longrightarrow M_i $$ with $f_i (q_k) = p_i $ such that $f_i ^*g_i\rightarrow g_k $ in the $C ^{k-1} $-topology on $B(p,R-1)$, where $g_i,g_k $ are the Riemannian metrics on $M_i,M $.

By Arzela-Ascoli's theorem, for each $k $ there is a pointed isometry  $$B_{L_k} (q_k, k-1) \longrightarrow B_{L_{k+ 1}} (q_{k+ 1}, k-1) .$$  So, the direct limit of the system
$$B_{L_{2}} (q_{2}, 1) \longrightarrow B_{L_{3}} (q_{3}, 2) \longrightarrow B_{L_{4}} (q_{4}, 3) \longrightarrow \cdots $$
is a complete  pointed $C ^\infty $-Riemannian manifold to which $(M_i, p_i) $ smoothly converges.
\end {proof}

\subsection {Metrizability of $\mathcal M ^ d $ in the smooth topology}
\label {secmetrizability}
The goal here is to show that smooth convergence on $\mathcal M ^ d $ is induced by a Polish topology.   As mentioned in the introduction, this result was established independently and concurrently by Candel, \'Alvarez L\'opez and Barral Lij{\'o} \cite{Alvarezuniversal}.  Their paper became available earlier than ours, so the theorem is certainly theirs.  The two approaches use the same key idea, encoding partial derivatives of a metric in \emph{iterated Sasaki metrics}, but ours produces an explicit metric that will be used elsewhere in the paper, namely in \S \ref{smoothChabauty}.  Our approach is also a bit simpler, since we use metric neighborhoods of the zero section in  iterated tangent bundles instead of the iteratively defined neighborhoods in \cite{Alvarezuniversal}.

Suppose $M $ is a manifold with a Riemannian metric $g$. Sasaki \cite {Sasakidifferential} introduced a Riemannian metric $g ^ 1 $ on the tangent bundle $TM$.  If $(x_1,\ldots, x_d) $ is a coordinate system for some $U \subset M $, let $$(x,v)=(x_1,\ldots, x_d,v_1,\ldots, v_d) $$ be the \emph {induced coordinates} on $TU$, where $v_i =Dx_i$.    At a point $(x, v) \in TU$, the \emph {Sasaki metric} $g ^ 1 $ is given as follows, see \cite [p. 342]{Sasakidifferential}: for $1\leq i, j\leq d$,
\begin {align}
&( g ^ 1)_{i j}= g_{i j }+\sum_{1\leq\alpha,\beta,\gamma,\delta\leq d} g_{\gamma\delta}\Gamma ^\gamma_{\alpha i}\Gamma ^\delta_{\beta j} v_\alpha v_\beta \nonumber\\
&( g ^ 1)_{i \, (d + j)}  =\sum_{1\leq \alpha \leq d}\Gamma_{j\alpha i} v_\alpha \label {metricincluded}\\
&(g ^ 1)_{(d + i)\,(d + j)} = g_{i j}\nonumber .
\end {align}
Here, $\Gamma_{ka j} $ and $\Gamma ^\beta_{a j}$ are the Christoffel symbols of $g $ of the first and second kind, and all the metric data on the right sides of the equations are taken at $x\in U$.

The $k $-fold iterated tangent bundle of a manifold $M $ is the manifold
$$T^k M = T(\cdots T(T(M))\cdots ).$$
Any smooth map $f$ of manifolds induces a smooth map of iterated tangent bundles, the \emph {iterated total derivative} $D^k f$, and if $M $ has a Riemannian metric $g$ then $T^ k M $ inherits the \emph {k-fold iterated Sasaki metric} $g ^ k $.  Note that 
$$T^kM\longrightarrow T^{k-1}M \longrightarrow \cdots \longrightarrow TM \longrightarrow M$$ is a sequence of vector bundles, so for each $i$ we have  a zero section $$\zeta : T^{i-1}M \longrightarrow T^iM.$$  Define the \emph{zero section} of the fiber bundle $T^{k}M \longrightarrow M$ to be the iteration 
$$\zeta^k : M \longrightarrow T^kM.$$

We now show that the iterated Sasaki metric $g ^ k$  encodes all order $\leq k$ derivatives of $g $. We will state this in a strong way that will be useful in Corollary  \ref{bilipschitzderivatives}, but the point is that partial derivatives of the $g_{ij}$ can be written in terms of the $g^k_{\alpha \beta}$.

\begin {lemma}\label {relationship}
Suppose $g $ is a Riemannian metric on an open subset $U\subset \BR^d$ and let $g^1$ be the Sasaki metric on $T U$.  Fix coordinates $(x_1,\ldots, x_d) $ on $U $ and let $(x_1,\ldots, x_d,v_1,\ldots, v_d) $ be the induced coordinates on $TU$, where $v_i =Dx_i$. 

For every $x=(x_1,\ldots, x_d)\in U$ and indices $\alpha, i, j\in\{1,\ldots,  d\}$, we have
\begin {align}
g_{ij} (x) &= g^ 1_{(i +d) (j +d)}(x,v), \ \ \forall v\in T_x U\label{k11} \\
\partial_{\alpha}g_{ij} (x)  &= t\cdot   (g^1_{i(d+j)} + g^1_{j(d+i)})\underset{\ \ \ \alpha ^ {th} \text { place}}{ (x,0,\ldots,\frac 1t,\ldots,0) } , \ \ \ \forall t>0.\label{k12}
\end {align}
Now fix $k$, and iterate the above construction to give a system of coordinates on $T^kU$. For any compact $C\subset U$ and any open neighborhood $O\supset\zeta^k(C)$ in $ T^kU$, any partial derivative $\partial_{\alpha_1,\ldots,\alpha_l} g_{ij}(x)$ with $x\in C$, $0\leq l\leq k$ can be represented as a linear combination
\begin {equation}\partial_{\alpha_1,\ldots,\alpha_l} g_{ij}(x) = \sum_{n} t_n \cdot g^k_{\beta_n\gamma_n} (v_n)\label {derivatives}\end {equation}
where $v_ n\in O$ and $1\leq \beta_n,\gamma_n \leq kd$. Here, $v_n,\beta_n,\gamma_n, t_n$ are determined just by the indices $\alpha_1,\ldots,\alpha_l, i, j$ and the choices of $C,U$, and \emph{not} by the metric $g$.
\end {lemma}
\begin {proof} 
The $k = 1 $ case follows from \eqref {metricincluded}, using the identity $\Gamma_{j\alpha i}+\Gamma_{i \alpha j} =\partial_\alpha g_{ij} $.

For \eqref {derivatives}, first choose some $t>1$ such that for all $x\in C$, the point $$\underset{\ \ \ \alpha ^ {th} \text { place}}{ (x,0,\ldots,\frac 1t,\ldots,0) } \in O.$$ Then \eqref {derivatives} is proved inductively: every time a successive partial derivative of is taken, one can instead consider the appropriate linear combination of entries of the next Sasaki metric, given in \eqref{k11}. One takes \emph{at most} $k$ of these derivatives, and then uses \eqref{k12} to encode the resulting data in the $k^{th}$ iterated Sasaki metric, instead of a previous one.\end {proof}

It will follow that $C^k$-convergence of metrics $g_i$ is  equivalent to $C^0$-convergence of the Sasaki metrics $g_i^k$. Here, recall that $C^k$-convergence means uniform convergence on compact sets of all derivatives up to order $k$. In fact, slightly more is true. 

\begin {corollary}\label {bilipschitzderivatives}
Fix $k\in\BN$. Suppose that $g_i,g$ are Riemannian metrics on some manifold $M$, and let $g_i^k, g^k$ be the induced Sasaki metrics on $T^kM$. Then the following are equivalent:
\begin {enumerate}
\item 	$g_i \to g$ in the $C^k$-topology on $M$,
\item $g_i^k \to g^k$ in the $C^0$-topology on $T^kM$,
\item for any compact $C \subset M$, there is some  open set $O \subset T^kM$ containing $ \zeta^k(C)$ such that $g_i^k \to g^k$ uniformly on $K$.
\end {enumerate}
\end {corollary}
\begin{proof}
Covering $M$ with a locally finite set of coordinate charts, it suffices to prove the Corollary when $M$ is an open subset of $\BR^n$. So, we will feel free to use the coordinate expressions in \eqref{metricincluded} and Lemma \ref{relationship} below without comment.

	For (1) implies (2), fix a compact subset $K \subset T^k M$. The $C^k$-convergence $g_i\to g$ implies that all derivatives up to order $k$ of $g_i$ converge to those of $g$, uniformly on the projection of $K$ into $M$.  In particular, when computing each successive Sasaki metric, the Christoffel symbols in \eqref{metricincluded} converge. Since $K$ is compact, all coordinates $v_\alpha,v_\beta$ from \eqref{metricincluded}  that are relevant in the construction of $g^k$ are bounded. It follows that $g_i^k \to g^k$ uniformly on $K$.

Since $C^0$-convergence is by definition  uniform convergence on compact sets, the implication $(2) \implies (3)$ is obvious, taking $O$ to be any such open subset that has compact closure in $T^kM$.

It remains to prove $(3) \implies (1).$  Given a compact subset $C \subset M$, let $O$ be as in (3). For each $x\in M$, we can use \eqref{derivatives} to write the partial derivatives of each $g_i$ as linear combinations of entries of the $g_i^k$, evaluated at points of $O$, where then particular linear combination is  independent of $i$, and also works for $g$. These linear combinations converge uniformly, by our assumption, so $g_i\to g$ in the $C^k$-topology as desired. \end{proof}

The advantage of using convergence of Sasaki metrics instead of $C^k$ convergence is that it is easier to metrize, since two Sasaki metrics can then be compared by analyzing the minimal distortion of a bilipschitz map between them. Now, it is not the case that when two metrics on $M$ are $C^k$-close, the associated metrics on $T^kM$ are bilipschitz. (We thank a referee for first bringing this to our attention.)  For instance, looking at \eqref{metricincluded}, if two metrics are $C^1$-close then their Christoffel symbols are close, but the expression given for the Sasaki metric also includes the coordinates $v_\alpha,v_\beta$, which can be arbitrarily large\footnote{One can also see this issue when calculating $D^2 f$, where $f : \BR\longrightarrow\BR$ is a map between open subsets of $\BR$ that is $C^2$-close to the identity. On $T^2\BR$, the map $D^2f$ has the form
$$(x,v,w)\longrightarrow \left (x, f'(x) v, \begin{pmatrix}
	f'(x) & 0 \\ f''(x) v & f'(x) \end{pmatrix} w \right ), $$
where here $v\in T\BR_x\cong \BR$ and $w \in T(T\BR)_{(x,v)} \cong \BR^2$. The term $f''(x)v$ can be arbitrarily large, so $D^2f$ is only bilipschitz if $f''(x)\equiv 0$.}. To fix this, we only look at the bilipschitz  distortion on subsets of $T^kM$ in which the coordinates $v_\alpha,v_\beta$ are bounded. Namely,  for each $r>0$ and $k\in \BN$, let $$Z^k_r(M) \subset T^k M$$ be the radius-$r$ neighborhood of the zero section $\zeta^k(M)$, with respect to the $k$-fold iterated Sasaki metric. Then we have:

\begin {corollary}\label {iterated}
Suppose that $(M_n, p_n)\in \mathcal M ^ d $, where $n = 1, 2,\ldots $.  Then $(M_n, p_n) $ converges $C ^ k $ to $ ( M, p) \in \mathcal M^d$ if and only if for every $R >0 $, we have that for sufficiently large $n $, there are smooth embeddings $$f_n : B_M (p, R) \longrightarrow M_n $$
with $f_n (p) = p_n $ such that for some $r>0$, the iterated total derivatives $$D^kf_n : T^k B_M (p, R) \longrightarrow T^kM_n$$ are locally $\lambda_n$-bilipschitz embeddings on $Z_r:=Z^k_r(B_M(p,R))$, with $\lambda_n\rightarrow 1$.
\end {corollary}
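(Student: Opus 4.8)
The plan is to deduce the equivalence between $C^k$-convergence and local $\lambda_n$-bilipschitz convergence ($\lambda_n\to 1$) of the iterated total derivatives $D^kf_n$ directly from the coordinate-level identities established in Lemma \ref{relationship} and Corollary \ref{bilipschitzderivatives}. The point is that, locally, $C^k$-closeness of two Riemannian metrics $g$ and $g_n$ is exactly the statement that all coordinates $(g_n)_{ij}$ and all their partial derivatives up to order $k$ are pointwise close to those of $g$ on precompact coordinate patches, while local $\lambda_n$-bilipschitz closeness of the identity map $(T^kU, g^k)\to(T^kU,g_n^k)$ says that the coordinates of the $k$-fold Sasaki metrics $g^k$ and $g_n^k$ are within a factor $\lambda_n^2$ of each other. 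Lemma \ref{relationship} shows these two assertions are really the same thing, since every partial derivative $\partial_{\alpha_1,\dots,\alpha_l}g_{ij}$ with $l\leq k$ is written in \eqref{derivatives} as a fixed finite sum of coordinates of $g^k$ evaluated at points of $T^kU$ determined only by the indices.

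First I would prove the forward direction. Suppose $(M_n,p_n)\to(M,p)$ in $C^k$, so for each $R>0$ there are smooth embeddings $f_n:B_M(p,R)\to M_n$ with $f_n(p)=p_n$ and $f_n^*g_n\to g$ in the $C^k$-topology. Fix a finite cover of $B_M(p,R)$ by precompact coordinate patches; on each one, the coordinates of $f_n^*g_n$ and all their partials up to order $k$ converge uniformly to those of $g$. By \eqref{derivatives}, the coordinates of the iterated Sasaki metrics $(f_n^*g_n)^k$ converge uniformly to those of $g^k$ on the corresponding precompact patches of $T^kB_M(p,R)$. Since $D^kf_n$ pulls back $g_n^k$ to $(f_n^*g_n)^k$, this precisely says that $D^kf_n:(T^kB_M(p,R),g^k)\to(T^kM_n,g_n^k)$ is locally $\lambda_n$-bilipschitz with $\lambda_n\to 1$. (One has to note that $g^k$ is nondegenerate and that on precompact sets the ratio bounds for the coordinates translate into bilipschitz bounds for the metric, with constants tending to $1$; this is routine.)

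Conversely, suppose for each $R>0$ that for large $n$ there are smooth embeddings $f_n:B_M(p,R)\to M_n$ with $f_n(p)=p_n$ such that $D^kf_n$ is locally $\lambda_n$-bilipschitz with $\lambda_n\to1$. Then the identity $(T^kB_M(p,R),g^k)\to(T^kB_M(p,R),(f_n^*g_n)^k)$ is locally $\lambda_n$-bilipschitz, so Corollary \ref{bilipschitzderivatives} gives, in each fixed precompact coordinate patch, the bound $\lambda_n^{-2}\leq |\partial_{\alpha_1,\dots,\alpha_l}(f_n^*g_n)_{ij}\,/\,\partial_{\alpha_1,\dots,\alpha_l}g_{ij}|\leq\lambda_n^2$ for all $l\leq k$ at points where the denominator is nonzero, and the $l=0$ case shows $f_n^*g_n$ and $g$ are uniformly comparable. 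Combining the comparability of the metric coefficients with the ratio bounds on all derivatives of order $\leq k$, and letting $\lambda_n\to1$, one gets uniform $C^k$-convergence $f_n^*g_n\to g$ on each precompact patch, hence on all of $B_M(p,R)$; since $R$ was arbitrary this is exactly $C^k$-convergence $(M_n,p_n)\to(M,p)$ after passing to a diagonal sequence of radii as in the remarks following Theorem \ref{firstcompactness}.

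The main obstacle is purely bookkeeping rather than conceptual: the ratio bounds in Corollary \ref{bilipschitzderivatives} control $\partial_{\alpha_1,\dots,\alpha_l}(f_n^*g_n)_{ij}$ only in terms of the same derivative of the fixed metric $g$ and only where the latter is nonzero, so some care is needed near points or directions where a particular coordinate derivative of $g$ vanishes — there one must instead use that \eqref{derivatives} expresses that derivative as a genuine \emph{sum} of Sasaki coefficients, each of which is controlled additively (not just multiplicatively) once the Sasaki metrics are $C^0$-close, which follows from the bilipschitz bound together with a uniform bound on the coefficients of $g^k$ over the precompact set. Making this additive-versus-multiplicative conversion clean, uniformly over the finitely many charts, is the only point requiring attention; everything else follows formally from Lemma \ref{relationship}, Corollary \ref{bilipschitzderivatives}, and the definition of the $C^k$-topology.
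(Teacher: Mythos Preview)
Your approach is essentially the same as the paper's: both reduce the question to comparing $g^k$ and $(f_n^*g_n)^k$ on $T^kB_M(p,R)$ via the identity $(D^kf_n)^*g_n^k=(f_n^*g_n)^k$, invoking Corollary~\ref{bilipschitzderivatives} for the bilipschitz-to-$C^k$ direction and the explicit Sasaki formula for the $C^k$-to-bilipschitz direction.

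One small correction: in your forward direction you cite \eqref{derivatives} to conclude that the Sasaki coefficients converge from $C^k$-convergence of the base metric, but \eqref{derivatives} points the wrong way (it expresses base-metric derivatives as sums of Sasaki coefficients). What you actually need is \eqref{metricincluded}, iterated, which shows that each coordinate of $g^k$ is a polynomial in the $g_{ij}$ and their partial derivatives of order $\leq k$; this is exactly what the paper invokes. Your careful remark about the multiplicative bound in Corollary~\ref{bilipschitzderivatives} breaking down where a partial derivative of $g$ vanishes is well taken --- the paper glosses over this --- and your proposed fix (passing to additive control via the bilipschitz bound on $g^k(v,v)$ together with precompactness) is the right way to make it honest.
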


\begin {proof}

Let $g$ and $g_n $ be the metrics on $M $ and $M_n $. %, respectively, and let $g ^ k $ and $g_n ^ k$ be the iterated Sasaki metrics.  
The point is to show that the condition that $D^k f_n$ is locally $\lambda_n $-bilipschitz on $Z_r$, with $\lambda_n\rightarrow 1$, is equivalent to the condition that $f ^*_n g_n \rightarrow g$ in the $C ^ k $-topology.
First, note that $$(D^kf_n) ^*g_n ^ k=\left (f_n ^*g_n \right)^ k,$$ where $g_n ^ k$ is the Sasaki metric on $T ^ k M_n $ and $\left (f_n ^*g_n \right)^ k$ is the Sasaki metric on $T^kB_M(p,R)$ corresponding to $f_n ^*g_n$.  So, $D^kf_n $ is locally $\lambda _n$-bilipschitz on $Z_r$ exactly when 
\begin{equation}
	id : (B_M (p, R), g ^ k)\longrightarrow(B_M (p, R), \left (f_n ^*g_n \right)^ k )\label{idmap}
\end{equation}
is locally $\lambda_n $-bilipschitz on $Z_r$.  

The `only if' direction then follows from our previous work, since if  the map \eqref{idmap} is locally $\lambda_n $-bilipschitz on $Z^k_r(M)$ with $\lambda_n\to 1$, then $$(f_n ^*g_n)^ k\to g^k$$ uniformly on $Z_r$, 
implying that $f_n^*g_n$ converges $C^k$ to $g$ by Corollary \ref {bilipschitzderivatives}.

The `if' direction is also easy. Suppose that $f_n^*g_n\to g$ in the $C^k $-topology. Then Corollary \ref {bilipschitzderivatives} implies that $(f_n^*g_n)^k\to g^k$ in the $C^0$ topology. The ball $B_M(p,R)$ has compact closure in $M$, so the set $Z_r$ has compact closure in $T^kM$. Hence, $(f_n^*g_n)^k\to g^k$ uniformly on $Z_r$, implying that the identity map in \eqref{idmap} is locally $\lambda_n$-bilipschitz with $\lambda_n\to 1$.\end {proof}

Finally, we record for later use the following fact.

\begin{fact}\label{alsobilipschitz}
Fix $\lambda>1$, $k\in \BN$ and $r>0$. Suppose that $f : M \longrightarrow N$ is a smooth map of Riemannian manifolds and that $D^k f$ is locally $\lambda$-bilipschitz when restricted to some neighborhood of the zero section $\zeta^k(M) \subset T^kM$, for instance the neighborhoods $Z^k_r(M)$ above. Then  the map $f$ is itself locally $\lambda$-bilipschitz.
\end{fact}

\begin {proof}
Let $x\in M$, set $v_0:=x$ and set $v^i=\zeta^i(x)$, for $i=1,\ldots,k$. For each $i$, we have isometric embeddings
$$\iota: T^iM_{v_{i-1}} \overset{\cong}{\longrightarrow} T((T^iM)_{v_{i-1}})_{v_{i}} \hookrightarrow T^{i+1}M_{v_{i}},$$
which satisfy the commutative diagram
$$\begin{tikzcd}
    T^iM_{v_{i-1}} \ar{r}{\iota} \ar{d}{D^if} & T^{i+1}M_{v_{i}} \ar{d}{D^{i+1}f} \\
     T^iN_{D^{i-1}f(v_{i-1})} \ar{r}{\iota} & T^{i+1}N_{D^{i}f(v_{i})} 
\end{tikzcd},$$
defining $\iota$ similarly on $N$. The commutativity  is just the statement that a linear map is its own derivative, and the fact that $\iota$ is an isometric embedding is immediate from the definition of the Sasaki metric. Composing, we have a diagram $$\begin{tikzcd}
    TM_{x} \ar{r}{\iota^k} \ar{d}{Df} & T^{k+1}M_{v_{k}} \ar{d}{D^{k+1}f} \\
     TN_{f(x)} \ar{r}{\iota^k} & T^{k+1}N_{D^{k}f(v_{k})} 
\end{tikzcd},$$
so if the linear map  $D^{k+1}f$ is $\lambda$-bilipschitz, so must be $Df$.
\end {proof} 

Corollary \ref {iterated} suggests a description of a basis of neighborhoods around a point $(M, p)\in\mathcal M ^ d $.  We define the \emph {$k^{th} $-order $(R,r,\lambda)  $-neighborhood} of $(M, p) $, written $$\mathcal N ^ k_{R,r,\lambda} (M, p), $$ 
to be the set of all $(N,q) $ such that there is a smooth embedding $$f: B_M (p, R)\longrightarrow N $$ with $f (p) = q $ such that $D  ^ k f: T ^ k U\longrightarrow T ^ k N  $ is locally $\lambda $-bilipschitz with respect to the iterated Sasaki metrics on $Z^k_r(B_M(p,R))$.  Note that $\mathcal N ^ k_{R,r,\lambda} (M, p)$ is a \emph {closed} neighborhood of $(M, p) $.  By Arzela-Ascoli, its interior is the open neighborhood $$\mathring{\mathcal N} ^ k_{R,r,\lambda} (M, p)$$ that is defined similarly, except that we require $D ^ k f $ to be locally $\lambda' $-bilipschitz for some $\lambda' <\lambda $. 

\begin {theorem}\label {manifoldmetrizable}
$\mathcal M ^ d $ has the structure of a Polish space (a complete, separable metric space), in which convergence is smooth convergence.
\end {theorem}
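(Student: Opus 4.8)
The plan is to prove three things: that the $k^{th}$-order $(R,\lambda)$-neighborhoods $\mathcal N^k_{R,\lambda}(M,p)$ genuinely form a neighborhood basis for a topology in which convergence is smooth convergence; that this topology is metrizable; and that the resulting metric space is separable and complete. For the first point, Corollary~\ref{iterated} already tells us that a sequence $(M_n,p_n)$ converges smoothly to $(M,p)$ if and only if, for every $R$ and $k$, eventually $(M_n,p_n)\in\mathring{\mathcal N}^k_{R,\lambda}(M,p)$ with $\lambda\to1$; so the sets $\mathring{\mathcal N}^k_{R,\lambda}(M,p)$ generate a topology whose convergent sequences are exactly the smoothly convergent ones, provided we check the (routine but slightly fiddly) compatibility conditions --- that the intersection of two such neighborhoods of $(M,p)$ contains a third, which follows by taking the largest $R$, the largest $k$, and the smallest $\lambda$, composing the almost-isometric embeddings appropriately and using that an $R$-ball isometrically embeds after restriction.

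For metrizability I would construct an explicit metric, since this is what the paper advertises it needs for later use in \S\ref{smoothChabauty}. The idea is to set, for a fixed exhausting choice of radii and orders,
\[
d\big((M,p),(N,q)\big)=\sum_{k=1}^{\infty}2^{-k}\,\min\Big\{1,\ \inf_f\ \big(\log\Lambda_k(f)\big)\Big\},
\]
where the infimum is over smooth embeddings $f:B_M(p,k)\to N$ with $f(p)=q$ (and symmetrically, the roles of $M,N$ swapped, taking the symmetrized version), and $\Lambda_k(f)$ is the smallest $\lambda$ such that $D^k f$ is locally $\lambda$-bilipschitz on $T^k B_M(p,k)$ with respect to the iterated Sasaki metrics; one then symmetrizes in $(M,p)\leftrightarrow(N,q)$ to get a genuine metric. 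Symmetry is built in; positivity (that $d=0$ forces pointed isometry) follows because $\Lambda_k(f)\to1$ across all $k$ forces, via Corollary~\ref{bilipschitzderivatives} and Arzela--Ascoli, an actual pointed isometry on larger and larger balls, hence on all of $M$; and the triangle inequality follows from composing embeddings, using that $\Lambda_k$ is submultiplicative under composition and that $\log$ turns this into subadditivity, together with the standard truncation trick for the $\min\{1,\cdot\}$ and the $2^{-k}$ weights. That $d$ induces the smooth topology is then exactly the content of Corollary~\ref{iterated}, read off level by level in $k$.

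Separability is obtained by exhibiting a countable dense set: every pointed Riemannian manifold can be $C^\infty$-approximated on each $R$-ball by one whose metric tensor has, in a finite real-analytic (or smooth bump-function) atlas, rational-coefficient data, and by compactly truncating; a standard diagonal argument over $R\in\mathbb N$ produces a countable family dense in $d$. Completeness is the step I expect to be the main obstacle, and it is where the compactness theory of the appendix does the real work: given a $d$-Cauchy sequence $(M_n,p_n)$, Cauchyness at every level $k$ and radius $R$ forces, for each fixed $R,k$, uniform $C^k$-bounds on the metric tensors in suitable charts covering $B(p_n,R)$ --- i.e.\ uniformly $C^k$-bounded geometry in the sense of Definition~\ref{boundedgeometry}, with bounds depending on $R,k$ but not $n$. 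Then Corollary~\ref{2ndcompactness} extracts a smoothly convergent subsequence with limit $(M,p)\in\mathcal M^d$, and since the full sequence is Cauchy in the metric $d$, it converges to the same limit. The delicate part is verifying carefully that $d$-Cauchyness really does yield the uniform geometry bounds required to invoke Corollary~\ref{2ndcompactness} --- one must chase how bilipschitz control of $D^kf$ between members of the sequence translates, via Corollary~\ref{bilipschitzderivatives} and Lemma~\ref{relationship}, into uniform control of all partials of the metric tensor up to order $k$ in honest coordinate charts, and then check the transition-map bounds of Definition~\ref{boundedgeometry} part 4); this is exactly the kind of calculation I will set up carefully rather than grind through here.
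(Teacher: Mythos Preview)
Your approach is essentially the same as the paper's: build pseudo-metrics from the bilipschitz constants of iterated total derivatives, symmetrize, sum with weights $2^{-k}$ and truncation, then prove separability and completeness via Corollary~\ref{2ndcompactness}. The outline for completeness is exactly right, including the point you flag as delicate.

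There is, however, a genuine gap in your triangle inequality. You fix the domain radius at $k$: the infimum is over $f:B_M(p,k)\to N$. But if $f:B_M(p,k)\to N$ has $D^kf$ locally $\lambda$-bilipschitz and $g:B_N(q,k)\to Z$ has $D^kg$ locally $\mu$-bilipschitz, then $g\circ f$ is only defined on $f^{-1}(B_N(q,k))\supset B_M(p,k/\lambda)$, not on all of $B_M(p,k)$. So you cannot produce a competitor for the $k$-th term of $d\big((M,p),(Z,z)\big)$ by composing, and submultiplicativity of $\Lambda_k$ under composition is not enough. The paper fixes this by building the domain shrinkage into the definition: it sets
\[
d_{R,k}\big((M,p),(N,q)\big)=\inf\{\log\lambda:\ (N,q)\in\mathcal N^k_{R/\lambda,\lambda}(M,p)\},
\]
i.e.\ the embedding is only required on $B_M(p,R/\lambda)$. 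Then $f$ maps $B_M(p,R/(\lambda\mu))$ into $B_N(q,R/\mu)$, the composition is defined there with constant $\lambda\mu$, and $\log(\lambda\mu)=\log\lambda+\log\mu$ gives the triangle inequality cleanly. This is a one-line change to your definition, but without it the argument as written does not go through.

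For separability, your rational-coefficient sketch would work but is more awkward than necessary; the paper instead observes that closed pointed manifolds are dense (double a large ball), that there are only countably many diffeomorphism types of closed manifolds, and that for each fixed $M$ the space of Riemannian metrics is separable in $C^\infty$.
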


\begin {proof}
For each $R >0 $ and $k\in\mathrm {N} $,  define a function $d_{R, k} : \mathcal M ^ d\times\mathcal M ^ d\longrightarrow\BR $ by
\begin {align*}
 d_{R, k}\left((M, p ),(N, q )\right) = \inf\{\log \lambda \ | \ (N, q ) \in\mathcal N ^ k_{R/\lambda,1/\lambda,\lambda} (M, p)\}.
\end {align*}
Each $d_{R, k}$ satisfies an (asymmetric) triangle inequality.  For suppose we have $(M,p),(N,q),(Z,z)\in\mathcal M ^ d $ and basepoint respecting embeddings $$f: B_M (p , R/\lambda)\longrightarrow N, \ \ g: B_N(q , R/\mu)\longrightarrow Z$$ such that $D ^k f $ and $D ^ k g $ are locally $\lambda $-bilipschitz and locally $\mu $-bilipschitz embeddings, respectively, when restricted to the sets $$Z^k_{1/\lambda}(B_M(p,R/\lambda)) \subset T^kM, \ \ Z^k_{1/\mu}(B_N(q,R/\mu)) \subset T^kN.$$  By Fact \ref {alsobilipschitz}, the map $f $ is also a locally $\lambda$-bilipschitz embedding, so
$$f\left(B_M \left(p , \frac R{\lambda \mu}\right)\right) \subset B_N \left(q, \frac R \mu\right),$$
and since $D^kf$ is $\lambda$-bilipschitz, 
$$D^kf\left(Z^k_{1/(\lambda\mu)}(B_M(p,R/(\lambda\mu)))\right ) \subset Z^k_{1/\mu}(B_N(q,R/\mu)) .$$
Therefore, the composition $g\circ f : B_M (p , \frac R{\lambda \mu}) \longrightarrow N$ is defined and the map $D ^ k (g\circ f)  $ is locally $(\lambda\mu )$-bilipschitz on $Z^k_{1/(\lambda\mu)}(B_M(p,R/(\lambda\mu)).$ So,
\begin {align*}d_{R, k}\left((M, p ),(Z,z )\right) &\leq \inf_{\lambda,\mu }\, \log \lambda\mu \\\ &= 
\inf_{\lambda }\, \log \lambda + \inf_{\mu }\, \log \mu \\ &= d_{R, k}\left((M, p ),(N, q )\right)+ d_{R, k}\left((N, q ),(Z, z )\right). 
\end {align*}

The subsets of $\mathcal M ^ d $ defined for each $(M, p) \in\mathcal M ^ d$, $R>0 $, $k\in\mathrm{N} $, $\epsilon >0 $ by $$d_{R, k}\left((M, p ),\, \cdot \,\right)< \epsilon $$ form a basis for a \emph {smooth  topology} on $\mathcal M ^ d $ that induces smooth convergence, by Lemma \ref {iterated}.  Although the $d_{R, k}$ are not symmetric, the reversed inequalities $$d_{R, k}\left(\, \cdot \,, (M, p )\right)< \epsilon $$ define a basis for the same topology, as Lemma \ref {invert} allows the relevant locally bilipschitz maps to be inverted at the expense of decreasing $R $.  So, the smooth topology is generated by the family of pseudo-metrics 
$$\hat d_{R, k} : \mathcal M ^ d\times\mathcal M ^ d\longrightarrow\BR, \ \ \hat d_{R, k}(x, y) = d_{R, k} (x, y)+ d_{R, k} (y, x).$$

 As the topology on $\mathcal M ^ d  $ induced by a particular pseudo-metric $\hat d_{R, k} $ becomes finer if $R, k $ are increased, it suffices to consider only $\hat d_{k, k} $ for $k\in\mathbb N $.  Therefore, the following is a metric on $\mathcal M ^ d $ that induces the smooth topology:
$$D:\mathcal M ^ d\times\mathcal M ^ d\longrightarrow\BR, \ \ D\big ( x,y\big) = \sum_{k = 1} ^\infty 2^ {- k} \min \{\hat d_{k, k}(x,y), 1\}. $$
%As we can take $R $ to be an integer, this `smooth topology' is determined by a countable family of pseudo-metrics, and is therefore metrizable (c.f. \cite [(Chapter IX, \S 2, no. 4, Theorem 1)]{Bourbakigeneral}).

We now show that $\mathcal M ^ d $ is separable.  An element $(M, p)\in\mathcal M ^ d $ is a limit of closed Riemannian manifolds: for instance, we can exhaust $M$ by a sequence of compact submanifolds with boundary, double each of these and extend the Riemannian metric on one side arbitrarily to the other.  So, it suffices to construct a countable subset of $\mathcal M ^ d $ that accumulates onto every closed manifold in $\mathcal M ^ d $.

There are only countably many diffeomorphism types of closed manifolds: this is a consequence of Cheeger's finiteness theorem \cite {Cheegerfiniteness}, for instance.  So, it suffices to show that the space $\mathcal M (M) $ of isometry classes of pointed closed Riemannian manifolds \emph {in the diffeomorphism class of some fixed $M$} is separable in the smooth topology.  This space $\mathcal M (M) $ is a continuous image of the product of $M$ with the space of Riemannian metrics on $M$, with the smooth topology on tensors.  The manifold $M$ is separable, and so is the space of Riemannian metrics on $M$, by the Weierstrass approximation theorem. So, their product is separable, implying $\mathcal M (M) $ is too, finishing the proof.

Finally, we want to show that  $(\mathcal M ^ d, D)$ is complete, so let $(M_i, p_i) $ be a Cauchy sequence.  We claim that for every $R>0 $ and $k\in \mathbb N $, the balls $$B (p_i, R)\subset M_i $$ have uniformly $C ^ k $-bounded geometry, in the sense of Definition \ref {boundedgeometry}.   Corollary \ref {2ndcompactness} will then imply that $(M_i, p_i) $ has a smoothly convergent subsequence, which will finish the proof.

It suffices to show that there are arbitrarily large $R,k$ for which the balls $$B (p_i, R)\subset M_i $$ have uniformly $C ^ k $-bounded geometry.  So, fix some $k\in \mathbb N$.  Since $(M_i, p_i) $ is $D $-Cauchy and $D= \sum_{j = 1} ^\infty 2^ {- j} \min \{\hat d_{j, j}, 1\}, $
 the $d_{k, k} $-diameter of the tail of $(M_i, p_i) $ can be made arbitrarily small.  In other words, there is some $(M,p)\in \mathcal M ^ d $ such that for sufficiently large $i $,
$$d_{k, k} \big ((M, p), (M_i, p_i)\big) < \log 2.$$ 
This means that for each large $i $ there is a pointed, smooth embedding
$$f_i: B_M (p, k)\longrightarrow M_i $$ such that $D^k f_i $ is locally $2 $-bilipschitz on $Z:=Z^k_{\frac 12}(B_M(p,k/2))$.  By Lemma \ref {invert}, $$f (B_M (p, k/2)) \supset B_{M_i} (p_i,k/4) .$$  

By precompactness, the ball $B_M (p,k/2-1) \subset M$ has $C ^ k $-bounded geometry, in the sense of Definition \ref {boundedgeometry}, for some constants $r,\epsilon, L $. We will use the maps $f_i$ to translate this to \emph {uniform} $C^k$-geometry bounds for the balls $$B_{M_i}(p_i,R) \subset M_i, \text { where } R:=(k - 1)/4 .$$   Let $\phi_s: B \longrightarrow U_s \subset M $ be finitely many coordinate charts as in  Definition \ref {boundedgeometry}, where $B$ is a ball around the origin in $\BR^d$, and by shrinking $B $ assume that $U_s\subset B _M (p, k)$ for all $s $.  Define
$$\phi_{i, s} : B \longrightarrow U_{i, s} \subset M_i, \ \ \phi_{i,s} = f_i\circ\phi_s  \ \ \forall i,s.$$  It is now straightforward to verify that 1) -- 4) of Definition \ref {boundedgeometry} are satisfied for the subsets $B_{M_i} (p_i, R)\subset M_i $ by the charts $\phi_{i,s}$, with modified constants.  As the maps $f_i: B_M (p, R) \longrightarrow M_i $ are locally $2 $-bilipschitz, the $\phi_{i,s}$ are locally $2 L$-bilipschitz, so by Lemma \ref {invert} the $\epsilon/2 $-ball around every $q\in B_M(p,R') $ is contained in some $U_{i,s}$, as $R' = (R -1 )/2 $. The transition maps of 4) are unchanged by the composition, so it remains to prove condition 3). The argument for here is like an effective version of $(3)\implies (1)$ in Corollary \ref{bilipschitzderivatives}. Fix some $s$, and let $O = D^k \phi_s^{-1}(Z) \subset T^kB$, where  $Z \subset T^kM$ is the precompact subset defined above on which $f_i$ is locally $2$-bilipschitz. By Lemma \ref{relationship}, each partial derivative of a Riemannian metric $g$ on $B$ can be expressed as some fixed linear combination \begin {equation}\partial_{\alpha_1,\ldots,\alpha_l} g_{ij}(x) = \sum_{n} t_n \cdot g^k_{\beta_n\gamma_n} (v_n)\label {derivatives2}\end {equation}
of coefficients of the metric $g^k$ on $T^k B$, where $v_n  \in O$ and the linear combination depends only on the partial derivative taken and the neighborhood $O$, not the metric $g$. So, let $g_M$ and $g_i$ be the metrics on $M$ and $M_i$, respectively. By assumption, the coefficients of the metric $\phi_s^* g_M$ have bounded $C^k$ norm, so since $O \subset T^kB$ is precompact, the construction of the Sasaki metric implies that all the coefficients of $(\phi_s^* g_M)^k=(D^k\phi_s^*) g_M^k$  are bounded on $O$. But since $D^kf_i$ is $2$-bilipschitz on $Z$, the metrics $g_M^k$ and $(D^k f_i^*)g_i^k$ differ by a factor of at most $2$ on $Z$, and hence the coefficients of the metric $$\left ((f_i \circ \phi_s)^* g_i \right )^k = (D^k\phi_s ^i)^*((D^k f_i^*)g_i^k)$$ are also bounded on $O$. Equation \eqref{derivatives2} then implies that all partial derivatives of the metric $(f_i \circ \phi_s)^* g_i $ are bounded, which proves condition (3).

This shows that for all $k\in \mathbb  N $, the balls $B(p_i,(k-1)/2)\subset M_i $ have uniformly $C ^ k  $-bounded geometry.  (We showed this explicitly for large $i $, but the initial finitely many terms only contribute a bounded increase to the constants.)  So, Corollary \ref {2ndcompactness} implies that $(M_i, p_i) $ has a smoothly convergent subsequence.  
\end {proof}

\label {smoothtopology}

\subsection {Vectored and framed manifolds} \label {vectored} It is often convenient to supplement the basepoint of a pointed manifold $(M,p)$ with additional local data: for instance, a unit vector or an orthonormal basis for $T_pM$.  An orthonormal basis for $T_pM$ is called a \emph {frame} for $M$ at $p$, and we let $\CF M \longrightarrow M$ be the bundle of all frames for $M$.

A \emph {vectored Riemannian manifold} is a pair $(M, v) $, where $v\in TM $ is a unit vector, and a \emph {framed Riemannian manifold} is a pair $(M,f)$, where $f\in \CF M$ is some (orthonormal) frame.  We define
\begin {align*}T^1\mathcal M^ d&=\{\substack{\text {vectored, connected, complete}\\ \text { Riemannian d-manifolds } (M, v)} \}/\sim\\
\CF \mathcal M^ d&=\{\substack{\text {framed, connected, complete}\\ \text { Riemannian d-manifolds } (M, f)} \}/\sim
\end {align*}
where in both cases we consider vectored (framed) manifolds up to vectored (framed) isometry.
Smooth convergence of vectored Riemannian manifolds is defined as follows: we say that $(M_i, v_i)\rightarrow (M, v) $ if for every $R >0 $ there is an open set $U \supset B (p,R)  $ and, for sufficiently large $i $, embeddings
\begin {equation}f_i:U\longrightarrow M_i \label{framedalmost}\end {equation} with $Df_i (v) = v_i $ such that and $f_i ^*g_i\rightarrow g $ on $U$ in the $C ^\infty $-topology, where $g_i,g $ are the Riemannian metrics on $M_i,M $; an analogous definition gives a notion of smooth convergence on $\CF \mathcal M^ d$.   We then have:

\begin {theorem}
$T^1\mathcal M^ d$ and $\CF \mathcal M ^ d $ both admit complete, separable metrics that topologize smooth convergence, and such that the natural maps$$\CF \mathcal M ^ d \longrightarrow T^1\mathcal M^ d \longrightarrow \mathcal M ^ d$$
defined by taking a frame to its first element, and a vector to its basepoint, are quotient maps.
\end {theorem}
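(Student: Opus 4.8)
The plan is to mimic, essentially verbatim, the proof of Theorem~\ref{manifoldmetrizable}, but carrying along the extra data of a unit vector or a frame at the basepoint. First I would define, for $(M,v) \in T^1\mathcal M^d$, the analogue of the neighborhoods $\mathcal N^k_{R,\lambda}$: let $\mathcal N^k_{R,\lambda}(M,v)$ be the set of $(N,w)$ admitting a smooth embedding $f : B_M(p,R) \longrightarrow N$ (where $p$ is the base of $v$) with $Df(v) = w$ and with $D^kf$ locally $\lambda$-bilipschitz with respect to the iterated Sasaki metrics. The same argument as in Corollary~\ref{iterated}---which never used that only the basepoint was prescribed---shows that these neighborhoods induce smooth convergence of vectored manifolds, and the same asymmetric triangle inequality holds because the composition $g \circ f$ of two basepoint-and-vector-respecting almost isometric maps still respects the vector: $D(g\circ f)(v) = Dg(Df(v))$. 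So one gets pseudo-metrics $\hat d_{R,k}$ exactly as before, hence a metric $D$ inducing the smooth topology on $T^1\mathcal M^d$; the frame case is identical, replacing $Df(v)=w$ by $Df(f_0) = f_1$ on frames.

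Next I would check separability. As in Theorem~\ref{manifoldmetrizable}, every $(M,v)$ is a limit of vectored closed manifolds (exhaust, double, extend the metric, and carry the vector along unchanged); there are countably many diffeomorphism types of closed manifolds by Cheeger finiteness; and for fixed closed $M$ the space of isometry classes of triples (metric, point, unit vector) is a continuous image of the product of the unit tangent bundle of $M$ with the (separable, by Weierstrass) space of Riemannian metrics on $M$, hence separable. The frame case uses $\CF M$ in place of $T^1M$. For completeness, a Cauchy sequence $(M_i,v_i)$ has underlying pointed manifolds $(M_i,p_i)$ whose $R$-balls have uniformly $C^k$-bounded geometry for all $R,k$ by exactly the argument in Theorem~\ref{manifoldmetrizable}; Corollary~\ref{2ndcompactness} extracts a smoothly convergent subsequence of $(M_i,p_i)$ with almost isometric maps $f_i : M \dashmap M_i$, and then the unit vectors $w_i := (Df_i)^{-1}(v_i) \in T^1M$ subconverge by compactness of $T^1B_M(p,1)$ to some $w$, and $(M_i,v_i) \to (M,w)$ using the same $f_i$. (Strictly, $Df_i$ is defined near $p$ for large $i$ and is close to an isometry there, so $(Df_i)^{-1}(v_i)$ makes sense; alternatively push $v_i$ back via the almost-isometry and use that $T^1B_M(p,1)$ is compact.) Again the frame case only replaces $T^1B_M(p,1)$ by $\CF B_M(p,1)$, which is still compact.

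Finally, for the quotient map assertions: the maps $\CF\mathcal M^d \to T^1\mathcal M^d$ (forget all but the first frame vector) and $T^1\mathcal M^d \to \mathcal M^d$ (forget the vector) are clearly continuous and surjective, since smooth convergence upstairs restricts to smooth convergence downstairs and every pointed manifold admits a unit vector / frame at its basepoint. To see they are quotient maps it suffices to show they are open, or to produce continuous local sections. The cleanest route is openness: given $(M,v)$ close to $(N,w)$ in $T^1\mathcal M^d$, witnessed by an almost isometric $f$ with $Df(v)=w$, and given any frame $e$ for $M$ whose first vector is $v$, the pair $(N, Df(e))$ is close to $(M,e)$ in $\CF\mathcal M^d$ via the same $f$; this shows $\CF\mathcal M^d \to T^1\mathcal M^d$ is open, and the identical argument (with a vector in place of a frame) handles $T^1\mathcal M^d \to \mathcal M^d$. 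A continuous open surjection between topological spaces is a quotient map, so we are done. The main obstacle---really a bookkeeping one rather than a conceptual one---is making sure that in the completeness argument the limiting almost isometric maps $f_i$, which are only guaranteed to be \emph{almost} isometric, can be used to transport $v_i$ back to a genuinely convergent sequence in a fixed compact piece of the limit $M$; this is handled exactly as the analogous transport of the basepoint in Theorem~\ref{manifoldmetrizable}, using Lemma~\ref{invert} to control domains and the uniform geometry bounds to get $C^k$-convergence of the pulled-back metrics.
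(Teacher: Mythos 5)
Your proposal is correct and takes essentially the same approach as the paper, whose proof consists only of the remark that ``the proof is identical to the work done earlier in this section,'' that one should use locally bilipschitz maps of iterated tangent bundles respecting the lifted vector or frame, and that the compactness arguments go through because one works with \emph{unit} vectors and \emph{orthonormal} frames. Your filling-in of the quotient-map assertion via openness is a natural way to make that part explicit (it is not addressed in the paper), and your treatment of completeness is exactly the intended one; the only cosmetic point is that $Df(e)$ (resp.\ $(Df_i)^{-1}(v_i)$) is only approximately orthonormal (resp.\ unit), so one should Gram--Schmidt (resp.\ normalize) before invoking compactness, which leaves the first vector fixed since it is already unit.
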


The proof is identical to the work done earlier in this section.  In particular, one can still reinterpret smooth convergence through locally bilipschitz maps of iterated tangent bundles, as long as these maps respect the obvious lifts of the base vectors or frames of the original manifolds.  Also, all compactness arguments still apply since we have chosen \emph {unit} vectors and \emph {orthonormal} frames.

\subsection {The Chabauty topology}\label {Chabautysection}
Suppose $M $ is a proper metric space and let $\mathcal C (M) $ be the space of closed subsets of $M $.  The \emph {Chabauty topology} on $\mathcal C(M)$ is that generated by subsets of the form
\begin {equation}\label {neighborhoods}\{C \in \mathcal C (M) \ | \ C \cap K=\emptyset\}, \ \   \  \ \{C \in \mathcal C (M) \ | \ C \cap U \neq\emptyset\},
\end {equation}
where $K\subset M$ is compact and $U\subset M $ is open.  It is also called the \emph {Fell topology} by analysts.  Convergence can be characterized as follows:

\begin {proposition}[Prop E.12, \cite{Benedettilectures}]\label {Chabautyconvergence}
A sequence $(C_i)$ in $\mathcal C (M) $ converges to $C \in \mathcal C (M)$ in the Chabauty topology if and only if
\begin {enumerate}
\item if $x_{i_j} \in C_{i_j}$ and $x_{i_j} \to x \in M$, where $i_j \to \infty$, then $x\in C$.
\item if $x\in C$, then there exist $x_i \in C_i$ such that $x_i \to x$.
\end {enumerate}
\end {proposition}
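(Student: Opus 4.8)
The statement to prove is the last displayed proposition, Proposition \ref{Chabautyconvergence}, the characterization of convergence in the Chabauty (Fell) topology. The plan is to prove the two implications separately: first that Chabauty convergence $C_i \to C$ forces conditions 1) and 2), and then conversely that 1) and 2) together force $C_i \to C$ in the topology generated by the subbasic sets in \eqref{neighborhoods}. Throughout I will use that $M$ is a \emph{proper} metric space, so closed bounded sets are compact; this is what makes the open sets $\{C \mid C \cap U \neq \emptyset\}$ and the compact-avoidance sets $\{C \mid C \cap K = \emptyset\}$ interact nicely with metric balls.

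For the forward direction, suppose $C_i \to C$ in the Chabauty topology. To verify 1), suppose $x_{i_j} \in C_{i_j}$, $x_{i_j} \to x$, but $x \notin C$. Since $C$ is closed, there is a radius $r>0$ with $\overline{B(x,r)} \cap C = \emptyset$; the set $\mathcal W = \{D \in \mathcal C(M) \mid D \cap \overline{B(x,r)} = \emptyset\}$ is a Chabauty-open neighborhood of $C$ (here $\overline{B(x,r)}$ is compact by properness), so $C_{i_j} \in \mathcal W$ for large $j$, contradicting $x_{i_j} \in C_{i_j} \cap \overline{B(x,r)}$ once $x_{i_j}$ is within $r$ of $x$. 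To verify 2), fix $x \in C$ and $\varepsilon > 0$; the set $\{D \mid D \cap B(x,\varepsilon) \neq \emptyset\}$ is Chabauty-open and contains $C$, so for large $i$ we can pick a point of $C_i$ in $B(x,\varepsilon)$. Running this over $\varepsilon = 1/n$ and diagonalizing produces the required sequence $x_i \in C_i$ with $x_i \to x$.

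For the converse, assume 1) and 2); I must show $C_i$ lies eventually in any subbasic open set containing $C$. If $C \cap K = \emptyset$ for compact $K$ but infinitely many $C_{i_j}$ meet $K$, choose $x_{i_j} \in C_{i_j} \cap K$; by compactness of $K$ pass to a further subsequence with $x_{i_j} \to x \in K$, and then 1) gives $x \in C \cap K$, a contradiction. If $C \cap U \neq \emptyset$ for open $U$, pick $x \in C \cap U$ and a ball $B(x,\varepsilon) \subset U$; by 2) there are $x_i \in C_i$ with $x_i \to x$, so $x_i \in B(x,\varepsilon) \subset U$ for large $i$, i.e.\ $C_i \cap U \neq \emptyset$ eventually. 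Since the sets in \eqref{neighborhoods} form a subbasis, finite intersections of such sets form a neighborhood basis at $C$, and being eventually in each factor gives being eventually in the intersection. This completes both directions.

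The main subtlety — not really an obstacle but the point requiring care — is the systematic use of properness: every place a ``compact'' set is needed (to apply the avoidance subbasic sets, or to extract a convergent subsequence of $x_{i_j}$) is supplied by replacing an open or closed ball with its compact closure. One should also be slightly careful in the forward direction that a genuine \emph{sequence} $x_i \in C_i$ is produced in 2) rather than merely a subsequence: this is handled by the diagonal argument over shrinking balls, using that $C_i$ is nonempty near $x$ for all large $i$ (one can set $x_i$ arbitrarily in $C_i$, or leave it undefined, for the finitely many small $i$ where the relevant ball condition has not yet kicked in). Since this is essentially Proposition E.12 of \cite{Benedettilectures}, I would either cite it directly or include the short argument above for completeness.
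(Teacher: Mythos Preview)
Your proof is correct and is the standard argument. The paper does not give its own proof of this proposition at all: it simply records the statement with the attribution ``Prop~E.12, \cite{Benedettilectures}'' and moves on, so there is nothing to compare against beyond noting that you have supplied exactly the short self-contained argument the paper chose to outsource.
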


The Chabauty topology is compact, separable and metrizable \cite[Lemma E.1.1]{Benedettilectures}.  When $M$ is compact, it is induced by the \emph {Hausdorff metric} on $C(M)$, where the distance between closed subsets $C_1,C_2 \subset M$ as defined as
$$d_{\text {Haus}}(C_1,C_2)=\inf \{ \epsilon \ | \ C_1 \subset \mathcal N_\epsilon (C_2) \text { and }  C_2 \subset \mathcal N_\epsilon (C_1) \}. $$
For noncompact $M$, the Chabauty topology is almost, but not quite, induced by taking the Hausdorff topology on all compact subsets of $M $.  Namely, fix a base point $p\in M$.  If $A\subset M$ is closed and $R>0$, set
$$
A_R=A\cap \overline {B(p,R)},$$
and then define a pseudo-metric $d_R$ on $\mathcal C (M) $ by setting
$$ d_R(A,B)=\min  \Big \{ 1, d_{\text {Haus}}(A_R, B_R) \Big\},$$
where $d_{\text {Haus}}$ is the Hausdorff metric of the compact subset $\overline {B(p,R)} \subset M $. 

The family of pseudo-metrics $\{d_R \ | \ R>0\}$ does not determine the Chabauty topology, since if $x_i\rightarrow x$ is a convergent sequence of points with $d(x,p)=R$ and $d(x_i,p)>R$ for all $i $, then $\{x_i\} \rightarrow \{x\}$ in the Chabauty topology, but $d_R(\{x_i\},\{x\})=1$ for all $i$. 

We now describe how to taper down $d_R$ near  the boundary of $B(p,R)$  so that even when points converge into $\partial B(p,R)$  from outside, $d_R$  does not jump in the limit.
The idea is  to view the Hausdorff distance on  closed  subsets of $ \overline {B(p,R)}$  as a special case of a  distance $d_{usc}$ on upper semicontinuous  (u.s.c.)  functions $$\phi,g:  \overline {B(p,R)} \longrightarrow [0,1] .$$
Closed subsets $A,B$  have u.s.c.\ characteristic functions $1_A, 1_B$.  The advantage of functions is that $1_A (x)$ and $1_B (x)$  can be scaled to converge to zero as $x \to \partial B(p,R)$,  so that  near $\partial B(p,R)$, the contribution to distance is negligible.

\vspace{2mm}

 To define a metric on u.s.c.\ functions, we use an idea of Beer \cite{Beerupper}.   Given a  compact metric space $K $ and a function $\phi:K\longrightarrow [0,1]$,  let $$\mathcal H(\phi) = \{ (x,s) \ | \ s \leq \phi(x) \} \subset K\times [0,1]$$  be the \emph {hypograph} of $\phi$. The  distance between functions $\phi,\psi: K\longrightarrow [0,1]$  is
$$d_{\text {usc}}(\phi,\psi) := d_{\text {Haus}}\left (\mathcal H(\phi),\mathcal H(\psi)\right) ,$$
 where $d_{\text {Haus}}$ is the Hausdorff  metric on $K \times [0,1] $,  considered with the product metric $d((x,s),(y,t))=d(x,y)+d(s,t)$.  Note that if  $A,B \subset K$  are closed, $$d_{\text {usc}}(1_A,1_B) = \min \big \{ 1,d_{\text {Haus}}\left (A,B\right) \big \}.$$

Fix a function $\phi : K \longrightarrow [0,1]$,  and define a new metric
$d_{\text {Haus}}^\phi$ on $\mathcal C(K)$ via
$$d_{\text {Haus}}^\phi(A,B) = d_{\text {usc}}\left (\mathcal H\left( \phi  \cdot 1_{A} \right), \mathcal H\left(\phi \cdot 1_{B}\right) \right).$$
 So in words, we are just taking the Hausdorff distance between $A,B$, but are  scaling down the importance of different parts of the sets as dictated by $\phi$.

\begin {lemma}
 Suppose $\phi,\psi : K	\longrightarrow [0,1]$ are u.s.c.\ functions,  that $\phi$  is $\lambda $-lipschitz, and  that $\phi(x) \leq C \psi(x)$.  Then % for all $A,B \in \mathcal C (K) $,
$d_{\text {Haus}}^\phi \leq   \max\{ C, \lambda +1\} d_{\text {Haus}}^\psi.$ \label {lemchab}
\end {lemma}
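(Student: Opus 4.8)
The plan is to estimate the Hausdorff distance between the hypographs $\mathcal H(\phi\cdot 1_A)$ and $\mathcal H(\phi\cdot 1_B)$ by ``transporting'' points of one hypograph to the other, using a point of $\mathcal H(\psi\cdot 1_A)$ or $\mathcal H(\psi\cdot 1_B)$ that is close (with respect to $d_{\text{Haus}}^\psi$) as an intermediary. By symmetry in $A$ and $B$ it suffices to bound, for a fixed point $(x,s)\in\mathcal H(\phi\cdot 1_A)$, its distance to $\mathcal H(\phi\cdot 1_B)$ by $\max\{C,\lambda+1\}\,d_{\text{Haus}}^\psi(A,B)$; taking the sup over such points and symmetrizing then gives the claim.

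First I would set $\delta = d_{\text{Haus}}^\psi(A,B) = d_{\text{usc}}(\mathcal H(\psi\cdot 1_A),\mathcal H(\psi\cdot 1_B))$ and fix $(x,s)\in\mathcal H(\phi\cdot 1_A)$, so $x\in A$ and $0\le s\le\phi(x)$. The point $(x, s\cdot \psi(x)/\phi(x))$ (interpreting this as $(x,0)$ when $\phi(x)=0$, in which case $s=0$ and there is nothing to do) lies in $\mathcal H(\psi\cdot 1_A)$, since $s\psi(x)/\phi(x)\le\psi(x)$. Hence there is a point $(y,t)\in\mathcal H(\psi\cdot 1_B)$ with $d(x,y)+|s\psi(x)/\phi(x)-t|\le\delta$; in particular $y\in B$ and $t\le\psi(y)$, and $d(x,y)\le\delta$. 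Now I want to produce a point of $\mathcal H(\phi\cdot 1_B)$ close to $(x,s)$. The natural candidate is $(y, t')$ where $t'$ is $t$ rescaled back by $\phi/\psi$, but one must be careful: I will instead take $t' = \min\{\,t\cdot\phi(y)/\psi(y),\ \phi(y)\,\}$ when $\psi(y)>0$, and handle the degenerate case $\psi(y)=0$ (forcing $t=0$) by taking $t'=0$. Then $(y,t')\in\mathcal H(\phi\cdot 1_B)$ by construction, and it remains to bound $d(x,y)+|s-t'|$.

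The horizontal term is already $\le\delta$. For the vertical term, I would estimate $|s-t'|$ by the triangle inequality through the intermediate value $s\psi(x)/\phi(x)$: write $|s-t'|\le |s - s\phi(y)/\psi(y)\cdot \psi(x)/\phi(x)| + |s\phi(y)/\psi(y)\cdot\psi(x)/\phi(x) - t'|$, or more cleanly bound the two contributions separately as (i) the change from rescaling between $x$ and $y$, controlled using that $\phi$ is $\lambda$-lipschitz together with $\phi/\psi\le C$, which will contribute a term like $C\cdot(\text{something})$ plus $\lambda\, d(x,y)\le\lambda\delta$, and (ii) the genuine discrepancy $|s\psi(x)/\phi(x) - t|\le\delta$ rescaled by the factor $\phi(y)/\psi(y)\le C$, giving $\le C\delta$. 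Collecting terms, $d(x,y)+|s-t'|\le\delta + \lambda\delta + C\delta$, but a more careful bookkeeping (distinguishing whether the scaling factor is above or below $1$) yields the asserted $\max\{C,\lambda+1\}\,\delta$ rather than the cruder $(C+\lambda+1)\delta$. The main obstacle I anticipate is precisely this sharpening: getting the constant to be the maximum rather than the sum requires splitting into cases according to the relative sizes of $\phi$ and $\psi$ at $x$ versus $y$ and noting that the lipschitz correction and the $C$-distortion cannot both be ``active'' at full strength simultaneously — the lipschitz term only matters where $\phi$ is not small, and one exploits that $s\le\phi(x)$ to absorb factors. Once the one-sided estimate is established, swapping the roles of $A$ and $B$ and taking the maximum completes the proof.
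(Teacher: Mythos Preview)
Your rescaling approach introduces a real difficulty that the paper avoids entirely. By passing through the point $(x,\,s\psi(x)/\phi(x))$ and then rescaling back by $\phi(y)/\psi(y)$, your vertical error term (i) involves the quantity $\psi(x)/\psi(y)$, and $\psi$ is only assumed upper semicontinuous, not Lipschitz. So there is no reason this ratio should be controlled for nearby $x,y$; your ``$C\cdot(\text{something})+\lambda\delta$'' is not actually justified, and the promised sharpening to $\max\{C,\lambda+1\}$ is exactly where the argument breaks down rather than a bookkeeping detail.

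The paper's proof sidesteps this completely with two simplifications you are missing. First, it reduces to points of the form $(x,\phi(x))$ with $x\in A$ (the ``top'' of the hypograph), since once you approximate $(x,\phi(x))$ by some $(y,t)\in\mathcal H(\phi\cdot 1_B)$, lowering the second coordinate handles all $(x,s)$ with $s\le\phi(x)$ at no extra cost. Second, and crucially, it uses $(x,\psi(x))\in\mathcal H(\psi\cdot 1_A)$ directly --- no rescaling. One then finds $(y,t)\in\mathcal H(\psi\cdot 1_B)$ with $d(x,y)+|\psi(x)-t|\le\epsilon$ and splits on whether $t=0$ or $t>0$. If $t=0$, then $\psi(x)\le\epsilon$, hence $\phi(x)\le C\epsilon$, and $(x,0)\in\mathcal H(\phi\cdot 1_B)$ is within $C\epsilon$ of $(x,\phi(x))$. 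If $t>0$, then $y\in B$, so $(y,\phi(y))\in\mathcal H(\phi\cdot 1_B)$, and the $\lambda$-Lipschitz bound gives $d(x,y)+|\phi(x)-\phi(y)|\le(\lambda+1)\epsilon$. The two cases are mutually exclusive, which is precisely why the constant is $\max\{C,\lambda+1\}$ rather than a sum.
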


 Note that the lipschitz condition is necessary for any sort of inequality. For instance, if $\phi$  approximates $1_{\{x\}}$ and $\psi=1$  is constant, we  can make $$1 \approx d_{\text {Haus}}^\phi(\{x\},\{y\}) >> d_{\text {Haus}}^\psi(\{x\},\{y\})\approx 0$$  by  taking $y\approx x$ and the approximation  $\phi \approx 1_{\{x\}}$ sufficiently close.   

\begin {proof}
 Suppose that $\mathcal H (\psi \cdot 1_A)$ and $\mathcal H (\psi \cdot 1_B)$ are $\epsilon $-close  in the Hausdorff metric. We want to show $\mathcal H (\phi \cdot 1_A)$ and $\mathcal H (\phi \cdot 1_B)$ are $(\max\{ C , \lambda +1\}  \cdot \epsilon)$-close, i.e.\  that  the two sets are each contained in $(\max\{ C , \lambda +1\}  \cdot \epsilon )$-neighborhoods of each other.

Let $x\in A$. It suffices to show that  there is some $(y,t) \in \mathcal H (\phi \cdot 1_B)$ with
\begin {equation}
	d((x,\phi(x)),(y,t)) \leq \epsilon \,  \max\{ C , \lambda +1\}. \label {wts}
\end {equation}

For the  same estimate will also work with $\phi(x)$  replaced  by any $s < \phi(x)$, so $\mathcal H (\phi \cdot 1_A)$  is contained in a $ {\lambda C\epsilon}  $-neighborhood of  $\mathcal H (\phi \cdot 1_B)$.  The proof of the other  inclusion is the same, switching the roles of $A,B$.

 We know that there is some $(y,t) \in \mathcal H (\psi \cdot 1_B)$ with $d((x,\psi(x)),(y,t)) \leq   \epsilon .$ If $t=0$, then $\psi(x) \leq \epsilon $, implying $\phi(x) \leq C\epsilon$, and $d((x,\phi(x)),(x,0)) \leq    { C\epsilon}, $  which proves \eqref{wts}.  So, assume $t\neq 0$. In this case, $y \in B$ and $d(x,y) \leq \epsilon$. Since $\phi$  is $\lambda $-lipschitz, $|\phi(x)-\phi(y)| \leq \lambda\epsilon$. Hence, 
$$d((x,\phi(x)),(y,\phi(y))) \leq  \epsilon(1 + \lambda). \qedhere$$
\end {proof}

 We now return to the problem of constructing a metric for the Chabauty topology  on $\mathcal C(M)$, for a proper metric space $M $.   Fix a point $p\in M$ and for each $R>0$, define a pseudometric $\hat d_R $ on $\mathcal C (M)$ by
\begin {equation}\hat d_R = d_{\text {Haus}}^{\phi_R}, \text{ where } \phi_R(x)=\begin {cases}
\frac {R-d(p,x )}R & d(p,x)\leq R \\
0 & d(p,x) \geq R. \end {cases}  \label {drhat} \end {equation}
Note that $\hat d_R$ induces the  Hausdorff  topology  on  the set of compact subsets of  the open ball $B(p,R)$, but  cannot tell apart subsets of $M \smallsetminus B(p,R)$. Also, earlier we only defined  our modified Hausdorff metrics $ d_{\text {Haus}}^{\phi}$ for $K$  compact, while $M$ is not compact.  However, since $\phi_R(x)=0$  when $d(p,x)\geq R$,  one can consider  the above construction as taking place within $K=\overline {B(p,R)}$.

\begin {proposition}\label {2ndChabauty}
The Chabauty topology on $\mathcal C(M)$  is induced by the  family of pseudo-metrics $\hat d_R$, for $R \in (0,\infty)$. 
\end {proposition}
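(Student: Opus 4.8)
The plan is to show that the pseudo-metrics $\hat d_R$ induce the Chabauty topology by verifying, in both directions, that a sequence $(C_i)$ converges in the topology generated by the $\hat d_R$ if and only if it converges in the Chabauty sense, using the characterization of Chabauty convergence in Proposition \ref{Chabautyconvergence}. Since all topologies involved are first-countable (the Chabauty topology is metrizable, and a countable family $\{\hat d_{R_n}\}$ with $R_n\to\infty$ suffices for the pseudo-metric topology), it is enough to argue at the level of sequences.

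First I would show that Chabauty convergence $C_i\to C$ implies $\hat d_R(C_i,C)\to 0$ for every fixed $R>0$. The key point is to control the modified Hausdorff distance $d_{\text{Haus}}^{\phi_R}$ by separately bounding the two inclusions of hypographs. For the inclusion showing $\mathcal H(\phi_R\cdot 1_C)$ lies in a small neighborhood of $\mathcal H(\phi_R\cdot 1_{C_i})$: given $x\in C$ with $\phi_R(x)>0$, condition (2) of Proposition \ref{Chabautyconvergence} gives $x_i\in C_i$ with $x_i\to x$; since $\phi_R$ is continuous (indeed $\tfrac1R$-Lipschitz), $\phi_R(x_i)\to\phi_R(x)$, so the point $(x_i,\phi_R(x_i))$ of $\mathcal H(\phi_R\cdot 1_{C_i})$ is eventually close to $(x,\phi_R(x))$. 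For the reverse inclusion: suppose for contradiction that along a subsequence there are points $(y_i, t_i)\in\mathcal H(\phi_R\cdot 1_{C_i})$ staying a definite distance $\delta>0$ away from $\mathcal H(\phi_R\cdot 1_C)$. We may take $t_i=\phi_R(y_i)$ and $y_i\in C_i$; by properness, $(y_i)$ ranges in the compact ball $\overline{B(p,R)}$, so passing to a further subsequence $y_i\to y$, and condition (1) of Proposition \ref{Chabautyconvergence} gives $y\in C$. Then $(y,\phi_R(y))\in\mathcal H(\phi_R\cdot 1_C)$ and $(y_i,\phi_R(y_i))\to(y,\phi_R(y))$ by continuity of $\phi_R$, contradicting the distance bound. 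Here the essential feature is that $\phi_R$ vanishes on $\partial B(p,R)$, so points of $C_i$ escaping outward toward the boundary contribute negligibly to $\hat d_R$ — this is exactly the defect of the un-tapered $d_R$ that the construction was designed to fix.

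Next I would prove the converse: if $\hat d_{R_n}(C_i,C)\to 0$ for a sequence $R_n\to\infty$, then $C_i\to C$ in the Chabauty sense. For condition (2): given $x\in C$, pick $R_n>d(p,x)$ so that $\phi_{R_n}(x)>0$; smallness of $\hat d_{R_n}(C_i,C)$ forces, for large $i$, a point $(y,t)\in\mathcal H(\phi_{R_n}\cdot 1_{C_i})$ near $(x,\phi_{R_n}(x))$, and since $\phi_{R_n}(x)>0$ this $t$ can be taken positive, so $y\in C_i$ and $y$ is close to $x$; a diagonal argument over $i$ produces $x_i\in C_i$ with $x_i\to x$. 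For condition (1): if $x_{i_j}\in C_{i_j}$ with $x_{i_j}\to x$, choose $R_n>d(p,x)$; for large $j$ we have $\phi_{R_n}(x_{i_j})$ bounded below by a positive constant and $(x_{i_j},\phi_{R_n}(x_{i_j}))\in\mathcal H(\phi_{R_n}\cdot 1_{C_{i_j}})$, so by smallness of $\hat d_{R_n}(C_{i_j},C)$ there is a nearby point $(y,t)\in\mathcal H(\phi_{R_n}\cdot 1_C)$ with $t$ bounded below, forcing $y\in C$ with $d(y,x_{i_j})$ small; letting $j\to\infty$ and using closedness of $C$ gives $x\in C$.

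\textbf{Main obstacle.} I expect the only real subtlety is bookkeeping at the boundary sphere $\partial B(p,R)$: one must be careful that the hypograph formulation genuinely tames the discontinuity, i.e.\ that a point $x_i\in C_i$ with $d(p,x_i)$ slightly exceeding $R$ (where $\phi_R(x_i)=0$) indeed contributes a hypograph point $(x_i,0)$ whose distance to $\mathcal H(\phi_R\cdot 1_C)$ is small — which holds because $(x_i,0)$ is close to $(x_i',0)$ for any nearby $x_i'\in\overline{B(p,R)}$, and such a point exists once $C$ accumulates there. Lemma \ref{lemchab} is not strictly needed for this proposition (it compares different tapering functions), but the Lipschitz control of $\phi_R$ that underlies it is exactly what makes the continuity arguments above go through, so I would invoke the $\tfrac1R$-Lipschitz property of $\phi_R$ freely. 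Everything else is a routine unwinding of the definition of $d_{\text{usc}}$ as a Hausdorff distance between hypographs together with properness (local compactness) of $M$.
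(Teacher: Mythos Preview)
Your proposal is correct and follows essentially the same approach as the paper: both directions are verified at the level of sequences via Proposition~\ref{Chabautyconvergence}, translating Chabauty convergence of the sets into Hausdorff convergence of the hypographs $\mathcal H(\phi_R\cdot 1_{C_i})$ by splitting on whether the height coordinate $t$ is zero or positive. The only cosmetic differences are that the paper phrases Hausdorff convergence of the hypographs via the Chabauty-style ``every point is a limit / every accumulation point lies in the limit'' criterion (the hypographs live in the compact space $\overline{B(p,R)}\times[0,1]$), and proves the converse by contrapositive rather than directly; your $\epsilon$-neighborhood formulation and direct argument amount to the same thing.
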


By Lemma \ref{lemchab}, we have $\hat d_R \leq 2 \hat d_{R'}$  whenever $1 \leq R \leq R'$, since $\phi_R$ is $1$-lipschitz and $\phi_{R'} \geq \phi_R$.  This implies the Chabauty topology  is induced by any family  $\hat d_{R_i} $  with $R_i \to \infty$,  although this is also clear from the proof below.

\begin {proof}Assume that  $A_i \to A$ in the Chabauty topology.  Fixing $R $, we want to show that the hypographs  $ \mathcal H (\phi_R \cdot 1_{A_i})$  Hausdorff converge to $ \mathcal H (\phi_R \cdot 1_{A})$.   

First, suppose that  $(x,t) \in \mathcal H (\phi_R \cdot 1_{A})$. If $t=0$, we have $(x,t) \in \mathcal H (\phi_R \cdot 1_{A_i})$  for all $i $.  If $t\neq 0$, then $p \in A \cap B(p,R)$, so by Chabauty  convergence, $x=\lim_i x_i$  for some sequence $x_i \in A_i$. So, $(x,t)$  is a limit of points $(x_i,t_i) \in \mathcal H (\phi_R \cdot 1_{A_i})$. 

Next, suppose $(x,t)$ is  the limit of  some  sequence $(x_{i_j},t_{i_j}) \in \mathcal H (\phi_R \cdot 1_{A_{i_j}})$.   Again, if $t=0$  then $(x,t) \in  \mathcal H (\phi_R \cdot 1_{A})$ automatically, so we are done.  Otherwise,  we can assume after passing to a  further subsequence that  $t_{i_j}\neq 0$  for all $i_j$.  In this case,  each $x_{i_j}\in A_i$, so $x=\lim x_{i_j} \in A$.  Hence $(x,t) \in \mathcal H (\phi_R \cdot 1_{A})$.

Finally, we must show that  if $ (A_i)$ does not converge to $A$  in the Chabauty topology,  then there is some $R $ with $\hat d_R (A_i,A) \not \to  0$.  There are two  cases.  Assume first that $x\in A$  is not the limit of any  sequence $x_i\in A_i$.  Taking $R>d(p,x)$, we see that $(x,\phi_R \cdot 1_{A}(x)) \in \mathcal H (\phi_R \cdot 1_{A})$ is not the limit of any sequence of points in the hypographs  $ \mathcal H (\phi_R \cdot 1_{A_i})$, so  we have $\hat d_R (A_i,A) \not \to  0$.  Similarly,  if there is some sequence $x_{i_j} \in A_{i_j}$  that converges to a point outside of $A $, the points $(x_{i_j},\phi_R \cdot 1_{A_{i_j}}(x_{i_j}) )$  will converge to a point outside of $\mathcal H (\phi_R \cdot 1_{A})$.
\end {proof}

Finally, for use in the next section,  we prove:

\begin {corollary}\label {distortioncor}
 Suppose that $f : B_{M_1}(p_1,R_1) \longrightarrow M_2$  is a locally $\lambda $-bilipschitz  embedding  with $R_1 \geq 1$, and $f(p_1)=p_2$.   Then for any $R_2 \geq \lambda R_1,$  we have $$\hat d_{R_1} (f^{-1}(C),f^{-1}(D)) \leq \lambda \, \hat d_{R_2} (C,D), \ \ \forall C,D \in \mathcal C (M_2).$$
\end {corollary}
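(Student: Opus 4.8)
\textbf{Proof plan for Corollary \ref{distortioncor}.}

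The plan is to transfer the hypograph computation defining $\hat d_R$ across the map $f$, controlling the distortion of distances by $\lambda$. First I would set up notation: write $\phi_{R_1}$ and $\phi_{R_2}$ for the taper functions on $M_1$ and $M_2$ as in \eqref{drhat}, using basepoints $p_1$ and $p_2$ respectively, so that $\hat d_{R_1} = d_{\text{Haus}}^{\phi_{R_1}}$ on $\mathcal C(M_1)$ and $\hat d_{R_2} = d_{\text{Haus}}^{\phi_{R_2}}$ on $\mathcal C(M_2)$. Given $C, D \in \mathcal C(M_2)$, the point is to compare the scaled hypographs $\mathcal H(\phi_{R_1} \cdot 1_{f^{-1}(C)})$ and $\mathcal H(\phi_{R_1} \cdot 1_{f^{-1}(D)})$ inside $\overline{B_{M_1}(p_1,R_1)} \times [0,1]$ with $\mathcal H(\phi_{R_2} \cdot 1_C)$ and $\mathcal H(\phi_{R_2} \cdot 1_D)$ inside $\overline{B_{M_2}(p_2,R_2)} \times [0,1]$.

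The key geometric input is that since $f$ is locally $\lambda$-bilipschitz with $f(p_1)=p_2$, for any $x \in B_{M_1}(p_1,R_1)$ we have $d_{M_2}(p_2, f(x)) \leq \lambda \, d_{M_1}(p_1,x)$, hence $d_{M_2}(p_2,f(x)) \leq \lambda R_1 \leq R_2$, so $f(x)$ lies in the region where $\phi_{R_2}$ is supported and $f$ maps $B_{M_1}(p_1,R_1)$ into $\overline{B_{M_2}(p_2,R_2)}$. Moreover I would check the pointwise comparison of taper functions: for $x$ with $d_{M_1}(p_1,x) \leq R_1$,
\[
\phi_{R_1}(x) = \frac{R_1 - d_{M_1}(p_1,x)}{R_1} \leq \frac{R_2 - d_{M_2}(p_2,f(x))}{R_2} \cdot (\text{something bounded})?
\]
This needs care — the cleanest route is instead to argue directly with neighborhoods. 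Suppose $\mathcal H(\phi_{R_2}\cdot 1_C)$ and $\mathcal H(\phi_{R_2}\cdot 1_D)$ are $\epsilon$-Hausdorff-close in $\overline{B_{M_2}(p_2,R_2)}\times[0,1]$ (with the product metric $d((x,s),(y,t)) = d(x,y)+|s-t|$). I want to show $\mathcal H(\phi_{R_1}\cdot 1_{f^{-1}(C)})$ and $\mathcal H(\phi_{R_1}\cdot 1_{f^{-1}(D)})$ are $\lambda\epsilon$-close. Take $(x,s)$ in the first hypograph, so $x \in f^{-1}(C)\cap B_{M_1}(p_1,R_1)$ (if $s>0$) and $s \leq \phi_{R_1}(x)$. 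Then $f(x)\in C$, and there is $(y',t') \in \mathcal H(\phi_{R_2}\cdot 1_D)$ with $d_{M_2}(f(x),y') + |\,\phi_{R_2}(f(x)) - t'| \leq \epsilon$. If $t'=0$ I handle it separately as in Lemma \ref{lemchab} (then $\phi_{R_2}(f(x))\leq\epsilon$); the substantive case is $t'>0$, where $y' \in D$ and $d_{M_2}(f(x),y')\leq\epsilon$. Now I want a preimage point $y \in f^{-1}(D)\cap B_{M_1}(p_1,R_1)$ near $x$: since $f$ is a $\lambda$-bilipschitz embedding, $f^{-1}$ moves distances by at most $\lambda$, giving $d_{M_1}(x, f^{-1}(y')) \leq \lambda\epsilon$ provided $y'$ lies in the image $f(B_{M_1}(p_1,R_1))$, which by Lemma \ref{invert} contains $B_{M_2}(p_2, R_1/\lambda)$ — here I must verify $y'$ is close enough to $p_2$, using that $f(x)$ is not too far out and $\epsilon$ is small (and if $\epsilon$ is not small the inequality $\hat d_{R_1}\leq 1 \leq \lambda\hat d_{R_2}$ holds trivially once $\hat d_{R_2} \geq 1/\lambda$, but actually $\hat d_{R_2} \leq 1$ always, so the nontrivial content is precisely the small-$\epsilon$ regime and one should just note the bound is vacuous when $\lambda\epsilon \geq 1$). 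Then the vertical coordinate: I need $\min\{s, \phi_{R_1}(f^{-1}(y'))\} \geq s - \lambda\epsilon$, which follows because $\phi_{R_1}$ is $1/R_1 \leq 1$-Lipschitz so $|\phi_{R_1}(x) - \phi_{R_1}(f^{-1}(y'))| \leq d_{M_1}(x,f^{-1}(y')) \leq \lambda\epsilon$. Packaging these, $(f^{-1}(y'), \min\{s,\phi_{R_1}(f^{-1}(y'))\})$ lies in $\mathcal H(\phi_{R_1}\cdot 1_{f^{-1}(D)})$ within $\lambda\epsilon + \lambda\epsilon$ of $(x,s)$ — I would tighten the constant by a more careful split, or simply absorb it, but the statement asks only for the clean bound $\lambda$, so I must make sure the vertical and horizontal contributions together give at most $\lambda\epsilon$; this works if I note $d_{M_2}(f(x),y')$ and $|\phi_{R_2}(f(x))-t'|$ sum to at most $\epsilon$, not each being $\epsilon$, and similarly route the Lipschitz estimate through the horizontal displacement only once.

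The main obstacle is the bookkeeping in the last paragraph: ensuring the preimage point $y'$ actually lies in the domain where $f^{-1}$ is defined (this is where the hypotheses $R_1 \geq 1$ and $R_2 \geq \lambda R_1$ and Lemma \ref{invert} are used — one needs $y' \in f(B_{M_1}(p_1,R_1))$, and since $d_{M_2}(f(x),y')\leq\epsilon$ with $f(x)$ at $M_2$-distance at most $\lambda R_1 \leq R_2$ from $p_2$, for $\epsilon$ small this is automatic, while for $\epsilon$ large the claim is vacuous as noted), and assembling the horizontal and vertical displacements so the total is genuinely $\leq \lambda\epsilon$ rather than a larger multiple. Everything else is a routine adaptation of the hypograph manipulations already carried out in the proof of Lemma \ref{lemchab}, applied with $K = \overline{B_{M_1}(p_1,R_1)}$ and with the scaling function $\phi_{R_1}$, and invoking Corollary \ref{bilipschitzderivatives}-style facts only insofar as a locally $\lambda$-bilipschitz embedding has a locally $\lambda$-bilipschitz inverse on its image.
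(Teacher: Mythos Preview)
Your direct hypograph-chasing approach is workable in principle, but the paper takes a cleaner route that sidesteps exactly the obstacle you flag. Rather than pulling points back through $f^{-1}$ (and worrying about whether $y'$ lies in the image), the paper pushes the taper function $\phi_{R_1}$ \emph{forward} to a function $\phi_1$ on $M_2$, setting $\phi_1(f(x_1)) = (R_1 - d_{M_1}(p_1,x_1))/R_1$ on the image and $\phi_1 = 0$ elsewhere. Then the left side of the inequality becomes (up to the bilipschitz identification) $d_{\text{Haus}}^{\phi_1}(C,D)$, the right side is $d_{\text{Haus}}^{\phi_2}(C,D)$ with $\phi_2 = \phi_{R_2}$, and both live on the \emph{same} space $M_2$. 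Now one simply applies Lemma~\ref{lemchab} after checking its two hypotheses: first, a radial bilipschitz estimate $\tfrac{1}{\lambda} d_{M_1}(p_1,x_1) \le d_{M_2}(p_2,f(x_1)) \le \lambda\, d_{M_1}(p_1,x_1)$ (which the paper proves by a path-lifting argument, since $f$ need not be globally bilipschitz) shows $\phi_1$ is $\lambda$-Lipschitz on $M_2$; second, the same estimate together with $R_2 \ge \lambda R_1$ gives $\phi_1 \le \phi_2$ pointwise.

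What this buys: the preimage issue evaporates, because Lemma~\ref{lemchab} already handles the case where the nearby hypograph point sits at level $t=0$ via the bound $\phi(x) \le C\psi(x)$, and in the transported picture $\phi_1$ vanishes outside $\operatorname{im}(f)$ by construction. Your attempt to handle this by a small-$\epsilon$/large-$\epsilon$ dichotomy is not quite the right diagnosis --- the issue is local near $\partial(\operatorname{im} f)$, not global in $\epsilon$ --- and fixing it directly would amount to re-proving the $t=0$ branch of Lemma~\ref{lemchab} by hand. Your approach would eventually close, but with more bookkeeping and a worse constant; the paper's transport-then-cite-the-lemma argument is the intended shortcut.
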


 Note that $R_2 \geq \lambda R_ 1 $ implies  $f\big(B_{M_1}(p_1,R_1)\big) \subset B_{M_2}(p_1,R_2)$.

\begin {proof}   The two  sides of the inequality are $d_{\text {Haus}}^{\phi_i}(C,D)$, $i=1,2$,  where
	\begin {align*}
	\phi_1  : M_2 \longrightarrow [0,1], \ \ &\phi_1(x_2)= \begin {cases}
\frac { R_1-d(p_1,x_1)}{ R_1} & 	x_2=f(x_1), \ x_1 \in B_{M_1}(p_1,R_1), \\
0 & x_2 \not \in f\big(B_{M_1}(p_1,R_1)\big),
 \end {cases} \\
  \phi_2 : M_2 \longrightarrow [0,1], \ \ & \phi_2(x_2)= \begin {cases}
\frac {R_2-d(p_2,x_2)}{R_2} & x_2	\in B_{M_2}(p_1,R_2) \\
0 & x_2 \not \in B_{M_2}(p_1,R_2).
 \end {cases}
\end {align*}
Since $f$  is $\lambda $-lipschitz, we have $d(p_2,x_2) \leq \lambda d(p_1,x_1)$  if $f (x_1) = x_2 $.  Conversely, suppose $\gamma $ is a path in $M_2$ joining $x_2$ to $p_2$.  The preimage $f^{-1}(\gamma)$  is either a path from $x_1$ to $p_1$, or is a union of paths,  the last of which is a path from  $\partial B_{M_1}(p_1,R_1)$ to $p_1$.  In either case,  the length of $f^{-1}(\gamma)$  is at least $d(p,x_1)$, so the length of $\gamma $ is at least  $\frac 1\lambda d(p,x_1)$, as $f$ is locally  $\lambda $-bilipschitz. This shows  
\begin {equation}\frac 1 \lambda d(p_1,x_1) \leq d(p_2,x_2)\leq \lambda d(p_1,x_1).\label {globalbilipschitz}	
\end {equation}

 Note that it may not be true that $f$ is globally  $\lambda $-bilipschitz, e.g.\ if $f$ is the inclusion of an  interval of length $.999$  into a circle of length $1$, but \eqref{globalbilipschitz}  holds  in this case because one of the two points  is the center of the interval.

 It follows from \eqref{globalbilipschitz}  that $\phi_1$ is $\lambda $-lipschitz.  Moreover, since $R_2 \geq \lambda R_1$,  \begin {align*}\phi_1 (x_2)  \leq  \frac { R_1-d(p_1,x_1)}{ R_1} 
  \leq  \frac  {\lambda R_1-d(p_2,x_2)}	{\lambda R_1}
  \leq \phi_2 (x_2)
\end {align*}
 for all $x_2 \in B_{M_2}(p_1,R_2)$.  So $\phi_1/\phi_2 \geq 1$. Therefore, the  hypotheses of Lemma \ref{lemchab}  are satisfied  with the constant $\lambda $,  which proves the corollary.\end {proof}
\label {Chabautytopology}

\subsection {The smooth-Chabauty topology}
\label {smoothChabauty}
In this section we combine the smooth topology of \S \ref {smoothtopology} with the Chabauty topology of \S \ref {Chabautytopology}.  Consider the set
$$\mathcal {CM}^d = \{(M,p,C) \ | \ \substack{M \text{ a complete, connected Riemannian } \\ d\text {-manifold,}  \ p\in M, \text { and } C \subset  M \text { a closed subset}} \}/\sim,$$
where $(M_1,p_1,C_1)\sim (M_2,p_2,C_2)$ if there is an isometry $M_1\longrightarrow M_2$ with $p_1\mapsto p_2$ and $C_1\mapsto C_2$.
We say that $(M_i, p_i, C_i)\rightarrow (M,p,C)$ in the \emph {smooth-Chabauty topology} if for large $i$ there are embeddings
\begin {equation}f_i:B_M(p, R_i) \longrightarrow M_i \label{CMmaps}\end {equation} with $f_i (p) = p_i$ such that $R_i \rightarrow \infty$ and the following two conditions hold:
\begin {enumerate}
\item $f_i ^*g_i\rightarrow g $ in the $C ^\infty $-topology, where $g_i,g $ are the Riemannian metrics on $M_i,M $, and
\item $f_i ^ {- 1} (\overline{C_i}) \rightarrow C$ in the Chabauty topology on closed subsets of $M $.
\end {enumerate}
Note that the metrics $f_i^* g_i $ are only partially defined, but they are defined on larger and larger subsets of $M $ as $i\to \infty$.  So as $C ^\infty $-convergence of metrics is checked on compact sets, the convergence in 1) still makes sense. 
As in \S \ref{smoothsec}, we  call $(f_i)$ a sequence of \emph {almost isometric maps}  witnessing the convergence $(M_i,p_i,C_i) \to (M,p,C)$. Also, when the $R_i$ do not matter, we  will  again write 
$$f_i : M \dashmap M_i$$
to  indicate that the maps $f_i$ are partially defined, but that their  domains of definition exhaust $M$. (This  notation will  be mostly used in the body of the paper, not  in this appendix.)

 We now show how to construct a quasi-metric that induces the smooth--Chabauty topology.  Here, a
\emph {quasi-metric} on a set $X$ is a nonnegative, symmetric function $d: X \times X \longrightarrow \BR$ that vanishes exactly on the diagonal and for some $K\geq 1$ satisfies the  quasi-triangle inequality
$$d(x_1,x_3) \leq K(d(x_1,x_2)+d(x_2,x_3) ), \  \ \forall x_1,x_2,x_3 \in X.$$
 Examples of quasi-metrics include powers $d=\rho^\beta$ of metrics $\rho$, and a theorem of Frink, c.f.\ \cite{Aimarmacias},  implies that for \emph{every} quasi-metric $d$,  there is an honest  metric $\rho$ on $X$  such that $\frac 1K \leq d/\rho^\beta \leq K$  for some $\beta\geq 1$ and $K >0$.    The added flexibility in the quasi-triangle inequality makes it much easier to construct quasi-metrics than metrics, and yet Frink's theorem shows that essentially, one can do as much with the former as with the latter. 

\vspace{2mm}

Given  points $X_i=(M_i,{p_i},{C_i})\in \mathcal {CM}^d$,  $i=1,2$, define $$d_{R,k}(X_1,X_2)=\min \{ 1, \inf \{ \log \lambda + \epsilon \}\},$$  where the infimum is taken over all $\lambda,\epsilon$  such that
there is a smooth embedding $f: B_{M_1} ({p_1}, R/\lambda)\longrightarrow M_2$ with $f ({p_1}) = p_2 $ such that 
\begin {enumerate}
\item $D  ^ k f: T ^ k B_{M_1} ({p_1}, R/\lambda)\longrightarrow T ^ k M_2  $ is locally $\lambda $-bilipschitz on the subset $Z^k_{1/\lambda}(B_{M_1}(p_1,R/\lambda)\subset T^kM_1$ with respect to the iterated Sasaki metric\footnote{Here, recall that $Z^k_r(M)$ is the $r$-neighborhood of the zero section in $T^k(M)$.},
\item $\hat d_{R/\lambda}(C_1,f^ {- 1} (C_2)) \leq \epsilon$, where $\hat d_{R/\lambda}$ is    as in Proposition \ref{2ndChabauty}.
\end {enumerate}
Note that if $\lambda$ and $\lambda'$ are   at least $e=2.718\ldots$, the $1$  realizes the minimum defining $d_{R,k}$. So, everywhere below, we will always assume $\lambda,\lambda'< e.$

 The functions $d_{R,k}$ are not symmetric, so later on we will  symmetrize them.  However, they are already `quasi-symmetric':

\begin {lemma}\label {quasisym}
If $R>0$, $k\in \BN$ and $X_i=(M_i,p_i,C_i) \in \mathcal {CM}^d$ for $i =1,2$, 
$$d_{R, k}(X_2,X_1) \leq  e \,d_{R, k}(X_1,X_2).$$

\end {lemma}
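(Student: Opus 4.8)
The plan is to show that if $d_{k,R}(X_1,X_2)$ is realized (up to $\varepsilon$-slack) by a map $f$ witnessing closeness from $X_1$ to $X_2$, then after inverting $f$ one obtains a map witnessing closeness from $X_2$ to $X_1$, at the cost of shrinking the radius by the bilipschitz constant. The point is to use Lemma~\ref{invert} to control the domain of the inverse, Corollary~\ref{bilipschitzderivatives} to transfer the iterated-Sasaki bilipschitz bounds, and Corollary~\ref{distortioncor} to transfer the Chabauty comparison with the correctly rescaled radius.

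In detail: suppose $\log\lambda+\varepsilon$ nearly realizes $d_{k,R}(X_1,X_2)$, with $\lambda<e$, via a smooth embedding $f: B_{M_1}(p_1,R/\lambda)\longrightarrow M_2$ with $f(p_1)=p_2$, $D^kf$ locally $\lambda$-bilipschitz for the iterated Sasaki metrics, and $\hat d_{R/\lambda}(C_1,f^{-1}(C_2))\le\varepsilon$. By Corollary~\ref{bilipschitzderivatives}, $f$ itself is locally $\lambda$-bilipschitz, so by Lemma~\ref{invert} the image $f(B_{M_1}(p_1,R/\lambda))$ contains $B_{M_2}(p_2,R/\lambda^2)$; hence $g:=f^{-1}$ is defined on $B_{M_2}(p_2,R/\lambda^2)$, with $g(p_2)=p_1$, and $D^kg$ is also locally $\lambda$-bilipschitz (being the inverse of a locally $\lambda$-bilipschitz map, using that $D^k$ of an inverse is the inverse of $D^kf$, which one checks functorially). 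To handle the Chabauty term, apply Corollary~\ref{distortioncor} to the map $f: B_{M_1}(p_1,R/\lambda)\longrightarrow M_2$ (which is locally $\lambda$-bilipschitz with $R_1 = R/\lambda$, and we may assume $R/\lambda\ge 1$ since otherwise $d_{k,R}=1$ already and nothing is to prove): taking $R_2\ge\lambda R_1 = R$, we get $\hat d_{R/\lambda}(f^{-1}(C),f^{-1}(D))\le\lambda\,\hat d_{R}(C,D)$. We want the reversed comparison, so we instead apply Corollary~\ref{distortioncor} with the roles arranged so that $g=f^{-1}$ pulls subsets back: since $g$ is locally $\lambda$-bilipschitz on $B_{M_2}(p_2,R/\lambda^2)$, for any radius at least $\lambda\cdot(R/\lambda^2)=R/\lambda$ we obtain $\hat d_{R/\lambda^2}(g^{-1}(C_1),g^{-1}(C_2))\le\lambda\,\hat d_{R/\lambda}(C_1,C_2)$; but $g^{-1}=f$, so $\hat d_{R/\lambda^2}(f(C_1),C_2)\le\lambda\,\hat d_{R/\lambda}(C_1,f^{-1}(C_2))\le\lambda\varepsilon$. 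Thus $g$ witnesses $d_{k,R}(X_2,X_1)\le\log\lambda+\lambda\varepsilon$ with effective radius $R/\lambda^2=(R)/\lambda^2$, i.e.\ with bilipschitz constant $\lambda$ but radius $R/\lambda^2$; rewriting, $g$ is an admissible competitor for $d_{k,R}(X_2,X_1)$ (whose definition asks for radius $R/\mu$ with $\mu$-bilipschitz bounds, so take $\mu=\lambda^2$ — wait, but then $D^kg$ must be $\lambda^2$-bilipschitz, which it is, being $\lambda$-bilipschitz and $\lambda>1$). Hence $d_{k,R}(X_2,X_1)\le \log\lambda^2+\lambda\varepsilon = 2\log\lambda+\lambda\varepsilon$.

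Finally I would bound $2\log\lambda+\lambda\varepsilon$ by $e(\log\lambda+\varepsilon)$. Since $\lambda<e<e\cdot 1$ (we assumed $\lambda<e$), we have $\lambda\varepsilon\le e\varepsilon$ and $2\log\lambda\le e\log\lambda$ because $2\le e$; so $2\log\lambda+\lambda\varepsilon\le e(\log\lambda+\varepsilon)$. Taking the infimum over the competitors realizing $d_{k,R}(X_1,X_2)$ (and noting the $\min\{1,\cdot\}$ truncation only helps the inequality) gives $d_{k,R}(X_2,X_1)\le e\,d_{k,R}(X_1,X_2)$, as claimed.

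The main obstacle I anticipate is the careful bookkeeping of radii: one must verify that when $f$ is defined on the $R/\lambda$-ball and $\lambda$-bilipschitz, the inverse is genuinely defined on a ball of radius $R/\lambda^2$ around $p_2$ (Lemma~\ref{invert} gives exactly this), and then that plugging $\mu=\lambda^2$ into the definition of $d_{k,R}$ requires the radius $R/\mu=R/\lambda^2$, which matches, and the $\mu$-bilipschitz requirement, which is implied by $\lambda$-bilipschitz. A secondary subtlety is confirming $D^k(f^{-1})=(D^kf)^{-1}$ and that the iterated Sasaki structure is natural enough that the bilipschitz constant is preserved under this inversion — this is a functoriality statement about iterated tangent bundles and should follow from the chain rule applied iteratively, together with the fact (used implicitly throughout \S\ref{secmetrizability}) that $(D^kf)^*(g^k)=(f^*g)^k$. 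Handling the edge case $R/\lambda<1$ (where $d_{k,R}$ may be forced to equal $1$, or where Corollary~\ref{distortioncor}'s hypothesis $R_1\ge1$ fails) needs a line: there one simply observes $\log\lambda\ge\log R$ is not directly useful, but rather that if no competitor has $R/\lambda\ge1$ then the truncation at $1$ makes both sides large and the inequality is trivial — more precisely, if $d_{k,R}(X_1,X_2)<1$ it is realized by some $\lambda$ with $R/\lambda\ge1$, so we may restrict attention to that case.
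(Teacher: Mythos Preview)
Your proof is correct and follows essentially the same route as the paper: invert $f$ via Lemma~\ref{invert} to obtain $g=f^{-1}$ on $B_{M_2}(p_2,R/\lambda^2)$, use Corollary~\ref{distortioncor} applied to $g$ (with $R_1=R/\lambda^2$, $R_2=R/\lambda$) to bound the Chabauty term by $\lambda\epsilon$, take $\mu=\lambda^2$ as the competitor parameter, and conclude via $2\log\lambda+\lambda\epsilon\le e(\log\lambda+\epsilon)$. The paper's proof is simply a terser version of exactly this, omitting the explicit $\mu=\lambda^2$ bookkeeping and the final arithmetic; your discussion of the edge case $R/\lambda^2<1$ (needed for the hypothesis $R_1\ge 1$ in Corollary~\ref{distortioncor}) is a point the paper glosses over, but since the only later use is with $R=k\ge 9>e^2$ it is harmless.
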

\begin {proof}
	Suppose $d_{R, k}(X_1,X_2) < \log \lambda + \epsilon$,  where  the sum  is that in the definition of $d_{R,k}$, and choose
$f: B_{M_1} ({p_1}, R/\lambda)\longrightarrow M_2$
 as above realizing  this inequality.  Applying Lemma \ref{invert}, the inverse map is defined on the domain $$f^{-1} : B_{M_2}(p_2,R/\lambda ^ 2) \longrightarrow M_1,$$
and of course is locally $\lambda $-bilipschitz.  Note that the iterated derivative of $f^{-1}$ is $(D^kf)^{-1}$, which is locally $\lambda$-bilipschitz on $D^kf(Z)$, where $$Z= Z^k_{1/\lambda}(B_{M_1}(p_1,R/\lambda)\subset T^kM_1.$$ And since $D^kf$ is locally $\lambda$-bilipschitz, we have 
$$D^kf(Z) \supset Z^k_{1/\lambda^2}(B_{M_2}(p_2,R/\lambda^2),$$
so $D^kf^{-1}$ is locally $\lambda$-bilipschitz
We have
\begin {align*}
	\hat d_{R/\lambda^2}(C_2, \left(f^{-1}\right)^ {- 1} (C_1)) &=\hat d_{R/\lambda^2}(C_2, f(C_1)) \\
 & \leq \lambda \, \hat d_{R/\lambda}(f^{-1}(C_2), C_1)\\
& \leq \lambda \epsilon
\end {align*}
 where the first inequality uses Corollary \ref{distortioncor}.   The lemma follows as $\lambda < e.$
\end {proof}

 We  now prove a quasi-triangle inequality:

\begin {lemma}\label {reallyweaktriangle}
If $R\geq e^2, k \in \BN$ and  $X_i=(M_i,p_i,C_i) \in \mathcal {CM}^d$ for $i =1,2,3$,
$$d_{R, k}(X_1,X_3) \leq e(d_{R, k}(X_1,X_2) +  d_{R, k}(X_2,X_3)). $$
\end{lemma}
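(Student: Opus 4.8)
The plan is to mimic the proof of the analogous triangle inequality for the pure smooth topology (the estimate for $d_{R,k}$ in Theorem~\ref{manifoldmetrizable}), but now keeping track of the Chabauty component as well, using Corollary~\ref{distortioncor} to control how $\hat d$ distorts under the bilipschitz maps involved. First I would unwind the definitions: given $\delta>0$, pick $\lambda,\mu<e$ and $\epsilon,\epsilon'\ge 0$ with $\log\lambda+\epsilon < d_{k,R}(X_1,X_2)+\delta$ and $\log\mu+\epsilon' < d_{k,R}(X_2,X_3)+\delta$, together with basepoint-preserving embeddings
$$f:B_{M_1}(p_1,R/\lambda)\longrightarrow M_2,\qquad g:B_{M_2}(p_2,R/\mu)\longrightarrow M_3$$
such that $D^kf$ is locally $\lambda$-bilipschitz, $D^kg$ is locally $\mu$-bilipschitz, $\hat d_{R/\lambda}(C_1,f^{-1}(C_2))\le\epsilon$ and $\hat d_{R/\mu}(C_2,g^{-1}(C_3))\le\epsilon'$.

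Next I would form the composition. By Corollary~\ref{bilipschitzderivatives}, $f$ itself is locally $\lambda$-bilipschitz, so $f\big(B_{M_1}(p_1,\tfrac{R}{\lambda\mu})\big)\subset B_{M_2}(p_2,\tfrac{R}{\mu})$ and hence $g\circ f:B_{M_1}(p_1,\tfrac{R}{\lambda\mu})\longrightarrow M_3$ is defined, with $(g\circ f)(p_1)=p_3$ and $D^k(g\circ f)$ locally $(\lambda\mu)$-bilipschitz; this is exactly the smooth part of the argument in Theorem~\ref{manifoldmetrizable}. For the Chabauty part I need to bound $\hat d_{R/(\lambda\mu)}\big(C_1,(g\circ f)^{-1}(C_3)\big)$. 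Writing $(g\circ f)^{-1}(C_3)=f^{-1}(g^{-1}(C_3))$, I would split via the triangle inequality for the genuine metric $\hat d_{R/(\lambda\mu)}$ into
$$\hat d_{R/(\lambda\mu)}\big(C_1,f^{-1}(C_2)\big)+\hat d_{R/(\lambda\mu)}\big(f^{-1}(C_2),f^{-1}(g^{-1}(C_3))\big).$$
The first term is $\le\hat d_{R/\lambda}(C_1,f^{-1}(C_2))\le\epsilon$ by the monotonicity $\hat d_{R'}\le\hat d_{R}$ for $R'\le R$ (Lemma~\ref{lemchab} applied to $\phi_{R'}\le\phi_R$, noting $R/(\lambda\mu)\le R/\lambda$ and that both radii are $\ge 1$ since $R\ge e^2>\lambda\mu$). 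For the second term, apply Corollary~\ref{distortioncor} to the locally $\lambda$-bilipschitz embedding $f:B_{M_1}(p_1,R/(\lambda\mu))\to M_2$ with $R_1=R/(\lambda\mu)\ge 1$ and $R_2=R/\mu\ge \lambda R_1$: this gives $\hat d_{R/(\lambda\mu)}(f^{-1}(C_2),f^{-1}(g^{-1}(C_3)))\le \lambda\,\hat d_{R/\mu}(C_2,g^{-1}(C_3))\le\lambda\epsilon'<e\epsilon'$. Combining,
$$d_{k,R}(X_1,X_3)\ \le\ \log(\lambda\mu)+\epsilon+e\epsilon'\ \le\ e\big((\log\lambda+\epsilon)+(\log\mu+\epsilon')\big),$$
using $\log(\lambda\mu)=\log\lambda+\log\mu$ and $e\ge1$; letting $\delta\to 0$ yields $d_{k,R}(X_1,X_3)\le e(d_{k,R}(X_1,X_2)+d_{k,R}(X_2,X_3))$. (If either of the right-hand distances equals $1$ there is nothing to prove since $d_{k,R}\le 1$ always and $e\ge1$, so one may assume strict inequalities $<1$ and that the relevant infima are nearly attained.)

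The main obstacle is bookkeeping with the shrinking radii: one must check at each step that the radius at which a given $\hat d$ is being evaluated is $\ge 1$ (so that the monotonicity $\hat d_{R'}\le\hat d_{R}$ and the hypothesis $R_1\ge1$ of Corollary~\ref{distortioncor} apply), and that $R_2\ge\lambda R_1$ holds exactly in the form needed, which is why the hypothesis $R\ge e^2$ together with $\lambda,\mu<e$ is imposed. Everything else is a transcription of the smooth-topology triangle inequality already carried out in the proof of Theorem~\ref{manifoldmetrizable}, plus the distortion estimate Corollary~\ref{distortioncor}, which was proved precisely for this application.
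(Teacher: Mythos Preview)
Your approach is essentially identical to the paper's: compose the two embeddings, get the $\lambda\mu$-bilipschitz bound on $D^k(g\circ f)$, split the Chabauty term via the triangle inequality for $\hat d_{R/(\lambda\mu)}$, use Corollary~\ref{distortioncor} on the second piece, and collect constants.

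There is one small slip. You claim the monotonicity $\hat d_{R'}\le \hat d_R$ for $R'\le R$ and invoke Lemma~\ref{lemchab} for it. But Lemma~\ref{lemchab} gives $d_{\text{Haus}}^{\phi}\le \max\{C,\lambda+1\}\,d_{\text{Haus}}^{\psi}$, and here $\phi=\phi_{R/(\lambda\mu)}$ is $(\lambda\mu/R)$-lipschitz, so the constant is $\max\{1,\,1+\lambda\mu/R\}>1$; the paper records this as $\hat d_R\le 2\hat d_{R'}$ for $1\le R\le R'$ (the remark after Proposition~\ref{2ndChabauty}), and accordingly uses $2\epsilon$ rather than $\epsilon$ for the first term. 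This does not damage your conclusion, since $2<e$ and your final chain $\log(\lambda\mu)+2\epsilon+\lambda\epsilon'\le e\big((\log\lambda+\epsilon)+(\log\mu+\epsilon')\big)$ still holds, but the intermediate inequality as you wrote it is not justified by the lemma you cite.
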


\begin {proof}
Suppose $d_{R, k}(X_1,X_2) < \log \lambda + \epsilon$ and $d_{R, k}(X_2,X_3) < \log \mu  +\delta$,  where these sums are those in the definition of $d_{R,k}$, and choose
$$f: B_{M_1} ({p_1}, R/\lambda)\longrightarrow M_2, \ \ g: B_{M_2} ({p_2}, R/\mu)\longrightarrow M_3$$
 as above realizing these inequalities.  \emph {} The composition
$$g \circ f : B_{M_1} ({p_1}, R/(\lambda\mu))\longrightarrow M_3$$
 is defined, since $f $ is itself $\lambda $-lipschitz, by  Corollary \ref {bilipschitzderivatives}.   The  iterated total derivative $D  ^ k (g \circ f): T ^ k B_{M_1} ({p_1}, R/ (\lambda\mu))\longrightarrow T ^ k M_3  $  is also locally $\lambda\mu$-bilipschitz on the appropriate domain, so we just need to deal with condition 2).  But 
\begin {align}
& \ \ \ \hat d_{R/(\lambda\mu)}\big(C_1,(g \circ f)^ {- 1} (C_3)\big)\nonumber \\&\leq
 \hat d_{R/(\lambda\mu)}\big(C_1,f^{-1}(C_2)\big) + \hat d_{R/(\lambda\mu)}\big(f^{-1}(C_2),(g \circ f)^ {- 1} (C_3)\big) \nonumber \\
& \leq  2 \, \hat d_{R/\lambda}\big(C_1,f^{-1}(C_2)\big)  +
\lambda \, \hat d_{R/\mu}\big(C_2,g^ {- 1}  (C_3)) \label {52}\\
& \leq  2 \epsilon  +
\lambda \delta \nonumber\\
& \leq e \left (\epsilon + \delta \right). \nonumber
\end {align}
 Here, the first term of \eqref{52}   comes from the comment after  the statement of Proposition \ref{2ndChabauty},  and the second term comes from Corollary \ref {distortioncor}.
 This finishes the proof, since  then $d_{R, k}(X_1,X_3) \leq \log (\lambda \mu) + e\left (\epsilon + \delta \right) \leq e( \log \lambda +  \epsilon + \log \mu + \delta ) \leq e(d_{R, k}(X_1,X_2) +  d_{R, k}(X_2,X_3))$. \end {proof}

  One now proceeds as in the proof of Theoreom \ref{manifoldmetrizable} to construct a quasi-metric $D$ on $\mathcal {CM}^d$  that induces the  smooth-Chabauty  topology:  
$$D(X,Y) = \sum_{k=9}^\infty \frac 1{2^{-k}} \left (d_{k,k}(X,Y) + d_{k,k}(Y,X)\right), $$
where $9>e^2$ is chosen because of Lemma \ref{reallyweaktriangle}.   Note that the quasi-symmetry lemma  (Lemma \ref{quasisym}) implies that  symmetrizing does not change the topology  induced by $d_{R,k}$. Now as mentioned above, Frink's  theorem, c.f.\ \cite{Aimarmacias},  implies that there is a metric  $\rho$   on $\mathcal{CM}^d$  with $$\frac 1K \leq D/\rho^\beta \leq K, \ \ \text {for some } \beta\geq 1,K >0.$$

 This allows us to prove:

\begin {theorem}\label {cmsepmet}
$\mathcal {CM}^ d$ is  a Polish space.
\end {theorem}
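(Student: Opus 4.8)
The plan is to mirror the proof of Theorem~\ref{manifoldmetrizable}, adapting each step to the combined smooth--Chabauty setting. We already have in hand the quasi-metric $D$ on $\mathcal{CM}^d$ constructed from the functions $d_{k,k}$, together with the quasi-triangle inequality (Lemma~\ref{reallyweaktriangle}) and quasi-symmetry (Lemma~\ref{quasisym}); Frink's theorem then upgrades $D$ to an honest metric $\rho$ with $\frac 1K \le D/\rho^\beta \le K$ inducing the same topology. So it remains to verify three things: that $\rho$ (equivalently $D$) induces exactly smooth--Chabauty convergence, that $\mathcal{CM}^d$ is separable, and that it is complete. The first is essentially bookkeeping: a sequence $X_i \to X$ in $D$ iff $d_{k,k}(X,X_i) \to 0$ for each $k$, and by the defining formula for $d_{k,R}$ this is precisely the existence, for each fixed radius, of almost-isometric embeddings $f_i$ realizing both the $C^\infty$-closeness of metrics (via iterated Sasaki bilipschitz bounds, as in Corollary~\ref{iterated}) and the Chabauty-closeness of the pulled-back subsets (via $\hat d_R$, as in Proposition~\ref{2ndChabauty}). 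One must also note that closeness in $d_{k,R}(X,\cdot)$ and in $d_{k,R}(\cdot,X)$ give the same topology, which is Lemma~\ref{quasisym} combined with the invertibility Lemma~\ref{invert} as in the proof of Theorem~\ref{manifoldmetrizable}.

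For separability, I would first reduce to closed manifolds exactly as in Theorem~\ref{manifoldmetrizable}: any $(M,p,C)$ is a smooth--Chabauty limit of triples $(M_n,p,C_n)$ where $M_n$ is obtained by doubling a compact exhaustion of $M$ and $C_n = C \cap M_n$ (the Chabauty part follows since the exhaustion eventually contains any compact set). Then, since there are only countably many diffeomorphism types of closed $d$-manifolds (Cheeger finiteness), it suffices to show that for a fixed closed $M$ the space of isometry classes of triples $(M,p,C)$ is separable. This space is a continuous image of $M \times \mathcal{R}(M) \times \mathcal{C}(M)$, where $\mathcal{R}(M)$ is the space of Riemannian metrics on $M$ with the smooth topology and $\mathcal{C}(M)$ is the (compact metrizable, hence separable) space of closed subsets of $M$ with the Chabauty topology; each factor is separable, so the product and its continuous image are too.

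For completeness I would take a $D$-Cauchy (equivalently $\rho$-Cauchy) sequence $X_i = (M_i,p_i,C_i)$. The Cauchy condition together with the formula $D = \sum 2^{-k}(d_{k,k}+\cdots)$ shows that for every $k$ the $d_{k,k}$-tail diameter is arbitrarily small, so exactly as in the proof of Theorem~\ref{manifoldmetrizable} one obtains, for each $k$, a model $(M,p)$ and almost-isometric embeddings $f_i : B_M(p,k-1) \dashmap M_i$ with $D^k f_i$ locally $2$-bilipschitz; hence the balls $B_{M_i}(p_i, R) \subset M_i$ have uniformly $C^k$-bounded geometry for all $R,k$. By Corollary~\ref{2ndcompactness}, after passing to a subsequence $(M_i,p_i)$ converges smoothly to some $(M,p)$, with witnessing maps $f_i : M \dashmap M_i$. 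Transporting: the sets $f_i^{-1}(C_i)$ are closed subsets of $M$, and the Chabauty space $\mathcal{C}(M)$ is compact, so after a further subsequence $f_i^{-1}(C_i) \to C$ in the Chabauty topology for some closed $C \subset M$; then $X_i \to (M,p,C)$ in the smooth--Chabauty topology. Finally, a Cauchy sequence with a convergent subsequence converges, so $\mathcal{CM}^d$ is complete; being also separable and metrizable, it is Polish.

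\textbf{Main obstacle.} The genuinely delicate point is not any single step but making sure the Chabauty part interacts correctly with the shrinking radii and the bilipschitz distortion in the definition of $d_{k,R}$ --- i.e.\ that when one passes to a subsequence and re-roots via the maps $f_i$, the $\hat d_R$-estimates on the pulled-back subsets survive. This is exactly what Corollary~\ref{distortioncor} and Lemma~\ref{lemchab} were set up to handle, so the work is in invoking them carefully rather than in new ideas; the compactness of $\mathcal{C}(M)$ is what ultimately saves us in the completeness argument, since it lets the subsets converge without any further geometric input.
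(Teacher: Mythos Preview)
Your proposal is correct and follows essentially the same approach as the paper: both arguments establish completeness by first using the Cauchy condition to get smooth convergence of the underlying pointed manifolds (via the bounded-geometry argument of Theorem~\ref{manifoldmetrizable}), then use compactness of the Chabauty topology on $\mathcal C(M)$ to extract a limit of the pulled-back closed subsets. Your separability argument via the product $M \times \mathcal R(M) \times \mathcal C(M)$ is slightly more elaborate than the paper's (which simply takes finite subsets drawn from a fixed countable dense subset of each $M$), but both are valid and the overall strategy is the same.
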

\begin {proof}
	The definition of a Cauchy sequence  extends verbatim to quasi-metrics, and  we claim that the quasi-metric $D$  is complete.  If $X_i=(M_i,p_i,C_i)$ is a $D$-Cauchy sequence,  the sequence of pointed manifolds $(M_i, p_i)$ is Cauchy for the  complete metric,  also called $D$,  introduced in the proof of Theorem \ref{manifoldmetrizable}.  Hence,  we can assume $(M_i,p_i) \to (M,p)$  in the smooth topology.  

 Fix  sequences $R_n \to \infty, \lambda_n \to 1$ and $k_n \to \infty$.  Then for each $n $,  we have that for all $i\geq I_n$  there are maps $$f_{n,i} : B_M(p,R_n) \longrightarrow M_i$$  with $f_{n,i} ({p_1}) = p_2 $  such that $D^{k_n}f_{n,i}$ is locally  $\lambda_n $-bilipschitz on  an appropriate neighborhood of the zero section in $T^kM$. Because the Chabauty topology is compact, we can pass to a subsequence in $n$  such that 
\begin {equation}
	\overline {f_{n,I_n}^{-1} C_{I_n}} \to C\in \mathcal C (M),\label {1stconvergence}
\end {equation}
 and passing to a further subsequence\footnote{We are performing a bit of sleight-of-hand here in order to tame the  proliferation of indices.  Namely, we are passing to a subsequence in $n$, but then also replacing the $R_n$  and $k_n$ with sequences that goes to infinity more slowly.  The convergence in \eqref{1stconvergence} alone  would not be enough to conclude  the first part of \eqref{2ndconvergence}, otherwise,  for instance.}
we may assume that
\begin {equation}
	\hat d_{R_n}(f_{n,I_n}^{-1} C_{I_n},C) \leq \frac 1n, \text{ and } d_{k_n,R_n}(X_i,X_j) \leq \frac 1n \text{ when } i,j \geq I_n, \label {2ndconvergence}
\end {equation}
 where $d_{R,k}$ is  as in  Proposition \ref{reallyweaktriangle} and the second part of \eqref{2ndconvergence} uses that $(X_i)$  is $D$-Cauchy.  Now set $X=(M,p,C)$. Then for each $n$ and $i \geq I_n $,  we have
\begin {align*}d_{k_n,R_n}(X,X_i)   &\leq d_{k_n,R_n}(X,X_{I_n}) + d_{k_n,R_n}(X_{I_n},X_i) \\ 
 &\leq ( \log \lambda_n + 1/n) + \frac 1n.
\end{align*}
 This converges to zero with $n$,  so $X_i \to X$ in $\mathcal {CM}^d$.  In other words, $D$  is a complete quasi-metric. But Cauchy sequences for $D$  are the same as Cauchy sequences for $\rho$,  so  this means that $\rho$ is a complete \emph{metric} on $\mathcal{CM}^d$. 

 Separability of $\mathcal {CM}^ d$   follows from separability of $\mathcal {M}^ d$:  we can choose an element from a countable dense subset of  $\mathcal {CM}^ d$  by first choosing a pointed manifold $(M,p)$ from  a countable dense subset of $\mathcal {M}^ d$ and  then choosing a finite subset  $C \subset M$  that lies within a fixed countable dense subset of $M$.
\end {proof}

There are a number of variants of $\mathcal {CM}^d$: one could substitute pointed manifolds with vectored or framed manifolds, or require the distinguished closed subset to lie in either the unit tangent bundle or the frame bundle.  (See the space $\mathcal{P} ^ d_{all}$ introduced in \S \ref {pdsec}, for instance, which is the space of pointed manifolds with distinguished closed subsets of the frame bundle.)  The techniques above apply just as easily to all these situations, so we will feel free to use their metrizability without comment in the text.

\subsection {A stability result for Riemannian foliated spaces}
\label {stabilitysec}
This section is devoted to the following stability result for leaves of Riemannian foliated spaces.  It is a slight strengthening of Theorem 4.3 (see also Theorem 4.1) from Lessa~\cite{Lessabrownian}, which requires that $X $ is compact, and implies a number of classical results on foliations, such as the Reeb local stability theorem, see \cite{Lessabrownian}.

\begin {theorem}\label {foliationcompactness}
Suppose $X $ is a $d$-dimensional Riemannian foliated space in which $x_i \to x$ is a convergent sequence of points. Then $(L_{x_i},x_i)$ is pre-compact in $\mathcal M ^ d $, and every accumulation point is a pointed Riemannian  cover of $(L_x,x)$. 

Moreover, if $(M,p)$ is such an accumulation point, it is covered by the holonomy cover $(\tilde L_x^{hol},\tilde x)$ of $(L_x,x)$:  that is, there is are covering maps $$(\tilde L_x^{hol}, \tilde x) \longrightarrow (M,p) \longrightarrow (L_x,x),$$ where the composition of the two is the holonomy covering.
\end {theorem}

 Recall that the \emph{holonomy cover} of a pointed leaf $(L,x)$  is the pointed cover corresponding to the subgroup of $\pi_1(L,x)$ consisting of all loops on $L$ with trivial holonomy.  The last part of Theorem \ref{foliationcompactness} will never be used in this paper, but we think it is worth including in the theorem statement for future reference.

 Before giving the proof, we note the following:

\begin {lemma}\label {compactsubset}
Let $X $ be a Riemannian foliated space and let $R >0 $.    
Suppose that $x_i \rightarrow x\in X$ and for each $i $, we have a point $y_i \in L_{x_i}$ with
$$\limsup_{i\rightarrow\infty} \, d_{L_{x_i}} (x_i, y_i)\leq R . $$Then there is a subsequence of $(y_i) $ that converges to a point $y\in L_x $ with 
$$d_{L_{x}} (x, y)\leq R . $$
\end {lemma}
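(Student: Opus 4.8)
The plan is to exploit the local product structure of a Riemannian foliated space near $x$, which gives uniform control on the leaf geometry in a neighborhood of $x$, and then to run a standard diagonal/exhaustion argument to produce the limit point $y$ together with a path from $x$ to $y$ of controlled length inside $L_x$.

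\emph{Step 1: Uniform local geometry.} First I would fix a foliated chart $\phi : U \longrightarrow L \times Z$ around $x$, with $\phi(x) = (l_0,z_0)$. By the definition of a Riemannian foliated space, the leaf-wise metrics $g_z$ on $L$ vary continuously (indeed smoothly in the tangential direction, continuously transversally), so on a compact sub-plaque $L' \times \{z\} \subset L \times Z$ with $z$ near $z_0$, the metrics $g_z$ and all their derivatives are uniformly bounded, and $g_z \to g_{z_0}$ in $C^\infty$ as $z \to z_0$. In particular, each small plaque through a point $x_i$ near $x$ sits inside $L_{x_i}$ as an open Riemannian submanifold whose metric is $C^\infty$-close to that of the plaque through $x$. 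This is exactly the data needed to apply the compactness machinery of \S\ref{smoothsec}.

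\emph{Step 2: Path of controlled length in $L_{x_i}$.} Fix $\epsilon>0$. For each $i$, choose a path $\gamma_i$ in $L_{x_i}$ from $x_i$ to $y_i$ whose length is at most $d_{L_{x_i}}(x_i,y_i) + \epsilon \leq R + 2\epsilon$ for large $i$. Cover $\gamma_i$ by a controlled number $N = N(R,\epsilon)$ of small plaques, each of bounded geometry, and in each plaque compare with the corresponding plaque in the leaf through $x$ via the transverse continuity of the metrics. Concretely, I would subdivide $\gamma_i$ into segments of length $\leq \delta$, where $\delta$ is smaller than the plaque size, so that each segment lies in a single plaque; the number of segments is $\leq (R+2\epsilon)/\delta + 1$, independent of $i$.

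\emph{Step 3: Diagonal limit.} Passing to a subsequence, the plaque containing $x_i$ Chabauty/smoothly converges to the plaque containing $x$ (this uses Proposition~\ref{Chabautyconvergence} applied transversally, together with $C^\infty$-convergence of the plaque metrics from Step 1). Inductively, along a further subsequence, each successive plaque crossed by $\gamma_i$ converges, and the endpoints of the segments of $\gamma_i$ converge to a sequence of points $x = z_0, z_1, \dots, z_{m} = y$ lying on $L_x$, with consecutive points joined by a path of the same length bound in $L_x$; hence $d_{L_x}(x,y) \leq R + 2\epsilon$. Finally, $y_i \to y$ in $X$ as well, since the Chabauty convergence of the plaques forces the converging sequence of endpoints in $X$ to have limit $y$, and $y$ lies in $L_x$ because it is a limit of a point in the final plaque, which converges into a plaque of $L_x$. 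Letting $\epsilon \to 0$ over a sequence and re-diagonalizing gives $d_{L_x}(x,y)\leq R$.

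\emph{Main obstacle.} The genuinely delicate point is Step 3: controlling the \emph{transverse} behavior so that the finitely many plaques crossed by $\gamma_i$ all converge simultaneously along a common subsequence, rather than drifting off into different parts of the transverse space. This requires that the transverse coordinates of the intermediate points stay within a fixed compact subset of $Z$ — which follows because the path length is uniformly bounded and the leaf-wise distances control how far one can move transversally within a chart — and then invoking separability/metrizability of the transversal so that a diagonal argument over the $N(R,\epsilon)$ plaques closes up. Once this uniform transverse containment is established, the rest is a routine compactness-of-plaques argument using the $C^\infty$-closeness of leaf metrics supplied by the definition of a Riemannian foliated space.
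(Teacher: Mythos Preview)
Your overall strategy---chop an almost-minimizing path $\gamma_i$ into short segments that each fit inside a foliated chart, then extract limits segment-by-segment---is exactly the paper's argument, which does the same thing phrased as an induction on $R$: take a point $z_i$ at leaf distance $\epsilon/2$ from $x_i$ along a geodesic to $y_i$, extract a subsequential limit $z\in L_x$, and repeat with $(z_i)$ replacing $(x_i)$ and $R$ replaced by $R-\epsilon/2$.

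There is, however, a real gap in your setup. You fix $\delta$ ``smaller than the plaque size'' and then assert that each segment of $\gamma_i$ lies in a single plaque. But the plaque size of \emph{which} chart? Your Step~1 produces only one chart, around $x$; after the first segment the path has left that chart and you need a new one, around some as-yet-unknown point of $X$, with plaque radius still at least $\delta$. A priori $\gamma_i$ could wander into regions of $X$ where only very thin charts exist. The paper's fix is the crucial first sentence of its proof: by compactness of $\overline{B_{L_x}(x,R)}$, there is a single $\epsilon>0$ so that \emph{every} $z\in \overline{B_{L_x}(x,R)}$ is the center of a foliated chart whose plaques have ``radius'' at least $\epsilon$. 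The induction then closes up because each intermediate limit lies in $B_{L_x}(x,R)$, so the next chart is already waiting there with the same $\epsilon$. You need exactly this uniformity statement, and it has to come before you choose $\delta$.

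Relatedly, your ``Main obstacle'' paragraph misidentifies the difficulty. Within any single chart $\phi:U\to L\times Z$, points on the plaque through $x_i$ all have the \emph{same} transverse coordinate, namely that of $x_i$; since $x_i\to x$, the transverse coordinate is already controlled and ``leaf-wise distances control transverse movement'' is both false and unnecessary. The genuine issue is not transverse drift but knowing in advance that the intermediate points land near $\overline{B_{L_x}(x,R)}\subset X$, which is precisely what the compactness-then-induction argument delivers. Finally, the $\epsilon\to 0$ re-diagonalization at the end is not needed: once the uniform chart size is in hand, continuity of the plaque metrics gives $d_{L_x}(x,y)\le R$ directly.
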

\begin {proof}
There is some $\epsilon >0$ with the following property: for every $z $ in the ball $ B(x,R) \subset L_x $, there is a foliated chart $\phi : \BR ^d \times  T \longrightarrow X $ with $\phi (0,s) =z $ and
$$d_t \big (\, [- 1, 1]^d ,\, \BR^d \setminus [-2,2]^d\,  \big)\geq \epsilon \ \ \forall t\in T, $$
where $d_t $ is the Riemannian metric on $\BR ^ d $ induced from that on $\phi(\BR ^ d \times\{t\})$.  This is a simple consequence of the relative compactness of $B (x , R) $.

If $R <\epsilon$, then pick such a foliated chart with $\phi (0,s)=x$.  For large $i $, we have $x_i\in \phi([-1,1] ^ d\times T)$ and $d_{L_{x_i}} (x_i, y_i)<  \epsilon$, so it follows that
$$y_i =\phi(a_i, t_i), \text { where } a_i\in[-2,2]^d,\ t_i\rightarrow s,$$ and extracting a convergent subsequence of $(a_i)$ proves the claim.

Most likely $R\geq\epsilon$, though.  In this case, construct a new sequence by choosing $z_i \in L_{x_i} $ along a geodesic from $x_i $ to $y_i $ so that $$d_{L_{x_i}} (x_i, z_i) = \epsilon /2, \ \ \  \limsup_{i\rightarrow\infty} d_{L_{x_i}} (z_i,y_i) \leq R -\epsilon /2.$$  Passing to a subsequence, $(z_i)$ converges, so we may replace $(x_i)$ with $(z_i)$, thus substituting $R $ with $R -\epsilon/2 $.  The proof concludes by induction.
\end {proof}

We now prove the theorem.

\begin {proof}[Proof of Theorem \ref {foliationcompactness}]
Given $R >0 $ and $k\in \mathbb N$, the $R$-balls around the base points $x_i$ within the leaves $L_{x_i} $ have uniformly $C ^ k $-bounded geometry, in the sense of Definition \ref {boundedgeometry} in the appendix.  For by compactness, this is true of the $R $-ball around $x \in L_x$, and for large $i$ the coordinate charts of Definition \ref {boundedgeometry} can be transferred to $L_{x_i}$ with arbitrarily small distortion via the vertical projections in local flow boxes, see Lemma \ref{compactsubset}.  Compare also with Lemma 4.34 of Lessa \cite{Lessabrownian}, in which similar arguments are used.  It follows from Theorem \ref {2ndcompactness} that $( L_{x_i}, x_i)$ is pre-compact in $\mathcal M ^ d $.  

Suppose now that $x_i \to x$ in $X$ and $( L_{x_i}, x_i) \rightarrow (M,p)$ smoothly.  From the smooth convergence, it follows that for every $R>0$ we have maps $$f_i : B(p,R) \longrightarrow L_{x_i} \subset X, \ \ f(p)=x_i,$$ that are locally bilipschitz with distortion constants converging to $1$.  We claim:

\begin {claim}\label {Arzela}
After passing to a subsequence, the maps $f_i $ converge to a local isometry 
$f : B (p, R) \longrightarrow L_x$ with $f (p)=x$.
\end {claim}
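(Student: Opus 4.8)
The plan is to combine the uniform convergence of the almost-isometric maps $f_i$ (which comes from the smooth convergence $(L_{x_i},x_i)\to (M,p)$) with the fact that these maps land in the leaves of $X$, where the ambient foliated structure forces the images to vary continuously as $i\to\infty$. First I would fix $R>0$ and recall that, by definition of smooth convergence, for large $i$ we have $C^\infty$-embeddings $f_i:B(p,R)\longrightarrow L_{x_i}\subset X$ with $f_i(p)=x_i$, such that $f_i^*g_{L_{x_i}}\to g_M$ in $C^\infty$ on $B(p,R)$; in particular the $f_i$ are locally $\lambda_i$-bilipschitz with $\lambda_i\to 1$. To extract a limit, I would use a diagonal/Arzel\`a--Ascoli argument on compact exhaustions $\overline{B(p,R')}$, $R'<R$: the maps $f_i$ are uniformly Lipschitz on $\overline{B(p,R')}$ with respect to the leaf metrics, and I need their images to stay in a compact subset of $X$ in order to apply Arzel\`a--Ascoli in $X$. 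This is exactly where Lemma~\ref{compactsubset} enters: since $d_{L_{x_i}}(x_i,f_i(q))\leq \lambda_i\, d_M(p,q)\leq \lambda_i R'$ for $q\in \overline{B(p,R')}$, and $x_i\to x$, Lemma~\ref{compactsubset} (applied with the constant $R'+1$, say) guarantees that the points $f_i(q)$ remain in a fixed compact subset $K\subset X$, for all $q$ and all large $i$. Equicontinuity plus relative compactness of the image then yields a subsequence along which $f_i$ converges uniformly on $\overline{B(p,R')}$ to a Lipschitz map $f:\overline{B(p,R')}\longrightarrow X$ with $f(p)=x$.

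Next I would identify the target and the regularity of $f$. Because each $f_i(q)$ lies on the leaf $L_{x_i}$, and $x_i\to x$, the limit point $f(q)$ lies on $L_x$: working in a foliated chart $\phi:\mathbb R^d\times T\longrightarrow X$ around $x$ (or around any point of the compact set $K$), the second coordinates of $f_i(q)$ converge to the transverse coordinate of $x$, forcing $f(q)\in L_x$. To upgrade continuity to smoothness and to see that $f$ is a local isometry, I would again pass to foliated charts: in such a chart the maps $f_i$ become maps into $\mathbb R^d$ whose derivatives (in the horizontal directions, which are the only relevant ones since the transverse coordinates converge) are controlled because the pulled-back metrics converge in $C^\infty$; a further Arzel\`a--Ascoli argument on the derivatives of all orders (exactly as in the proof of Corollary~\ref{2ndcompactness} and Corollary~\ref{iterated}) shows $f_i\to f$ in $C^\infty$ on compact sets, so $f$ is smooth. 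Since $f_i^*g_{L_{x_i}}\to g_M$ and $f_i\to f$ smoothly, we get $f^*g_{L_x}=g_M$ on $B(p,R')$, i.e.\ $f$ is a local isometry. Finally, letting $R'\to R$ and $R\to\infty$ through a diagonal subsequence produces a smooth local isometry $f:M\longrightarrow L_x$ with $f(p)=x$; since $M$ is complete and connected and $f$ is a local isometry, it is a Riemannian covering map, which is the assertion of Claim~\ref{Arzela} (and, taking $R=\infty$, completes the proof that every accumulation point of $(L_{x_i},x_i)$ is a pointed Riemannian cover of $(L_x,x)$).

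The main obstacle I anticipate is the bookkeeping needed to make the images $f_i(q)$ stay in a single compact subset of $X$, uniformly over $q$ in an $R$-ball and over large $i$: a priori the $f_i$ are only defined leafwise and $X$ is merely a separable metrizable space, so one cannot directly invoke compactness of metric balls. Lemma~\ref{compactsubset} is tailored to overcome precisely this, but one must be careful that its hypothesis (that $\limsup_i d_{L_{x_i}}(x_i,y_i)\le R$) is verified for the relevant sequences $y_i=f_i(q_i)$ where $q_i$ itself may vary with $i$; handling this uniformly requires covering $\overline{B(p,R')}$ by finitely many small balls and applying the lemma to the centers, then using the uniform Lipschitz bound to control the rest. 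The remaining steps---the Arzel\`a--Ascoli extraction, the chart computations showing smoothness, and the identification of $f$ as a covering via completeness---are routine once this compactness is in hand, and are essentially the same as the corresponding arguments in \cite{Lessabrownian} and in the proofs of Theorem~\ref{firstcompactness} and Corollary~\ref{2ndcompactness} above.
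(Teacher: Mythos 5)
Your proposal follows the same outline as the paper's proof of Claim~\ref{Arzela}: apply Lemma~\ref{compactsubset} to contain the images $f_i(B(p,R))$ in a compact subset of $X$, extract a convergent subsequence via Arzel\`a--Ascoli, and argue that the limit is a local isometry into $L_x$. But the step you invoke most casually---``equicontinuity plus relative compactness of the image then yields a subsequence along which $f_i$ converges uniformly''---is the one that requires a real construction, and it is left unjustified.

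The equicontinuity you claim comes from the $f_i$ being ``uniformly Lipschitz with respect to the leaf metrics.'' However, the leaf metrics live on the different leaves $L_{x_i}$, not on $X$, and Arzel\`a--Ascoli requires equicontinuity of the $f_i$ as maps into $X$ relative to some metric inducing the topology of $X$ (or of the compact set $K$). A bound on $d_{L_{x_i}}(f_i(q),f_i(q'))$ does not by itself control $d_X(f_i(q),f_i(q'))$: the comparison between leafwise distance and the ambient topology could a priori degenerate as $i$ varies, and since $X$ is merely separable metrizable one cannot wave this away. The paper closes this gap by constructing an \emph{adapted} pseudometric $d$ on $X$---following Lessa's construction but modified because $X$ need not be locally compact---with the property that $d(y,y')\le d_{L}(y,y')$ whenever $y,y'$ lie on the same leaf, and such that $d$ restricts to a genuine metric on (the closure of) a neighborhood $U$ of $\overline{B(x,R)}\subset L_x$. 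Only then does the leafwise Lipschitz bound translate into equicontinuity into $(\overline U\cap K,d)$, where Arzel\`a--Ascoli applies. You do correctly identify that $X$ being only separable metrizable is the obstruction and you mention covering by foliated charts, which is morally where the adapted metric comes from, but you never actually close the loop; the ``main obstacle'' you flag (containment of the images in a compact set) is handled by Lemma~\ref{compactsubset} exactly as the paper does, whereas the equicontinuity obstacle is the one you leave open. Once the adapted metric is in place, the rest of your argument---identifying the image in $L_x$ via transverse coordinates, upgrading regularity via derivative bounds in foliated charts, and concluding $f^*g_{L_x}=g_M$---is correct and, if anything, spells out the final ``the limit is a local isometry'' step more explicitly than the paper does.
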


\begin{proof}The first step is to construct a metric on $X$ with respect to which the maps $f_i $ are uniformly lipschitz, and the second is to show that the images of the $f_i $ are contained in some compact subset of $X $, so that Arzela-Ascoli applies.

In \cite [Lemma 4.33]{Lessabrownian}, Lessa shows that any \emph {compact} Riemannian foliated space $X $ admits a metric $d$ that is \emph {adapted} to the leafwise Riemannian structure: i.e.\ when $x,y$ lie on the same leaf, their leafwise distance is at least $d(x,y)$.  The idea is to first construct pseudo-metrics on $X $ that vanish outside of a given foliated chart $\BR ^ d\times T \longrightarrow X$, by combining the leafwise metrics $d_t$ with the distance $d_T$ in $T $.  %(Lessa invokes local compactness of $T$ in this construction since he needs the metrics $d_t$ to all be uniformly bilipschitz, but this can just be arranged by taking the foliated chart small enough.)  
Then, one covers $X$ with a finite number of such charts, and sums the resulting pseudo-metrics to give an adapted metric on $X $.  

In our situation, $X $ may not be (even locally) compact, so this method fails to produce an adapted metric.  However, as $B (x,R) \subset L_x$ is relatively compact, the same argument does give a pseudo metric $d$ on $X $ such that
\begin {enumerate}
\item if $x, y  $ lie on the same leaf, their leafwise distance is at least $d (x, y) $,
\item there is a neighborhood $U \subset X$ of the ball $B (x, R)\subset L_x $ such that $d$ restricts to a metric on the closure of $U $.
\end {enumerate}
As the maps $f_i : B(p,R) \longrightarrow L_{x_i} \subset X$ are locally bilipschitz with distortion constants converging to $1$, they are uniformly lipschitz with respect to the adapted pseudo-metric $d$. Lemma \ref {compactsubset} implies that $f_i (B (p, R))\subset U$ for large $i $ and that $$K=\overline {B(x,R)} \ \cup \ \bigcup_i \overline {f_i(B(p,R))} $$
is compact, so the intersection of $K \cap \overline U $ is a compact metric space that contains $f_i (B (p, R)) $ for large $i $.  By Arzela-Ascoli's theorem, $f_i $ converges after passing to a subsequence.  The limit is a local isometry 
$f : B (p, R) \longrightarrow L_x $
with $f (p) = x $, proving Claim \ref {Arzela}, and therefore Theorem \ref {foliationcompactness}.
\end{proof}

 Using Claim \ref {Arzela}, a diagonal argument now gives a local isometry
$$f : M \longrightarrow L_x, \ \ f(p)=x$$
defined on all of $M $.  As $M $ and $L_x $ are both complete, connected Riemannian manifolds, this $f $ is a Riemannian covering map. The fact that $\tilde L_x^{hol} \longrightarrow L_x$  factors through $M$ is exactly the same as in \cite{Lessabrownian}, see the last 3 paragraphs of the proof of Theorem 4.3.
\end {proof}

\textit{\textrm{
\bibliographystyle{amsplain}
\bibliography{bibrefs}
}}

\end{document}